\crefname{theorem}{Theorem}{Theorems}
\theoremstyle{plain}
\newtheorem{theorem}{Theorem}[section]
\newtheorem{corollary}[theorem]{Corollary}
\newtheorem{lemma}[theorem]{Lemma}
\newtheorem{proposition}[theorem]{Proposition}
\theoremstyle{definition}
\newtheorem{definition}[theorem]{Definition}
\newtheorem{remark}[theorem]{Remark}
\numberwithin{equation}{section}
\numberwithin{table}{section}
\DeclareMathOperator{\des}{des}
\DeclareMathOperator{\E}{\mathbb{E}}
\DeclareMathOperator{\Prob}{\mathbb{P}}
\DeclareMathOperator{\Var}{Var}
\DeclareMathOperator{\Cov}{Cov}
\title{A Central Limit Theorem on Two-Sided Descents of Mallows Distributed Elements of Finite Coxeter Groups}
\author{Maxwell Sun}
\begin{document}

\maketitle

\begin{abstract}
The Mallows distribution is a non-uniform distribution, first introduced over permutations to study non-ranked data, in which permutations are weighted according to their length. It can be generalized to any Coxeter group, and we study the distribution of $\des(w) + \des(w^{-1})$ where $w$ is a Mallows distributed element of a finite irreducible Coxeter group. We show that the asymptotic behavior of this statistic is Guassian. The proof uses a size-bias coupling with Stein's method.
\end{abstract}

\section{Introduction}

Asymptotic normality of statistics over the group of permutations (where we view the random variables as these statistics on $S_n$, taking $n \to \infty$) is a topic in probability that has been studied for many years \cite{Gon44, Ho51}. Chatterjee and Diaconis have proved a general asymptotic normality result for local statistics on $S_n$, including the two-sided descent (which we will discuss more later) \cite{CD17}. There is interest in finding general results for such statistics over an arbitrary Coxeter group. Given a sequence of finite Coxeter groups $(W_n)_{n \in \mathbb{N}}$, let $w_n$ be a random uniformly distributed element of $W_n$. We are interested in the behavior of the distributions of statistics on the $w_n$.

Kahle and Stump showed that $(\ell(w_n))_n$ is asymptotically normal if and only if $(W_n)_{n \in \mathbb{N}}$ satisfies some condition that requires the variance of $\ell(w_n)$ to grow sufficiently quickly, where $\ell(w)$ denotes the number of inversions in $w$ for a Coxeter group element $w$. They also showed a similar limit theorem for $(\des(w_n))_n$ where $\des(w_n)$ denotes the descent statistic, namely that $(\des(w_n))_n$ is asymptotically normal if and only if the variance $\Var(t(w_n))$ approaches infinity \cite{KS20}. Br\"uck and R\"ottger showed a similar limit theorem for $(t(w_n))_n$ under some assumptions about the sequence $(W_n)_{n \in \mathbb{N}}$ (which they refer to as being "well-behaved") where $t(w) = \des(w) + \des(w^{-1})$ is the two-sided descent statistic of a Coxeter group element $w$ \cite{BR22}. F\'eray then proved that this assumption on the behavior of $(W_n)_n$ is unnecessary, i.e. that a central limit theorem holds for $(t(w_n))_n$ if and only if $\Var(t(w_n)) \to \infty$ as $n \to \infty$. We generalize this result to cases when $w_n$ is not uniformly distributed in $W_n$, but rather when it is distributed according to the product of Mallows measures. He recently proved a central limit theorem for the two-side descents of Mallows permutations, covering the special case of $W_n$ all being symmetric groups \cite{He_2022}. We use his results and methods to prove our generalization.

The definition of a Mallows measure is given in Section \ref{subsection: Mallows}. The Mallows distribution was introduced by Mallows to study non-uniform ranked data \cite{Mal57}. The Mallows distribution on permutations is heavily studied in applied statistics \cite{Muk16a, Tang19}. Mallows permutations also have applications in studying stable matchings \cite{AHHL21} and one-dependent processes \cite{HHL}. The Mallows model also appears as the stationary distribution of the biased interchange process on the line \cite{LL19}, and it’s projection to particle systems is the stationary distribution of ASEP. It additionally appears as the stationary distribution of random walks related to Hecke algebras \cite{B20, DR00}.

\subsection{Results}

Coxeter groups are a class of groups with a specific kind of presentation in which, in particular, each generator has degree $2$. Irreducible Coxeter groups are those that cannot be decomposed into direct products of other Coxeter groups. More details can be found in \Cref{subsec: Coxeter}. For a finite irreducible Coxeter group, we define the Mallows measure over the group to be such that the measure of the singleton containing an element of length $\ell$ is proportional to $q^\ell$ where $q$ is some fixed parameter. Our main result is \Cref{CLT: General Coxeter Groups}.

\begin{theorem} \label{CLT: General Coxeter Groups}
Let $(W_n)_{n \ge 1}$ be a sequence of finite Coxeter groups and let $w_n$ be a random element of $W_n$ such that if a $W_n$ is the product of irreducible groups $W_n^1 \times W_n^2 \times \cdots \times W_n^{k_n}$ then the distribution of $w_n$ is the product of Mallows distributions on these factors. Then, $\frac{t(w_n) - \E(t(w_n))}{\sqrt{\Var(t(w_n))}}$ tends to $Z$ in distribution if and only if $\lim\limits_{n \to \infty}\Var(t(w_n)) = \infty$.
\end{theorem}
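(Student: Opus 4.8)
The plan is to combine the product structure of the Mallows measure with the additivity of the statistic, and then to run Stein's method through a bounded size-bias coupling, following the construction of He \cite{He_2022}. Write $W_n = W_n^1 \times \cdots \times W_n^{k_n}$ with $w_n = (w_n^1,\dots,w_n^{k_n})$. Since length is additive across irreducible factors, the hypothesis that the law of $w_n$ is the product of Mallows measures makes the coordinates $w_n^j$ independent. Moreover each simple reflection of $W_n$ lies in exactly one factor, so for $s\in S_j$ the sign of $\ell(w_ns)-\ell(w_n)$ depends only on $w_n^j$; hence both $\des$ and $\des(\cdot^{-1})$ (the right- and left-descent counts) split factorwise, and $t(w_n)=\sum_{j=1}^{k_n} t(w_n^j)$ is a sum of independent integer-valued random variables, one per irreducible factor.

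First I would invoke the classification of finite irreducible Coxeter groups: apart from the three infinite families of types $A$, $B$, $D$, every such group is dihedral $I_2(m)$ or one of finitely many exceptionals, and in all of these the rank $|S|$ is bounded by an absolute constant, so $t(w_n^j)\le 2|S|$ is bounded there. Thus the only factors that can contribute an unbounded amount to $t$ are those of type $A$, $B$, or $D$ of large rank. For each such type I would produce a coupling $(t(w_n^j),\,t(w_n^j)^s)$ of $t(w_n^j)$ with its size-biased version on a common space satisfying $|t(w_n^j)^s - t(w_n^j)| \le C$ for an absolute constant $C$, independent of the rank and of the Mallows parameter. For type $A$ this is exactly He's coupling, which generates the Mallows permutation sequentially and forces a descent at a chosen position by a local rearrangement perturbing only $O(1)$ neighboring descent indicators. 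For types $B$ and $D$ I would realize the Mallows measure on (even-)signed permutations through the analogous sequential insertion governed by the type-$B$/$D$ length and perform the same local move; the bounded-rank and dihedral factors carry a trivially bounded coupling since $t$ itself is bounded.

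Next I would assemble the global coupling from the factorwise ones: for an independent sum $X=\sum_j Y_j$ of nonnegative variables, one size-biases by drawing $I$ with $\Prob(I=j)=\E[Y_j]/\E[X]$, replacing $Y_I$ by $Y_I^s$ and leaving the others fixed, so that $\Delta_n := t(w_n)^s - t(w_n) = Y_I^s - Y_I$ inherits the bound $|\Delta_n|\le C$. The standard Wasserstein bound for bounded size-bias couplings then controls $d_W\big((t(w_n)-\E t(w_n))/\sqrt{\Var t(w_n)},\,Z\big)$ by $\frac{\E[t(w_n)]}{\Var(t(w_n))}\sqrt{\Var\big(\E[\Delta_n \mid t(w_n)]\big)} + \frac{\E[t(w_n)]}{\Var(t(w_n))^{3/2}}\,\E[\Delta_n^2]$. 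Showing this tends to $0$ under the sole hypothesis $\Var(t(w_n))\to\infty$ is the crux, and it is genuinely delicate: one cannot simply bound $\E[\Delta_n^2]$ by $C^2$, since a regime with many near-deterministic small factors can make $\E[t(w_n)]/\Var(t(w_n))^{3/2}$ blow up while $\E[\Delta_n^2]$ is correspondingly small. I would instead track $\E[\Delta_n^2]=\sum_j \tfrac{\E[Y_j]}{\E[t(w_n)]}\E[(Y_j^s-Y_j)^2]$ together with the conditional-variance term, verifying their decay from the local structure of descents (each descent indicator depends, in the sequential generation, on only a few nearby values). This finer accounting, uniform whether the variance is carried by one large factor or spread across many small ones, is exactly what lets us dispense with the ``well-behaved'' hypothesis of Br\"uck--R\"ottger.

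Finally, the converse is elementary: $t(w_n)$ is integer-valued, so $(t(w_n)-\E t(w_n))/\sqrt{\Var t(w_n)}$ is supported on a lattice of spacing $1/\sqrt{\Var(t(w_n))}$; if $\Var(t(w_n))$ stayed bounded along a subsequence the spacing would not shrink to $0$, and a lattice-supported limit cannot equal the continuous law of $Z$, contradicting convergence. Hence convergence to $Z$ forces $\Var(t(w_n))\to\infty$. I expect the main obstacle to be the type-$B$ and type-$D$ couplings together with the variance estimates needed to kill the two Stein error terms across all regimes; the reduction over the classification and the independent-sum size-bias construction are comparatively routine.
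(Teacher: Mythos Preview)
Your setup through the global size-bias coupling is correct and shares with the paper the recognition that the technical heart lies in the type-$B$/$D$ covariance estimates. The combination step, however, is genuinely different. You apply Stein's method once to the whole product via the independent-sum size-bias construction. The paper instead applies Stein's method only factorwise to types $A/B/D$, obtaining Wasserstein-$1$ bounds; it then upgrades these to Wasserstein-$2$ bounds via concentration/tail estimates (a separate step you do not have), and finally follows F\'eray: split the irreducible factors into ``large'' ones (type $A/B/D$ with $p\min(q,1/q)\ge C$) and ``small'' ones (everything else), control the large part by the convexity inequality $d_2\bigl(\sum_j a_jX_j,Z\bigr)\le\sum_j a_j^2\, d_2(X_j,Z)$, control the small part by a Lyapunov characteristic-function argument using only $\E|t_{n,j}-\E t_{n,j}|^3\le K\sigma_j^2$, and glue the two halves at the level of characteristic functions.

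Your route has a gap at the dihedral factors. You justify controlling the global Stein error terms via ``the local structure of descents (each descent indicator depends, in the sequential generation, on only a few nearby values).'' That locality is precisely what drives the $A/B/D$ estimates, but it fails for $I_2(m)$: the two generators do not commute, there is no sequential generation, and the two descent indicators are globally coupled. Concretely, for $I_2(m)$ with $q=1$ one has $\mu_j=2$ and $\sigma_j^2=4/m$, so $\mu_j/\sigma_j^2=m/2$ is unbounded in $m$; the ``trivially bounded coupling'' together with $\E[\Delta_n^2]\le C^2$ therefore does not close the bound, exactly as you anticipated. Making your approach work would require proving, for every $I_2(m)$ and every $q$, that $\mu_j\,\E[(Y_j^s-Y_j)^2]\lesssim\sigma_j^2$ and a matching bound on the conditional-variance term---a separate, non-local computation you have not supplied. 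The paper sidesteps this entirely: dihedral and exceptional factors are always ``small'' because their rank (hence $t_j$) is bounded, so the crude third-moment bound suffices and Lyapunov handles the rest without any Stein input on those factors.
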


The proof of our result involves first proving a central limit theorem for the three infinite families of finite irreducible Coxeter groups. One of these families is $(S_n)_n$ and thus we can directly use the results in \cite{He_2022}. For the other families, we use similar techniques. We then convert the bounds to bounds involving Wasserstein $2$-distance and closely mirror the proof used by F\'eray for the uniform distribution case to obtain \Cref{CLT: General Coxeter Groups}.

First, we show analogous results to \cite[Thoerem 1.1]{He_2022} for the families of groups $(B_n)_{n \in \mathbb{N}}$ and $(D_n)_{n \in \mathbb{N}}$ using similar methods. These families of groups are closely related to the permutations. Both $B_n$ and $D_n$ are obtained by adding a generator $s_0$ to $S_n$, so the combinatorics involved is very similar to that used by He. We use a size-bias coupling for the two-sided descent statistic with Stein's method to obtain these bounds, which show that the two-sided descent of a Mallows distributed element of $B_n$ asymptotically approaches a standard normal, in some sense (and similarly for $D_n$). The main difficulty comes in proving that the error terms yielded by Stein's method are small. This involves bounding lots of covariance terms.

\begin{theorem}
\label{thm: main theorem 1, B_n}
Let $w\in B_n$ for $n\geq 4$ be Mallows distributed with parameter $q\in (0,\infty)$ and let $\mu$ and $\sigma^2$ denote the mean and variance of $\des(w)+\des(w^{-1})$. Let $Z$ denote a standard normal random variable. Then for all piecewise continuously differentiable functions $h:\mathbb{R}\to \mathbb{R}$,
\begin{equation*}
\begin{split}
    &\left|\E h\left(\frac{\des(w)+\des(w^{-1})-\mu}{\sigma}\right)-\E h(Z)\right|
    \leq \left(360\|h\|_\infty+236\|h'\|_\infty\max(q^{-1/2},q^{1/2})\right)n^{-\frac{1}{2}}.
\end{split}
\end{equation*}
\end{theorem}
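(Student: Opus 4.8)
The plan is to prove a quantitative central limit theorem for $t(w) = \des(w) + \des(w^{-1})$ via Stein's method using a size-bias coupling, exactly mirroring He's approach for the symmetric group but adapted to the hyperoctahedral group $B_n$. The starting point is Stein's method for normal approximation in the bounded-Wasserstein (or smooth test function) metric: for a nonnegative random variable $X$ with mean $\mu$ and variance $\sigma^2$, if $X^s$ denotes a size-biased version of $X$ coupled to $X$ on a common probability space, then one has a bound of the shape
\begin{equation*}
\left|\E h\!\left(\frac{X-\mu}{\sigma}\right) - \E h(Z)\right| \leq \frac{\mu}{\sigma^2}\|h'\|_\infty \sqrt{\Var\!\left(\E[X^s - X \mid X]\right)} + \frac{\mu}{\sigma^3}\|h\|_\infty\, \E\!\left[(X^s-X)^2\right],
\end{equation*}
or some close variant. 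So the first task is to construct an explicit size-bias coupling for $t(w)$. Following He, I would write $t(w)$ as a sum of indicator variables $\xi_i = \mathbbm{1}[i \in \mathrm{Des}(w)] + \mathbbm{1}[i \in \mathrm{Des}(w^{-1})]$ over the descent positions, and build the size-biased version by choosing a uniformly random index $I$, resampling the local configuration at position $I$ so that $\xi_I$ is forced to contribute, and then correcting the rest of the permutation to respect the Mallows weighting. The key structural input is the one-dependence/local nature of descents: changing $w$ near position $I$ only affects $\xi_j$ for $j$ in a bounded window around $I$, and the Mallows measure factorizes nicely through the Lehmer-code / inversion-table description.

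Concretely, for $B_n$ I would exploit that $B_n \cong S_n \ltimes (\mathbb{Z}/2)^n$ (signed permutations), so a Mallows-distributed $w \in B_n$ has a length function $\ell(w)$ that decomposes into an $S_n$-type inversion part plus contributions from the sign changes and the extra generator $s_0$. This gives a description of the Mallows measure on $B_n$ as a product of independent (or weakly dependent) coordinates in an appropriate coordinate system, which is what makes the size-bias coupling tractable. I would compute $\mu = \E[t(w)]$ and $\sigma^2 = \Var(t(w))$ explicitly (or at least pin down that $\sigma^2 = \Theta(n)$ with the correct constant), since the final bound has the form $C n^{-1/2}$ and this rate comes precisely from $\mu/\sigma^2 \asymp 1$ together with $\sigma \asymp \sqrt{n}$ and the coupling satisfying $|X^s - X| = O(1)$.

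The hard part, as the authors themselves flag, will be bounding the error terms — specifically controlling $\Var(\E[X^s - X \mid X])$, which after expanding becomes a double sum of covariance terms $\Cov(\xi_i, \xi_j)$ and related conditional expectations. These covariances are governed by the Mallows correlation decay: under the Mallows measure, the indicators at positions $i$ and $j$ are nearly independent when $|i - j|$ is large, with correlations decaying so that $\sum_{i,j} |\Cov(\xi_i,\xi_j)| = O(n)$ uniformly in $q$ (after inserting the $\max(q^{-1/2}, q^{1/2})$ factor that accounts for the bias toward one end of the length spectrum). The main technical labor is therefore a careful, uniform-in-$q$ estimate of these Mallows covariances for the signed-permutation statistics; the explicit constants $360$ and $236$ in the statement are the accumulated bookkeeping from these covariance bounds combined with the Stein bound constants. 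I would organize this as a sequence of lemmas: first the exact size-bias construction, then a lemma bounding $\E[(X^s - X)^2]$ by a constant, then the covariance-decay lemma giving $\Var(\E[X^s-X\mid X]) = O(1)$, and finally assembling these into the Stein bound and simplifying the constants — with the $q$-dependence isolated so that it appears only through the stated $\max(q^{-1/2}, q^{1/2})$ factor multiplying $\|h'\|_\infty$.
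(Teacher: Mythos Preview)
Your overall architecture is exactly the paper's: size-bias coupling plus the Goldstein--Rinott Stein bound, with $\mu/\sigma^2=O(1)$, $\sigma\asymp\sqrt{n}$, and $|X^s-X|\le 4$. Two corrections, though. First, you have the roles of $\|h\|_\infty$ and $\|h'\|_\infty$ reversed in the Stein inequality; in the version used here (Goldstein--Rinott), $\|h\|_\infty$ multiplies the $\sqrt{\Var\E(X^s-X\mid X)}$ term and $\|h'\|_\infty$ multiplies the $\E(X^s-X)^2$ term. This matters because the $q$-dependence $\max(q^{-1/2},q^{1/2})$ enters only through $\mu/\sigma^3$, so it must sit in front of $\|h'\|_\infty$ as in the statement. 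Second, the claim $\Var\E(X^s-X\mid X)=O(1)$ is what you get for free from $|X^s-X|=O(1)$, but that only yields an $O(1)$ first error term. You actually need $\Var\E(X^s-X\mid X)=O(1/n)$, i.e.\ the full covariance sum (after the $1/(2n)^2$ normalization) must be $O(n)$, not $O(n^2)$.

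The real gap is your appeal to ``one-dependence/local nature of descents.'' That is true for $\des(w)-\des(w_i^*)$, which depends only on $w_{T_i}$ and hence gives easy $O(n)$ bounds for those covariance blocks. But the two-sided statistic forces you to also control terms like $\des(w)-\des(w_i^{-*})$ and $\des(w^{-1})-\des((w_i^*)^{-1})$: forcing a \emph{left} descent at $s_i$ (i.e.\ multiplying by $s_i$ on the left) swaps the values $i,i+1$ wherever they sit in $w$, which is not local in the position index at all. The covariances $\Cov(\des(w)-\des(w_i^{-*}),\des(w)-\des(w_j^{-*}))$ reduce to probabilities like $\Prob(w(i{+}1)-w(i)=1,\ w(j{+}1)-w(j)=1)$, and summing these to $O(n)$ uniformly in $q$ is the genuine work of the proof. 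The paper handles this with a pointwise probability bound for fixed values at fixed indices (Lemma~\ref{lem: set_indices_B}) and a case split: for $q\ge 1-(n-1)^{-1/2}$ a direct $O(1/n)$ bound on each joint probability suffices, while for small $q$ one splits the sum at a cutoff $|i-j|=m$ (near-diagonal bounded crudely, off-diagonal controlled via a careful decomposition of the joint probability minus the product, with $m$ optimized at the end). Your sketch does not contain this mechanism, and the generic ``Mallows correlation decay'' heuristic will not by itself produce the uniform-in-$q$ $O(n)$ bound for these non-local blocks.
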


\begin{theorem}
\label{thm: main theorem 2, D_n}
Let $w\in D_n$ for $n\geq 30$ be Mallows distributed with parameter $q\in (0,\infty)$ and let $\mu$ and $\sigma^2$ denote the mean and variance of $\des(w)+\des(w^{-1})$. Let $Z$ denote a standard normal random variable. Then for all piecewise continuously differentiable functions $h:\mathbb{R}\to \mathbb{R}$,
\begin{equation*}
\begin{split}
    &\left|\E h\left(\frac{\des(w)+\des(w^{-1})-\mu}{\sigma}\right)-\E h(Z)\right|
    \leq \left(768\|h\|_\infty+666\|h'\|_\infty\max(q^{-1/2},q^{1/2})\right)n^{-\frac{1}{2}}.
\end{split}
\end{equation*}
\end{theorem}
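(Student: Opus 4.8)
The plan is to follow the strategy used for $B_n$ in \Cref{thm: main theorem 1, B_n} and by He for $S_n$: a size-bias coupling fed into a Stein's method bound for smooth test functions. Realize $D_n$ as the group of signed permutations of $\{1,\dots,n\}$ with an even number of sign changes, and write $W := \des(w)+\des(w^{-1})$ as a sum $W=\sum_{k}\xi_k$ of roughly $2n$ Bernoulli indicators: for each Coxeter generator $s_i$, an indicator of a right descent ($\ell(ws_i)<\ell(w)$) and one of a left descent ($\ell(s_iw)<\ell(w)$). All but the two indicators attached to the special generator $s_0$ are local, depending only on the relative order of a bounded number of adjacent values (or, for left descents, adjacent ranks), while the $s_0$ indicators depend on the pair $(w(1),w(2))$ and its inverse analogue.

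First I would record the size-bias Stein bound: if $W^s$ has the $W$-size-biased law coupled to $W$ on one probability space, then splitting the Stein equation and applying the size-bias identity produces a term $T_1$ carrying $\|h'\|_\infty\max(q^{-1/2},q^{1/2})$ and $\frac{\mu}{\sigma^2}\sqrt{\Var(\E[W^s-W\mid W])}$, together with a term $T_2$ carrying $\|h\|_\infty$ and $\frac{\mu}{\sigma^3}\E[(W^s-W)^2]$, the $q$-factor entering through the Mallows weights. Using the standard recipe for sums of indicators I would build $W^s$ by selecting an index $I$ with probability proportional to $\E\xi_k$, forcing $\xi_I=1$, and resampling the remaining coordinates from their conditional law given $\xi_I=1$; the key point is that under the Mallows measure this conditioning can be realized by a coupling that appreciably disturbs only $O(1)$ coordinates. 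I would then prove the variance lower bound $\sigma^2\ge c\,n$, needed to convert both error terms into the claimed $n^{-1/2}$ rate, by exhibiting $\Theta(n)$ descent indicators with summable pairwise correlations so that the diagonal terms dominate. The stronger hypothesis $n\ge 30$ (versus $n\ge 4$ for $B_n$) should be precisely what makes this lower bound and the resulting constants clean, the $s_0$ generator contributing lower-order boundary corrections that must be absorbed.

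Bounding $T_2$ is the easier half: the size-bias move changes only coordinates near the resampled index, so $W^s-W=O(1)$ in $L^2$ uniformly, giving $\E[(W^s-W)^2]=O(1)$ and hence $T_2=O(n^{-1/2})$. The main obstacle is $T_1$, namely showing $\Var(\E[W^s-W\mid W])=O(n^{-1})$. Expanding $\E[W^s-W\mid W]$ as an average over $I$ of conditional increments, its variance becomes a double sum of covariances between the effect of conditioning on one descent and that of conditioning on another. I would control these through the near-independence of well-separated coordinates under the Mallows law, exploiting its product structure in suitable inversion-table coordinates, the analogue of the independent-coordinate representation He uses for $S_n$ and that underlies the $B_n$ argument. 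I expect the bulk of the work to lie in showing that $\Cov$ between two descent-type events decays geometrically in their separation, with rate governed by $q$, and in treating the cross-terms created by the non-local $s_0$ descents, whose interaction with ordinary descents is exactly what makes $D_n$ harder than $B_n$. Summing the geometric covariance bounds yields $T_1=O(n^{-1/2})$, and combining with $T_2$ and the variance lower bound gives the stated inequality with explicit constants.
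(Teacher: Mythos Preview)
Your high-level plan matches the paper's: size-bias coupling plus the Goldstein--Reinert Stein bound (\Cref{thm: size-bias steins}), with the main work going into the conditional-variance term. However, two concrete points need correction.

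First, you have swapped the two error terms. In \Cref{thm: size-bias steins} the $\|h\|_\infty$ factor multiplies $\frac{\mu}{\sigma^2}\sqrt{\Var\E(W^s-W\mid W)}$ and the $\|h'\|_\infty$ factor multiplies $\frac{\mu}{\sigma^3}\E(W^s-W)^2$, not the other way around. The $\max(q^{1/2},q^{-1/2})$ in the statement comes from $\mu/\sigma^3$: the paper proves (for $q\le 1$) $\sigma^2\ge nq/12$, giving $\mu/\sigma^3\le 24\sqrt{3}\,n^{-1/2}q^{-1/2}$, while $\mu/\sigma^2\le 12$ carries no $q$-factor. Relatedly, the variance lower bound you need is $\sigma^2\ge c\,n\min(q,1/q)$, not $\sigma^2\ge cn$; without the $q$-dependence you cannot place the $q$-factor correctly. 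The hypothesis $n\ge 30$ is used to absorb the additive constant $-10q^2/(1+q)^2$ in \Cref{Moments: Type D}.

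Second, your plan for the variance term is too schematic and does not match what actually works. The paper does not use an inversion-table product structure or a generic geometric-decay estimate. It writes $\E(W^s-W\mid w)=\frac{1}{2n}(\Sigma_1+\Sigma_2+\Sigma_3+\Sigma_4)$, expands the variance into sixteen covariance sums reducing by symmetry to six types (\Cref{tab:term types}), and handles each differently. Types 1--2 use \emph{exact} independence of $w_S$ and $w_{S'}$ for commuting parabolics (\Cref{Lemma: general_parabolic_indep}): covariances vanish once indices are more than $3$ apart, leaving $O(n)$ terms. Types 3--4 use conditional independence of $w_S$ and $(w^{-1})_{S'}$ (\Cref{inverse_indep}) together with $\sum_{i,j}\Prob(w(i)=\pm j)=O(n)$. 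Types 5--6 are the hard part: there $\des(w)-\des(w_i^{-*})=-\mathbbm{1}_{A_i}$ for events about consecutive values, and bounding $\sum_{i,j}\Cov(\mathbbm{1}_{A_i},\mathbbm{1}_{A_j})$ needs the position-probability estimate \Cref{set_indices}, a near-diagonal/off-diagonal split at a threshold $m\approx 4\log(1-q)/\log q$, and separate treatments of $q\ge 1-(n-1)^{-1/2}$ versus $q$ small. Finally, the $s_0$ descent is local (it depends on $w(1),w(2)$); the genuine $D_n$ complication is the branching of the Coxeter graph at $s_2$, which only perturbs the Type 1--4 counts by $O(1)$ but adds extra cases $(w(i),w(i+1))\in\{(-1,2),(-2,1)\}$ to the event $A_i$ in Types 5--6.
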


In proving the above results, we also get a bound on the Wasserstein 1-distance between a standard normal and the normalized two-sided descent. We let $d_W$ denote the Wasserstein $1$-distance and define it as follows:

\begin{definition}
Let $A$ and $B$ be real-valued random variables. Then the \emph{Wasserstein $1$-distance} between $A$ and $B$ is given by
\[
d_W(A,B) = \inf_{(A',B') \in \Gamma(A,B)} \E|A'-B'|
\]
where $\Gamma(A,B)$ denotes the set of couplings of $(A,B)$.
\end{definition}

Another distance on real-valued random variables is the following:
\begin{definition}
Let $A$ and $B$ be real-valued random variables. Then the \emph{Wasserstein $2$-distance} between $A$ and $B$ is given by
\[
d_2(A,B) = \inf_{(A',B') \in \Gamma(A,B)} \left(\E[(A'-B')^2]\right)^{\frac{1}{2}}
\]
where $\Gamma(A,B)$ denotes the set of couplings of $(A,B)$.
\end{definition}

The Wasserstein $1$-distance bound is as follows:

\begin{theorem} \label{thm: Wasserstein 1-distance}
Let $w\in B_n$ for $n\geq 4$ be Mallows distributed with parameter $q\in (0,\infty)$. Then,
$$
d_W\left(\frac{\des(w)+\des(w^{-1})-\mu}{\sigma}, Z \right) \le \left(180+236\max(q^{-1/2},q^{1/2})\right)n^{-\frac{1}{2}}.
$$
Similarly, let $w\in D_n$ for $n\geq 30$ be Mallows distributed with parameter $q\in (0,\infty)$. Then,
$$
d_W\left(\frac{\des(w)+\des(w^{-1})-\mu}{\sigma}, Z \right) \le \left(384+666\max(q^{-1/2},q^{1/2})\right)n^{-\frac{1}{2}}.
$$
\end{theorem}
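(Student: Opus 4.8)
Writing $\tilde W=\frac{\des(w)+\des(w^{-1})-\mu}{\sigma}$ for the normalized statistic, the plan is to deduce the two Wasserstein bounds from the smooth–test–function estimates of \Cref{thm: main theorem 1, B_n} and \Cref{thm: main theorem 2, D_n} by combining Kantorovich--Rubinstein duality with the internal structure of their Stein's method proofs. By duality,
$$d_W(\tilde W, Z)=\sup_{h}\left|\E h(\tilde W)-\E h(Z)\right|,$$
where the supremum is over all $1$-Lipschitz functions $h\colon\mathbb{R}\to\mathbb{R}$. For such $h$ we have $\|h'\|_\infty\le 1$, so the second terms in \Cref{thm: main theorem 1, B_n} and \Cref{thm: main theorem 2, D_n} are immediately controlled and contribute exactly the coefficients $236\max(q^{-1/2},q^{1/2})$ and $666\max(q^{-1/2},q^{1/2})$. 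The obstruction is the first term: a $1$-Lipschitz function need not be bounded, so $\|h\|_\infty$ may be infinite and the two theorems cannot be applied verbatim. A naive truncation of $h$ to a window $[-M,M]$ reintroduces $\|h_M\|_\infty\le M$, and optimizing $M$ against the tails of $\tilde W$ loses both a logarithmic factor and the sharp constant, so truncation alone does not suffice.

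Instead I would re-enter the Stein's method argument behind the two theorems. That argument controls $|\E h(\tilde W)-\E h(Z)|$ through the solution $f=f_h$ of the Stein equation $f'(x)-xf(x)=h(x)-\E h(Z)$ together with a size-bias coupling, producing an estimate of the schematic form $A\,\|f''\|_\infty+B\,\|f'\|_\infty$ (or $B\,\|f\|_\infty$), in which $A$ and $B$ are the covariance/coupling error terms that carry all of the $n^{-1/2}$ decay and the $q$-dependence. In \Cref{thm: main theorem 1, B_n} and \Cref{thm: main theorem 2, D_n} the coefficient of $\|h\|_\infty$ arises precisely from invoking the \emph{bounded}-test-function Stein estimates (of the type $\|f'\|_\infty\le 2\|h-\E h(Z)\|_\infty$), while the coefficient of $\|h'\|_\infty$ comes from $\|f''\|_\infty\le 2\|h'\|_\infty$. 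For a $1$-Lipschitz $h$ I would instead feed the \emph{Lipschitz} Stein estimates $\|f'\|_\infty\le\sqrt{2/\pi}\,\|h'\|_\infty$ and $\|f''\|_\infty\le 2\|h'\|_\infty$ into the very same covariance bounds. Since $\|h'\|_\infty\le 1$, every error term is then expressed through a single constant; the term that previously produced the coefficient $360$ (resp.\ $768$) is replaced by one producing $180$ (resp.\ $384$), while the $\|h'\|_\infty$-term is unchanged. Taking the supremum over $h$ is then harmless, because the resulting bound no longer depends on $h$, and the two stated inequalities follow.

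The main obstacle is the bookkeeping in this re-derivation. One must verify that each error term in the proofs of \Cref{thm: main theorem 1, B_n} and \Cref{thm: main theorem 2, D_n} that was charged to $\|h\|_\infty$ is in fact multiplied by a Stein-solution norm ($\|f\|_\infty$ or $\|f'\|_\infty$) that admits the sharper Lipschitz estimate, and that substituting these estimates reproduces the constants $180$ and $384$ \emph{exactly}, rather than only up to a universal factor. This amounts to re-running the same family of covariance estimates that formed the hard part of the two theorems, now with the Lipschitz Stein constants in place, and checking both that no new $n$-dependence is introduced and that the threshold conditions $n\ge 4$ and $n\ge 30$ still suffice.
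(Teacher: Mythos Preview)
Your approach is correct and is essentially what the paper does, but you are working harder than necessary. The paper does not re-enter the Stein argument at all: it simply quotes a ready-made size-bias Stein bound for the Wasserstein distance (stated in the paper as \Cref{thm: size-bias steins 1-distance}, taken from \cite[Theorem~3.20]{R11b}),
\[
d_W\!\left(\tfrac{X-\mu}{\sigma},Z\right)\le \sqrt{\tfrac{2}{\pi}}\,\tfrac{\mu}{\sigma^2}\sqrt{\Var\E(X-X^*\mid X)}+\tfrac{\mu}{\sigma^3}\E(X-X^*)^2,
\]
which packages exactly the substitution of Lipschitz Stein constants you propose. The two error quantities on the right are already isolated and bounded in the proofs of \Cref{thm: main theorem 1, B_n} and \Cref{thm: main theorem 2, D_n}: for $B_n$ one has $\tfrac{\mu}{\sigma^2}\sqrt{\Var\E(\cdot\mid\cdot)}\le 180\,n^{-1/2}$ and $\tfrac{\mu}{\sigma^3}\E(\cdot)^2\le 236\,q^{-1/2}n^{-1/2}$ (and analogously $384$, $666$ for $D_n$). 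Since $\sqrt{2/\pi}<1$, plugging these into the displayed inequality gives the stated constants immediately. So your ``main obstacle'' of bookkeeping evaporates: there is nothing to re-derive, only a different black-box theorem to cite alongside the same covariance estimates.
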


We then bound the Wasserstein $2$-distance using the Wasserstein $1$-distance bounds via tail estimates on the two-sided descent distribution.
\begin{theorem} \label{thm: Wasserstein 2-distance}
Let $w\in B_n$ for $n\geq 4$ be Mallows distributed with parameter $q\in (0,\infty)$. Let $k = \min(q, 1/q)$. Assume that $nk \ge 50$. Then,
$$
d_2\left(\frac{\des(w)+\des(w^{-1})-\mu}{\sigma}, Z \right) \le 100(nk)^{-1/4}(\log (nk))^{1/2}
$$
The same is true if $w \in D_n$ or $A_n$ is Mallows distributed with parameter $q \in (0, \infty)$. 
\end{theorem}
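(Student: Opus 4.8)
The plan is to convert the Wasserstein $1$-distance bounds of \Cref{thm: Wasserstein 1-distance} into a Wasserstein $2$-distance bound by combining them with an exponential tail estimate on the normalized statistic, so that the cost of passing from $d_W$ to $d_2$ is only a logarithmic factor. Write $X = \frac{\des(w)+\des(w^{-1})-\mu}{\sigma}$ and let $Z$ be standard normal. First I would record, using the identity $\max(q^{-1/2},q^{1/2}) = k^{-1/2}$ together with $k \le 1$, that \Cref{thm: Wasserstein 1-distance} gives $d_W(X,Z) \le C_0\,(nk)^{-1/2}$ for an explicit constant $C_0$ (and the analogous statement for $A_n$ from \cite{He_2022}); call this bound $\delta$.

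The central tool is a tail estimate of sub-exponential type: there are absolute constants $C_1,c_1 > 0$ such that $\Prob(|X| \ge t) \le C_1 e^{-c_1 t}$ for all $t \ge 0$ in the regime $nk \ge 50$. For type $A$ this is available directly from \cite{He_2022}; for $B_n$ and $D_n$ I would establish it along the same lines, exploiting that $\des(w)$ and $\des(w^{-1})$ are sums of bounded, locally dependent indicators and that the Mallows measure is an exponential tilt of the uniform measure, so that the size-bias couplings already built in the proofs of \Cref{thm: main theorem 1, B_n} and \Cref{thm: main theorem 2, D_n} control the relevant exponential moments. Since $Z$ is sub-Gaussian, in any coupling the bound $|X-Z| \le |X| + |Z|$ yields $\Prob(|X-Z| \ge t) \le C_1 e^{-c_1 t/2} + 2e^{-t^2/8}$.

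Next I would fix a coupling $(X,Z)$ attaining the optimal $W_1$ value up to an arbitrarily small error and use the layer-cake identity
\[
\E[(X-Z)^2] = \int_0^\infty 2t\,\Prob(|X-Z| \ge t)\,dt,
\]
splitting the integral at a truncation level $M$ to be chosen at the end. On $[0,M]$ the Markov bound $\Prob(|X-Z|\ge t) \le \delta/t$ gives a contribution at most $2\delta M$; on $[M,\infty)$ the sub-exponential and Gaussian tails give a contribution at most $C_2(1+M)e^{-c_1 M/2}$. Choosing $M \asymp c_1^{-1}\log(1/\delta) \asymp \log(nk)$ balances the two pieces and yields $\E[(X-Z)^2] \le C_3\,\delta\log(1/\delta)$. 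Taking square roots and substituting $\delta \le C_0(nk)^{-1/2}$, together with $\log(1/\delta) \le \tfrac12\log(nk) + O(1) \lesssim \log(nk)$ in the regime $nk \ge 50$, produces
\[
d_2(X,Z) \le \sqrt{C_3\,\delta\log(1/\delta)} \le C_4\,(nk)^{-1/4}(\log(nk))^{1/2},
\]
and careful bookkeeping of the constants brings $C_4$ below $100$. The same three steps apply verbatim to $A_n$ and $D_n$.

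I expect the main obstacle to be the tail estimate for $B_n$ and $D_n$: one must verify that the sub-exponential bound holds with rate constant $c_1$ bounded away from $0$ uniformly in $n$ and $k$, since it is precisely the \emph{exponential} (rather than Gaussian) decay at the standard-deviation scale that forces the truncation $M \asymp \log(nk)$ and produces the exponent $1/2$ on $\log(nk)$ rather than $1/4$. This is where the exponential-tilt structure of the Mallows measure and the boundedness of the descent increments must be used quantitatively. A secondary difficulty is tracking the constants tightly enough through the truncation optimization to reach the stated numerical bound.
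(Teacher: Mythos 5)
This is essentially the paper's proof: the paper likewise converts the $(4,1)$-bounded size-bias coupling into Bernstein-type tail bounds (\Cref{prop: tail_bounds}, via \cite{CGJ18}), fixes a coupling that is $W_1$-optimal up to an arbitrarily small error, splits $\E[(X-N)^2]$ at a truncation level $y \asymp \log(nk)$ --- bounding the bulk by $2y\,\E|X-N|$, which is the exact analogue of your Markov/layer-cake step --- and reduces $q>1$ to $q\le 1$ by the reversal symmetry of \Cref{cor: q_invert}. The only caveat is numerical: replacing the mixed Gaussian-plus-exponential (Bernstein) tails by a pure sub-exponential bound $C_1e^{-c_1 t}$ inflates the constant prefactor and the required truncation level, leaving the final constant near, but not comfortably below, $100$, whereas keeping the Bernstein form as the paper does is what delivers the stated constant.
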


This bound leads to \Cref{CLT: General Coxeter Groups}, using a similar proof to that in \cite{ferayCLT}. The strategy is to consider separately the $W_n^i$ that are "small" and those that are "large." The approach relies on the classification of finite Coxeter groups.

The product of Mallows distributions can be viewed as a "generalized" Mallows distribution on the Coxeter group $W_n$ where there is a parameter for each conjugacy class of generators (as it is not hard to see that two generators are conjugate if and only if they are in the same irreducible component of $W_n$). It is not hard to see that the product of Mallows measures all with the same parameter $q$ is also a Mallows measure. It then follows that the above holds also when $w_n$ is assumed simply to be Mallows distributed over $W_n$ (as a subset of the theorem).

\subsection{Further Directions}

It is natural to ask now if similar central limit theorems exist for some families of infinite Coxeter groups, over which the Mallows measure can be defined. For example, the affine symmetric group have similar combinatorial structures to symmetric groups (just as $B_n$ and $D_n$ do), so there may be great potential for results regarding them. We also wonder whether there exists a joint central limit theorem for $(\des(w), \des(w^{-1}))$, where $w$ is a Mallows distributed element of some Coxeter group. Such a result exists for the family $(A_n)_n$ of groups of permutations, as shown in \cite{He_2022}. He proves this by using a regenerative process related to Mallows permutations. As $B_n$ and $D_n$ are combinatorially structured rather similarly to permutations, we ask if the techniques used by He can be adapted.

\subsection{Outline of Paper}

In \Cref{sec: background}, we give some definitions and introduce the properties of Coxeter groups and Mallows distributions that are important to us. This is mostly just material regarding Coxeter groups. Then, we present some preliminary results in \Cref{sec: prelim} that are useful in later sections. We use Stein's method in \Cref{sec: coupling} and then bound the error terms in \Cref{sec: easy cov,,sec: hard cov,,sec: limit thms} to obtain Theorems \ref{thm: main theorem 1, B_n}, \ref{thm: main theorem 2, D_n} and \ref{thm: Wasserstein 1-distance}. In \Cref{sec: 2-distance}, we use \Cref{thm: Wasserstein 1-distance} to obtain a Wasserstein $2$-distance limit theorem and then use this to prove the main result in \Cref{sec: grand result}. The appendix contains some bounds on some moments of the descent and two-sided descent that are useful in later sections.

\subsection{Acknowledgements}

A special thanks to Jimmy He for being my mentor for this project. He originally came up with the idea of trying to prove such a result and provided critical references to help me get started (including his paper \cite{He_2022}). He also gave many helpful insights and tips, and the methods he used in his previous work \cite{He_2022} played an extremely important role in this paper.

Another thanks to the donors of MIT's Undergraduate Research Opportunities Program Fund for First-Year and Sophomore Students. They provided the funding necessary for the author to carry out this work, which was the direct result of a UROP project. Thanks to Alexei Borodin for being the faculty sponsor for this project.

\section{Background and Definitions} \label{sec: background}

\subsection{Finite Coxeter Groups} \label{subsec: Coxeter}

A $\textit{Coxeter group}$ is a group with presentation
$$
\left\langle r_1, r_2, \dots, r_n | (r_ir_j)^{e_{ij}} = 1 \ \forall i,j \right\rangle
$$
where $e_{ij}$ is $\infty$ or a positive integer greater than $1$ when $i \neq j$ (with $e_{ij} = \infty$ meaning no relation) and $e_{ii} = 1$ for all $i$. The pair $(W,S)$ of a Coxeter group with its generator set $S = \{r_1, r_2, \dots, r_n\}$ is called a $\textit{Coxeter system}$. Coxeter systems can be represented graphically where each generator is given a vertex and there is an edge between two generators $r_i, r_j$ if and only if $e_{ij} > 2$ (which is equivalent to $r_i, r_j$ not commuting). This is called a $\textit{Coxeter graph}$. If $e_{ij} \neq 3$, the edge is labeled with the value of $e_{ij}$ (otherwise, it is assumed to be $3$).

The symmetric group $S_{n+1}$ is an example of a finite Coxeter group and is denoted $A_n$. It's $n$ generators are the transpositions. There exist other infinite families of irreducible finite Coxeter groups, namely the signed permutations, or hyperoctahedral group, $B_n$ and the even signed permutations $D_n$. An element of $B_n$ is an automorphism of $[\pm n] = \{-n, -n+1, \dots, -1, 1, 2, \dots, n\}$ that commutes with negation (that is, $w(-i) = -w(i)$ for all $w \in B_n$). $D_n$ is the subset of $B_n$ that consists of all permutations that have send an even number of positive integers to negative integers. 

The groups $B_n$ and $D_n$ each have generator sets of size $n$ that can be written as $\{s_0, s_1, \dots, s_{n-1}\}$ where $s_i$ denotes the transposition $(i \ i+1) (-i \ -i-1)$. In $B_n$, $s_0$ represents the transposition $(-1 \ 1)$ that switches the values in the middle two indices. In $D_n$, $s_0$ represents the transposition $(-1 \ 2)$ that swaps the values at indices $-1,2$ and the values at indices $-2, 1$. In some of the results that follow, we will consider the arbitrary finite Coxeter group $W$ with arbitrary generator set $\{s_0, s_1, \dots, s_{n-1}\}$. The Coxeter graphs of $S_n = A_{n-1}, B_n, D_n$ are shown below (and are taken from \cite[Section 1.2]{BB05}).

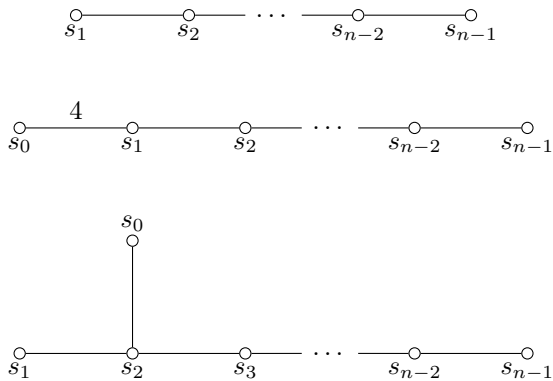
\begin{figure}[h]
    \centering
\begin{tikzpicture}[scale = 1.5]
    \node[draw, circle, inner sep=1.5pt] (s1) at (1.5, 0) {};
    \node[below] at (s1) {$s_1$};
    
    \node[draw, circle, inner sep=1.5pt] (s2) at (2.5, 0) {};
    \node[below] at (s2) {$s_2$};
    
    \node at (3.25, 0) {$\dots$};
    
    \node[draw, circle, inner sep=1.5pt] (sn-2) at (4, 0) {};
    \node[below] at (sn-2) {$s_{n-2}$};
    
    \node[draw, circle, inner sep=1.5pt] (sn-1) at (5, 0) {};
    \node[below] at (sn-1) {$s_{n-1}$};
    
    \draw (s1) -- (s2);
    \draw (s2) -- (3, 0);  
    \draw (3.5, 0) -- (sn-2);  
    \draw (sn-2) -- (sn-1);
    
    \node[draw, circle, inner sep=1.5pt] (s0) at (1, -1) {};
    \node[below] at (s0) {$s_0$};
    
    \node[draw, circle, inner sep=1.5pt] (s1b) at (2, -1) {};
    \node[below] at (s1b) {$s_1$};
    
    \node[draw, circle, inner sep=1.5pt] (s2b) at (3, -1) {};
    \node[below] at (s2b) {$s_2$};
    
    \node at (3.75, -1) {$\dots$};
    
    \node[draw, circle, inner sep=1.5pt] (sn-2b) at (4.5, -1) {};
    \node[below] at (sn-2b) {$s_{n-2}$};
    
    \node[draw, circle, inner sep=1.5pt] (sn-1b) at (5.5, -1) {};
    \node[below] at (sn-1b) {$s_{n-1}$};
    
    \draw (s0) -- (s1b) node[midway, above] {4};
    \draw (s1b) -- (s2b);
    \draw (s2b) -- (3.5, -1);  
    \draw (4, -1) -- (sn-2b);  
    \draw (sn-2b) -- (sn-1b);

    \node[draw, circle, inner sep=1.5pt] (s1c) at (1, -3) {};
    \node[below] at (s1c) {$s_1$};
    
    \node[draw, circle, inner sep=1.5pt] (s2c) at (2, -3) {};
    \node[below] at (s2c) {$s_2$};
    
    \node[draw, circle, inner sep=1.5pt] (s3c) at (3, -3) {};
    \node[below] at (s3c) {$s_3$};
    
    \node at (3.75, -3) {$\dots$};
    
    \node[draw, circle, inner sep=1.5pt] (sn-3c) at (4.5, -3) {};
    \node[below] at (sn-3c) {$s_{n-2}$};
    
    \node[draw, circle, inner sep=1.5pt] (sn-2c) at (5.5, -3) {};
    \node[below] at (sn-2c) {$s_{n-1}$};
    
    \node[draw, circle, inner sep=1.5pt] (s0c) at (2, -2) {};
    \node[above] at (s0c) {$s_0$};
    
    \draw (s0c) -- (s2c);
    \draw (s1c) -- (s2c);
    \draw (s2c) -- (s3c);
    \draw (s3c) -- (3.5, -3);  
    \draw (4, -3) -- (sn-3c);  
    \draw (sn-3c) -- (sn-2c);
\end{tikzpicture}
    \caption{Coxeter graphs of $S_n, B_n, D_n$, respectively}
    \label{Fig: Coxeter graphs}
\end{figure}

A Coxeter group is \emph{irreducible} if it is not the product of two nontrivial Coxeter groups. The following is a classification of the irreducible finite Coxeter groups. Rigorous descriptions of the groups referred to below are given in \cite{BB05}.

\begin{proposition}[{\cite[Appendix A1]{BB05}}]
    All finite irreducible Coxeter groups are contained in the families $\{A_n\}_{n \ge 1}$, $\{B_n\}_{n \ge 2}$, $\{D_n\}_{n \ge 4}$, $\{I_2(m)\}_{m \ge 3}$ ($I_2(m)$ being the dihedral group of order $2m$) save for a finite number of exceptions which are denoted $E_6, E_7, E_8, F_4, G_2, H_3, H_4$.
\end{proposition}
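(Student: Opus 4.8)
The plan is to invoke the standard dictionary between Coxeter systems and their associated symmetric bilinear forms, and then reduce the classification to a finite combinatorial problem about diagrams. First I would attach to the Coxeter system $(W,S)$ with $S=\{r_1,\dots,r_n\}$ the real vector space $V$ with basis $\alpha_1,\dots,\alpha_n$ and the symmetric bilinear form $B$ determined by $B(\alpha_i,\alpha_j)=-\cos(\pi/e_{ij})$, so that $B(\alpha_i,\alpha_i)=1$ and $B(\alpha_i,\alpha_j)\le 0$ for $i\neq j$. Letting each generator $r_i$ act on $V$ by the reflection $v\mapsto v-2B(v,\alpha_i)\alpha_i$ produces a faithful representation of $W$ preserving $B$, the geometric (Tits) representation. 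The crucial analytic input, which I would cite or reprove by an averaging argument, is the finiteness criterion: $W$ is finite if and only if $B$ is positive definite. If $W$ is finite one averages an arbitrary inner product over $W$ to obtain a $W$-invariant positive definite form, and checks by irreducibility of the relevant module that it must agree (block by block) with $B$; conversely, if $B$ is positive definite then $W$ is realized as a discrete subgroup of the compact group $O(V,B)$, forcing $|W|<\infty$.

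Second, I would record that $W$ is irreducible exactly when its Coxeter graph is connected, since a disconnected graph makes $B$ block diagonal and exhibits $W$ as a direct product. Hence the theorem reduces to classifying the connected Coxeter graphs for which $B$ is positive definite. The key structural observation is heredity: deleting vertices restricts $B$ to a coordinate subspace, so every induced subgraph of a positive-definite graph is again positive definite; moreover, replacing the form by one with off-diagonal entries of smaller magnitude (i.e. decreasing edge labels) only preserves positive definiteness. Thus it suffices to identify a list of minimal forbidden diagrams — those whose form is positive semidefinite but degenerate, namely the affine diagrams $\widetilde{A}_n,\widetilde{B}_n,\widetilde{C}_n,\widetilde{D}_n,\widetilde{E}_6,\widetilde{E}_7,\widetilde{E}_8,\widetilde{F}_4,\widetilde{G}_2$, or already indefinite — and to show that any connected graph avoiding all of them lies on the claimed list.

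The combinatorial classification itself is the main body of work, and I expect it to be the principal obstacle. Concretely I would establish, using heredity together with explicit determinant computations of the Gram matrix of $B$ on small diagrams, a sequence of constraints: no vertex has degree exceeding three; a graph containing a degree-three vertex has no edge labeled greater than $3$ and at most one such branch vertex; any edge labeled $\ge 4$ forces the graph to be a path with tightly restricted labels; cycles are excluded because $\widetilde{A}_n$ is degenerate; and the lengths of the branches emanating from a branch vertex are bounded by ruling out $\widetilde{D}_n$ and $\widetilde{E}_n$. Tracking these restrictions to their conclusion leaves precisely the straight-line diagrams $A_n$, the labeled paths $B_n$, $F_4$, $G_2=I_2(6)$, $H_3$, $H_4$, and $I_2(m)$, together with the branched diagrams $D_n$, $E_6$, $E_7$, $E_8$. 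The delicate point throughout is verifying the sign of each relevant determinant; a clean way to organize this is to compute the determinants of path and star diagrams by a transfer-matrix recursion, and to exploit the fact that a connected positive-definite diagram has strictly positive determinant while each of its minimal degenerate extensions has determinant zero, which both terminates the case analysis and pins down exactly the exceptional graphs.
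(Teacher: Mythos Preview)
Your sketch is a correct outline of the standard classification argument (geometric representation, the finiteness criterion $W$ finite $\iff$ $B$ positive definite, heredity of positive definiteness under taking subdiagrams, and the elimination via the affine forbidden subgraphs), and it matches the treatment one finds in Humphreys or Bourbaki. There is nothing mathematically missing at the level of a plan; the only labor left is the determinant bookkeeping you flag.

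That said, there is no proof in the paper to compare against: the proposition is stated purely as a citation to \cite[Appendix A1]{BB05} and is used as a black box. The paper neither reproves the classification nor sketches it; it simply invokes the result to know that every finite irreducible factor is one of $A_n$, $B_n$, $D_n$, $I_2(m)$, or an exceptional type. So your proposal goes well beyond what the paper does, supplying an actual argument where the paper just quotes the literature.
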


Any element $w$ of a Coxeter group can be written as a product of generators. The $\textit{length}$ of $w$, denoted $\ell(w)$, is the minimal number of generators among all such products. A \emph{reduced word} or \emph{reduced expression} for $w$ is a product of $\ell(w)$ generators that equals $w$. We say that the element $w \in W$ has a \emph{descent} at a generator $s_i$ if $\ell(ws_i) < \ell(w)$. From general Coxeter theory (see \cite[Proposition 1.4.2]{BB05}), $\ell(ws_i) = \ell(w) \pm 1$, so either $w$ has a descent at $s_i$ or $ws_i$ does. 

For a $w \in B_n$, a pair of indices $(i,j)$ is called a \emph{B-inversion} if $1 \le i<j \le n$ and $w(i) > w(j)$ or $1 \le -i \le j \le n$ and $w(i) > w(j)$. It is then true that $\ell(w)$ counts the number of B-inversions of $w$ (see \cite[Proposition 8.1.1]{BB05}). In other words, 
$$
\ell(w) = |\{(i,j): 1\le i < j \le n, w(i) > w(j)\}| + |\{(i,j):-n \le -j \le i \le -1, w(i) > w(j)\}|.
$$
Note that there are potentially up to $n^2$ such pairs $(i,j)$. Similarly, for $w \in D_n$,
$$
\ell(w) = |\{(i,j) \in [n]^2: i < j, w(i) > w(j)\}| + |\{(i,j):-i \in [n], j \in [n], -j < i, w(i) > w(j)\}|.
$$
There are potentially up to $n(n-1)$ such pairs $(i,j)$, and we call such a pair a D-inversion. We denote by $\des_i(w)$ the indicator function of the event that $w$ has a descent at $s_i$. The $\textit{(right) descent statistic}$, of $w$ is $\des(w) = \sum\limits_i \des_i(w)$. The $\textit{two-sided descent statistic}$ is given by $\des(w) + \des(w^{-1})$. 

\subsection{Parabolic Subgroups}

For an arbitrary Coxeter system $W$ with generator set $T$ and any subset $S$ of $T$, we denote the subgroup of $W$ generated by $S$ by $W_S$. This is called a $\textit{parabolic subgroup}$. We may also define the subset $W^S$ of $W$ that consists exactly of all of the elements of $W$ that do not have a right descent at any of the generators in $S$. We have the following result from \cite{BB05}.

\begin{lemma}[{\cite[Proposition 2.4.4]{BB05}}] \label{lem: parabolic decomposition}
For any Coxeter group $W$ and subset $S$ of its generators, we have:
\begin{enumerate}
    \item Every $w \in W$ has a unique decomposition $w = w^S \cdot w_S$ such that $w^S \in W^S$ and $w_S \in W_S$.
    \item Given this factorization, $\ell(w) = \ell(w^S) + \ell(w_S)$.
\end{enumerate}
\end{lemma}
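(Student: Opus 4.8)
The plan is to identify $W^S$ with the set of minimal-length representatives of the right cosets $wW_S$ and to deduce both claims from a single length-additivity lemma. First I would dispose of existence. Given $w$, choose an element $u$ of minimal length in the coset $wW_S$ (this exists since lengths are nonnegative integers), and set $w^S = u$ and $w_S = u^{-1}w \in W_S$. If $u$ had a right descent at some $s \in S$, then $us \in wW_S$ would be strictly shorter, contradicting minimality; hence $u$ has no right descent in $S$, so $u \in W^S$, and we have produced a decomposition of the required shape. Everything then rests on the following.

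\emph{Key Lemma (length additivity).} For $u \in W^S$ and $v \in W_S$ one has $\ell(uv) = \ell(u) + \ell(v)$. I would prove this by induction on $\ell(v)$, the case $v = e$ being trivial. For $v \neq e$, pick $s \in S$ with $\ell(vs) < \ell(v)$ (such a generator exists because every reduced word for $v$ uses only letters of $S$), and write $v = v''s$ with $v'' = vs$ and $\ell(v'') = \ell(v) - 1$. By the inductive hypothesis $\ell(uv'') = \ell(u) + \ell(v'')$, so concatenating reduced words for $u$ and $v''$ gives a reduced word $uv'' = a_1 \cdots a_p\, b_1 \cdots b_m$ with the $b_j \in S$ spelling $v''$. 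The only thing to exclude is that appending $s$ shortens this word, i.e.\ that $uv''$ has a right descent at $s$. If it did, the exchange condition would let us delete one letter to obtain $uv$. Deleting some $b_j$ would force $v$ to equal an element of length at most $\ell(v) - 2$, which is impossible; deleting some $a_i$ would give $uv = u'v''$ with $\ell(u') = \ell(u) - 1$, equivalently $ut = u'$ with $\ell(ut) < \ell(u)$ for the reflection $t = v''s(v'')^{-1} \in W_S$.

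The main obstacle is ruling out this last possibility, for which I would invoke the sub-lemma: if $u \in W^S$, then $\ell(ut) > \ell(u)$ for every reflection $t$ lying in $W_S$. I expect to prove this in the geometric (reflection) representation. Writing $\Phi$ for the root system of $W$, $\Phi_S \subseteq \Phi$ for the sub-root-system attached to $S$, $\Phi^+$ for the positive roots, and $\Delta_S$ for the simple roots of $\Phi_S$, the standard length criterion for simple reflections says that $u \in W^S$ (that is, $\ell(us) > \ell(u)$ for all $s \in S$) is equivalent to $u(\beta) \in \Phi^+$ for every $\beta \in \Delta_S$. Since every $\alpha \in \Phi_S^+$ is a nonnegative integer combination $\alpha = \sum_{\beta \in \Delta_S} c_\beta \beta$, linearity gives $u(\alpha) = \sum_\beta c_\beta\, u(\beta)$, a nonnegative combination of positive roots; as $u(\alpha)$ is itself a root, it must be positive. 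Thus $u$ carries all of $\Phi_S^+$ into $\Phi^+$, and the positive-root form of the length criterion yields $\ell(ut_\alpha) > \ell(u)$ for every reflection $t_\alpha$ with $\alpha \in \Phi_S^+$. This contradicts $\ell(ut) < \ell(u)$ above, completing the induction and hence the Key Lemma.

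With additivity established, part (2) is immediate from the construction $w = w^S w_S$. For part (1), additivity shows that any $z \in W^S \cap wW_S$ is of minimal length in the coset, since $\ell(u') = \ell(z) + \ell(z^{-1}u')$ for every $u'$ in the coset; a second such $z'$ would satisfy both $\ell(z') \ge \ell(z)$ and $\ell(z) \ge \ell(z')$, forcing $\ell(z^{-1}z') = 0$ and $z = z'$. Hence $w^S$ is unique and then $w_S = (w^S)^{-1}w$ is too. The only genuine work is the sub-lemma together with the exchange-condition bookkeeping; alternatively, once the sub-lemma is in hand, additivity can be obtained directly by partitioning $\Phi^+$ into $\Phi_S^+$ and $\Phi^+ \setminus \Phi_S^+$ and counting sign changes of roots under $uv$, using that $W_S$ permutes $\Phi^+ \setminus \Phi_S^+$.
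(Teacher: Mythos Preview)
The paper does not prove this lemma at all; it simply quotes it as \cite[Proposition 2.4.4]{BB05} and uses it as a black box. So there is no in-paper argument to compare against.

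That said, your proof is correct and is essentially the standard argument one finds in the cited reference. The identification of $W^S$ with minimal-length right-coset representatives, the inductive proof of length additivity via the exchange condition, and the geometric sub-lemma that $u \in W^S$ sends $\Phi_S^+$ into $\Phi^+$ (hence $\ell(ut) > \ell(u)$ for every reflection $t \in W_S$) are exactly the ingredients used in Bj\"orner--Brenti. One tiny quibble: when you delete some $a_i$ you assert $\ell(u') = \ell(u) - 1$, but the deleted word need not be reduced; all you actually get (and all you need) is $\ell(ut) \le \ell(u) - 1 < \ell(u)$, which already contradicts the sub-lemma. With that minor rewording the argument is complete.
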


\begin{corollary} \label{Cor: right mult parabolic}
Given $W$ and a subset $S$ of its generators, for any $w \in W$ and $w_0 \in W_S$, $w_S = (ww_0)_S$ and $(ww_0)^S = w^S$
\end{corollary}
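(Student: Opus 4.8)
The plan is to deduce both identities from the uniqueness half of \Cref{lem: parabolic decomposition}. The strategy is to exhibit an explicit decomposition of $ww_0$ of the required form—first factor in $W^S$, second in $W_S$—and then appeal to uniqueness to read off its two pieces.

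First I would invoke \Cref{lem: parabolic decomposition} to write $w = w^S w_S$ with $w^S \in W^S$ and $w_S \in W_S$. Right-multiplying by $w_0$ gives $ww_0 = w^S(w_S w_0)$. Since $W_S$ is the subgroup generated by $S$ and both $w_S$ and $w_0$ lie in $W_S$, closure under multiplication yields $w_S w_0 \in W_S$; meanwhile the factor $w^S$ is untouched and still lies in $W^S$. Thus $ww_0 = w^S \cdot (w_S w_0)$ is a decomposition of $ww_0$ whose first factor belongs to $W^S$ and whose second factor belongs to $W_S$.

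By the uniqueness asserted in part (1) of \Cref{lem: parabolic decomposition}, this decomposition must agree with the canonical one for $ww_0$, so $(ww_0)^S = w^S$ and $(ww_0)_S = w_S w_0$. (The corollary as typeset reads $w_S = (ww_0)_S$, which should say $(ww_0)_S = w_S w_0$; the genuinely useful content, used later when analyzing right multiplication, is that the $W^S$-part is preserved.)

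There is essentially no obstacle: the only verification is $w_S w_0 \in W_S$, immediate from $W_S$ being a subgroup, and everything else is a direct application of uniqueness. Part (2) of the lemma is not needed for the argument, though it provides the consistency check $\ell(ww_0) = \ell(w^S) + \ell(w_S w_0)$.
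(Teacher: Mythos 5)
Your proof is correct and is essentially identical to the paper's own argument: both write $ww_0 = w^S(w_S w_0)$, observe that $w^S \in W^S$ and $w_S w_0 \in W_S$ by closure, and invoke uniqueness in \Cref{lem: parabolic decomposition}. Your remark that the stated identity should read $(ww_0)_S = w_S w_0$ rather than $w_S = (ww_0)_S$ is also on target, since that is exactly the form in which the paper later applies the corollary (in the proof of \Cref{First_indep_result}).
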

\begin{proof}
We have that $ws_0 = w^Sw_Sw_0$. It is clear that $w^S \in W^S$ and $w_sw_0 \in W_S$, so uniqueness in Lemma \ref{lem: parabolic decomposition} implies the result.
\end{proof}

Now, we call $S$ a \textit{connected} subset of $T$ if its elements form a connected subgraph in the Coxeter graph of $W$. Thus, the connected subsets of the generators of $B_n$ are of the form $\{s_i| i \in I\}$ for a closed interval $I \subseteq [0,n-1]$. The connected subsets of the generators of $D_n$ are of the forms
\begin{itemize}
    \item $\{s_i| a \le i \le b\}$ for fixed $1 \le a \le b \le n-1$, 
    \item $\{s_0\} \cup \{s_i| 2 \le i \le b \le n\}$ for fixed $b \le n-1$, 
    \item or $\{s_0, s_1, s_2\}$. 
\end{itemize}
For a subset $S$ of the generators of $B_n$ and $D_n$, let $\overline{S}$ be the set of indices associated to $S$. That is, if $s_i \in S$ for $i \ge 1$, $i, i+1 \in \overline{S}$. If $s_0 \in S$, $-1, 1 \in \overline{S}$ in the case of $B_n$ and $-2, -1, 1, 2 \in \overline{S}$ in the case of $D_n$. Moreover, if in $S$, $s_k$ and $s_0$ are in the same connected component, then $-k, -k-1 \in \overline{S}$. It is not hard to check that in both groups $\overline{S}$ is either a set of positive consecutive integers or of the form $[\pm i]$ for all connected subsets $S$ of $T$. For a general $S$, we can partition $S$ into its \textit{connected components} given by the connected components of the subgraph corresponding to $S$.

Lemma \ref{lem: parabolic decomposition} can be interpreted in the cases $W = B_n, D_n$ as follows: for an element $w$ of $W$, $w_S$ gives the order of the values of $w$ within $\overline{S'}$ for each connected component $S'$ of $S$, while $w^S$ gives the assignment of values to the indices associated to the connected components. This is because switching values within the indices associated to a connected component corresponds to multiplying on the right by generators in $S$ and this simply changes $w_S$.

\subsection{The Mallows Distribution} \label{subsection: Mallows}

The \emph{Mallows measure} $\mu_q^W$ is a one-paramater family of probability measures on a finite Coxeter group $W$ indexed by $q \in (0, \infty)$. A random element $w \in W$ is \emph{Mallows distributed} with parameter $q$ if it is distributed according to $\mu_q^W$. We have that
$$
\mu_q^W(w_0) = \frac{q^{\ell(w_0)}}{Z_W(q)}
$$
for all $w_0 \in W$, where $Z_W(q)$ is a normalization constant. It is not hard to see that $Z_W$ is the evaluation of a generating function for the length of a random Mallows distributed element of $W$. By Theorem 2.3 and the bottom of page $6$ (where the degrees are explicitly given) in \cite{KS20}, we have the following, where $[m]_q = (q^m - 1)/(q-1)$ for any positive integer $m$.
$$
Z_{A_n} = [n]_q! = [n]_q[n-1]_q \dots [1]_q, 
$$
$$
Z_{B_n} = [2n]_q!! = [2n]_q[2n-2]_q \dots [2]_q, 
$$
$$
Z_{D_n} = [n]_q[2n-2]_q!! = [n]_q[2n-2]_q \dots [2]_q.
$$
When $q = 1$, the Mallows distribution reduces to the uniform distribution. If $W$ is the product of finite Coxeter group, the Mallows distribution becomes the product of independent Mallows distributions on each factor.

\section{Preliminary Results on Mallows-Distributed Coxeter Groups} \label{sec: prelim}

In this section we state and proof some computations and probabilisitic results about the Mallows measure on Coxeter Groups and that will be useful later on in bounding the error term in the size-bias coupling.

\subsection{Reversal Symmetry}

We first give the following result which allows us to simply work with the case $q \le 1$. For any signed permutation $w \in B_n$ or $w \in D_n$, let $-w$ denote the permutation obtained by reversing the values of $w$, i.e. setting $(-w)(i) = -w(i)$ for all indices $i$.

\begin{lemma}
\label{lem: q_invert}
For $q \in (0, \infty)$, $w \sim \mu_q^W$ is identically distributed to $-w'$ where $w' \sim \mu_{1/q}^W$ for $W = B_n, D_n$.
\end{lemma}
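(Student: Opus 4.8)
The plan is to show that the map $w \mapsto -w$ is a bijection on $W$ (for $W = B_n, D_n$) that relates the length statistic under parameter $q$ to the length statistic under parameter $1/q$. Concretely, I would establish the single key identity
\begin{equation*}
\ell(-w) = N - \ell(w),
\end{equation*}
where $N$ is the total number of possible inversions, i.e.\ $N = n^2$ for $B_n$ and $N = n(n-1)$ for $D_n$ (the maximum values of $\ell$ recorded in the length formulas of Section~\ref{subsec: Coxeter}). Once this is in hand, the lemma follows by a direct computation: for any fixed $w_0 \in W$,
\begin{equation*}
\mu_q^W(w_0) = \frac{q^{\ell(w_0)}}{Z_W(q)}
= \frac{q^{N}\,(1/q)^{N - \ell(w_0)}}{Z_W(q)}
= \frac{(1/q)^{\ell(-w_0)}}{q^{-N}Z_W(q)},
\end{equation*}
so $\mu_q^W(w_0)$ is proportional to $(1/q)^{\ell(-w_0)} = \mu_{1/q}^W(-w_0)\cdot Z_W(1/q)$; matching normalization constants (both measures are probability measures, so the proportionality constant must be $1$) gives $\mu_q^W(w_0) = \mu_{1/q}^W(-w_0)$, which is exactly the statement that $w \sim \mu_q^W$ has the same law as $-w'$ with $w' \sim \mu_{1/q}^W$.

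The main work is therefore proving the inversion-complementation identity $\ell(-w) = N - \ell(w)$. I would argue this combinatorially from the B- and D-inversion formulas. The idea is that reversing all values, $(-w)(i) = -w(i)$, flips every strict inequality $w(i) > w(j)$ into $-w(i) < -w(j)$, so each pair of indices that was an inversion for $w$ fails to be one for $-w$, and vice versa. For $B_n$, the $n^2$ candidate pairs split into the two index families appearing in the length formula, and one checks that negating all values interchanges ``inverted'' and ``non-inverted'' status on each family while keeping the two families stable as sets; since there are no ties (all values are distinct and nonzero), exactly one of $w, -w$ is inverted on each candidate pair, giving $\ell(-w) = n^2 - \ell(w)$. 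The D-case is analogous but requires care: $-w$ must be checked to remain in $D_n$ (negating all $n$ values changes the number of sign changes by $n \bmod 2$, but the relevant parity is preserved because $D_n$-membership is about an \emph{even} number of negatives, and negating all coordinates of an even-signed permutation yields another even-signed permutation), and the candidate-pair count is $n(n-1)$ rather than $n^2$.

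The step I expect to be the genuine obstacle is verifying that the negation map acts cleanly on the D-inversion index sets. The B-inversion families are symmetric enough that the complementation is transparent, but the D-inversion formula has the asymmetric constraint $-j < i$ in its second index family, and I would need to confirm that negation maps this family bijectively onto the appropriate complementary set of candidate pairs without over- or under-counting, and that boundary or diagonal cases contribute no ties. Once I have confirmed $\ell(-w) + \ell(w) = N$ holds identically on D-inversions, the remainder of the argument is the routine normalization computation above, and the same identity simultaneously settles the $B_n$ case. An alternative, cleaner route would be to identify $-w = w_0 w$ where $w_0$ is the longest element of $W$ (for $B_n$ this is the all-negation element, $w_0(i) = -i$); then $\ell(w_0 w) = \ell(w_0) - \ell(w) = N - \ell(w)$ follows from the standard Coxeter-theoretic fact $\ell(w_0 w) = \ell(w_0) - \ell(w)$, avoiding the explicit inversion bookkeeping entirely. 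I would likely present this longest-element argument as the primary proof and relegate the inversion count to a remark, since it makes the $B_n$ case immediate and isolates the only subtlety—namely that the longest element of $D_n$ is the all-negation map precisely when $n$ is even, requiring a small separate argument for odd $n$.
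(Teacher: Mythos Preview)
Your proposal is correct and follows essentially the same approach as the paper: both arguments establish the complementation identity $\ell(w) + \ell(-w) = N$ (with $N = n^2$ for $B_n$ and $N = n(n-1)$ for $D_n$) by observing that negating all values flips each strict inequality and hence swaps inverted and non-inverted pairs, and then deduce the equality of measures by matching normalizations. Your additional remarks about the longest-element reformulation and the parity issue for $D_n$ with odd $n$ are in fact more careful than the paper's own proof, which simply asserts the inversion complementation without further comment.
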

\begin{proof}
The probability that $w = w_0$ for some fixed $w_0 \in W$ is proportional to $q^{\ell(w_0)}$. The probability that $-w' = w_0$ is proportional to $(1/q)^{\ell(-w_0)}$. However, note that $(i,j)$ is an inversion in $w_0$ if and only if $w_0(i) > w_0(j)$, which is equivalent to $(-w_0)(i) < (-w_0)(j)$, i.e. $(i,j)$ not being an inversion in $-w_0$. Thus, $\ell(w_0) + \ell(-w_0) = n^2$ in the $B_n$ case while $\ell(w_0) + \ell(-w_0) = n(n-1)$ in the $D_n$ case. In any case, we have that $q^{\ell(w_0)}$ is proportional to $(1/q)^{\ell(-w_0)} = q^{-\ell(-w_0)}$. It follows that
$$
\Prob(w = w_0) = \Prob(-w' = w_0),
$$
so the conclusion follows.
\end{proof}

\begin{corollary} \label{cor: q_invert}
For $q \in (0, \infty)$ and $W = B_n, D_n$, the distribution of $\des(w) + \des(w^{-1})$ where $w \sim \mu_q^W$ is identical to the distribution of $2n - \des(w') - \des((w')^{-1})$ where $w' \sim \mu_{1/q}^W$.
\end{corollary}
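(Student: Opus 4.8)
The plan is to deduce Corollary~\ref{cor: q_invert} directly from Lemma~\ref{lem: q_invert} by tracking how the reversal operation $w \mapsto -w$ interacts with the descent and inverse-descent statistics. By Lemma~\ref{lem: q_invert}, it suffices to show that for any fixed $w_0 \in W$ (with $W = B_n$ or $D_n$), the two-sided descent of $-w_0$ equals $2n - \des(w_0) - \des(w_0^{-1})$; then applying this to $w' \sim \mu_{1/q}^W$ and using that $w = -w'$ in distribution gives the claim. So I would reduce everything to the deterministic identity
\[
\des(-w_0) + \des((-w_0)^{-1}) = 2n - \des(w_0) - \des(w_0^{-1}).
\]

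The key step is to understand descents of $-w_0$. First I would establish $\des(-w_0) = n - \des(w_0)$. The natural approach is via the inversion characterization already recorded in the excerpt: recall that $w_0$ has a descent at a generator $s_i$ iff $\ell(w_0 s_i) < \ell(w_0)$, and that $\ell$ counts ($B$- or $D$-) inversions. Since reversal sends each inversion to a non-inversion, I expect that for each of the $n$ generators $s_i$, exactly one of $w_0$ and $-w_0$ has a descent at $s_i$, which gives $\des_i(-w_0) = 1 - \des_i(w_0)$ and hence $\des(-w_0) = n - \des(w_0)$ upon summing. Concretely, one checks that right-multiplication by $s_i$ reverses the relative order of the two values sitting in the index-pair that $s_i$ governs (for $i \ge 1$, the pair of adjacent positions; for $s_0$, the special middle or crossed positions), and that applying $-w_0$ flips each such comparison, so a descent of $w_0$ at $s_i$ becomes an ascent of $-w_0$ at $s_i$ and vice versa.

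Next I would handle the inverse term. The clean observation is that reversal commutes appropriately with inversion: since $(-w_0)(i) = -w_0(i)$, the inverse of $-w_0$ is related to $w_0^{-1}$ again by a reversal, so the identity $\des((-w_0)^{-1}) = n - \des(w_0^{-1})$ follows by the same argument applied to $w_0^{-1}$ in place of $w_0$. Specifically $(-w_0)^{-1}$ should equal $-(w_0^{-1})$ (one verifies $(-w_0)^{-1}(j) = -\,(w_0^{-1})(j)$ directly from the definition of $-w$ and the commutation with negation), so $\des((-w_0)^{-1}) = \des(-(w_0^{-1})) = n - \des(w_0^{-1})$ by the first step. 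Adding the two relations yields the deterministic identity, and combining with Lemma~\ref{lem: q_invert} finishes the proof.

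The main obstacle I anticipate is the careful verification that $\des_i(-w_0) = 1 - \des_i(w_0)$ holds for \emph{every} generator, especially for the exceptional generator $s_0$ in each of $B_n$ and $D_n$, where the associated index-pairs are the crossed pairs $(-1,1)$ (respectively $(-1,2)$ and $(-2,1)$) rather than ordinary adjacent positions. For $i \ge 1$ the claim reduces to the transparent fact that $w_0$ has a descent at $s_i$ iff $w_0(i) > w_0(i+1)$, which reversal plainly flips; the $s_0$ case requires unwinding the precise descent condition at $s_0$ (in terms of the sign of $w_0(1)$ for $B_n$, and of $w_0(1) + w_0(2)$ for $D_n$) and confirming the flip there too. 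Getting the identity $(-w_0)^{-1} = -(w_0^{-1})$ exactly right, together with the $s_0$ bookkeeping, is where I would spend the most care; the rest is routine summation.
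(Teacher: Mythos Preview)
Your proposal is correct and follows essentially the same approach as the paper: reduce via Lemma~\ref{lem: q_invert} to the deterministic identity, establish $\des(-w_0)=n-\des(w_0)$ by the descent-flipping argument, verify $(-w_0)^{-1}=-(w_0^{-1})$, and add. The paper states the descent flip and the inverse-reversal commutation more tersely, but the structure and key steps are identical.
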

\begin{proof}
We first show that $-(w')^{-1} = (-w')^{-1}$. To see this, simply note that $(-(w')^{-1})(i) = -(w')^{-1}(i) = (w')^{-1}(-i) = (-w')^{-1}(i)$ since the index that $-w'$ sends to $i$ and the index that $w'$ sends to $-i$ are both the negative of the index that $w'$ sends to $i$. Now, we also have that $w'$ has a descent at $i$ if and only if $-w'$ does not. Thus, $\des(-w') = n - \des(w')$, and, similarly, $\des((-w')^{-1}) = \des(-(w')^{-1}) = n - \des((w')^{-1})$. Thus, $\des(-w') + \des((-w')^{-1}) = 2n - \des(w') - \des((w')^{-1})$ and the result follows from Lemma \ref{lem: q_invert}.
\end{proof}

\subsection{Independence Results}

For each generator $s_i$ of $W$, let $T_i$ be the set of generators consisting of $s_i$ and its neighbors (i.e. the generators it does not commute with). We then have the following results.

\begin{lemma} \label{First_indep_result}
Let $W$ be a finite Coxeter system generated by $s_0, \dots, s_{n-1}$. Let $S \subseteq \{s_0, \dots, s_{n-1}\}$ and let $A$ be a subset of $W$ such that $Aw = A$ for all $w \in W_S$. If $w \in W$ is Mallows distributed, then $w_S$ is Mallows distributed in $W_S$ with the same parameter and $w_S$ and the event $A$ are independent.
\end{lemma}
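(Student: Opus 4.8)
The plan is to exploit the product structure on $W$ coming from the parabolic decomposition of \Cref{lem: parabolic decomposition}, which identifies $W$ with $W^S \times W_S$ via $w \mapsto (w^S, w_S)$ and makes the length additive, $\ell(w) = \ell(w^S) + \ell(w_S)$. Writing $Z^S := \sum_{v \in W^S} q^{\ell(v)}$ and recalling the standard fact that the length of an element of $W_S$ agrees with its intrinsic length inside the Coxeter system $(W_S, S)$, this additivity immediately factors the partition function:
$$
Z_W(q) = \sum_{v \in W^S}\sum_{u \in W_S} q^{\ell(v) + \ell(u)} = Z^S \cdot Z_{W_S}(q).
$$

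First I would show that the two coordinates $w^S$ and $w_S$ are independent and Mallows distributed on their respective factors. For fixed $v \in W^S$ and $u \in W_S$, the pair $(v, u)$ is exactly the parabolic decomposition of the product $vu$, so
$$
\Prob(w^S = v,\ w_S = u) = \frac{q^{\ell(vu)}}{Z_W(q)} = \frac{q^{\ell(v)}}{Z^S}\cdot \frac{q^{\ell(u)}}{Z_{W_S}(q)}.
$$
Summing out one coordinate yields $\Prob(w_S = u) = q^{\ell(u)}/Z_{W_S}(q)$ and $\Prob(w^S = v) = q^{\ell(v)}/Z^S$, so the joint law factors as a product of marginals. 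This already gives the first claim: $w_S \sim \mu_q^{W_S}$, i.e.\ $w_S$ is Mallows distributed on $W_S$ with the same parameter $q$.

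Next I would argue that the event $A$ is determined by the coordinate $w^S$ alone. The hypothesis $Aw = A$ for all $w \in W_S$ says precisely that $A$ is invariant under right multiplication by $W_S$. Applying this to $w_S^{-1} \in W_S$ (and to $w_S$ for the reverse implication), and writing $w = w^S w_S$, one obtains the equivalence $w \in A \iff w^S \in A$. Hence $\{w \in A\} = \{w^S \in A\}$ is a $\sigma(w^S)$-measurable event. Since $w_S$ is independent of $w^S$ by the previous step, it is independent of any event determined by $w^S$, and in particular independent of $A$, which completes the proof.

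The only genuinely delicate point, and the step I would be most careful about, is the translation of the algebraic invariance $Aw = A$ into the measurability statement that $A$ depends only on $w^S$; everything else is the bookkeeping of the parabolic factorization and the resulting factorization of $q^{\ell}$ into a $W^S$-part and a $W_S$-part. I would also take a moment to record explicitly that the length function restricted to $W_S$ coincides with the length function of $(W_S, S)$, so that the normalization $Z_{W_S}(q)$ appearing above is the correct one and ``$w_S$ Mallows distributed with the same parameter'' is literally the measure $\mu_q^{W_S}$.
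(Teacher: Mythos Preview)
Your proof is correct. Both arguments rest on the same ingredient, the additive length formula $\ell(w)=\ell(w^S)+\ell(w_S)$ from \Cref{lem: parabolic decomposition}, but you organize it differently from the paper. The paper argues directly at the level of the event $A$: it uses the bijection $w\mapsto ww_0$ together with $Aw_0=A$ and \Cref{Cor: right mult parabolic} to show $\Prob(\{w_S=w_0\}\cap A)=q^{\ell(w_0)}\Prob(\{w_S=e\}\cap A)$, from which both claims follow by normalization. You instead first prove the stronger structural fact that under $\mu_q^W$ the coordinates $w^S$ and $w_S$ are independent with $w_S\sim\mu_q^{W_S}$, and then observe that the invariance hypothesis makes $\{w\in A\}$ a $\sigma(w^S)$-measurable event. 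Your route is slightly more conceptual and yields a reusable intermediate statement (the product structure of the Mallows measure along the parabolic decomposition); the paper's route is a touch more direct and avoids having to name $Z^S$ or the marginal on $W^S$. Either way the substance is the same.
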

\begin{proof}
For any $w_0 \in W_S$, note that $(ww_0)_S = w_Sw_0$ for all $w \in W$ by \Cref{Cor: right mult parabolic}. Then,
$$
\mathbb{P}(\{w_S = w_0\} \cap A) = \mathbb{P}((\{w_S = e\} \cap A)w_0)
$$
since $Aw_0 = A$. Now, we can decompose $w = w^Sw_S$ so that $\ell(w) = \ell(w^S) + \ell(w_S)$. Thus, if $w_S = e$, then $\ell(ww_0) = \ell(w^S) + \ell(w_0)$ since $(ww_0)_S = w_Sw_0$. It follows that
$$
\mathbb{P}((\{w_S=e\} \cap A)w_0) = q^{\ell(w_0)}\mathbb{P}(\{w_S=e\} \cap A)
$$
$$
\implies \mathbb{P}(\{w_S = w_0\} \cap A) = q^{\ell(w_0)}\mathbb{P}(\{w_S=e\} \cap A).
$$
Now, it is clear that $Ww = W$ for all $w \in W$, so we can take $A = W$ and obtain that $w_S$ is, in fact, Mallows distributed in $W_S$. Moreover, for fixed $A$, we have that $\mathbb{P}(\{w_S = w_0\} \cap A)$ is directly proportional to $q^{\ell(w_0)}$, which is directly proportional to $\mathbb{P}(w_S = w_0)$. Since $\sum_{w_0 \in W_S}\mathbb{P}(\{w_S = w_0\} \cap A) = \mathbb{P}(A)$, it follows that
$$
\mathbb{P}(\{w_S = w_0\} \cap A) = \mathbb{P}(w_S = w_0) \mathbb{P}(A)
$$
yielding independence of $w_S$ and $A$.
\end{proof}

\begin{lemma} \label{Lemma: Descents_invariant_commute}
If $s_i$ and $s_j$ commute (where $i \neq j$), then $\mathrm{des}_j(w) = \mathrm{des}_j(ws_i)$.
\end{lemma}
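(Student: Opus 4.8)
The plan is to reduce the statement to a computation inside the rank-two parabolic subgroup $W_S$ generated by $S=\{s_i,s_j\}$, where everything can be checked by hand. Since $s_i$ and $s_j$ commute, we have $(s_is_j)^2=e$, so $W_S=\{e,s_i,s_j,s_is_j\}\cong(\mathbb{Z}/2)^2$, and its four elements have lengths $0,1,1,2$ respectively. The whole argument rests on the observation that a right descent of $w$ at a generator of $S$ is detected already by the parabolic factor $w_S$.

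First I would establish this reduction. For any $s\in S$, apply \Cref{Cor: right mult parabolic} with $w_0=s\in W_S$ to get $(ws)_S=w_S s$ and $(ws)^S=w^S$; then part (2) of \Cref{lem: parabolic decomposition} gives $\ell(ws)-\ell(w)=\ell(w_S s)-\ell(w_S)$. Hence $\des_s(w)=1$ if and only if $\ell(w_S s)<\ell(w_S)$, i.e. $\des_s(w)=\des_s(w_S)$. Applying this with $s=s_j$ yields $\des_j(w)=\des_j(w_S)$. Applying it instead to the element $ws_i$, and using \Cref{Cor: right mult parabolic} once more (with $w_0=s_i$) to write $(ws_i)_S=w_S s_i$, gives $\des_j(ws_i)=\des_j((ws_i)_S)=\des_j(w_S s_i)$. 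The lemma therefore reduces to the single identity $\des_j(u)=\des_j(us_i)$ for every $u\in W_S$.

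Finally I would verify this identity directly in the four-element group $W_S$. Right multiplication by $s_i$ pairs its elements as $e\leftrightarrow s_i$ and $s_j\leftrightarrow s_is_j$, so it suffices to check the two pairs. Using the lengths $0,1,1,2$, one finds $\des_j(e)=\des_j(s_i)=0$ (because $\ell(s_j)>\ell(e)$ and $\ell(s_is_j)>\ell(s_i)$) and $\des_j(s_j)=\des_j(s_is_j)=1$ (because $\ell(s_js_j)=\ell(e)<\ell(s_j)$ and $\ell(s_is_js_j)=\ell(s_i)<\ell(s_is_j)$). Thus the value of $\des_j$ is constant on each pair, which completes the proof.

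I do not expect a genuine obstacle here: all of the content lives in the reduction step, and the only points to handle with care are invoking \Cref{Cor: right mult parabolic} with the correct $w_0$ and confirming that commutativity forces $W_S\cong(\mathbb{Z}/2)^2$ with the stated lengths. As a sanity check one can note the root-theoretic shortcut: a right descent of $x$ at $s_j$ is equivalent to $x$ sending the simple root $\alpha_j$ to a negative root, and commutativity of $s_i,s_j$ gives $s_i\alpha_j=\alpha_j$, so $ws_i$ and $w$ send $\alpha_j$ to the same root; since the paper works combinatorially rather than with root systems, I would present the parabolic argument above as the actual proof.
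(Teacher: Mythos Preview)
Your proof is correct and takes a genuinely different route from the paper. The paper argues directly with reduced words: assuming $\des_j(w)=1$, it takes a reduced expression ending in $s_j$, commutes $s_i$ past $s_j$, and then uses the Strong Exchange Property to rule out the possibility that the resulting word for $ws_i$ fails to exhibit a descent at $s_j$. Your argument instead pushes everything into the rank-two parabolic $W_{\{s_i,s_j\}}\cong(\mathbb{Z}/2)^2$ via \Cref{lem: parabolic decomposition} and \Cref{Cor: right mult parabolic}, after which the claim is a four-element check. Your approach is cleaner in that it avoids any case analysis with exchange conditions; the paper's approach is more self-contained in that it does not invoke the parabolic machinery. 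It is worth noting that the paper itself uses exactly your reduction (that $\des_j(w)$ depends only on $w_S$ for $s_j\in S$) in the proof of the very next lemma, \Cref{Lemma: Difference is parabolic}, so your method is entirely in the spirit of the surrounding arguments.
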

\begin{proof}
Suppose that $\text{des}_j(w) = 1$. Then by \cite[Corollary 1.4.6]{BB05}, $w = s^1s^2\dots s^ks_j$ where $k = \ell(w) - 1$ and $s^a$ are generators. We then have that 
$$
ws_i = s^1s^2\dots s^ks_js_i = s^1s^2\dots s^ks_is_j.
$$
Either this is a reduced word for $ws_i$, or it is two characters too long by \cite[Proposition 1.4.2]{BB05}. In the first case, it is clear then that $ws_i$ has a descent at $s_j$. In the second case, since $\ell(ws_i) < \ell(w)$, by the Strong Exchange Property, $ws_i = s^1\dots \widehat{s^m}\dots s^ks_j$ for some $1 \le m \le k$. Note that we cannot have $s_j$ be the deleted generator, or else we have
$$
ws_i = s^1s^2\dots s^ks_js_i = s^1s^2\dots s^k \implies s_js_i = 1 \implies s_i = s_j
$$
which is not true. Moreover $ws_i = s^1\dots \widehat{s^m}\dots s^ks_j$ is a reduced word. But, clearly $ws_is_j = s^1\dots \widehat{s^m}\dots s^k$ is of shorter length than $ws_i$, so $\text{des}_j(ws_i) = 1$, as desired. We have shown that $\text{des}_j(w) = 1$ implies $\text{des}_j(ws_i) = 1$. The other direction is obtained by replacing $w$ with $ws_i$.
\end{proof}

\begin{lemma} \label{Lemma: Difference is parabolic}
$\des(w) - \des(w_i^*)$ is determined by $w_{T_i}$ for all $w \in W$. Furthermore, $\des(w) - \des(w_i^*) = \sum_{s_j \in T_i} (\des_j(w) - \des_j(w_i^*))$.
\end{lemma}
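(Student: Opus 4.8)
The plan is to reduce the statement to two structural facts: a right descent at a generator $s_j \in T_i$ is a \emph{local} quantity, depending on $w$ only through its parabolic component $w_{T_i}$; and passing from $w$ to $w_i^*$ (a right-multiplication by $s_i$) cannot disturb any descent at a generator that commutes with $s_i$. Granting these, the furthermore identity is immediate, and the first claim follows by collecting the surviving local terms.

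First I would show that for every $u \in W$ and every $s_j \in T_i$, the indicator $\des_j(u)$ is a function of $u_{T_i}$ alone. Writing the parabolic factorization $u = u^{T_i} u_{T_i}$ of \Cref{lem: parabolic decomposition} and using that $u_{T_i}s_j \in W_{T_i}$, \Cref{Cor: right mult parabolic} gives $(us_j)^{T_i} = u^{T_i}$ and $(us_j)_{T_i} = u_{T_i}s_j$; the length-additivity in \Cref{lem: parabolic decomposition} then yields $\ell(us_j) - \ell(u) = \ell(u_{T_i}s_j) - \ell(u_{T_i})$, so $\des_j(u) = 1$ exactly when $u_{T_i}$ has a descent at $s_j$. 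As a special case, $\des_i(w)$ — which dictates whether $w_i^* = w$ or $w_i^* = ws_i$ — is determined by $w_{T_i}$, and hence so is $(w_i^*)_{T_i} \in \{w_{T_i}, w_{T_i}s_i\}$.

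For the furthermore identity I would fix $s_j \notin T_i$. By the definition of $T_i$ this forces $s_j \neq s_i$ with $s_j$ not a neighbour of $s_i$, so $s_i$ and $s_j$ commute; since $w_i^*$ differs from $w$ only by a right-multiplication by $s_i$ (or equals $w$), \Cref{Lemma: Descents_invariant_commute} gives $\des_j(w_i^*) = \des_j(w)$. Thus in $\des(w) - \des(w_i^*) = \sum_j(\des_j(w) - \des_j(w_i^*))$ every term outside $T_i$ cancels, leaving $\sum_{s_j \in T_i}(\des_j(w) - \des_j(w_i^*))$. Combining with the first step — each summand is determined by $w_{T_i}$ and by $(w_i^*)_{T_i}$, both of which are functions of $w_{T_i}$ — the entire difference is determined by $w_{T_i}$.

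The step I expect to require the most care is ensuring that this locality genuinely holds, namely that $w_i^*$ is only a right-multiplication by $s_i$ rather than a fuller resampling of the block $w_{T_i}$. A general element of $W_{T_i}$ may involve a neighbour $s_k$ of $s_i$, and such an $s_k$ need not commute with a generator $s_j \notin T_i$ adjacent to it, so right-multiplication by it could flip $\des_j$ and destroy the cancellation above. The argument therefore hinges on invoking the precise, local definition of $w_i^*$ so that \Cref{Lemma: Descents_invariant_commute} applies verbatim for every $s_j \notin T_i$; with that in hand, the rest is routine bookkeeping with the parabolic decomposition.
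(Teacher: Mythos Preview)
Your proposal is correct and follows essentially the same route as the paper: both arguments use the parabolic factorization and length additivity to show that $\des_j(u)$ for $s_j\in T_i$ depends only on $u_{T_i}$, invoke \Cref{Lemma: Descents_invariant_commute} to kill the terms with $s_j\notin T_i$, and observe that whether $w_i^*=w$ or $ws_i$ is itself determined by $w_{T_i}$. Your extra paragraph of caution about $w_i^*$ being only a right-multiplication by $s_i$ is a fair remark but not a separate step in the proof, since this is precisely the definition of $w_i^*$.
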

\begin{proof}
First, we show that $w_{T_i}$ has a right descent at $s_j \in T_i$ if and only if $w$ does. If $w_{T_i}$ has a right descent at $s_j$, then, by \Cref{lem: parabolic decomposition} and \Cref{Cor: right mult parabolic}, $\ell(ws_j) = \ell((ws_j)^{T_i}) + \ell((ws_j)_{T_i}) = \ell(w^{T_i}) + \ell(w_{T_i}s_j) = \ell(w^{T_i}) + \ell(w_{T_i}) - 1 = \ell(w) - 1$ so $w$ has a right descent at $s_j$. If $w$ has a right descent at $s_j$, we have, since $w^{T_i}(w_{T_i}s_j)$ is the Proposition 2.4.4 factorization for $ws_j$, 
$$
\ell(w^{T_i}) + \ell(w_{T_i}s_j) = \ell(ws_j) < \ell(w) = \ell(w^{T_i}) + \ell(w_{T_i}) \implies \ell(w_{T_i}s_j) < \ell(w_{T_i}).
$$
It follows that $w_{T_i}$ has a right descent at $s_j$, as desired. Now, note that $ws_i = w^{T_i}w_{T_i}s_i$ is the unique factorization of $ws_i$, so we have that, by the same logic above, $ws_i$ has a right descent at $s_j \in T_i$ if and only if $w_{T_i}s_i$ does. It follows that both $\text{des}_j(w)$ and $\text{des}_j(ws_i)$ depend only on $w_{T_i}$. Lastly, for $s_j \not\in T_i$, $\text{des}_j(w) = \text{des}_j(ws_i)$ by Lemma \ref{Lemma: Descents_invariant_commute}. Thus,
$$
\des(w) - \des(w_i^*) = \sum_{s_j \in T_i} (\des_j(w) - \des_j(w_i^*))
$$
Moreover, it is clear that whether $w_i^*$ is equal to $w$ or $ws_i$ is also determined by $w_{T_i}$. It follows that $\des(w) - \des(w_i^*)$ is a function of $w_{T_i}$.
\end{proof}

\begin{lemma} \label{Lemma: general_parabolic_indep}
Let $S$ and $S'$ be two disjoint subsets of the generator set of a Coxeter group $W$ such that $s$ and $s'$ commute for any $s \in S$ and $s' \in S'$. If $w \sim \mu_q^W$, then $w_S$ and $w_{S'}$ are independent and Mallows distributed with parameter $q$ in $W_S$ and $W_{S'}$, respectively.
\end{lemma}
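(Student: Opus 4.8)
The plan is to deduce the result from \Cref{First_indep_result} by showing that, for each fixed $w_0' \in W_{S'}$, the event $A = \{v \in W : v_{S'} = w_0'\}$ is invariant under right multiplication by $W_S$. Once this is in hand everything follows cleanly: taking $A = W$ in \Cref{First_indep_result} (once with $S$ and once with $S'$) shows that $w_S$ and $w_{S'}$ are each Mallows distributed with parameter $q$ in $W_S$ and $W_{S'}$; and applying the lemma to $A = \{w_{S'} = w_0'\}$ gives $\Prob(\{w_S = w_0\} \cap \{w_{S'} = w_0'\}) = \Prob(w_S = w_0)\,\Prob(w_{S'} = w_0')$ for all $w_0 \in W_S$ and $w_0' \in W_{S'}$, which is precisely the independence of $w_S$ and $w_{S'}$.

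The heart of the argument is therefore the purely combinatorial claim that $(vw)_{S'} = v_{S'}$ for every $v \in W$ and every $w \in W_S$. To prove it I would first exploit that $S$ and $S'$ commute elementwise, so that $w$ commutes with $v_{S'} \in W_{S'}$; writing $v = v^{S'} v_{S'}$ then gives $vw = v^{S'}\, w\, v_{S'} = (v^{S'} w)\, v_{S'}$. By the uniqueness half of \Cref{lem: parabolic decomposition}, it suffices to check that $v^{S'} w \in W^{S'}$, for then $(v^{S'} w)\cdot v_{S'}$ is forced to be the $S'$-parabolic factorization of $vw$, whence $(vw)_{S'} = v_{S'}$ (length additivity coming for free from part (2) of the lemma, though it is not even needed). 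To verify $v^{S'} w \in W^{S'}$, fix $s' \in S'$, write $w = s_{i_1}\cdots s_{i_k}$ as a product of generators of $S$, and note that each $s_{i_\ell}$ commutes with $s'$; iterating \Cref{Lemma: Descents_invariant_commute} one generator at a time yields
$$ \des_{s'}(v^{S'} w) = \des_{s'}(v^{S'} s_{i_1}\cdots s_{i_{k-1}}) = \cdots = \des_{s'}(v^{S'}) = 0, $$
the last equality because $v^{S'} \in W^{S'}$. As $s'$ was arbitrary in $S'$, this gives $v^{S'} w \in W^{S'}$.

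Finally I would record that the claim makes $A = \{v_{S'} = w_0'\}$ right-$W_S$-invariant: if $v \in A$ then $(vw)_{S'} = v_{S'} = w_0'$ for all $w \in W_S$, so $Aw \subseteq A$, and applying this with $w^{-1}$ upgrades the inclusion to $Aw = A$. This is exactly the hypothesis required to invoke \Cref{First_indep_result}, which completes the argument. The main obstacle is the combinatorial claim $(vw)_{S'} = v_{S'}$, and I expect the only real subtlety to be in justifying $v^{S'} w \in W^{S'}$; the commutation hypothesis is precisely what lets \Cref{Lemma: Descents_invariant_commute} propagate the ``no descent in $S'$'' property through multiplication by an arbitrary element of $W_S$. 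Everything else is bookkeeping with the uniqueness of parabolic factorizations.
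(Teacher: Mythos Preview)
Your proof is correct and takes essentially the same approach as the paper: both establish the key combinatorial fact that right multiplication by $W_S$ preserves $w_{S'}$ by commuting the $W_S$-element past $v_{S'}$ and then invoking \Cref{Lemma: Descents_invariant_commute} to check $v^{S'}w \in W^{S'}$. The only difference is packaging---you delegate the probability computation to \Cref{First_indep_result}, whereas the paper redoes that length-change argument inline---but the substance is identical.
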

\begin{proof}

First, note that elements of $W_S$ commute with elements of $W_{S'}$. Consider the map from $\{w \in W\ | \ w_S = a\}$ to $\{w \in W \ | \ w_S = b\}$ given by $w \mapsto wa^{-1}b$. It is clear that this a bijection with the inverse $w \mapsto wb^{-1}a$. Moreover, this map preserves $w_{S'}$ since 
$$
wa^{-1}b = w^{S'}w_{S'}a^{-1}b = (w^{S'}a^{-1}b)w_{S'}
$$
and $w^{S'}a^{-1}b$ has no descent at any generators in $S'$ by Lemma \ref{Lemma: Descents_invariant_commute}. That is, $(wa^{-1}b)_{S'} = w_{S'}$. Hence, this map is also a bijection between the sets $\{w|w_S = a, w_{S'} = c\}$ and $\{w|w_S = b, w_{S'} = c\}$ for any $c \in W_{S'}$. We have that $(wa^{-1}b)^S = w^S$ so that
$$
\ell(wa^{-1}b) - \ell(w) = \ell(w^Sb) - \ell(w^Sa) = [\ell(w^S) + \ell(b)] - [\ell(w^S) + \ell(a)] = \ell(b) - \ell(a).
$$
where the second equality follows from Lemma \ref{lem: parabolic decomposition}. Thus,
$$
\mathbb{P}(w_S = b, w_{S'} = c) = q^{\ell(b) - \ell(a)}\mathbb{P}(w_S = a, w_{S'} = c)
$$
$$
\implies \mathbb{P}(w_S = b| w_{S'} = c) = q^{\ell(b) - \ell(a)}\mathbb{P}(w_S = a| w_{S'} = c)
$$
i.e. $w_S$ is a Mallows distributed conditioned on $w_{S'} = c$ for all $c \in W_{S'}$. Since the distribution is not affected by $c$, the result follows.
\end{proof}

\begin{lemma} \label{Lemma: parabolic_indep_conditioning}
Let $S$ and $S'$ be two disjoint subsets of the generator set of a Coxeter system with group $W = B_n$ or $D_n$ such that $s$ and $s'$ commute for any $s \in S$ and $s' \in S'$ and $\overline{S} \cap \overline{S'} = \emptyset$. Let $w \sim \mu_q^W$. Then $w_S$ and $w_{S'}$ are independent and Mallows distributed with parameter $q$, conditioning on any distribution of values to $\overline{S}$ and $\overline{S'}$ (and thus also without the conditioning). Moreover, $w_S$, $w_{S'}$, and $\{w(i)| i \in \overline{S'}\}$ are mutually independent.
\end{lemma}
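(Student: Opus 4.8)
The plan is to reuse the bijection from the proof of \Cref{Lemma: general_parabolic_indep}, refining its analysis using the extra hypothesis $\overline{S} \cap \overline{S'} = \emptyset$, which was not available there. Recall that for $a, b \in W_S$ the map $w \mapsto wa^{-1}b$ is a bijection of $W$ sending $\{w : w_S = a\}$ to $\{w : w_S = b\}$ and changing length by exactly $\ell(b) - \ell(a)$. The key observation I would record first is the geometric meaning of this map: since $a^{-1}b \in W_S$ and elements of $W_S$ fix every index outside $\overline{S}$ while permuting the values among the positions of $\overline{S}$, right-multiplication by $a^{-1}b$ leaves $w(i)$ unchanged for every $i \notin \overline{S}$ and merely rearranges the values sitting on $\overline{S}$. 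In particular it preserves the set $\{w(i) : i \in \overline{S}\}$, and, because $\overline{S} \cap \overline{S'} = \emptyset$, it preserves the entire restriction of $w$ to $\overline{S'}$ --- hence both $w_{S'}$ and the set $\{w(i) : i \in \overline{S'}\}$ are left invariant.

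Second, I would fix value-sets $\alpha, \beta$ together with $c \in W_{S'}$ and intersect the bijection with the event $E$ that $\{w(i) : i \in \overline{S}\} = \alpha$, $\{w(i) : i \in \overline{S'}\} = \beta$, and $w_{S'} = c$. By the previous paragraph, $w \mapsto wa^{-1}b$ restricts to a bijection from $E \cap \{w_S = a\}$ onto $E \cap \{w_S = b\}$ that scales the Mallows weight by $q^{\ell(b) - \ell(a)}$, exactly as in \Cref{Lemma: general_parabolic_indep}. Summing over $b$ shows that, conditioned on $E$, the law of $w_S$ is Mallows with parameter $q$ on $W_S$; since this conditional law does not depend on the choice of $(\alpha, \beta, c)$, the variable $w_S$ is Mallows$(q)$ and independent of the triple $\bigl(w_{S'}, \{w(i) : i \in \overline{S}\}, \{w(i) : i \in \overline{S'}\}\bigr)$. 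Note that conditioning on a fixed value-set does not restrict the range of $w_S$, because the Mallows weight depends only on the relative order recorded by $w_S$, which can be any element of $W_S$ irrespective of which values occupy $\overline{S}$.

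Finally, I would run the symmetric argument with the roles of $S$ and $S'$ interchanged --- legitimate since the disjointness hypothesis is symmetric --- to conclude that $w_{S'}$ is Mallows$(q)$ and independent of $\bigl(w_S, \{w(i): i \in \overline{S}\}, \{w(i) : i \in \overline{S'}\}\bigr)$; in particular $w_{S'}$ is independent of $\{w(i) : i \in \overline{S'}\}$. Combining $w_S \perp (w_{S'}, \{w(i): i \in \overline{S'}\})$ with $w_{S'} \perp \{w(i): i \in \overline{S'}\}$ factors the joint law of $w_S$, $w_{S'}$, and $\{w(i): i \in \overline{S'}\}$ into its marginals, giving the asserted mutual independence; the conditional-on-values statements are immediate from the independence established in the second step.

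The step I expect to be the main obstacle --- or at least the only place requiring genuine care --- is the first paragraph: pinning down precisely that right-multiplication by $W_S$ acts only on the positions indexed by $\overline{S}$ and that, under $\overline{S} \cap \overline{S'} = \emptyset$, this leaves the whole configuration on $\overline{S'}$ (both the value-set and the internal order $w_{S'}$) invariant. This disjointness is exactly what \Cref{Lemma: general_parabolic_indep} did not assume, so it is the crux that upgrades ``commuting generators'' to ``commuting generators on disjoint index blocks'' and lets the value-set $\{w(i): i \in \overline{S'}\}$ join the independence.
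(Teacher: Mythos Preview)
Your proposal is correct and follows essentially the same mechanism as the paper: the invariance of the value-set events under right multiplication by $W_S$, combined with the exact length shift $\ell(b)-\ell(a)$, forces the conditional law of $w_S$ to be Mallows and independent of everything happening on $\overline{S'}$. The only cosmetic difference is that the paper packages this by citing \Cref{First_indep_result} applied to $W_{S\cup S'}$ (noting $W_{S\cup S'}=W_S\times W_{S'}$ and that the value-assignment event is invariant under this subgroup), whereas you unpack the bijection explicitly; the underlying argument is the same.
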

\begin{proof}
First, note that $W_{S \cup S'} = W_S \times W_{S'}$ by the commutativity assumption, so $w_S$ and $w_{S'}$ are independent and Mallows distributed with parameter $q$ by Lemma \ref{Lemma: general_parabolic_indep}. Furthermore, Lemma \ref{First_indep_result} implies that $w_{S \cup S'} \sim \mu_q^{W_{S\cup S'}}$ as well. Let $A$ be the event that $w$ sends $\overline{S}$ and $\overline{S'}$ to two fixed disjoint subsets of $[\pm n]$ (that have been previously specified). Then $A$ is clearly invariant under right multiplication by elements in $W_{S \cup S'}$ (since it only permutes values among indices in $\overline{S}$ and $\overline{S'}$, never putting new values in this set of indices). Hence, Lemma \ref{First_indep_result} implies that $w_S$ and $w_{S'}$ are independent of $A$. 
\end{proof}

\begin{lemma} \label{inverse_indep}
Let $S$ and $S'$ be connected subsets of the generator set of a Coxeter group $W = B_n$ or $D_n$. Then conditional on
$$
\{w(i)|i \in \overline{S}\} \cap \overline{S'}, \ \{-w(i)|i \in \overline{S}\} \cap \overline{S'}
$$
each being empty or containing one element, $w_S$ and $(w^{-1})_{S'}$ are independent and Mallows distributed.
\end{lemma}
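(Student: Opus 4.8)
The plan is to adapt the length-shifting bijection argument of \Cref{Lemma: general_parabolic_indep}, now coupling a \emph{right} multiplication (which controls $w_S$) with a \emph{left} multiplication (which controls $(w^{-1})_{S'}$). Recall that $w_S$ is the parabolic factor recording the signed relative order of the values $\{w(i): i\in\overline{S}\}$, and that it is changed cleanly by right multiplication by $W_S$ (\Cref{Cor: right mult parabolic}). Dually, since $(gw)^{-1}=w^{-1}g^{-1}$, the factor $(w^{-1})_{S'}$ records the signed relative order of $\{w^{-1}(j): j\in\overline{S'}\}$ and is changed cleanly by left multiplication of $w$ by $W_{S'}$. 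Write $E$ for the conditioning event. Fixing $a,b\in W_S$ and $c,d\in W_{S'}$, I would study the map
\[
\phi(w) = (d^{-1}c)\, w\, (a^{-1}b)
\]
and aim to show it is a length-shifting bijection from $\{w\in E: w_S=a,\ (w^{-1})_{S'}=c\}$ onto $\{w\in E: w_S=b,\ (w^{-1})_{S'}=d\}$.

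The heart of the argument is the claim that, on $E$, left multiplication by any $g\in W_{S'}$ leaves $w_S$ unchanged, and symmetrically left multiplication by any $h\in W_S$ leaves $(w^{-1})_{S'}$ unchanged. For the first, note that $g$ only relabels values inside the contiguous block $\overline{S'}\cup(-\overline{S'})$ (contiguity is exactly where connectedness of $S'$ enters). The hypothesis forces at most one value of $\{w(i): i\in\overline{S}\}$ to lie in $\overline{S'}$ and at most one in $-\overline{S'}$; since every value lying outside the block keeps its side of the block under $g$, and at most one value sits inside each half, the signed relative order of $\{w(i): i\in\overline{S}\}$ is unchanged, hence so is $w_S$. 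The dual statement follows because the overlap counts for $w$ across $\overline{S},\overline{S'}$ coincide with those for $w^{-1}$. This is precisely where both halves of the hypothesis, and the connectedness of $S,S'$, are needed: with two values inside one block, $g$ could transpose them and scramble the order.

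Granting this, $\phi$ is well behaved. Right multiplication by $a^{-1}b\in W_S$ fixes the set $\{w(i): i\in\overline{S}\}$, and left multiplication by $d^{-1}c\in W_{S'}$ permutes $\overline{S'}$ within itself, so both moves preserve $E$; by \Cref{Cor: right mult parabolic} they send $w_S$ from $a$ to $b$ and $(w^{-1})_{S'}$ from $c$ to $d$, and by the crux claim neither move disturbs the other factor, so $\phi$ lands in the target fiber, with inverse $w\mapsto (c^{-1}d)\,w\,(b^{-1}a)$. For the length, \Cref{lem: parabolic decomposition} applied to $w$ shows the right factor shifts $\ell$ by $\ell(b)-\ell(a)$, while applying it to $w^{-1}$ (through $\ell(gw')=\ell((w')^{-1}g^{-1})$) shows the left factor shifts $\ell$ by $\ell(d)-\ell(c)$; the total shift $[\ell(b)-\ell(a)]+[\ell(d)-\ell(c)]$ is independent of $w$. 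Summing the Mallows weights $q^{\ell(\cdot)}$ over the fibers then yields
\[
\Prob\big(w_S=b,\ (w^{-1})_{S'}=d \,\big|\, E\big) \propto q^{\ell(b)}\,q^{\ell(d)},
\]
which is exactly the product of the Mallows measures on $W_S$ and $W_{S'}$, giving simultaneously the independence and the two Mallows marginals.

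The main obstacle is the crux claim of the second paragraph: carefully verifying that a single overlap never disturbs the parabolic factor, while correctly handling the signed order and the two possible shapes of the blocks $\overline{S},\overline{S'}$ (a run of positive integers versus a symmetric block $[\pm k]$), which is where the second, negated, half of the hypothesis does its work. Once that is established, the remaining steps are routine bookkeeping with the parabolic decomposition.
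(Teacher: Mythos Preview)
Your proposal is correct and proves the lemma, but it takes a more elaborate route than the paper. The paper's argument is one-sided: it observes that both the event $A$ and each set $A\cap\{(w^{-1})_{S'}=w_0\}$ are invariant under right multiplication by $W_S$ (this is your crux claim in one direction only), and then invokes \Cref{First_indep_result} directly to conclude that $w_S$ is Mallows and independent of $(w^{-1})_{S'}$ conditional on $A$. You instead construct the explicit two-sided bijection $\phi(w)=(d^{-1}c)\,w\,(a^{-1}b)$, which forces you to verify the crux claim in \emph{both} directions and to track the length shift through two parabolic decompositions. The payoff is that your argument is self-contained (no appeal to \Cref{First_indep_result}) and delivers both Mallows marginals at once; the cost is roughly twice the bookkeeping. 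Both proofs rest on the same combinatorial core you correctly identified as the main obstacle---that under the overlap hypothesis, moving a single value within the contiguous block $\overline{S'}$ (resp.\ a single index within $\overline{S}$) cannot disturb the signed relative order recorded by the other parabolic factor---and the paper, like you, treats this point somewhat tersely.
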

\begin{proof}
Let $A$ be the event that each of the two intersections above are at most singletons. That is,
$$
A = \{w : \ |\{w(i)|i \in \overline{S}\} \cap \overline{S'}|, \ |\{-w(i)|i \in \overline{S}\} \cap \overline{S'}| \ \le \ 1\}.
$$
We claim that $A$ and 
$$
A \cap \{w: (w^{-1})_{S'} = w_0\}
$$
are invariant events under right multiplication by elements of $W_S$. It is clear that $A$ is invariant since such multiplication simply permutes the values among indices of $\overline{S}$ among themselves. Such multiplication also keeps $(w^{-1})_{S'}$ invariant since the order of the values among indices in $\overline{S}$ has no effect on the order of values in $\overline{S'}$. By Lemma \ref{First_indep_result}, $w_S$ is independent of $A$. Thus, again by Lemma \ref{First_indep_result},
\begin{align*}
    \Prob(\{w_S = w_1\} \cap \{(w^{-1})_{S'} = w_0\} | A) &= \Prob(\{w_S = w_1\} \cap \{(w^{-1})_{S'} = w_0\} \cap A)\big/\Prob(A) \\
    &= \Prob(w_S = w_1)\Prob(\{(w^{-1})_{S'} = w_0\} \cap A)\big/\Prob(A) \\
    &= \Prob(w_S = w_1 | A)\Prob(\{(w^{-1})_{S'} = w_0\} | A)
\end{align*}
so $w_S$ and $(w^{-1})_{S'}$ are independent conditioned on $A$.
\end{proof}

\subsection{Probability Bounds}

We now upper bound the probability of observing a specific sequence of values at a some predetermined set of indices. We first consider this when $W = B_n$.

\begin{lemma}[Probability Bound for $B_n$] \label{lem: set_indices_B}
For some $C \subseteq [n]$, let $a_i \in [\pm n]$ be distinct for $i \in C$. Let $w'$ be the signed permutation with $w'(i) = a_i$ for all $i \in C$ and that is ordered elsewhere, i.e. $w(i)<w(j)$ for $i,j \not\in C \cup (-C)$. Then, for $w \sim \mu_q^{B_n}$,
$$
\mathbb{P}(w(i) = a_i \forall i \in C) \le \frac{q^{\ell(w')}[2n-2|C|]_q!!}{[2n]_q!!}
$$
\end{lemma}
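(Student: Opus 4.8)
The plan is to compare the event $E=\{w(i)=a_i\ \forall i\in C\}$ with the reference permutation $w'$ by tracking how length interacts with the free coordinates. Write $D=[\pm n]\setminus\big(C\cup(-C)\big)$ for the (negation-symmetric) set of free indices and $V=[\pm n]\setminus\big(\{a_i\}\cup\{-a_i\}\big)$ for the (negation-symmetric) set of free values, both of size $2(n-|C|)$. Every $w\in E$ restricts to a bijection $D\to V$ commuting with negation, and the order- and sign-preserving relabelings $D\to[\pm(n-|C|)]$, $V\to[\pm(n-|C|)]$ turn this restriction into an element $\tilde w\in B_{n-|C|}$. This is a bijection $E\leftrightarrow B_{n-|C|}$, and since $w'$ places the free values in increasing order, $\widetilde{w'}=e$, so $\ell(\widetilde{w'})=0$.

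First I would decompose the length. Using the convenient form $\ell(w)=\tfrac12(\mathrm{Inv}(w)+\mathrm{neg}(w))$, where $\mathrm{Inv}(w)$ counts inversions of $w$ viewed as a permutation of the $2n$ points $[\pm n]$ and $\mathrm{neg}(w)=\#\{i\in[n]:w(i)<0\}$, I split each inversion according to whether its two indices both lie in $C\cup(-C)$, both lie in $D$, or are mixed. The ``$CC$'' part is constant over $E$; the ``$DD$'' part equals $\ell(\tilde w)$ under the relabeling; and the mixed part is $\tfrac12\sum_{d\in D}M(d,w(d))$, where $M(d,v)$ counts the fixed points $(c,a_c)$, $c\in C\cup(-C)$, forming an inversion with $(d,v)$ (i.e.\ $c<d,a_c>v$ or $c>d,a_c<v$). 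Hence $\ell(w)=\mathrm{const}+\ell(\tilde w)+\tfrac12\sum_{d}M(d,w(d))$ and $\ell(w')=\mathrm{const}+\tfrac12\sum_{d}M(d,w'(d))$.

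The crux is to show $w'$ minimizes the mixed term, i.e.\ $\sum_{d}M(d,w'(d))\le\sum_{d}M(d,w(d))$ for all $w\in E$, giving the super-additivity $\ell(w)\ge\ell(w')+\ell(\tilde w)$. I would prove this by sorting the sequence $(w(d))_{d\in D}$ into increasing order using the symmetric adjacent transpositions that generate the copy of $B_{n-|C|}$ acting on $D$, showing each step cannot increase the mixed term. Using the identity $M(-d,-v)=M(d,v)$, an off-center swap of an inverted pair $d_1<d_2$ changes the mixed term by $2\big(\phi(w(d_2))-\phi(w(d_1))\big)$ with $\phi(v)=M(d_1,v)-M(d_2,v)$; a direct computation shows $\phi$ only sees the fixed points with index strictly between $d_1$ and $d_2$ and is therefore nondecreasing, so the swap does not increase the term. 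The delicate case is the central sign-change swap at the innermost free index, which flips a negative value to positive: here I would split $M(d,\cdot)$ into contributions of fixed-point pairs lying ``outside'' the index (even in $v$) and pairs lying ``inside'' (nonincreasing in $v$) to see the swap again cannot increase it. I expect this verification, especially the sign-change step, to be the main obstacle, since it is exactly where the negation symmetry of $B_n$ is essential and where the $A_n$ analogue is genuinely more delicate.

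Finally, for $q\le 1$ (the case we need; recall Lemma~\ref{lem: q_invert} lets us work with $q\le 1$) the map $x\mapsto q^x$ is nonincreasing, so super-additivity gives $q^{\ell(w)}\le q^{\ell(w')}q^{\ell(\tilde w)}$. Summing over $w\in E$ via the bijection with $B_{n-|C|}$,
\[
\sum_{w\in E}q^{\ell(w)}\le q^{\ell(w')}\sum_{\tilde w\in B_{n-|C|}}q^{\ell(\tilde w)}=q^{\ell(w')}Z_{B_{n-|C|}}=q^{\ell(w')}[2n-2|C|]_q!!,
\]
and dividing by $Z_{B_n}=[2n]_q!!$ yields the claimed bound on $\mathbb{P}(w(i)=a_i\ \forall i\in C)$.
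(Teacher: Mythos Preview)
Your proposal is correct and follows essentially the same approach as the paper: construct the bijection $E\to B_{n-|C|}$, decompose $\ell(w)$ into a ``free'' part equal to $\ell(\tilde w)$ and a remainder, show $w'$ minimizes the remainder by swapping toward sorted order, deduce $\ell(w)\ge \ell(w')+\ell(\tilde w)$, and sum for $q\le 1$. The only cosmetic difference is bookkeeping: the paper works directly with the $B$-inversion count and splits $\ell=\ell_1+\ell_2$ with $\ell_1$ the inversions among free indices, whereas you use the equivalent formula $\ell=\tfrac12(\mathrm{Inv}+\mathrm{neg})$ and separate the constant $CC$-part from the mixed part. Your more careful treatment of the central sign-change swap is warranted---the paper dispatches it in one sentence---and your anticipated verification does go through: pairs $\{c,-c\}$ with $c>d_0$ contribute symmetrically in $v\leftrightarrow -v$, while pairs with $0<c<d_0$ contribute at least as much at $v<0$ as at $-v$, so $M(d_0,-v)\le M(d_0,v)$.
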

\begin{proof}
Let $\overline{w}$ denote the signed permutation in $B_{n-|C|}$ given by the relative order of $w(i)$ for $i \not\in C \cup (-C)$. That is, let $V$ be the set of values among the indices in $[\pm n] \setminus (C \cup (-C))$ in $w$. Then, let $V^+$ be the positive values in $V$ and $V^-$ be the negative values in $V$. Let $i$ be an index of $\overline{w}$ and $i'$ be the corresponding index in $[\pm n] \setminus (C \cup (-C))$. We let $\overline{w}(i) = j$ for $j>0$ if and only if $w(i)$ equals the $j$th smallest element of $V^+$. Similarly, let $\overline{w}(i') = -j$ for $j>0$ if and only if $w(i')$ equals the $j$th largest element of $V^+$. It is not hard to see that $w(i_1') < w(i_2')$ if and only if $\overline{w}(i_1) < \overline{w}(i_2)$.

Now, $w \mapsto \overline{w}: \{w: w(i)=a_i\} \to B_{n-|C|}$ is a bijection. Let $\ell_1(w)$ denote the number of inversions among indices in $C^c$, i.e.
$$
\ell_1(w) = |\{(i,j) \in C^c \times C^c: i<j, w(i)>w(j)\}| + |\{(i,j) \in C^c \times C^c: i\le j, w(-i)>w(j)\}|.
$$
Notice that $\ell_1(w) = \ell(\overline{w})$. Let $\ell_2(w)$ denote the rest of the inversions, that is
$$
\ell_2(w) = |\{(i,j) \in [n] \times [n] \setminus C^c \times C^c: i<j, w(i)>w(j)\}| $$$$ \quad \quad \quad \quad + |\{(i,j) \in [n] \times [n] \setminus C^c \times C^c: i\le j, w(-i)>w(j)\}|.
$$
We have that $w'$ minimizes $\ell_2$ among $\{w: w(i)=a_i\}$ since switching any two values at positive indices in $C^c$ to put them in order can only decrease the number of inversions between $C$ and $C^c$. Similarly, switching $w(i)$ and $w(-i)$ for $i \not\in C$ can only decrease $\ell_2(w)$. By definition, $\ell_1(w') = 0$. That is, there does not exist $(i,j) \in C^c \times C^c$ such that $i<j$ and $w'(i)>w'(j)$ or $w'(-i)>w'(j)$, and all $w(i)$ are positive. We then have that
\begin{align*}
    \mathbb{P}(w(i) = a_i \forall i \in C) &= \sum_{w(i)=a_i} \frac{q^{\ell(w)}}{[2n]_q!!} \\
    &= \sum_{w(i)=a_i} \frac{q^{\ell_1(w) + \ell_2(w)}}{[2n]_q!!} \\
    &\le \sum_{w(i)=a_i} \frac{q^{\ell(\overline{w}) + \ell(w')}}{[2n]_q!!} \\
    &= \frac{q^{\ell(w')}[n-|C|]_q!!}{[n]_q!!}
\end{align*}
\end{proof}

Now, in order to show the result for $W = D_n$, we tweak the definition of $w'$ (where $w \sim \mu_q^{D_n}$). As before, fix some $C \subseteq [n]$, let $a_i \in [\pm n]$ for $i \in C$ such that no two $a_i$ are negatives of each other. Let $w' \in S_n^D$ be the signed permutation that has $w'(i) = a_i$ for all $i \in C$ and is "maximally" well-ordered elsewhere. More precisely, if the number of negative $a_i$ is even, then we may define $w'$ to be exactly such that $w(i)<w(j)$ for all $-n \le i<j \le n$ with $i,j \not\in C \cup (-C)$. If the number of negative $a_i$ is odd, we define $w'$ in the same way, except that if $k$ is the smallest integer in $[n] \setminus C$, we have $w(k)$ negative (i.e., the middle two values are switched).

\begin{lemma}[Probability Bound for $D_n$] \label{set_indices}
Let $w$ be Mallows distributed in $D_n$, fix $C \subseteq [n]$, and define $w'$ as above. We have,
$$
\mathbb{P}(w(i)=a_i \forall i \in C) \le \frac{q^{\ell(w')}[n-|C|]_q[2n-2|C|-2]_q!!}{[n]_q[2n-2]_q!!}
$$
\end{lemma}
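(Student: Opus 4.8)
The plan is to mirror the structure of the proof of \Cref{lem: set_indices_B} as closely as possible, replacing the normalization constant $[2n]_q!!$ with $Z_{D_n} = [n]_q[2n-2]_q!!$ and accounting for the parity constraint that defines membership in $D_n$. First I would set up the analogous reduction map $w \mapsto \overline{w}$ from the fiber $\{w \in D_n : w(i) = a_i \ \forall i \in C\}$ into a smaller group on the unconstrained indices $[\pm n] \setminus (C \cup (-C))$. As in the $B_n$ case, $\overline{w}$ records the relative order of the values $w(i)$ for $i \notin C \cup (-C)$: I let $V$ be the multiset of values placed at these free indices, split it into positive and negative parts, and define $\overline{w}$ so that $\overline{w}(i) < \overline{w}(j)$ iff $w(i') < w(j')$ on the corresponding indices. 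The key point to verify here is that this map is a bijection onto the correct smaller group. Because we are in $D_n$, the total number of negative values among \emph{all} indices must be even; once the sign pattern of the $a_i$ is fixed, the parity of the number of negatives among the free indices is determined, so $\overline{w}$ ranges over a copy of $D_{n-|C|}$ (possibly after the single sign-flip bookkeeping built into the definition of $w'$), which has normalization $[n-|C|]_q[2n-2|C|-2]_q!!$.

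Next I would split the length into $\ell(w) = \ell_1(w) + \ell_2(w)$, where $\ell_1(w)$ counts D-inversions both of whose indices lie in $C^c := [\pm n]\setminus(C\cup(-C))$ and $\ell_2(w)$ counts all remaining D-inversions. By construction $\ell_1(w) = \ell(\overline{w})$ exactly as before, using the $D_n$ inversion formula from \Cref{subsec: Coxeter} in place of the $B_n$ formula. The crucial inequality is $\ell_2(w) \ge \ell_2(w')$ for every $w$ in the fiber, i.e.\ that the specially-constructed $w'$ minimizes the inversions involving at least one constrained index. The argument is the same exchange/sorting argument: sorting the free values into increasing order can only remove inversions between $C$ and $C^c$, and swapping $w(i) \leftrightarrow w(-i)$ for free $i$ can only decrease $\ell_2$. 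The one genuinely new wrinkle, and the step I expect to be the main obstacle, is the parity constraint: in $D_n$ I cannot freely put all free values in increasing nonnegative order, because that might violate the evenness condition on the number of negatives. This is exactly why the definition of $w'$ distinguishes the odd case by flipping the middle value $w(k)$ negative. I would need to check that this minimal-cost single flip is genuinely optimal — that no configuration respecting the $D_n$ parity constraint achieves a strictly smaller $\ell_2$ — and that $\ell_1(w') = 0$ still holds (the free part of $w'$ is itself the identity-like element of the smaller $D$-group, contributing no inversions among $C^c$).

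Granting these two facts, the computation is then a direct chain of inequalities paralleling the $B_n$ case:
\begin{align*}
    \mathbb{P}(w(i)=a_i \ \forall i \in C)
    &= \sum_{w(i)=a_i} \frac{q^{\ell_1(w)+\ell_2(w)}}{[n]_q[2n-2]_q!!} \\
    &\le \sum_{w(i)=a_i} \frac{q^{\ell(\overline{w})+\ell(w')}}{[n]_q[2n-2]_q!!} \\
    &= \frac{q^{\ell(w')}}{[n]_q[2n-2]_q!!}\sum_{\overline{w}} q^{\ell(\overline{w})} \\
    &= \frac{q^{\ell(w')}[n-|C|]_q[2n-2|C|-2]_q!!}{[n]_q[2n-2]_q!!},
\end{align*}
where the final equality uses the bijection onto $D_{n-|C|}$ together with $Z_{D_{n-|C|}} = [n-|C|]_q[2n-2|C|-2]_q!!$ from \Cref{subsection: Mallows}. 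The main care is entirely in getting the combinatorial bijection and the parity-aware minimality of $w'$ right; the generating-function bookkeeping is then forced by the formula for $Z_{D_m}$.
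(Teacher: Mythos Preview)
Your proposal is correct and follows essentially the same approach as the paper: the same reduction map $w\mapsto\overline{w}$, the same decomposition $\ell=\ell_1+\ell_2$, and the same parity casework. For the step you flagged as the main obstacle---optimality of the single flip in $w'$ in the odd case---the paper argues via an auxiliary $w''\in B_n\setminus D_n$ (the fully sorted element, which minimizes $\ell_2$ unconstrained) and shows that the flip $w''\mapsto w'$ adds the fewest possible D-inversions because $k$ is the smallest free index and $|w'(k)|$ is minimal.
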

\begin{proof}
Let $\overline{w}$ denote the signed permutation in $B_{n-|C|}$ given by the relative order of $w(i)$ for $i \not\in C \cup (-C)$. First, assume that an even number of $a_i$ are negative. We then have that $w \mapsto \overline{w}: \{w: w(i)=a_i\} \to D_{n-|C|}$ is a bijection. Let $\ell_1(w)$ denote the number of D-inversions among indices in $C^c = [n] \setminus C$, i.e.
$$
\ell_1(w) = |\{(i,j) \in C^c \times C^c: i<j, w(i)>w(j)\}| + |\{(i,j) \in C^c \times C^c: i< j, w(-i)>w(j)\}|.
$$
Note that $\ell_1(w) = \ell(\overline{w})$. Let $\ell_2(w)$ denote the rest of the D-inversions, i.e.
$$
\ell_2(w) = |\{(i,j) \in [n] \times [n] \setminus C^c \times C^c: i<j, w(i)>w(j)\}| $$$$ \quad \quad \quad \quad + |\{(i,j) \in [n] \times [n] \setminus C^c \times C^c: i< j, w(-i)>w(j)\}|.
$$
We have that $w'$ minimizes $\ell_2$ among $\{w: w(i)=a_i\}$ since switching any two values at positive indices in $C^c$ to put them in order can only decrease the number of inversions between $C$ and $C^c$. Similarly, switching $w(i)$ and $w(-i)$ for $i \not\in C$ can only decrease $\ell_2(w)$. By definition, $\ell_1(w') = 0$. That is, there does not exist $(i,j) \in C^c \times C^c$ such that $i<j$ and $w'(i)>w'(j)$ or $w'(-i)>w'(j)$, and all $w'(i)$ are positive. We then have that
\begin{align*}
    \mathbb{P}(w(i) = a_i \forall i \in C) &= \sum_{w(i)=a_i} \frac{q^{\ell(w)}}{[2n-2]_q!![n]_q} \\
    &= \sum_{w(i)=a_i} \frac{q^{\ell_1(w) + \ell_2(w)}}{[2n-2]_q!![n]_q} \\
    &\le \sum_{w(i)=a_i} \frac{q^{\ell(\overline{w}) + \ell(w')}}{[2n-2]_q!![n]_q} \\
    &= \frac{q^{\ell(w')}[n-|C|]_q[2n-2|C|-2]_q!!}{[n]_q[2n-2]_q!!}.
\end{align*}
It remains to deal with the case when an odd number of $a_i$ are positive. If we define $\overline{w}$ the same we as above (that is, completely well-ordered outside of $C$), then $w \mapsto \overline{w}: \{w: w(i)=a_i\} \to B_{n-{C}} \setminus D_{n-{C}}$ is a bijection. However, there is a clear bijection between $B_{n-{C}} \setminus D_{n-{C}}$ and $D_{n-{C}}$ that involves switching the values $1$ and $-1$. Thus, we may let $\overline{w} \in D_{n-C}$ via this bijection. Moreover, this bijection preserves the number of D-inversions in $\overline{w}$. In particular, 
$$
\sum_{w(i)=a_i} q^{\ell(\overline{w})} = [n-|C|][2n-2|C|-2]_q!!.
$$
We can then define $\ell_1(w)$ and $\ell_2(w)$ as before and again observe that $\ell_1(w)=\ell(\overline{w})$, $\ell_1(w')=0$. We also claim that $w'$ minimizes $\ell_2$ among $\{w:w(i)=a_i \forall i \in C\}$. To see this, let $w''$ denote the result of switching the values of $w'$ at $k$ and $-k$, where $k$ is the smallest integer in $[n]\setminus C$. Then, $w''$ is not in $D_n$, but it minimizes $\ell_2$ over all signed permutations. In general, this switch provides a bijection between $\{w \in D_n: w(i)=a_i\}$ and $\{w \in B_n \setminus D_n: w(i)=a_i\}$ and subtracts a certain number of D-inversions to $\ell_2$ that is equal to
\begin{equation} \label{number_of_new_D_inversions}
    |\{|i| \in C: -k < i < k, |w(i)| < |w(k)|\}|.
\end{equation}
However, $w'$ minimizes the magnitude of $w(k)$, so (\ref{number_of_new_D_inversions}) is minimized for $w'$. Since $w''$ has minimum $\ell_2$ over all signed permutations, and the bijection $w'' \mapsto w'$ adds a minimal number of D-inversions, we have that $\ell_2(w')$ is minimal for all type D signed permutations. We can then perform the same computation as above:
\begin{align*}
    \mathbb{P}(w(i) = a_i \forall i \in C) &= \sum_{w(i)=a_i} \frac{q^{\ell(w)}}{[2n-2]_q!![n]} \\
    &= \sum_{w(i)=a_i} \frac{q^{\ell_1(w) + \ell_2(w)}}{[2n-2]_q!![n]} \\
    &\le \sum_{w(i)=a_i} \frac{q^{\ell(\overline{w}) + \ell(w')}}{[2n-2]_q!![n]} \\
    &= \frac{q^{\ell(w')}[n-|C|][2n-2|C|-2]_q!!}{[n][2n-2]_q!!}.
\end{align*}
\end{proof}

\section{Size-bias coupling for two-sided descents}
\label{sec: coupling}
In this section, size-bias couplings and their relation to Stein's method are reviewed. Then a size-bias coupling for the two-sided descent statistic is constructed, based on a coupling due to Goldstein \cite{G05} which was also used by Conger and Viswanath \cite{CV07} to study descents in multiset permutations.

\subsection{Size-bias Coupling and Stein's Method}
Let $X$ be a non-negative discrete random variable with positive mean. Say that $X^*$ has the \emph{size-bias distribution} with respect to $X$ if
\begin{equation*}
    \Prob(X^*=x)=\frac{x\Prob(X=x)}{\E(X)}.
\end{equation*}
A \emph{size-bias coupling} is a pair $(X,X^*)$ of random variables, defined on the same probability space such that $X^*$ has the size-bias distribution of $X$.

The following version of Stein's method using size-bias coupling is the main tool used to prove Theorems \ref{thm: main theorem 1, B_n} and \ref{thm: main theorem 2, D_n}.

\begin{theorem}[{\cite[Theorem 1.1]{GR96}}]
\label{thm: size-bias steins}
Let $X$ be a non-negative random variable, with $\E(X)=\mu$ and $\Var(X)=\sigma^2$ and let $(X,X^*)$ be a size-bias coupling. Let $Z$ denote a standard normal random variable. Then for all piecewise continuously differentiable functions $h:\mathbf{R}\to \mathbf{R}$,
\begin{equation*}
    \left|\E h\left(\frac{X-\mu}{\sigma}\right)-\E h(Z)\right|\leq 2\|h\|_\infty \frac{\mu}{\sigma^2}\sqrt{\Var \E(X-X^*|X)}+\|h'\|_\infty \frac{\mu}{\sigma^3}\E(X-X^*)^2.
\end{equation*}
\end{theorem}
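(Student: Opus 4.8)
The plan is to prove this via the standard machinery of Stein's method, the only nonstandard ingredient being the size-bias identity, which here plays the role usually filled by an exchangeable pair or a dependency graph. First I would set up the Stein equation. Write $W = (X-\mu)/\sigma$ and recall that $Z$ is standard normal if and only if $\E[f'(Z) - Z f(Z)] = 0$ for all sufficiently regular $f$. For the given test function $h$, let $f = f_h$ solve the ODE $f'(w) - w f(w) = h(w) - \E h(Z)$, so that $\E h(W) - \E h(Z) = \E[f'(W) - W f(W)]$ and it suffices to bound the right-hand side. I would quote the classical regularity estimates for this solution, namely $\|f_h'\|_\infty \le 2\|h\|_\infty$ (using that we may center $f_h'$ by an arbitrary constant, see below) and $\|f_h''\|_\infty \le 2\|h'\|_\infty$; these account for the numerical constants in the statement.

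Next I would bring in the size-bias coupling through its defining identity $\E[X g(X)] = \mu\, \E[g(X^*)]$, valid for every function $g$ for which the expectations exist. Applying this with $g(x) = f((x-\mu)/\sigma)$ gives $\E[X f(W)] = \mu\,\E[f(W^*)]$, where $W^* = (X^*-\mu)/\sigma$, and hence
\[
\E[W f(W)] = \tfrac{1}{\sigma}\big(\E[X f(W)] - \mu \E[f(W)]\big) = \tfrac{\mu}{\sigma}\,\E[f(W^*) - f(W)].
\]
Writing $\Delta = W^* - W = (X^*-X)/\sigma$ and using the fundamental theorem of calculus, $f(W^*) - f(W) = \int_0^\Delta f'(W+u)\,du$, so the quantity to be bounded becomes $\E[f'(W)] - \tfrac{\mu}{\sigma}\E\!\int_0^\Delta f'(W+u)\,du$.

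The heart of the argument is then a two-term split obtained by adding and subtracting $\Delta f'(W)$ inside the integral, $\int_0^\Delta f'(W+u)\,du = \Delta f'(W) + \int_0^\Delta [f'(W+u) - f'(W)]\,du$. The first piece, combined with the leading $\E[f'(W)]$, uses that $f'(W)$ is $\sigma(X)$-measurable to pass to the conditional expectation, giving $\E\big[f'(W)\,(1 - \tfrac{\mu}{\sigma^2}\E[X^*-X\mid X])\big]$. The crucial computation is $\E[X^*] = \E[X^2]/\mu = (\sigma^2+\mu^2)/\mu$, whence $\tfrac{\mu}{\sigma^2}\E[X^*-X] = 1$ and the factor $1 - \tfrac{\mu}{\sigma^2}\E[X^*-X\mid X]$ has mean zero; this is what lets me replace $f'(W)$ by $f'(W) - c$ for any constant $c$ and then apply Cauchy--Schwarz, producing $\|f_h'\|_\infty \,\tfrac{\mu}{\sigma^2}\sqrt{\Var \E[X-X^*\mid X]}$, the first term. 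The second piece is controlled pointwise by $\|f_h''\|_\infty \tfrac{\Delta^2}{2}$, and since $\E[\Delta^2] = \E[(X^*-X)^2]/\sigma^2$ this gives $\tfrac{1}{2}\|f_h''\|_\infty \,\tfrac{\mu}{\sigma^3}\,\E(X-X^*)^2$, the second term.

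The main obstacle I anticipate is securing the sharp numerical constants rather than the overall structure, which is routine. The clean factor $2\|h\|_\infty$ (as opposed to the naive $4\|h\|_\infty$ one would get from $\|h - \E h(Z)\|_\infty \le 2\|h\|_\infty$) hinges precisely on exploiting the mean-zero property of $1 - \tfrac{\mu}{\sigma^2}\E[X^*-X\mid X]$ to center $f_h'$ before bounding, so that only the oscillation of $f_h'$ enters. Deriving the regularity bounds $\|f_h'\|_\infty \le 2\|h\|_\infty$ and $\|f_h''\|_\infty \le 2\|h'\|_\infty$ from the explicit integral representation of $f_h$ is the one genuinely analytic step; everything else is algebra together with the single application of Cauchy--Schwarz.
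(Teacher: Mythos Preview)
The paper does not contain a proof of this theorem: it is quoted verbatim as \cite[Theorem 1.1]{GR96} and used as a black box, so there is no proof in the paper to compare your attempt against.

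That said, your sketch follows exactly the standard argument by which such size-bias Stein bounds are established (and is essentially the argument in Goldstein--Rinott): solve the Stein equation, rewrite $\E[Wf(W)]$ via the size-bias identity as $\tfrac{\mu}{\sigma}\E[f(W^*)-f(W)]$, split the increment into a first-order piece $\Delta f'(W)$ and a Taylor remainder, and then use the mean-zero property of $1-\tfrac{\mu}{\sigma^2}\E[X^*-X\mid X]$ together with Cauchy--Schwarz on the first piece and the pointwise bound $|f'(W+u)-f'(W)|\le \|f''\|_\infty |u|$ on the second. One small caution: your justification for the constant $2\|h\|_\infty$ is not quite right as written. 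The standard regularity estimate is $\|f_h'\|_\infty \le 2\|h-\E h(Z)\|_\infty$, and centering $f_h'$ by a constant replaces $\|f_h'\|_\infty$ by half the oscillation of $f_h'$, not by $2\|h\|_\infty$; getting the stated constant requires a slightly more careful accounting (as in the original reference) rather than the informal centering remark you give. The second-term constant, by contrast, falls out exactly as you describe from $\|f_h''\|_\infty\le 2\|h'\|_\infty$ and the factor $\tfrac12$ in the Taylor remainder.
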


The next result is an alternative version of Stein's method that will be used to prove Theorem \ref{thm: Wasserstein 1-distance}.

\begin{theorem}[{\cite[Theorem 3.20]{R11b}}]
\label{thm: size-bias steins 1-distance}
Let $X, X^s, Z$ be as in Theorem \ref{thm: size-bias steins}. Then,
\begin{equation*}
    d_W\left( \frac{X-\mu}{\sigma}, Z \right) \le \sqrt{\frac{2}{\pi}} \frac{\mu}{\sigma^2}\sqrt{\Var \E(X-X^*|X)} + \frac{\mu}{\sigma^3}\E(X-X^*)^2.
\end{equation*}
\end{theorem}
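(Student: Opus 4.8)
The statement is the standard Wasserstein 1-distance output of Stein's method under a size-bias coupling, so the plan is to reproduce that argument while tracking the constants that generate the $\sqrt{2/\pi}$ and $1$ factors. Write $W=(X-\mu)/\sigma$ and $W^*=(X^*-\mu)/\sigma$. By the Kantorovich--Rubinstein duality for $d_W$, it suffices to bound $|\E h(W)-\E h(Z)|$ uniformly over $1$-Lipschitz $h$. First I would introduce the Stein equation $f'(x)-xf(x)=h(x)-\E h(Z)$ for the standard normal, whose bounded solution $f=f_h$ obeys the classical estimates $\|f_h'\|_\infty\le\sqrt{2/\pi}$ and $\|f_h''\|_\infty\le 2$ whenever $\|h'\|_\infty\le 1$; these are precisely the two constants appearing in the final bound. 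Evaluating the Stein equation at $x=W$ and taking expectations reduces the problem to controlling $\E[f'(W)-Wf(W)]$.

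The driving mechanism is the defining property of the size-bias distribution, $\E[Xg(X)]=\mu\,\E[g(X^*)]$ for bounded $g$. Taking $g(X)=f(W)$ yields $\E[(X-\mu)f(W)]=\mu\,\E[f(W^*)-f(W)]$, hence $\E[Wf(W)]=\tfrac{\mu}{\sigma}\E[f(W^*)-f(W)]$. I would then Taylor expand $f(W^*)-f(W)=(W^*-W)f'(W)+R$ with $|R|\le\tfrac12\|f_h''\|_\infty (W^*-W)^2$, splitting the target into a linear ``main'' term and a remainder.

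For the linear term, since $W^*-W=(X^*-X)/\sigma$ we get $\tfrac{\mu}{\sigma}\E[(W^*-W)f'(W)]=\tfrac{\mu}{\sigma^2}\E[(X^*-X)f'(W)]$. Combining this with $\E[f'(W)]$ and using the elementary identity $\tfrac{\mu}{\sigma^2}\E[X^*-X]=1$ (which follows from $\E[X^*]=\E[X^2]/\mu=(\sigma^2+\mu^2)/\mu$), the difference collapses to a covariance. Because $f'(W)$ is $\sigma(X)$-measurable, conditioning on $X$ gives
\[
\E[f'(W)]-\tfrac{\mu}{\sigma^2}\E[(X^*-X)f'(W)]=-\tfrac{\mu}{\sigma^2}\Cov\!\big(f'(W),\,\E[X^*-X\mid X]\big).
\]
Bounding $|f'(W)|\le\|f_h'\|_\infty\le\sqrt{2/\pi}$ and applying Cauchy--Schwarz to the centered second factor produces the first summand $\sqrt{2/\pi}\,\tfrac{\mu}{\sigma^2}\sqrt{\Var\E(X-X^*\mid X)}$, where I use that $\Var$ is insensitive to the sign flip $X-X^*\leftrightarrow X^*-X$. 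The remainder contributes $\tfrac{\mu}{\sigma}\cdot\tfrac12\|f_h''\|_\infty\,\E[(W^*-W)^2]\le\tfrac{\mu}{\sigma^3}\E[(X-X^*)^2]$ via $\|f_h''\|_\infty\le 2$.

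The main obstacle is not any single estimate but arranging the cancellation correctly: the identity $\tfrac{\mu}{\sigma^2}\E[X^*-X]=1$ is exactly what lets $\E[f'(W)]$ and the size-bias term merge into one conditional covariance, and recognizing that $\Cov(f'(W),\E[X^*-X\mid X])$ should be controlled by $\|f_h'\|_\infty$ times $\sqrt{\Var\E(X-X^*\mid X)}$ (rather than a cruder bound through $\Var f'(W)$) is what keeps the first error term sharp. Taking the supremum over $1$-Lipschitz $h$ and invoking the Wasserstein duality then completes the proof.
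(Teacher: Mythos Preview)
Your argument is correct and is exactly the standard Stein's method computation that establishes this bound. Note, however, that the paper does not supply its own proof of this statement: it is quoted verbatim as \cite[Theorem~3.20]{R11b} and used as a black box. The proof you wrote is essentially the one found in that reference, with the key inputs being the Stein-equation bounds $\|f_h'\|_\infty\le\sqrt{2/\pi}\,\|h'\|_\infty$ and $\|f_h''\|_\infty\le 2\|h'\|_\infty$, the size-bias identity $\E[Xg(X)]=\mu\,\E[g(X^*)]$, the normalization $\tfrac{\mu}{\sigma^2}\E[X^*-X]=1$, and the reduction of the linear term to a conditional covariance via $\sigma(X)$-measurability of $f'(W)$. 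There is nothing to compare against in the paper itself, and your write-up would serve as a faithful reconstruction of the cited proof.
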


This theorem becomes a central limit theorem if both error terms can be controlled. For the application to two-sided descents, the relevant error term will be the first one, because $|X-X^*|$ will be bounded, and $\mu,\sigma^2$ are both of order $n$.

The following construction of a size-bias coupling for sums of random variables is also crucial.

\begin{lemma}[{\cite[Lemma 2.1]{BRS89}}]
\label{lem: size-bias characterization}
Let $X=\sum X_i$ be a sum of non-negative random variables. Let $I$ be a random index with the distribution $\Prob(I=i)=\E X_i/\sum \E X_j$. Let $X^*=\sum X_i'$ where conditional on $I=i$, $X_i'$ has the size-bias distribution of $X_i$ and
\begin{equation*}
    \Prob((X_1',\dotsc,X_n')\in A|I=i,X_i'=x)=\Prob((X_1,\dotsc,X_n)\in A|X_i=x).
\end{equation*}
Then $X^*$ has the size-bias distribution of $X$.
\end{lemma}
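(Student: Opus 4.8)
The plan is to verify the defining functional identity of the size-bias distribution, namely that $X^*$ is size-biased from $X$ if and only if $\E[X f(X)] = \E(X)\,\E[f(X^*)]$ for every bounded function $f$; this follows immediately from the discrete definition $\Prob(X^*=x)=x\Prob(X=x)/\E(X)$ stated above, simply by summing against $f$. Writing $\mu=\E(X)=\sum_i \E(X_i)$ and $\mu_i=\E(X_i)$, so that $\Prob(I=i)=\mu_i/\mu$, it therefore suffices to establish $\E[f(X^*)] = \E[X f(X)]/\mu$.

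First I would condition on the random index $I$ and then on the value of $X_I'$. Conditioning on $I=i$ and using that $X_i'$ has the size-bias law of $X_i$ gives $\Prob(X_i'=x\mid I=i)=x\Prob(X_i=x)/\mu_i$. The crucial step is the coupling hypothesis: given $I=i$ and $X_i'=x$, the whole vector $(X_1',\dots,X_n')$ has the law of $(X_1,\dots,X_n)$ conditioned on $X_i=x$, and since $X^*=\sum_j X_j'$ is a function of that vector, this yields $\E[f(X^*)\mid I=i,\,X_i'=x]=\E[f(X)\mid X_i=x]$. Substituting back,
\begin{equation*}
\E[f(X^*)] = \sum_i \frac{\mu_i}{\mu}\sum_x \frac{x\Prob(X_i=x)}{\mu_i}\,\E[f(X)\mid X_i=x] = \frac{1}{\mu}\sum_i\sum_x x\Prob(X_i=x)\,\E[f(X)\mid X_i=x].
\end{equation*}

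To finish, I would recognize the inner sum as a single expectation: $\sum_x x\Prob(X_i=x)\,\E[f(X)\mid X_i=x] = \sum_x x\,\E[f(X)\mathbbm{1}_{\{X_i=x\}}]=\E[X_i f(X)]$, since $\sum_x x\,\mathbbm{1}_{\{X_i=x\}}=X_i$. Summing over $i$ then gives $\E[f(X^*)]=\mu^{-1}\sum_i \E[X_i f(X)]=\mu^{-1}\E[X f(X)]$, which is exactly the characterizing identity, completing the argument.

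I expect the only genuinely substantive point to be the use of the coupling hypothesis to pass from $\E[f(X^*)\mid I=i,\,X_i'=x]$ to $\E[f(X)\mid X_i=x]$; everything else is bookkeeping with conditional expectations and an interchange of summation order. A minor care point is that the argument as written is cleanest in the discrete nonnegative setting in which the size-bias distribution was defined above, where all the sums over $x$ are legitimate and the interchange is harmless; this is exactly the regime in which the lemma will be applied, since each $X_i$ will be an indicator.
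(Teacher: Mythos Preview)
Your proof is correct and is the standard argument via the functional characterization of size-biasing. Note, however, that the paper does not give its own proof of this lemma: it is simply quoted as \cite[Lemma 2.1]{BRS89} and used as a black box in the construction of Proposition~\ref{prop: size-bias coupling}, so there is no ``paper's proof'' to compare against.
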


\subsection{Construction}

Let $w \in W$ be an arbitrary element of an arbitrary Coxeter system $W$ with generator set $\{s_0, s_1, \dots, s_{n-1}\}$. For each $i$, define
$$
w_i^* = w_i^{+*} = \begin{cases} w & \text{if } \des_i(w) = 1 \\ ws_i & \text{if } \des_i(w) = 0 \end{cases}
$$
and $w_i^{-*} = ((w^{-1})^*_i)^{-1}$. That is, obtaining $w_i^*$ from $w$ ensures a right descent at $s_i$ while taking $w_i^{-*}$ ensures a left descent at $s_i$. Now, let $w^*$ be the random variable (taking values in $W$) that is obtained by taking uniformly at random a generator $s_i$ and uniformly at random a sign $\pm$ and then letting $w^* = w_i^{\pm *}$. We claim that this gives a size-bias coupling for two-sided descents.

\begin{proposition} \label{prop: size-bias coupling}
Let $W$ be a finite Coxeter group and $w \in W$ be Mallows distributed. Then the random variable $\des(w^*)+\des((w^*)^{-1})$ has the size-bias distribution of $\des(w)+\des(w^{-1})$.
\end{proposition}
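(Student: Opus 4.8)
The plan is to apply the size-bias characterization for sums, \Cref{lem: size-bias characterization}, to the decomposition
$$
\des(w)+\des(w^{-1})=\sum_{i=0}^{n-1}\des_i(w)+\sum_{i=0}^{n-1}\des_i(w^{-1}),
$$
a sum of $2n$ nonnegative (indeed Bernoulli) summands, which I index by pairs $(i,\pm)$, writing $\des_{(i,+)}:=\des_i(w)$ and $\des_{(i,-)}:=\des_i(w^{-1})$.

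First I would compute the means of all summands and show they coincide. Using the bijection $w\mapsto ws_i$, the group $W$ partitions into pairs $\{u,us_i\}$; within each pair exactly one element has a right descent at $s_i$, and that element has length one greater than the other, so under $\mu_q^W$ the conditional probability of a descent in each pair is $q/(1+q)$. Summing over pairs gives $\E\des_i(w)=q/(1+q)$ for every $i$. Since $\ell(w)=\ell(w^{-1})$, the measure $\mu_q^W$ is invariant under $w\mapsto w^{-1}$, hence $\E\des_i(w^{-1})=q/(1+q)$ as well. Therefore all $2n$ summands share the common mean $q/(1+q)$, and the index distribution required by \Cref{lem: size-bias characterization}, which is proportional to the means, is exactly uniform over the $2n$ pairs $(i,\pm)$ — precisely the distribution used to build $w^*$ (choose $s_i$ uniformly, then a sign uniformly, assigning weight $\tfrac{1}{2n}$ to each pair).

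Next I would verify the conditional-resampling requirement of \Cref{lem: size-bias characterization}. Because each summand is an indicator, its size-bias distribution is the constant $1$, so the requirement reduces to showing that $w_i^{+*}$ is distributed as $w$ conditioned on $\{\des_i(w)=1\}$ and that $w_i^{-*}$ is distributed as $w$ conditioned on $\{\des_i(w^{-1})=1\}$; since every summand is a function of the underlying group element, matching the law of the element automatically matches the joint law of the entire summand vector. For $w_i^{+*}$, I would fix $w_0$ with $\des_i(w_0)=1$ and observe that $w_i^{+*}=w_0$ precisely when $w=w_0$ or $w=w_0s_i$ (the latter having $\des_i=0$ and length $\ell(w_0)-1$); summing the two Mallows weights and comparing with $\Prob(w=w_0)/\Prob(\des_i(w)=1)$ yields equality. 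For $w_i^{-*}=((w^{-1})_i^*)^{-1}$, I would set $v=w^{-1}$ (again Mallows distributed), apply the preceding computation to see that $v_i^*=v_i^{+*}$ has the law of $v$ given $\{\des_i(v)=1\}$, and then take inverses, using $\des_i(v)=\des_i(w^{-1})$ and $v^{-1}=w$ to conclude that $w_i^{-*}$ has the law of $w$ given a left descent at $s_i$.

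The main obstacle, and the step that makes the uniform choice of index legitimate, is the equality of all the summand means: it relies both on the length-pairing argument under the Mallows weighting and on the inversion invariance of $\mu_q^W$. The other point requiring care is the bookkeeping of the inverse in $w_i^{-*}$, ensuring that forcing a right descent of $w^{-1}$ at $s_i$ corresponds, after inversion, to forcing a left descent of $w$ at $s_i$ and to conditioning on $\{\des_i(w^{-1})=1\}$ rather than on some other event. Once these points are settled, \Cref{lem: size-bias characterization} applies directly and yields the claim.
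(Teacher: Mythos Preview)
Your proposal is correct and follows essentially the same route as the paper: both apply \Cref{lem: size-bias characterization} to the $2n$-term Bernoulli decomposition, use the common mean $q/(1+q)$ to justify the uniform index, and verify that $w_i^{\pm*}$ has the law of $w$ conditioned on the relevant descent via the pairing $w\leftrightarrow ws_i$ (with the paper invoking symmetry where you explicitly invoke inversion-invariance of $\mu_q^W$). Your write-up is slightly more explicit about why matching the law of the underlying element suffices to match the joint law of all summands, which is a worthwhile clarification.
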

\begin{proof}
The proof proceeds by showing that it coincides with the construction given by Proposition \ref{lem: size-bias characterization}.

We have,
\begin{equation*}
    \des(w)+\des(w^{-1})=\sum _{i=0}^{n-1}\des_i(w)+\des_i(w^{-1}).
\end{equation*}
Note that each $\des_i(w)$ and $\des_i(w^{-1})$ is identically distributed (although they are not independent), with $\mathbb{P}(\des_i(w)=1)=q/(1+q)$. The size-bias distribution of $\des_i(w)$ is the constant $1$. (It is not hard to show that this is the case for any Bernoulli random variable.) Then $\des(w^*)+\des((w^*)^{-1})$ can be described as picking a summand uniformly at random, and replacing it with its size-bias distribution.

Thus, it remains to check that conditional on picking some index $i$, and either $w$ or $w^{-1}$, which by symmetry can be taken to be $w$, the distribution of all other summands $\des_j(w_i^*)$, $j\neq i$, and $\des_j((w_i^*)^{-1})$ is the same as that of $\des_j(w)$, $j\neq i$ and $\des_j(w^{-1})$ conditioned to have $\des_i(w)=1$ (note that the conditioning gives $w^*=w_i^*$).

In fact, it can be shown that the distribution of $w_i^*$ is equal to that of $w$ conditioned to have $\des_i(w)=1$. To see this, note that
\begin{equation*}
\begin{split}
    \Prob(w_i^*=w_0)&=\Prob(w=w_0|\des_i(w)=1)\Prob(\des_i(w)=1)
    \\&\qquad+\Prob(w_i^*=w_0|\des_i(w)=0)\Prob(\des_i(w)=0)
\end{split}
\end{equation*}
and so it suffices to show that the distribution of $w_i^*$ given that $\des_i(w)=0$ and the distribution of $w$ given that $\des_i(w)=1$ are the same. Note that, conditioning on the event that $\des_i(w)=0$, $w_i^*=ws_i$. But the map $w\mapsto ws_i$ is a bijection from $\{w|\des_i(w)=0\}$ to $\{w|\des_i(w)=1\}$ and $\Prob(ws_i)=q\Prob(w)$ if $\des_i(w)=0$. Thus, the relative probabilities are unchanged and so the distributions are the same.
\end{proof}

\subsection{Decomposition of the Variance Term}

The idea is to now use this coupling to apply Theorem \ref{thm: size-bias steins} to obtain a quantitative bound. The main focus will be on the first error term, which will be called the \emph{variance term}.

First, note that 
\begin{equation*}
\begin{split}
&\Var \E(\des(w)+\des(w^{-1})-\des(w^*)-\des((w^*)^{-1})|\des(w^{-1})+\des(w))
\\\leq &\Var(\E(\des(w)+\des(w^{-1})-\des(w^*)-\des((w^*)^{-1})|w)).
\end{split}
\end{equation*}
Writing
\begin{equation*}
    E(\des(w^*)|w)=\frac{1}{2(n-1)}\sum _{i,\pm}\des(w_{i}^{\pm *})
\end{equation*}
and similarly for $(w^*)^{-1}$, the variance can be written as
\begin{equation*}
    \Var\left(\frac{1}{2n}\left(\Sigma_1+\Sigma_2+\Sigma_3+\Sigma_4\right) \right)  = \frac{1}{4n^2} \Var\left(\Sigma_1+\Sigma_2+\Sigma_3+\Sigma_4\right)
\end{equation*}
where
\begin{align*}
    \Sigma_1&=\sum _i \des(w)-\des(w_i^*)
    \\\Sigma_2&=\sum _i \des(w)-\des(w_{i}^{-*})
    \\\Sigma_3&=\sum _i \des(w^{-1})-\des((w_i^*)^{-1})
    \\\Sigma_4&=\sum_i \des(w^{-1})-\des((w_{i}^{-*})^{-1}).
\end{align*}

The variance can be split into 16 types of covariance terms. These can be reduced by symmetry and the fact that $w$ and $w^{-1}$ have the same distribution to 6 types of terms. For example,
\begin{equation*}
\begin{split}
    &\Cov(\des(w)-\des(w_i^*),\des(w)-\des(w_j^*))
    \\=&\Cov(\des(w^{-1})-\des((w_{i}^{-*})^{-1}),\des(w^{-1})-\des((w_{j}^{-*})^{-1})).
\end{split}
\end{equation*}
The types of terms are summarized in Table \ref{tab:term types} along with their multiplicities.

\begin{table}
\caption{The types of terms and multiplicities in the variance bound}
\begin{center}
\begin{tabular}{c|c|c}
     Type & Multiplicity & Term \\\hline
     1&2& $\sum \Cov(\des(w)-\des(w_i^*),\des(w)-\des(w_j^*))$\\
     2&4& $\sum \Cov(\des(w)-\des(w_i^*),\des(w^{-1})-\des((w_j^*)^{-1}))$\\
     3&4& $\sum \Cov(\des(w)-\des(w_i^*),\des(w)-\des(w_{j}^{-*}))$\\
     4&2& $\sum \Cov(\des(w)-\des(w_i^*),\des(w^{-1})-\des((w_{j}^{-*})^{-1}))$\\
     5&2& $\sum \Cov(\des(w)-\des(w_{i}^{-*}),\des(w)-\des(w_{j}^{-*}))$\\
     6&2& $\sum \Cov(\des(w)-\des(w_{j}^{-*}),\des(w^{-1})-\des((w_i^*)^{-1}))$
\end{tabular}
\end{center}
\label{tab:term types}
\end{table}

\section{Covariance Bounds: Types 1,2,3, and 4} \label{sec: easy cov}

\subsection{Results on the Covariance Terms}

\begin{lemma} \label{Lemma: Uniform Bound Difference}
For all $i$ and if $W = B_n$ or $D_n$, $|\des(w) - \des(w_i^*)| \le 3$.
\end{lemma}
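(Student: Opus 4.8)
The plan is to reduce $\des(w)-\des(w_i^*)$ to a short signed sum of descent indicators and then bound the number of terms. By \Cref{Lemma: Difference is parabolic},
\begin{equation*}
\des(w)-\des(w_i^*)=\sum_{s_j\in T_i}\left(\des_j(w)-\des_j(w_i^*)\right),
\end{equation*}
where $T_i$ is $s_i$ together with its neighbors in the Coxeter graph. If $\des_i(w)=1$ then $w_i^*=w$ and the difference is $0$, so I would assume $\des_i(w)=0$ and hence $w_i^*=ws_i$. The summand at $j=i$ is then exactly $\des_i(w)-\des_i(ws_i)=0-1=-1$ (using that exactly one of $w,ws_i$ has a descent at $s_i$), and every other summand, being a difference of two $\{0,1\}$-valued indicators, lies in $\{-1,0,1\}$. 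Thus the whole bound comes down to counting the neighbors of $s_i$.

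Reading off \Cref{Fig: Coxeter graphs}, every generator of $B_n$ has at most two neighbors, and the same is true in $D_n$ for every generator except $s_2$, whose neighbors are $s_0,s_1,s_3$. For $W=B_n$ (all $i$) and for $W=D_n$ with $i\neq 2$, the sum therefore has one term equal to $-1$ and at most two terms in $\{-1,0,1\}$, so $|\des(w)-\des(w_i^*)|\leq 3$ immediately.

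The remaining case, $W=D_n$ with $i=2$, is the main obstacle: $T_2$ has four elements, so naive counting gives only the bound $4$. To improve this I would use the combinatorial descent rules for signed permutations coming from the inversion description in \Cref{subsec: Coxeter} to argue that the three neighbor terms all share a sign. Since $\des_2(w)=0$ means $w(2)<w(3)$, passing to $ws_2$ swaps the values in positions $2,3$ (and $-2,-3$), raising the value at position $2$ and lowering it at position $3$. The descent at $s_1$ (a comparison of positions $1,2$), at $s_3$ (a comparison of positions $3,4$), and at $s_0$ (a sign condition on positions $1,2$) each then shifts monotonically in the direction that can only remove a descent, so each of the three neighbor terms lies in $\{0,1\}$. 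The sum is then $-1$ plus three terms in $\{0,1\}$, hence lies in $\{-1,0,1,2\}$, giving $|\des(w)-\des(w_2^*)|\leq 2\leq 3$. The only delicate point is checking the monotonicity direction of each of the three neighbor conditions under the swap, which is a direct computation from the descent rules; the same monotonicity would in fact tighten the bound to $1$ in the at-most-two-neighbor cases, but the stated bound $3$ needs no such refinement there.
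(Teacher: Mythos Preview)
Your proof is correct and follows essentially the same route as the paper: reduce via \Cref{Lemma: Difference is parabolic} to a sum over $T_i$, dispose of all cases with $|T_i|\le 3$ trivially, and treat $W=D_n$, $i=2$ by a direct combinatorial check on the descent conditions. The only difference is in that exceptional case: the paper rules out magnitude $4$ by contradiction using just the $j=3$ neighbor (if all four terms equal $-1$ then $w(3)>w(2)$ and $w(2)>w(4)>w(3)$), whereas you show all three neighbor terms are nonnegative by monotonicity, which is slightly more work but yields the sharper bound $|\des(w)-\des(w_2^*)|\le 2$.
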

\begin{proof}
From Lemma \ref{Lemma: Difference is parabolic},
$$
\des(w) - \des(w_i^*) = \sum_{s_j \in T_i} (\des_j(w) - \des_j(w_i^*))
$$
and the result follows if $|T_i| \le 3$, which is the case except when $s_i = s_2$ and $W = D_n$. However, in this case, we can only have that $\des(w) - \des(w_2^*)$ is of magnitude $4$ only if $w_2^* = ws_2$ and each $\des_j(w) - \des_j(w_2^*) = -1$. But, this means that $w(3) > w(2)$ and $w(2) > w(4) > w(3)$ (a descent is induced at $s_3$). This is impossible, so the result follows.
\end{proof}

\begin{lemma} \label{Lemma: Uniform Bound Inverse}
For any $W$, we have $|\des(w)-\des(w_i^{-*})| \le 1$.
\end{lemma}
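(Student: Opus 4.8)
The plan is to unwind the definition of $w_i^{-*}$, reduce to a single left multiplication by $s_i$, and then invoke the fact that left multiplication by one generator perturbs the total number of right descents by at most one. First I would compute: by definition $w_i^{-*}=((w^{-1})_i^*)^{-1}$, and $(w^{-1})_i^*$ equals $w^{-1}$ when $\des_i(w^{-1})=1$ and equals $w^{-1}s_i$ when $\des_i(w^{-1})=0$. Taking inverses and using $s_i^2=e$ gives $w_i^{-*}=w$ in the first case and $w_i^{-*}=s_iw$ in the second. In the first case $\des(w)-\des(w_i^{-*})=0$, so everything reduces to proving $|\des(w)-\des(s_iw)|\le 1$. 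Note the contrast with the right-multiplication case (Lemma~\ref{Lemma: Difference is parabolic}): there $w_i^*$ differs from $w$ by right multiplication, which can alter descents throughout the neighbourhood $T_i$, whereas here the perturbation is on the left and turns out to be much more rigid.

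For the main step I would use the geometric realization of $W$, which makes the bound uniform across all Coxeter groups (as the statement demands). A right descent of an element $x$ at $s_j$ is equivalent to $x(\alpha_j)$ being a negative root, so $\des(x)$ counts exactly the simple roots that $x$ carries into $\Phi^-$. The decisive fact about a simple reflection is that $s_i$ sends $\alpha_i\mapsto-\alpha_i$ while permuting $\Phi^+\setminus\{\alpha_i\}$ among itself (and likewise $\Phi^-\setminus\{-\alpha_i\}$). Consequently, for each $j$ the sign of $s_i(w(\alpha_j))$ agrees with that of $w(\alpha_j)$ unless $w(\alpha_j)\in\{\alpha_i,-\alpha_i\}$. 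Since $w$ is injective on roots and distinct simple roots are never negatives of one another, at most one index $j$ can satisfy $w(\alpha_j)\in\{\alpha_i,-\alpha_i\}$. Hence at most one right-descent indicator flips between $w$ and $s_iw$, which gives $|\des(w)-\des(s_iw)|\le 1$ and completes the proof.

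The main obstacle is that the root-system picture has not been set up earlier in the paper, so to keep the argument self-contained I would either import the needed statement from \cite{BB05} (namely that $s_i$ permutes $\Phi^+\setminus\{\alpha_i\}$) or replace this step with a direct computation in one-line notation for signed permutations. In the latter route, left multiplication by $s_i$ for $i\ge 1$ swaps the values $i,i+1$ and $-i,-(i+1)$ wherever they occur, and left multiplication by $s_0$ swaps $1,-1$; a case analysis on the positions of the swapped values shows the net change in the descent count (including the $s_0$ descent) is $0$ unless those values sit in adjacent positions, where it is $\pm 1$. I expect this elementary route to be the more tedious part, as it requires separately tracking boundary positions and the $s_0$ descent, whereas the root argument delivers the bound for every finite Coxeter group in a single line; I would therefore present the root argument as the primary proof.
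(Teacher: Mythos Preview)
Your proof is correct, but it proceeds by a genuinely different route than the paper. Both arguments begin with the same reduction to $|\des(w)-\des(s_iw)|\le 1$, and both ultimately identify the same ``exceptional'' index: the unique $j$ (if any) with $s_j=w^{-1}s_iw$, equivalently $w(\alpha_j)=\pm\alpha_i$. The difference is in how that index is isolated. The paper stays entirely within the reduced-word combinatorics already set up: assuming $\ell(s_iw)=\ell(w)+1$, it first checks via a length inequality that every right descent of $w$ survives in $s_iw$, and then uses the Deletion Property to argue that a \emph{new} descent at $s_j$ forces a cancellation that deletes both the leading $s_i$ and the trailing $s_j$, giving $s_iw=ws_j$ and hence $s_j=w^{-1}s_iw$. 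Your argument instead invokes the geometric realisation: $\des_j(x)=1$ iff $x(\alpha_j)\in\Phi^-$, and $s_i$ permutes $\Phi^+\setminus\{\alpha_i\}$, so at most one simple root can have the sign of its image flipped. Your route is shorter and symmetric in the two directions of the inequality, but, as you note, it imports a fact about root systems that the paper has not developed; the paper's route is more self-contained given its earlier reliance on \cite{BB05} for word-combinatorial facts, and as a byproduct it actually pins down $\des(w)-\des(w_i^{-*})\in\{0,-1\}$, which the paper uses later in Section~\ref{sec: hard cov}. Your signed-permutation fallback would work but is indeed the tedious option; the root argument with a one-line citation to \cite{BB05} is the better choice if you keep your approach.
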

\begin{proof}
It suffices to show that multiplying on the left by $s_i$ changes at most one descent. That is, suppose that $\text{des}_i(w^{-1}) = 0$. Then, if $w$ has a reduced expression $w = s^1s^2 \dots s^k$, $s_iw$ has reduced expression 
$$
s_iw = s_is^1s^2 \dots s^k.
$$
If $w$ has a right descent at $s_j$, then $\ell(s_iws_j) \le \ell(ws_j) + 1 = \ell(w) = \ell(s_iw) - 1$ by \cite[Proposition 1.4.2 (iii)]{BB05}, so $s_iw$ also has a right descent at $s_j$. If $w$ does not have a right descent at $s_j$, then we claim
$$
s_iws_j = s_is^1s^2 \dots s^ks_j
$$
is a reduced word for all but at most one $j$. Suppose that it is not. Then, we can cancel two generators from the above word by the Deletion 
Property (see \cite[Proposition 1.4.7]{BB05}). This must include $s_i$ or else $s^1s^2 \dots s^ks_j$ is not a reduced word for $ws_j$ and $\ell(ws_j) < \ell(w)$, a contradiction. Similarly, it must also include $s_j$ or else $s_iw = s_is^1s^2 \dots s^k$ is not a reduced word, but $w$ does not have a left descent at $s_i$. It follows that 
$$
s_is^1s^2 \dots s^ks_j = s^1s^2 \dots s^k \iff s_is^1s^2 \dots s^k = s^1s^2 \dots s^ks_j.
$$
There is at most one $s_j$ that solves the above equation, namely $s_j = w^{-1}s_iw$.
\end{proof}

\subsection{Sum Bounds}

Now, by Lemma \ref{Lemma: general_parabolic_indep} and the above, we have that
$$
\Cov(\des(w) - \des(w_i^*), \des(w) - \des(w_j^*)) = 0
$$
if all elements from $T_i$ and $T_j$ commute. It is then sufficient that $i$ and $j$ are a distance of greater than $3$ apart in the Coxeter graph of $W$. We claim the following:

\begin{lemma} [Type 1 Bound] \label{Lemma: Type 1 Bound}
In both cases $W = B_n, D_n$,
$$
|\sum_{i,j \ge 0} \Cov(\des(w) - \des(w_i^*), \des(w) - \des(w_j^*))| \le 63(n-1).
$$
\end{lemma}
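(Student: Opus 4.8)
The plan is to use the two ingredients already in hand: each summand is a bounded random variable, and most of the covariances vanish by independence. Write $X_i = \des(w)-\des(w_i^*)$. By \Cref{Lemma: Uniform Bound Difference} we have $|X_i|\le 3$ in both cases $W=B_n,D_n$, so $\Var(X_i)\le \E[X_i^2]\le 9$. The Cauchy--Schwarz inequality for covariances then gives, for every ordered pair $(i,j)$,
\[
|\Cov(X_i,X_j)|\le \sqrt{\Var(X_i)\,\Var(X_j)}\le 9 .
\]
Thus the entire sum is controlled as soon as I know how many of its terms can be nonzero.

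Next I would record which covariances survive, using the facts assembled just before the statement. By \Cref{Lemma: Difference is parabolic} the quantity $X_i$ is a function of $w_{T_i}$ alone, and likewise $X_j$ depends only on $w_{T_j}$. When $T_i$ and $T_j$ are disjoint and every generator of $T_i$ commutes with every generator of $T_j$, \Cref{Lemma: general_parabolic_indep} shows $w_{T_i}$ and $w_{T_j}$ are independent, whence $\Cov(X_i,X_j)=0$. Since $T_i$ is the closed neighborhood of $s_i$ in the Coxeter graph, a non-commuting pair between $T_i$ and $T_j$ forces an edge between these neighborhoods, i.e. graph distance $d(s_i,s_j)\le 3$; any pair with $d(s_i,s_j)>3$ contributes $0$. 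So only pairs at distance at most $3$ remain.

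It then remains to count the surviving ordered pairs and multiply by $9$. For $W=B_n$ the Coxeter graph is the path $s_0-s_1-\cdots-s_{n-1}$, and the number of ordered pairs at distance at most $3$ is $n+2(n-1)+2(n-2)+2(n-3)=7n-12$. For $W=D_n$ the only change is the fork at $s_2$; I would tabulate, for each vertex, the number of vertices within distance $3$ (equal to $7$ well inside the long arm, smaller near the two ends, and equal to $8$ only at $s_4$, since the fork places both $s_0$ and $s_1$ at distance $3$ from $s_4$) and sum to obtain at most $7n-10$ surviving ordered pairs. In either case the total is at most $7(n-1)$, so
\[
\Big|\sum_{i,j\ge 0}\Cov(X_i,X_j)\Big|\le 9\cdot 7(n-1)=63(n-1).
\]

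The only place that demands genuine care is this last count in the $D_n$ case: because of the branch vertex, the naive bound ``at most $7$ vertices within distance $3$ per vertex'' fails at $s_4$, so I cannot argue vertex by vertex but must sum the local counts globally and check that the deficits near the fork and at the two path ends more than absorb the single vertex of excess. Everything else is an immediate consequence of the uniform bound of \Cref{Lemma: Uniform Bound Difference} and the independence provided by \Cref{Lemma: general_parabolic_indep}.
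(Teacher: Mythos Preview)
Your proposal is correct and matches the paper's proof essentially line for line: both bound each covariance by $9$ via $|X_i|\le 3$, invoke \Cref{Lemma: general_parabolic_indep} (together with \Cref{Lemma: Difference is parabolic}) to kill all terms with $d(s_i,s_j)>3$, and then count the surviving ordered pairs as $7n-12$ for $B_n$ and $7n-10$ for $D_n$, both at most $7(n-1)$. The only cosmetic difference is that for $D_n$ the paper counts by first handling the path $s_1,\dots,s_{n-1}$ and then adding the $9$ ordered pairs involving $s_0$, whereas you count vertex-by-vertex; both give the same total.
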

\begin{proof}
For $W=B_n$, we have that $\Cov(\des(w) - \des(w_i^*), \des(w) - \des(w_j^*)) = 0$ if $|i-j| > 3$ since the Coxeter graph is just a straight line. Thus, a covariance is nonzero only if $|i-j| \le 3$. It is not hard to see that there are $n$ pairs $(i,j)$ such that $|i-j| = 0$ and $2(n-k)$ pairs $(i,j)$ such that $|i-j| = k > 0$. So there are
$$
n + 2(n-1) + 2(n-2) + 2(n-3) = 7n - 12 \le 7(n-1)
$$
such pairs. Moreover, for any such pair,
$$
|\Cov(\des(w) - \des(w_i^*), \des(w) - \des(w_j^*))| \le 9
$$
since each term inside the covariance is bounded absolutely by $3$ by Lemma \ref{Lemma: Uniform Bound Difference}. It follows that the total sum of covariances is bounded above by $63(n-1)$.

In the case that $W=D_n$, we have that $\Cov(\des(w) - \des(w_i^*), \des(w) - \des(w_j^*)) = 0$ if $|i-j| > 3$ for $i,j > 0$. Moreover, the only generators within a distance of three from $s_0$ are $s_1, s_2, s_3, s_4$. Thus, the number of pairs $(i,j)$ such that the covariance may not be $0$ is at most
$$
(n-1) + 2(n-2) + 2(n-3) + 2(n-4) + 9 = 7n-10 \le 7(n-1).
$$
Moreover, Lemma \ref{Lemma: Uniform Bound Difference} implies that for any such pair
$$
|\Cov(\des(w) - \des(w_i^*), \des(w) - \des(w_j^*))| \le 9.
$$
It follows that the total sum of covariances is bounded above by $63(n-1)$.
\end{proof}

\begin{lemma}[Type 2 Bound] \label{Lemma: Type 2 Bound}
In both cases $W = B_n, D_n$,
$$
\sum_{i,j} \Cov(\des(w)-\des(w_i^*),\des(w^{-1})-\des((w_j^*)^{-1})) \le 63(n-1)
$$
\end{lemma}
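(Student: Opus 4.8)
The plan is to mirror the proof of \Cref{Lemma: Type 1 Bound} as closely as possible: show that all but $O(n)$ of the covariances vanish by an independence argument, and bound each surviving term by a constant. The first factor $\des(w)-\des(w_i^*)$ is, by \Cref{Lemma: Difference is parabolic}, a function of $w_{T_i}$, i.e. of the relative order of the values $\{w(a):a\in\overline{T_i}\}$ at positions near $i$. The genuinely new difficulty is that the second factor $\des(w^{-1})-\des((w_j^*)^{-1})$ is \emph{not} a function of any right-parabolic component of $w$, so \Cref{Lemma: general_parabolic_indep} does not apply directly and one must instead invoke the finer conditioning independence of \Cref{Lemma: parabolic_indep_conditioning}.

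First I would put the second factor into a usable form. Since $w_j^*$ equals $w$ or $ws_j$, its inverse equals $w^{-1}$ or $s_jw^{-1}$, so $(w_j^*)^{-1}=(w^{-1})_j^{-*}$ is exactly the left-descent-ensuring modification of $w^{-1}$. Applying \Cref{Lemma: Uniform Bound Inverse} to $w^{-1}$ gives $|\des(w^{-1})-\des((w_j^*)^{-1})|\le 1$. Equivalently, viewing this quantity through $w$, it records the change in the number of \emph{left} descents of $w$ produced by right-multiplication by $s_j$, which swaps the values at positions $j,j+1$ (and at $-j,-(j+1)$).

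The key structural step is to verify, by a direct case analysis, that away from the generator $s_0$ this swap alters exactly one left descent: the descent between the values $w(j)$ and $w(j+1)$, and only when these are consecutive integers. Thus the second factor equals $-\mathbf{1}[w(j+1)=w(j)+1]$, and in particular it is a function of the values that $w$ assigns to the positions in $\overline{T_j}$ rather than of their relative order alone. Granting this, when $i$ and $j$ are far apart in the Coxeter graph, $T_i$ and $T_j$ are disjoint, all their generators commute, and $\overline{T_i}\cap\overline{T_j}=\emptyset$, so \Cref{Lemma: parabolic_indep_conditioning} shows that $w_{T_i}$ is independent of the value set $\{w(a):a\in\overline{T_j}\}$. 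Since the first factor depends only on $w_{T_i}$ and the second only on that value set, the two factors are independent and the covariance vanishes. As in \Cref{Lemma: Type 1 Bound} this leaves the covariance nonzero only for $|i-j|\le 3$ (together with $O(1)$ index pairs coming from the signed boundary cases near $s_0$, which I would treat separately), giving at most $7(n-1)$ surviving terms; bounding each by $9$ exactly as in \Cref{Lemma: Type 1 Bound} yields the claimed $63(n-1)$ (the sharper bound $|\cdot|\le 1$ on the second factor would in fact improve this constant).

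The main obstacle is precisely this structural step. Because the second factor mixes left descents with right multiplication, it is not captured by any parabolic component, so the naive reuse of the Type 1 argument fails; the correct independence input is not \Cref{Lemma: general_parabolic_indep} (which compares relative orders) but \Cref{Lemma: parabolic_indep_conditioning} (which gives independence from the actual values). Carrying out the case analysis that identifies the factor as a function of the values at $\overline{T_j}$, and correctly accounting for the signed and $s_0$-boundary configurations in both $B_n$ and $D_n$, is where the real work lies.
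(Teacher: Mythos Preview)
Your proposal is correct and follows essentially the same approach as the paper. The paper makes the slightly sharper observation that the second factor depends only on the pair $(w(j),w(j+1))$, i.e.\ on $w_{\{s_j\}}$ together with the value set $\{w(j),w(j+1)\}$, rather than on all values in $\overline{T_j}$; this lets it invoke \Cref{Lemma: parabolic_indep_conditioning} with $S'=\{s_j\}$ and obtain independence already for $|i-j|\ge 3$ (with $i,j\ge 1$), but after adding back the boundary pairs involving $s_0$ the count of surviving terms is the same $7(n-1)$ either way, and the bound of $9$ per term and the conclusion are identical.
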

\begin{proof}
Consider the case $W = B_n$. For $j > 0$, note that $\des_i(w^{-1}) - \des_i((w_j^*)^{-1})$ is equal to $-1$ if and only if $w(j) < w(j+1)$ and $|w(j) - w(j+1)| = 1$ or $(w(j), w(j+1)) = \{-1,1\}$, and is $0$ otherwise. This is because reverse sorting at position $j$ only changes $w$ if it is well-ordered at $j$, and swapping $j, j+1$ in $w^{-1}$ only affects descents if $j, j+1$ are adjacent in $w^{-1}$ (i.e. if $\overline{\{s_k\}} = \{j, j+1\}$ for some $k$). Thus, $\des(w^{-1}) - \des((w_j^*)^{-1})$ depends solely on $w_{\{s_j\}}$ and the set $\{w(j), w(j+1)\}$. Then by Lemma \ref{Lemma: parabolic_indep_conditioning}, since $\des(w)-\des(w_i^*)$ depends only on $w_{T_i}$, if $T_i \cap T_j = \emptyset$ and $\overline{T_i} \cap \overline{\{s_j\}} = \emptyset$ then $\des(w)-\des(w_i^*)$ and $\des(w^{-1}) - \des((w_j^*)^{-1})$ are independent. This occurs if $|i-j| \ge 3$ and $i,j \ge 1$. The number of pairs $(i,j)$ for which this is not the case is
$$
n-1 + 2(n-2) + 2(n-3) + (2(n-1) + 1) = 7n - 12 \le 7(n-1).
$$
Hence, 
$$
\sum_{i,j} \Cov(\des(w)-\des(w_i^*),\des(w^{-1})-\des((w_j^*)^{-1})) \le 63(n-1)
$$
since each term is bounded by $9$. The $D_n$ case is essentially identical.
\end{proof}

We use the following probability result:

\begin{lemma}
\label{lem: cond ind bound}
Let $X$ and $Y$ be random variables with $|X|\leq C_1$ and $|Y|\leq C_2$. Let $A$ be some event such that conditional on $A$, $X$ and $Y$ are uncorrelated. Then
\begin{equation*}
\begin{split}
    |\Cov(X,Y)|\leq& 4C_1C_2\Prob(A^c).
\end{split}
\end{equation*}
\end{lemma}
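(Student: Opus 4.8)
The plan is to condition on the two-element partition $\{A, A^c\}$ and track how much the hypothesis that $X$ and $Y$ are uncorrelated given $A$ constrains the unconditional covariance. Write $p=\Prob(A)$ and $q=\Prob(A^c)=1-p$. If $p\in\{0,1\}$ the claim is immediate (the right-hand side is either $4C_1C_2$, which dominates the trivial bound $|\Cov(X,Y)|\le \E|XY|+|\E X||\E Y|\le 2C_1C_2$, or it is $0$ and $\Cov(X,Y)=\Cov(X,Y\mid A)=0$), so I would assume $0<p<1$. Set $a_1=\E(X\mid A)$, $a_2=\E(X\mid A^c)$, $b_1=\E(Y\mid A)$, $b_2=\E(Y\mid A^c)$, and $c=\E(XY\mid A^c)$; the boundedness hypotheses give $|a_i|\le C_1$, $|b_i|\le C_2$, and $|c|\le C_1C_2$, while the hypothesis on $A$ gives $\E(XY\mid A)=a_1b_1$.

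Next I would apply the law of total covariance with respect to the $\sigma$-algebra generated by $\mathbbm{1}_A$:
\begin{equation*}
\Cov(X,Y)=\E\big(\Cov(X,Y\mid \mathbbm{1}_A)\big)+\Cov\big(\E(X\mid \mathbbm{1}_A),\E(Y\mid \mathbbm{1}_A)\big).
\end{equation*}
The first term is $p\Cov(X,Y\mid A)+q\Cov(X,Y\mid A^c)=q(c-a_2b_2)$, since the conditional uncorrelatedness on $A$ kills the $A$ contribution. For the second term, each conditional expectation is a two-valued random variable determined by $\mathbbm{1}_A$, so a direct computation gives $\Cov(\E(X\mid \mathbbm{1}_A),\E(Y\mid \mathbbm{1}_A))=pq(a_1-a_2)(b_1-b_2)$. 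Adding these and simplifying with $1-p=q$ would yield the single expression
\begin{equation*}
\Cov(X,Y)=q\big[c-qa_2b_2+p(a_1b_1-a_1b_2-a_2b_1)\big].
\end{equation*}

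Finally I would bound the bracket. Applying $|a_i|\le C_1$, $|b_i|\le C_2$, $|c|\le C_1C_2$ termwise inside the bracket gives at most $(1+q+3p)C_1C_2=(2+2p)C_1C_2\le 4C_1C_2$, whence $|\Cov(X,Y)|\le 4C_1C_2\,q=4C_1C_2\Prob(A^c)$. The one point requiring care—and the main (mild) obstacle—is that the estimate must be applied only after combining the two total-covariance terms into the single bracketed expression: bounding the two terms separately would cost roughly $q(2C_1C_2)+4pq\,C_1C_2$ and lose the target constant $4$. The partial cancellation of the $a_2b_2$ contributions between the two terms is exactly what brings the constant down, so the algebraic simplification should be carried out explicitly before estimating.
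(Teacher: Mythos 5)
Your proof is correct. One thing to note: the paper itself states this lemma without giving a proof (it is the analogue of a lemma in \cite{He_2022}, whose proof expands $\Cov(X,Y)=\E(XY)-\E(X)\E(Y)$ over the partition $\{A,A^c\}$ and uses $\E(XY\mid A)=\E(X\mid A)\E(Y\mid A)$ to kill the on-$A$ contribution), so there is no in-paper argument to compare against. Your route through the law of total covariance is algebraically equivalent to that direct expansion: after substituting the conditional-uncorrelatedness hypothesis, both reduce to the same expression, namely
\begin{equation*}
\Cov(X,Y)=\Prob(A^c)\left[\E(XY\mid A^c)-\Prob(A^c)a_2b_2+\Prob(A)\bigl(a_1b_1-a_1b_2-a_2b_1\bigr)\right],
\end{equation*}
whose bracket is bounded by $(2+2\Prob(A))C_1C_2\le 4C_1C_2$. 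Your computation of the second total-covariance term as $\Prob(A)\Prob(A^c)(a_1-a_2)(b_1-b_2)$ is right, and your warning about combining the two terms before estimating is also accurate: bounding $\E\bigl(\Cov(X,Y\mid\mathbbm{1}_A)\bigr)$ and $\Cov\bigl(\E(X\mid\mathbbm{1}_A),\E(Y\mid\mathbbm{1}_A)\bigr)$ separately only yields the constant $2+4\Prob(A)\le 6$, so the cancellation of the $a_2b_2$ terms is genuinely needed to reach the stated constant $4$. Your handling of the degenerate cases $\Prob(A)\in\{0,1\}$ is a detail the standard terse write-up leaves implicit, and it is handled correctly here.
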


\begin{lemma}[Type 3 Bound] \label{Lemma: Type 3 Bound}
In both cases $W = B_n, D_n$,
$$
\sum_{i,j} \Cov(\des(w)-\des(w_i^*),\des(w)-\des(w_{j}^{-*})) \le 306(n-1) + 9.
$$
\end{lemma}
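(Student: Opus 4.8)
The plan is to follow the same template used for the Type 1 and Type 2 bounds, but to account for the fact that the two summands involve \emph{right}-type and \emph{left}-type modifications at $s_i$ and $s_j$ respectively, so that independence now requires controlling interactions through both the index set $\overline{T_i}$ and through the inverse permutation. First I would recall that by \Cref{Lemma: Difference is parabolic} the quantity $\des(w)-\des(w_i^*)$ is a function of $w_{T_i}$, and hence depends only on the relative order of the values $\{w(k) : k \in \overline{T_i}\}$. The term $\des(w)-\des(w_j^{-*})$ involves multiplication on the \emph{left} by $s_j$, which by \Cref{Lemma: Uniform Bound Inverse} changes at most one descent; I would argue that this difference is governed by the relative order of the values at a bounded set of indices determined by where $w^{-1}$ acts, i.e. by $\{w^{-1}(k)\}$ for $k$ near $j$, equivalently by the positions $k$ with $w(k) \in \overline{T_j}$ (roughly the preimages under $w$ of the indices $j,j+1$).

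The key step is then to identify when these two quantities are independent. Since $\des(w)-\des(w_i^*)$ depends on the order among indices in $\overline{T_i}$, and $\des(w)-\des(w_j^{-*})$ depends on the order among the (random) preimage indices of $\overline{T_j}$, I would invoke \Cref{inverse_indep}: conditional on the event $A$ that $\{w(k) : k \in \overline{T_i}\} \cap \overline{T_j}$ and $\{-w(k) : k \in \overline{T_i}\} \cap \overline{T_j}$ are each at most singletons, the parabolic factors $w_{T_i}$ and $(w^{-1})_{T_j}$ are independent and Mallows distributed, so the covariance vanishes conditional on $A$. This is exactly the hypothesis needed to apply \Cref{lem: cond ind bound} with $C_1 = 3$ (from \Cref{Lemma: Uniform Bound Difference}) and $C_2 = 1$ (from \Cref{Lemma: Uniform Bound Inverse}), giving $|\Cov| \le 12\,\Prob(A^c)$ for each pair $(i,j)$.

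The main obstacle will be bounding $\sum_{i,j}\Prob(A^c)$. The event $A^c$ occurs when two or more of the (at most four) values at indices in $\overline{T_i}$ land in, or have negatives landing in, the bounded set $\overline{T_j}$. Using the probability bounds \Cref{lem: set_indices_B} (for $B_n$) and \Cref{set_indices} (for $D_n$), I would estimate the probability that a specified pair of the values $w(k)$, $k\in\overline{T_i}$, is forced into $\overline{T_j}\cup(-\overline{T_j})$; since $|\overline{T_i}|$ and $|\overline{T_j}|$ are each $O(1)$, each such probability is $O(1/n)$. The combinatorial heart is to sum these over all $(i,j)$ pairs: there are $O(n^2)$ pairs, each contributing $O(1/n^2)$ to the probability of a collision, yielding a total of order $n$. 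I expect to have to separate the diagonal-ish contribution where $T_i$ and $T_j$ overlap directly (where the covariance is bounded crudely by $12$ for the $O(n)$ such pairs) from the off-diagonal contribution where the only coupling is through the inverse, and to track the constants carefully so that the two sources of error add up to the stated $306(n-1)+9$; the additive constant presumably absorbs the small number of exceptional pairs near $s_0$ in the $D_n$ case, handled exactly as in the Type 1 and Type 2 proofs.
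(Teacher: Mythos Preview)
Your overall template is right, but there is a real gap at the independence step. You want to apply \Cref{inverse_indep} to conclude that, conditional on your event $A$ (intersections at most singletons), $\des(w)-\des(w_i^*)$ and $\des(w)-\des(w_j^{-*})$ are independent. But $\des(w)-\des(w_j^{-*})$ is \emph{not} a function of $(w^{-1})_{T_j}$: for $j\ge 1$ in $B_n$ it equals $-1$ precisely when $w^{-1}(j+1)-w^{-1}(j)=1$, which depends on the actual positions of $j$ and $j+1$ in $w$, not merely on their relative order among $\{w^{-1}(k):k\in\overline{T_j}\}$. Consequently, when exactly one of $j,j+1$ lies in $w(\overline{T_i})$---a configuration allowed under your $A$---right multiplication by elements of $W_{T_i}$ moves that value around inside $\overline{T_i}$ and can switch whether $w^{-1}(j)$ and $w^{-1}(j+1)$ are adjacent. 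So $w_{T_i}$ and $\des(w)-\des(w_j^{-*})$ are genuinely correlated on your $A$, and \Cref{lem: cond ind bound} does not apply.

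The paper avoids this by conditioning on the stronger event that $w(\overline{T_i})\cap\overline{\{s_j\}}$ and $w(\overline{T_i})\cap(-\overline{\{s_j\}})$ are both \emph{empty}. Then neither $w^{-1}(j)$ nor $w^{-1}(j+1)$ lies in $\overline{T_i}$, the event $\{w^{-1}(j+1)-w^{-1}(j)=1\}$ is invariant under right multiplication by $W_{T_i}$, and the invariance mechanism behind \Cref{First_indep_result} yields the required conditional independence. With that event one has $\Prob(A^c)\le\sum_{k=i-1}^{i+2}\sum_{l=j}^{j+1}\Prob(w(k)=\pm l)$ by a one-point union bound, and the sum over $i,j$ collapses using only $\sum_{k,l}\Prob(w(k)=\pm l)\le n$, each term appearing $O(1)$ times. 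Your planned appeal to the two-point estimates \Cref{lem: set_indices_B} and \Cref{set_indices} is therefore unnecessary here (and your arithmetic sketch is off: $O(n^2)\cdot O(1/n^2)=O(1)$, not order $n$).
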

\begin{proof}
Consider a term $\Cov(\des(w)-\des(w_i^*),\des(w)-\des(w_{j}^{-*}))$ for $i,j > 0$. If we condition on the event
$$
A = \{w(\overline{T_i}) \cap \overline{\{s_j\}} = w(\overline{T_i}) \cap (-\overline{\{s_j\}}) = \emptyset\} = \{w | w(a) \neq \pm j, \pm(j+1) \ \forall i-1 \le a \le i+2\}
$$
then this covariance term is $0$ by Lemma \ref{inverse_indep} since $\des(w)-\des(w_i^*)$ is a function of $w_{T_i}$ and $\des(w)-\des(w_{-j}^*)$ is a function of $(w^{-1})_{\{s_j\}}$. Thus, by Lemma \ref{lem: cond ind bound}
$$
|\Cov(\des(w)-\des(w_i^*),\des(w)-\des(w_{j}^{-*}))| \le 36\Prob(A^c) \le 36\sum_{k = i-1}^{i+2} \sum_{l = j}^{j+1} \Prob(w(k) = l).
$$
Thus,
\begin{align*}
    \sum_{i,j \ge 1} \Cov(\des(w)-\des(w_i^*),\des(w)-\des(w_{j}^{-*})) &\le \sum_{i,j \ge 1} 36\sum_{k = i-1}^{i+2} \sum_{l = j}^{j+1} \Prob(w(k) = \pm l) \\
    &\le 288 \sum_{i,j} \Prob(w(i) = \pm j) \\
    &= 288(n-1).
\end{align*}
Now, there are $2n-1$ other terms in the sum, so the total sum is bounded above by
$$
288(n-1) + 9(2n-1) = 306(n-1) + 9.
$$
\end{proof}

\begin{lemma}[Type 4 Bound] \label{Lemma: Type 4 Bound}
In both cases $W = B_n, D_n$,
$$
\sum_{i,j} \Cov(\des(w)-\des(w_i^*),\des(w^{-1})-\des((w_{-j}^*)^{-1})) \le 594(n-1) + 9.
$$
\end{lemma}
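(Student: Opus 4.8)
The plan is to treat this exactly as the Type 3 bound (Lemma~\ref{Lemma: Type 3 Bound}), the only change being that the inverse factor now depends on a full star-neighborhood rather than a single generator. First I would rewrite the second difference using $w_j^{-*}=((w^{-1})_j^*)^{-1}$, so that $(w_j^{-*})^{-1}=(w^{-1})_j^*$ and hence
\[
\des(w^{-1})-\des\bigl((w_j^{-*})^{-1}\bigr)=\des(w^{-1})-\des\bigl((w^{-1})_j^*\bigr).
\]
By Lemma~\ref{Lemma: Difference is parabolic} applied to $w^{-1}$ this is a function of $(w^{-1})_{T_j}$, while $\des(w)-\des(w_i^*)$ is a function of $w_{T_i}$; both are bounded in absolute value by $3$ by Lemma~\ref{Lemma: Uniform Bound Difference}. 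So each Type 4 term has the shape $\Cov(f(w_{T_i}),g((w^{-1})_{T_j}))$ with $|f|,|g|\le 3$.

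For the main part $i,j\ge 1$ I would condition on the event $A$ that $w(\overline{T_i})\cap\overline{T_j}$ and $w(-\overline{T_i})\cap\overline{T_j}$ are both empty. Since right multiplication of $w$ by $W_{T_i}$ only permutes values among the indices $\pm\overline{T_i}$, on $A$ no value of $\overline{T_j}$ sits at such an index, so $(w^{-1})_{T_j}$ is unchanged; thus $A$ is invariant and Lemma~\ref{inverse_indep} gives that $w_{T_i}$ and $(w^{-1})_{T_j}$ are independent conditional on $A$. Lemma~\ref{lem: cond ind bound} with $C_1=C_2=3$ then yields
\[
\bigl|\Cov(f,g)\bigr|\le 36\,\Prob(A^c)\le 36\sum_{k\in\overline{T_i}}\sum_{l\in\overline{T_j}}\bigl(\Prob(w(k)=l)+\Prob(w(k)=-l)\bigr),
\]
the final inequality being a union bound over the index--value pair witnessing a nonempty intersection.

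Summing over $i,j\ge 1$ and exchanging summation order, I would use that in $B_n$ a fixed index $k$ lies in $\overline{T_i}=\{i-1,i,i+1,i+2\}$ for at most four values of $i$ and a fixed value $l$ lies in $\overline{T_j}$ for at most four values of $j$. This multiplicity factor $4\cdot 4=16$ collapses the quadruple sum into $16\sum_k\sum_{l\ge 1}(\Prob(w(k)=l)+\Prob(w(k)=-l))=16\sum_k 1\le 16(n-1)$, because the inner signed sum is total probability $1$. Hence the main part is at most $36\cdot 16(n-1)=576(n-1)$. The remaining terms are those with $i=0$ or $j=0$: there are $2n-1$ of them, each bounded by $9$, contributing $9(2n-1)=18(n-1)+9$. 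Adding gives $576(n-1)+18(n-1)+9=594(n-1)+9$, and the $D_n$ case follows identically.

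The step I expect to be the main obstacle is keeping the conditional-independence justification and the constant bookkeeping honest. One must use the empty-intersection event rather than the weaker at-most-singleton event of Lemma~\ref{inverse_indep}: a single shared value between $\pm\overline{T_i}$ and $\overline{T_j}$ could, under right multiplication by $W_{T_i}$, permute the relative order of the $\overline{T_j}$-preimages and so spoil the invariance of $(w^{-1})_{T_j}$. The multiplicity count likewise needs care at the ends of the Coxeter graph and, for $D_n$, near $s_0,s_1,s_2$, where $\overline{T_j}$ acquires signed indices and is slightly larger; these finitely many exceptional positions are exactly what the crude boundary estimate $9(2n-1)$ is there to absorb.
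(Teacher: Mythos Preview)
Your proposal is correct and follows essentially the same approach as the paper: rewrite the second factor as $\des(w^{-1})-\des((w^{-1})_j^*)$, observe it depends on $(w^{-1})_{T_j}$, condition on the event that $w(\overline{T_i})$ avoids $\pm\overline{T_j}$, apply Lemma~\ref{inverse_indep} together with Lemma~\ref{lem: cond ind bound}, and collect the $4\times 4=16$ multiplicity to get $576(n-1)$ plus the $9(2n-1)$ boundary terms. One minor remark: your worry that the at-most-singleton event of Lemma~\ref{inverse_indep} would not suffice is unfounded, since a single preimage moving within the interval $\overline{T_i}$ cannot cross any of the other $\overline{T_j}$-preimages (they all lie outside that interval), so $(w^{-1})_{T_j}$ is still invariant; but since you and the paper both use the empty-intersection event anyway, this does not affect the argument.
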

\begin{proof}
First suppose that $i,j > 0$. Then $\des(w)-\des(w_i^*)$ is a function of $w_{T_i}$ and $\des(w^{-1})-\des((w_{-j}^*)^{-1})$ is a function of $(w^{-1})_{T_j}$. Thus, if we condition on the event
$$
A = \{w| \{|w(i-1)|, |w(i)|, |w(i+1)|, |w(i+2)|\} \cap \{j-1, j, j+1, j+2\} = \emptyset\}
$$
the covariance term becomes $0$ by Lemma \ref{inverse_indep}. Hence, by Lemma \ref{lem: cond ind bound}, we have
$$
\Cov(\des(w)-\des(w_i^*),\des(w^{-1})-\des((w_{-j}^*)^{-1})) \le 36\Prob(A^c) \le 36\sum_{k = i-1}^{i+2} \sum_{l = j-1}^{j+2} \Prob(w(k) = \pm l).
$$
Thus,
\begin{align*}
    \sum_{i,j \ge 1} \Cov(\des(w)-\des(w_i^*),\des(w)-\des(w_{j}^{-*})) &\le \sum_{i,j \ge 1} 36\sum_{k = i-1}^{i+2} \sum_{l = j-1}^{j+2} \Prob(w(k) = \pm l) \\
    &\le 576 \sum_{i,j} \Prob(w(i) = \pm j) \\
    &= 576(n-1).
\end{align*}
Now, there are $2n-1$ other terms in the sum, so the total sum is bounded above by
$$
576(n-1) + 9(2n-1) = 594(n-1) + 9.
$$
\end{proof}

\section{Covariance Bounds: Types 5,6} \label{sec: hard cov}

We wish to evaluate
$$
\sum_{i,j} \mathrm{Cov}(\mathrm{des}(w)-\mathrm{des}(w_{i}^{-*}), \mathrm{des}(w)-\mathrm{des}(w_{j}^{-*})).
$$

\subsection{$B_n$ Case}

Note that we have that $\mathrm{des}(w)-\mathrm{des}(w_{i}^{-*})$ is $0$ or $-1$ and similarly for $\mathrm{des}(w)-\mathrm{des}(w_{j}^{-*})$, so that, for positive $i,j$,
\begin{align*}
    \mathrm{Cov}(\mathrm{des}(w)-\mathrm{des}(w_{i}^{-*}), \mathrm{des}(w)-\mathrm{des}(w_{j}^{-*})) &= \mathbb{E}((\mathrm{des}(w)-\mathrm{des}(w_{i}^{-*}))(\mathrm{des}(w)-\mathrm{des}(w_{j}^{-*}))) \\
    &\le \mathbb{P}\left( \substack{w(i+1)-w(i) = 1 \\ w(j+1)-w(j) = 1} \right)
\end{align*}
since $\mathrm{des}(w)-\mathrm{des}(w_{i}^{-*}) = -1$ if and only if $w(i+1)-w(i) = 1$.

\subsubsection{$q \approx 1$}

\begin{lemma}
Let $|i-j|>1$, for $i,j \in [n]$, and $n \ge 4$. Then,
$$
\mathbb{P}\left( \substack{w(i+1)-w(i) = 1 \\ w(j+1)-w(j) = 1} \right) \le \frac{22(1-q)^2}{(1-q^{2n-6})^2}.
$$
\end{lemma}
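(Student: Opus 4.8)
The plan is to expand the joint probability over the possible values at the four positions and then invoke the $B_n$ probability bound from \Cref{lem: set_indices_B}. Since $|i-j|>1$, the indices $i,i+1,j,j+1$ are four distinct elements of $[n]$, and the event decomposes as a disjoint union over the choices $a=w(i)$ and $b=w(j)$ (which force $w(i+1)=a+1$ and $w(j+1)=b+1$), where $a,b$ range over all sign-compatible values making $a,a+1,b,b+1$ distinct and pairwise non-negating. Writing $w'_{a,b}$ for the signed permutation fixing these four values and sorted elsewhere, \Cref{lem: set_indices_B} with $C=\{i,i+1,j,j+1\}$, $|C|=4$, gives
$$
\mathbb{P}\left(\substack{w(i+1)-w(i)=1\\ w(j+1)-w(j)=1}\right)\le \frac{[2n-8]_q!!}{[2n]_q!!}\sum_{a,b}q^{\ell(w'_{a,b})}=\frac{1}{[2n]_q[2n-2]_q[2n-4]_q[2n-6]_q}\sum_{a,b}q^{\ell(w'_{a,b})}.
$$
Because $\tfrac{22(1-q)^2}{(1-q^{2n-6})^2}=\tfrac{22}{[2n-6]_q^2}$, it then suffices to prove the clean reformulation $\sum_{a,b}q^{\ell(w'_{a,b})}\le 22\,[2n]_q[2n-2]_q[2n-4]_q/[2n-6]_q$.

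Next I would analyze $\ell(w'_{a,b})$ via the explicit B-inversion formula. Because $w'_{a,b}$ is sorted outside the four fixed positions and their negatives, its B-inversions split into three groups: those carried by the consecutive-value block $\{a,a+1\}$ sitting at positions $\{i,i+1\}$, those carried by $\{b,b+1\}$ at $\{j,j+1\}$, and a bounded number of cross-inversions between the two blocks (at most a constant, since each block occupies only two positive positions together with their negatives). The contribution of a single block grows linearly in the displacement of its value from the ``home'' index, so that $q^{\ell}$ decays geometrically as $a$ (resp. $b$) moves away from $i$ (resp. $j$), while the cross term only multiplies the estimate by a bounded factor.

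I would then bound the double sum by a bounded multiple of the product of two single-block geometric series. Each single-block series runs over the roughly $2n$ admissible values of its variable, and summing the geometric decay in displacement yields a closed form comparable to the $q$-integers $[2n]_q,[2n-2]_q,\dots$, with the finite range producing the truncation factors $(1-q^{2n-2k})$; collecting constants and the cross-term factor produces the numerical constant $22$. Casework on the relative signs and order of $a$ and $b$ (needed to handle the mixed-sign B-inversions and the non-negating constraint) is absorbed into this constant by over-counting.

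The main obstacle is the length computation in the second and third paragraphs. Pinning down $\ell(w'_{a,b})$ for a signed permutation is genuinely more delicate than in type $A$: the negation symmetry makes the count piecewise in the signs and relative order of $a$ and $b$, so one must organize the casework and over-count so that the cross-inversions and boundary effects are swallowed by the constant, while still extracting \emph{exactly} the exponent $2n-6$ rather than a nearby value in the final estimate.
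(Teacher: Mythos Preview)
Your proposal is correct and follows essentially the same approach as the paper: expand over the values $a=w(i)$, $b=w(j)$, apply \Cref{lem: set_indices_B} with $|C|=4$, lower-bound $\ell(w'_{a,b})$ by a linear function of the displacements, and sum the resulting geometric series. The paper carries out precisely the sign casework you anticipate, splitting into the four cases $a,b\in[n]$; $a,b\in[-n]$; $a\in[-n],b\in[n]$; and $a\in[n],b\in[-n]$, with explicit lower bounds of the form $\max(2|i-a|+2|j-b|-c,0)$ (where $c\in\{4,7,10\}$ absorbs the cross-inversions and the extra $B$-inversions from negative values), and then sums each case to obtain the constant $22$.
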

\begin{proof}
We have
\begin{align*}
    \mathbb{P}\left( \substack{w(i+1)-w(i) = 1 \\ w(j+1)-w(j) = 1} \right) &= \sum_{k,l \in [\pm n] \setminus \{-1, n\} } \mathbb{P}\left( \substack{w(i)=k, w(i+1)=k+1 \\ w(j)=l, w(j+1)=l+1} \right) \\
    &\le \sum_{k,l \in [n]} \frac{q^{\max(2|i-k|+2|j-l|-4, 0)}(1-q)^4}{(1-q^{2n})(1-q^{2n-2})(1-q^{2n-4})(1-q^{2n-6})} \\
    & \quad + \sum_{k,l \in [-n]} \frac{q^{\max(2|i-k|+2|j-l|-10, 0)}(1-q)^4}{(1-q^{2n})(1-q^{2n-2})(1-q^{2n-4})(1-q^{2n-6})} \\
    & \quad + \sum_{k \in [-n], l \in [n]} \frac{q^{\max(2|i-k|+2|j-l|-7, 0)}(1-q)^4}{(1-q^{2n})(1-q^{2n-2})(1-q^{2n-4})(1-q^{2n-6})} \\
    & \quad + \sum_{k \in [n], l \in [-n]} \frac{q^{\max(2|i-k|+2|j-l|-7, 0)}(1-q)^4}{(1-q^{2n})(1-q^{2n-2})(1-q^{2n-4})(1-q^{2n-6})}
\end{align*}
by Lemma 2.5. Setting $w(i)=k,w(i+1)=k+1$ for positive $k$ induces $2|i-k|$ inversions by the pigeonhole principle. Doing this for negative $k$ induces only $2|i-k|-3$ inversions. The $-4$ is from possible double counting of inversions between $i,i+1$ and $j,j+1$ (or their negatives). Note that the number of double counts is at most $4$ since there is at most one double count between $\pm i$ and $\pm j$. In order for there to be more than one count, (assuming $i>j$) we must have $w(i)<0$ and $|w(j)|<|w(i)|$, but the $2|j-l|-3$ inversions from assigning $j,j+1$ come purely from descent with values that are of magnitude less than $l$ or values with index between $j,-j$. 

We now wish to evaluate 
$$
\sum_{k,l \in [n]} q^{\max(2|i-k|+2|j-l|-4, 0)}.
$$
We can then sum of over the following regimes:
\begin{enumerate}
    \item $k>i, l>j$ or $k>i, l<j$ or $k<i,l>j$ or $k<i,l<j$: Each of these cases yields a factorable sum of the form $(1+q^2+ \dots)(1+q^2+ \dots) \le (1+q^2+ \dots + q^{2(n-2)})^2 = \frac{(1-q^{2n-2})^2}{(1-q^2)^2}$.
    \item $k=i, l>j+1$ or $k=i, l<j-1$ or $k<i-1, l=j$ or $k>i+1, l=j$: Each of these cases yields a sum of the form $1+q^2+ \dots \le 1+q^2+ \dots + q^{2(n-2)} = \frac{1-q^{2n-2}}{1-q^2}$.
    \item $(k,l) = (i,j), (i,j-1),(i-1,j), (i+1,j), (i,j+1)$: Each case yields $1$.
\end{enumerate}
It follows that the total sum is bounded above by 
$$
4\frac{(1-q^{2n-2})^2}{(1-q^2)^2} + 4\frac{1-q^{2n-2}}{1-q^2} + 5 \le 13\frac{(1-q^{2n-2})^2}{(1-q^2)^2}.
$$

We now wish to evaluate 
$$
\sum_{k,l \in [-n]} q^{\max(2|i-k|+2|j-l|-10, 0)}.
$$
In this case we actually have $2|i-k|+2|j-l|-10 = 2i+2j-2-2(k+2)-2(l+2)$, where $k+2,l+2$ range from $0$ to $2-n$, so that the sum equals
$$
q^{2i+2j-2} (1+q^2+ \cdots + q^{2(n-2)})^2 = q^{2i+2j-2} \frac{(1-q^{2n-2})^2}{(1-q)^2}.
$$
We now wish to evaluate
$$
\sum_{k \in [-n], l \in [n]} q^{\max(2|i-k|+2|j-l|-7, 0)}.
$$
We have $2|i-k|+2|j-l|-7 = 2i-3 + 2|j-l| - 2(k+2)$. We can sum over the following regimes:
\begin{itemize}
    \item $j>l$ or $j<l$: In either case, we have a sum of the form 
    $$
    \sum_{k \in [-n]} q^{2i-1}q^{-2(k+2)}(1+q^2 + \cdots + q^{2(n-2)}) \le q^{2i-1}(1+q^2 + \cdots + q^{2(n-2)})^2 = \frac{q^{2i-1}(1-q^{2n-2})^2}{(1-q^2)^2}.
    $$
    \item $j=l, k<-2$: This yields the sum $q^{2i-1}(1+q^2+\cdots + q^{2(n-3)}) = \frac{q^{2i-1}(1-q^{2n-4})}{1-q^2}$.
    \item $j=l,k=-2$: This yields a maximum term of $1$.
\end{itemize}
The sum totals to 
$$
2\frac{q^{2i-1}(1-q^{2n-2})^2}{(1-q^2)^2} + \frac{q^{2i-1}(1-q^{2n-4})}{1-q^2} + 1 \le 4\frac{q^{2i-1}(1-q^{2n-2})^2}{(1-q^2)^2}.
$$
By similar reasoning, we have
$$
\sum_{k \in [n], l \in [-n]} q^{\max(2|i-k|+2|j-l|-7, 0)} \le 4\frac{q^{2j-1}(1-q^{2n-2})^2}{(1-q^2)^2}.
$$
It follows that the sum of the power $q$ terms in the all of the four sums at the start is bounded above by 
$$
22\frac{(1-q^{2n-2})^2}{(1-q^2)^2} \le 22\frac{(1-q^{2n-2})^2}{(1-q)^2}.
$$
We can then multiply by $$\frac{(1-q)^4}{(1-q^{2n})(1-q^{2n-2})(1-q^{2n-4})(1-q^{2n-6})} \le \frac{(1-q)^4}{(1-q^{2n-2})^2(1-q^{2n-6})^2}$$ to get the result.
\end{proof}

\begin{lemma} \label{Lemma: Type 5 Bound, B_n, q large}
    Fix $q \ge 1-(n-1)^{-1/2}$ and let $n \ge 4$. Then
    $$
    \sum_{0\le i,j \le n-1} \mathrm{Cov}(\mathrm{des}(w)-\mathrm{des}(w_{-i}^*), \mathrm{des}(w)-\mathrm{des}(w_{-j}^*)) \le 43(n-1).
    $$
\end{lemma}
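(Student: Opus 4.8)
The plan is to split the double sum $\sum_{0\le i,j\le n-1}$ into three groups and bound each separately. Write $X_i := \des(w)-\des(w_i^{-*})$; by the observation opening this subsection each $X_i$ takes values in $\{0,-1\}$, so by \Cref{Lemma: Uniform Bound Inverse} we have $|X_i|\le 1$ and hence $\Cov(X_i,X_j)\le 1$ for every pair. The three groups are: (a) the boundary terms with $i=0$ or $j=0$; (b) the near-diagonal terms with $i,j\ge 1$ and $|i-j|\le 1$; and (c) the far terms with $i,j\ge 1$ and $|i-j|>1$. Groups (a) and (b) contain only $2n-1$ and $3n-5$ ordered pairs respectively, so the crude bound $\Cov(X_i,X_j)\le 1$ shows they contribute at most $(2n-1)+(3n-5)=5n-6\le 5(n-1)$ to the sum. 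All the real work is in group (c).

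For group (c) I would first invoke the reduction at the start of this subsection, $\Cov(X_i,X_j)\le \Prob(w(i+1)-w(i)=1,\ w(j+1)-w(j)=1)$ for positive $i,j$, together with the preceding lemma, which bounds this probability by $22(1-q)^2/(1-q^{2n-6})^2$ whenever $|i-j|>1$. Since there are at most $(n-1)^2$ pairs in group (c), their total contribution is at most
\[
(n-1)^2\cdot\frac{22(1-q)^2}{(1-q^{2n-6})^2}.
\]
The key algebraic simplification is the factorization $1-q^{2n-6}=(1-q)\sum_{k=0}^{2n-7}q^k$, which collapses the $q$-dependent factor to $(1-q)^2/(1-q^{2n-6})^2=\left(\sum_{k=0}^{2n-7}q^k\right)^{-2}$. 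Thus group (c) is bounded by $22(n-1)^2\left(\sum_{k=0}^{2n-7}q^k\right)^{-2}$, and everything reduces to showing that this geometric sum is of order $\sqrt{n-1}$.

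The main obstacle, and the reason the hypothesis $q\ge 1-(n-1)^{-1/2}$ is imposed, is to control this geometric sum without losing a power of $n$. One must resist bounding $(1-q)^2\le (n-1)^{-1}$ and $(1-q^{2n-6})^{-2}$ separately, since both factors shrink together as $q\to 1$ and splitting them overcounts by a factor of $n$. Keeping them combined, I would use that $f(q):=\sum_{k=0}^{2n-7}q^k$ is non-decreasing in $q$ on $(0,1]$ (each summand is), so on the admissible range $q\in[q_0,1]$, where $q_0:=1-(n-1)^{-1/2}$, we have
\[
f(q)\ge f(q_0)=\frac{1-q_0^{2n-6}}{1-q_0}=\sqrt{n-1}\,\big(1-q_0^{2n-6}\big).
\]
It then suffices to bound $q_0^{2n-6}=(1-(n-1)^{-1/2})^{2n-6}$ away from $1$: at $n=4$ it equals $(1-3^{-1/2})^2\approx 0.18$, while for $n\ge 5$ it is at most $e^{-(2n-6)/\sqrt{n-1}}\le e^{-2}<0.14$ because $(2n-6)/\sqrt{n-1}\ge 2$ there. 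Hence $1-q_0^{2n-6}\ge 0.82$ for all $n\ge 4$, giving $f(q)\ge 0.82\sqrt{n-1}$, so group (c) contributes at most $22(n-1)^2/(0.82\sqrt{n-1})^2<33(n-1)$. Adding the $5(n-1)$ from groups (a) and (b) keeps the total below $43(n-1)$, as claimed; the sharp endpoint estimate at $n=4$ (rather than the cruder $1-q_0^{2n-6}\ge 3/4$, which would only yield roughly $44(n-1)$) is exactly what is needed to fit inside the stated constant.
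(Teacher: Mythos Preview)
Your proof is correct and follows essentially the same route as the paper's: both split off the boundary ($i=0$ or $j=0$) and near-diagonal ($|i-j|\le 1$) terms with the trivial bound $\Cov(X_i,X_j)\le 1$, and both bound the far terms via the preceding lemma's estimate $22(1-q)^2/(1-q^{2n-6})^2$, reducing matters to controlling $(1-q)/(1-q^{2n-6})$ at $q_0=1-(n-1)^{-1/2}$. The paper uses the cruder $1-q_0^{2n-6}\ge 4/5$ to get $(1-q)/(1-q^{2n-6})\le \tfrac54(n-1)^{-1/2}$, landing at roughly $34.4(n-1)$ for the far terms; your sharper $1-q_0^{2n-6}\ge 0.82$ via the explicit $n=4$ check gives $33(n-1)$, with the same overall conclusion.
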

\begin{proof}
We have, since $\frac{1-q}{1-q^{2n-6}}$ is a decreasing function of $q$ in $(0,1)$, 
$$
\frac{1-q}{1-q^{2n-6}} \le \frac{(n-1)^{-1/2}}{1-(1-(n-1)^{-1/2})^{2n-6}} \le \frac{5}{4}(n-1)^{-1/2}
$$
where we note that $1-(1-(n-1)^{-1/2})^{2n-6} \ge \frac{4}{5}$ for all $n \ge 4$. We now have,
\begin{align*}
    \sum_{0\le i,j \le n-1} \mathrm{Cov}(&\mathrm{des}(w)-\mathrm{des}(w_{-i}^*), \mathrm{des}(w)-\mathrm{des}(w_{-j}^*)) \\
    &\le \sum_{\substack{i,j \in [n] \\ |i-j|>1}}\mathbb{P}\left( \substack{w(i+1)-w(i) = 1 \\ w(j+1)-w(j) = 1} \right) + \sum_{\substack{i,j \in [n] \\ |i-j|\le 1}}\mathbb{P}\left( \substack{w(i+1)-w(i) = 1 \\ w(j+1)-w(j) = 1} \right) \\ 
    &\quad + 2\sum_{1 \le i \le n-1} \mathbb{P}\left( \substack{w(i+1)-w(i) = 1 \\ w(1)=1, \ w(-1)=-1} \right) + \mathbb{P}(w(1)=1, \ w(-1)=-1) \\
    &\le (n-1)^2\frac{22(1-q)^2}{(1-q^{2n-6})^2} + (3(n-1) - 2) + 2(n-1) + 1 \\
    &\le 43(n-1).
\end{align*}
\end{proof}

\subsubsection{$q << 1$}

We first control the diagonal:
\begin{lemma} \label{Lemma: B_n near diagonal}
Fix some $0\le m \le n-1$ and $q \le 1-(n-1)^{-1/2}$ with $n \ge 4$. Then 
$$
\sum_{\substack{i,j \in [n-1] \\ |i-j|<m}}\mathbb{P}\left( \substack{w(i+1)-w(i) = 1 \\ w(j+1)-w(j) = 1} \right) \le 55m(n-1)(1-q)^2 + 3(n-1).
$$
\end{lemma}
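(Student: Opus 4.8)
The plan is to split the double sum according to whether the two index sets $\{i,i+1\}$ and $\{j,j+1\}$ overlap. They overlap precisely when $|i-j|\le 1$, and are disjoint precisely when $2\le |i-j|\le m-1$. For the overlapping range I bound each probability trivially by $1$. The number of ordered pairs $(i,j)\in [n-1]^2$ with $|i-j|\le 1$ is $(n-1)+2(n-2)=3(n-1)-2\le 3(n-1)$, so this range contributes at most $3(n-1)$, which is exactly the additive term appearing in the statement.

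For the disjoint range the two index sets are disjoint, so I can apply the preceding lemma to bound each term by $\frac{22(1-q)^2}{(1-q^{2n-6})^2}$. The decisive estimate is a lower bound on $1-q^{2n-6}$ in the regime $q\le 1-(n-1)^{-1/2}$. Since $t\mapsto t^{2n-6}$ is increasing, $q^{2n-6}\le \left(1-(n-1)^{-1/2}\right)^{2n-6}$, and for $n\ge 5$ one checks directly that this is at most $1-2/\sqrt5$ (the bound being tightest at $n=5$, where it equals $(1/2)^4=1/16$, and smaller for larger $n$), so that $(1-q^{2n-6})^2\ge 4/5$ and hence each term is at most $\frac{22(1-q)^2}{4/5}=27.5\,(1-q)^2$. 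The number of ordered pairs with $2\le |i-j|\le m-1$ is at most $2(m-2)(n-1)\le 2m(n-1)$, so summing over the disjoint range gives at most $2m(n-1)\cdot 27.5\,(1-q)^2=55\,m(n-1)(1-q)^2$, completing the bound for $n\ge 5$.

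It remains to treat the boundary case $n=4$, where $m\le 3$ and the clean lower bound $(1-q^{2n-6})^2\ge 4/5$ is no longer available. Here, however, the disjoint range contains at most two ordered pairs (those with $|i-j|=2$, occurring only when $m=3$), while the target $55\,m(n-1)(1-q)^2$ is comparatively large; using $q\le 1-3^{-1/2}$ gives $(1-q^{2})^2\ge 0.67$, so the per-term bound $\frac{22(1-q)^2}{(1-q^{2})^2}$ is more than enough. The main point requiring care throughout is the interplay between the lower bound on $1-q^{2n-6}$ and the count of contributing pairs: the per-term bound degrades exactly when $n$ is small, but then $m\le n-1$ forces the number of disjoint pairs to be tiny relative to $m(n-1)$, so the constant $55$ survives uniformly. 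I expect this constant bookkeeping, rather than any conceptual difficulty, to be the only real obstacle.
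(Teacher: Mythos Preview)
Your proof is correct and follows essentially the same approach as the paper: split the sum into the overlapping range $|i-j|\le 1$ (bounded trivially, contributing at most $3(n-1)$) and the disjoint range $|i-j|>1$ (bounded via the preceding lemma combined with a lower bound on $(1-q^{2n-6})^2$ in the regime $q\le 1-(n-1)^{-1/2}$). Your handling of the constants is in fact slightly more careful than the paper's, since you separate out the boundary case $n=4$ to verify that the factor $55$ genuinely survives.
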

\begin{proof}
We have that $\frac{1}{1-q^{2n-6}}$ is an increasing function of $q$, so
$$
\frac{1}{1-q^{2n-6}} \le \frac{1}{1-(1-(n-1)^{-1/2})^{2n-6}} \le \frac{5}{4}.
$$
We then have that
\begin{align*}
    \sum_{\substack{i,j \in [n-1] \\ |i-j|<m}}\mathbb{P}\left( \substack{w(i+1)-w(i) = 1 \\ w(j+1)-w(j) = 1} \right) &= \sum_{\substack{i,j \in [n-1] \\ 1<|i-j|<m}}\mathbb{P}\left( \substack{w(i+1)-w(i) = 1 \\ w(j+1)-w(j) = 1} \right) + \sum_{\substack{i,j \in [n-1] \\ |i-j|<2}}\mathbb{P}\left( \substack{w(i+1)-w(i) = 1 \\ w(j+1)-w(j) = 1} \right) \\
    &\le 2m(n-1) \frac{22(1-q)^2}{(1-q^{2n-6})^2} + 3(n-1) \\
    &\le 55m(n-1)(1-q)^2 + 3(n-1)
\end{align*}
by Lemma 3.1.
\end{proof}

We will use the following result below (it is elementary to prove):
\begin{lemma} \label{lem: assisting ineq}
For any positive integer $k$,
    $$
    \sum_{r=0}^k rq^r = \frac{q}{(1-q)^2} - \frac{(k+1)q^{k+1}}{1-q} - \frac{q^{k+2}}{(1-q)^2}
    $$
\end{lemma}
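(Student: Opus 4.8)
The plan is to recognize $\sum_{r=0}^k r q^r$ as the sum that arises from differentiating a finite geometric series, and then to carry out the resulting algebra carefully enough to land in the exact three-term form stated. First I would recall the elementary identity $\sum_{r=0}^k q^r = \frac{1-q^{k+1}}{1-q}$ (for $q\neq 1$; the case $q=1$ is not needed here since the lemma is applied with $q\in(0,1)$). Differentiating both sides with respect to $q$ produces $\sum_{r=0}^k r q^{r-1}$ on the left, so multiplying through by $q$ recovers exactly the target sum.

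Next I would differentiate the right-hand side $\frac{1-q^{k+1}}{1-q}$ using the quotient rule, with numerator derivative $-(k+1)q^k$ and denominator derivative $-1$, to obtain a single rational function over $(1-q)^2$. After multiplying by $q$, the expression becomes
\[
\sum_{r=0}^k r q^r = \frac{q\bigl(1-q^{k+1}\bigr) - (k+1)q^{k+1}(1-q)}{(1-q)^2}.
\]
The remaining work is purely mechanical: expand the numerator to $q - (k+1)q^{k+1} + k q^{k+2}$ and then split it back into the three advertised pieces.

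A calculus-free alternative, which may be cleaner to typeset, is the telescoping trick: form $(1-q)\sum_{r=0}^k r q^r$ by index-shifting $q\sum_{r=0}^k r q^r = \sum_{r=1}^{k+1}(r-1)q^r$ and subtracting. Almost everything cancels, leaving a plain geometric sum $\sum_{r=1}^k q^r$ together with a single boundary term $-kq^{k+1}$; dividing by $1-q$ and using the geometric-series formula once more yields the identity. Either route reduces the claim to a finite geometric sum plus bookkeeping.

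I expect no genuine obstacle here, since the statement is elementary; the only point requiring care is the final regrouping. In particular one must notice that the claimed middle term $-\frac{(k+1)q^{k+1}}{1-q}$ carries denominator $1-q$ rather than $(1-q)^2$, and verify that combining the three stated terms over the common denominator $(1-q)^2$ reproduces the numerator $q - (k+1)q^{k+1} + k q^{k+2}$. Matching this is a one-line check and completes the proof.
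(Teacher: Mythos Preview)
Your proposal is correct. The paper does not actually give a proof of this lemma; it simply states the identity with the remark ``it is elementary to prove'' and moves on. Your differentiation-of-the-geometric-series argument (or the telescoping alternative) is exactly the kind of elementary verification the paper has in mind, and your final numerator check $q - (k+1)q^{k+1} + kq^{k+2}$ matches the stated three-term form.
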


\begin{lemma} \label{Lemma: B_n off-diagonal}
Fix some $2\le m \le n-1$ and $q \le 1-(n-1)^{-1/2}$ with $n \ge 6$. Let $A_i$ denote the event that $w(i+1)-w(i) = 1$. Then
$$
\sum_{|i-j|\ge m} \mathrm{Cov}(I_{A_i}, I_{A_j}) \le 44(n-1)q^{m-1} + \frac{(n-1)q^{m-1}(2m+2)}{(1-q)^2} + \frac{8(n-1)q^{m-1}}{(1-q)^3}
$$
\end{lemma}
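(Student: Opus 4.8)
The plan is to reduce the whole sum to covariances of indicator events and then to extract geometric decay in the index gap $|i-j|$, which is the only mechanism that can beat the $O(n^2)$ number of summands. Throughout take $i<j$ and write $d=j-i\ge m$; by the symmetry recorded in \Cref{tab:term types} it suffices to treat this case. Since for positive indices $\des(w)-\des(w_i^{-*})=-I_{A_i}$, each summand is exactly $\Cov(I_{A_i},I_{A_j})=\Prob(A_i\cap A_j)-\Prob(A_i)\Prob(A_j)$. The naive bound $\Cov(I_{A_i},I_{A_j})\le\Prob(A_i\cap A_j)$ is useless here: in the regime $q\le 1-(n-1)^{-1/2}$ the event $A_i$ has probability bounded away from $0$, so $\Prob(A_i\cap A_j)$ does not decay in $d$, and one genuinely needs the cancellation coming from near-independence of $A_i$ and $A_j$.

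First I would strip off the orientation degrees of freedom. Writing $V_i=\{w(i),w(i+1)\}$ for the unordered pair of values in window $i$ and $O_i=w_{\{s_i\}}$ for its relative order, we have $A_i=\{V_i\text{ consecutive}\}\cap\{O_i\text{ ascending}\}$. For $d\ge 2$ the sets $\{s_i\}$ and $\{s_j\}$ are disjoint, commuting, with $\overline{\{s_i\}}\cap\overline{\{s_j\}}=\emptyset$, so \Cref{Lemma: parabolic_indep_conditioning} gives that, conditional on the value assignment to these windows, $O_i$ and $O_j$ are independent of each other and of all value-set information, each ascending with probability $1/(1+q)$. Hence $\E[I_{A_i}\mid V_i,V_j]=\tfrac{1}{1+q}I_{C_i}$ with $C_i=\{V_i\text{ consecutive}\}$, and since $A_i\perp A_j$ given $(V_i,V_j)$ the identity $\Cov(X,Y)=\E[\Cov(X,Y\mid\mathcal G)]+\Cov(\E[X\mid\mathcal G],\E[Y\mid\mathcal G])$ yields
\[
\Cov(I_{A_i},I_{A_j})=\tfrac{1}{(1+q)^2}\,\Cov(I_{C_i},I_{C_j}).
\]
This removes the orientations and reduces everything to the covariance of the two ``value set is an interval'' events.

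Next I would establish approximate independence of $C_i$ and $C_j$ through a separation argument. The idea is that $V_i$ and $V_j$ decouple once the values carried by the two windows are split by a cut between them: if there is a position $p$ with $i+1\le p\le j-1$ across which no inversion passes, then window $i$ lives in the parabolic factor on $\{1,\dots,p\}$ and window $j$ in the complementary factor, and \Cref{Lemma: general_parabolic_indep} makes their arrangements independent. Conditioning on the (refined) separation event $E$ that records such a cut together with its value split gives $C_i\perp C_j$, so \Cref{lem: cond ind bound} gives $|\Cov(I_{C_i},I_{C_j})|\le 4\,\Prob(E^c)$. The crux is bounding $\Prob(E^c)$: a failure to separate forces some value to be displaced across the whole gap, which by \Cref{lem: set_indices_B} costs at least $q^{\Theta(d)}$, and summing the probability bound over the spanning value and its displacement produces contributions of the shape $q^{d-1}$, $q^{d-1}/(1-q)$ and $q^{d-1}/(1-q)^2$. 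Finally I would sum over all pairs with $|i-j|=d\ge m$ (at most $2(n-1)$ pairs per distance $d$), evaluating $\sum_{d\ge m}q^{d-1}$ and the arithmetico-geometric series $\sum_{d\ge m}d\,q^{d-1}$ via \Cref{lem: assisting ineq}; the factor $(2m+2)$ in the statement is precisely the $m$-dependence produced by the latter, and the three resulting pieces are the claimed bound. The main obstacle is the estimate for $\Prob(E^c)$ with the correct rate $q^{d-1}$ and small enough constants: one must enumerate the ways the two windows fail to decouple, count the inversions each configuration forces, and control the constants so that, after the geometric summation, they match the stated coefficients — this is the delicate covariance bookkeeping flagged in the introduction.
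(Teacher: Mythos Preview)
Your Step~1 is correct and a genuinely nice reduction: the law-of-total-covariance identity together with \Cref{Lemma: parabolic_indep_conditioning} does give $\Cov(I_{A_i},I_{A_j})=\tfrac{1}{(1+q)^2}\Cov(I_{C_i},I_{C_j})$ for $|i-j|\ge 2$, where $C_i=\{V_i\text{ is a consecutive pair}\}$. The paper does not make this reduction; it works directly with the events $A_i$.

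Step~2, however, has a real gap. You appeal to \Cref{Lemma: general_parabolic_indep} to conclude that, on a separation event $E$, ``window $i$ lives in the parabolic factor on $\{1,\dots,p\}$''. But \Cref{Lemma: general_parabolic_indep} only gives independence of the parabolic components $w_S$ and $w_{S'}$, i.e.\ the \emph{relative orders} within blocks. The event $C_i$ is a function of the \emph{value set} $V_i$, not of $w_{\{s_0,\dots,s_{p-1}\}}$; whether $\{w(i),w(i+1)\}$ happens to be a consecutive pair of integers is not determined by relative order inside a block. So the cited lemma does not give $C_i\perp C_j$ conditional on any cut event, and no lemma in the paper does. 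More decisively, the paper's own computation shows that conditional independence \emph{fails} on the natural ``good'' event: even on the set $\Gamma$ of configurations where every value in positions $\le i+1$ is smaller in absolute value than every value in positions $\ge j$, the covariance does not vanish --- there is a residual coming from the mismatch of normalizing constants $[j-i-2]_q![n]_q!!$ versus $[n-i-1]_q![j-1]_q!!$, and this residual is exactly what produces the first term $44(n-1)q^{m-1}$. A single \Cref{lem: cond ind bound} step cannot isolate this contribution, because it is present precisely on the event where you want independence to hold.

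What the paper actually does is bypass conditional independence entirely. It expands both $\Prob(A_i\cap A_j)$ and $\Prob(A_i)\Prob(A_j)$ as sums over ``outer configurations'' $C$ (assignments to positions outside the middle block $[i+2,j-1]$), and shows combinatorially that for configurations in $\Gamma$ one has $\ell(C')+\ell(C'')=\ell(C)$, so the matched terms differ only by a ratio of $q$-factorials; bounding $1-\frac{[n-i-1]_q![j-1]_q!!}{[j-i-2]_q![n]_q!!}\le q^{j-i-1}\frac{1-q^n}{1-q}$ and summing gives the $44(n-1)q^{m-1}$ term. The remaining ``bad'' configurations are controlled by the probability of the event $X=\{\exists\, a\le i+1,\ b\ge j:\ |w(a)|>w(b)\}$, and a lengthy estimate of $\Prob(X)$ using \Cref{lem: set_indices_B} and \Cref{lem: assisting ineq} produces the second and third terms. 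Your separation heuristic is morally the event $X^c$, but the argument requires this explicit two-part decomposition rather than a single conditional-independence bound.
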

\begin{proof}
We have
$$
\mathrm{Cov}(I_{A_i}, I_{A_j}) = \sum_{k,l \in [\pm n] \setminus \{-1, n\} } \mathbb{P}\left( \substack{w(i)=k, w(i+1)=k+1 \\ w(j)=l, w(j+1)=l+1} \right) - \mathbb{P}(w(i)=k, w(i+1)=k+1)\mathbb{P}(w(j)=l, w(j+1)=l+1).
$$
Assuming $i< j$.
$$
\mathbb{P}\left( \substack{w(i)=k, w(i+1)=k+1 \\ w(j)=l, w(j+1)=l+1} \right) = \sum_{C} \frac{q^{\ell(C)}}{[n]_q!!} \sum_{w_0 \in S_{j-i-2}} q^{\ell(w_0)} = \sum_{C} q^{\ell(C)} \frac{[j-i-2]_q!}{[n]_q!!}
$$
where $C$ ranges over all assignments to indices in $[n]$ that are less than $i$ or greater than $j+1$ (and obviously the negatives of these indices are assigned the appropriate values) and each $C$ specifies the sign of if each value that is assigned in the middle block. $\ell(C)$ denotes the number of inversions induced by the assignment $C$. That is, $\ell(C)=\ell(w_C)$ for the signed permutation $w_C$ that satisfies $C$ and is completely well-ordered on the block from $i+2$ to $j-1$. The claim follows from noting that $\ell(w)=\ell(C)+\ell(w_0)$ where $w_0$ corresponds to the way $w$ is ordered on $[i+2,j-1]$. This is because
\begin{align*}
    \ell(w) &= |\{ (i',j'): i'\in [-j',j'-1], j' \in [n], w(i')>w(j') \}| \\
    &= |\{ (i',j') \in [\pm ([n]\setminus [i+2,j-1])]^2: i'\in [-j',j'-1], j' \in [n], w(i')>w(j') \}| \\
    & \quad + |\{ (i',j') \in \pm [i+2,j-1] \times [j,n]: w(i')>w(j') \}| \\
    & \quad + |\{ (i',j') \in [\pm (i+1)] \times [i+2,j-1]: w(i')>w(j') \}| \\
    & \quad + |\{ (i',j') \in [i+2,j-1]^2: i'\le j', w(-i')>w(j') \}| \\
    & \quad + |\{ (i',j') \in [i+2,j-1]^2: i'<j', w(i')>w(j') \}| \\
    &= |\{ (i',j') \in [\pm ([n]\setminus [i+2,j-1])]^2: i'\in [-j',j'-1], j' \in [n], w_C(i')>w_C(j') \}| \\
    & \quad + |\{ (i',j') \in \pm [i+2,j-1] \times [j,n]: w_C(i')>w_C(j') \}| \\
    & \quad + |\{ (i',j') \in [\pm (i+1)] \times [i+2,j-1]: w_C(i')>w_C(j') \}| \\
    & \quad + |\{ (i',j') \in [i+2,j-1]^2: i'\le j', w_C(-i')>w_C(j') \}| \\
    & \quad + |\{ (i',j') \in [i-j-2]^2: i'<j', w(i')>w(j') \}| \\
    &= \ell(C) + \ell(w_0)
\end{align*}
where we have $|\{ (i',j') \in [i+2,j-1]^2: i'\le j', w(-i')>w(j') \}| = |\{ (i',j') \in [i+2,j-1]^2: i'\le j', w_C(-i')>w_C(j') \}|$ since flipping two adjacent values in $[i+2,j-1]$ does not change the number of such types of inversions.

We also have that
$$
\mathbb{P}(w(i)=k, w(i+1)=k+1) = \sum_{C'} \frac{q^{\ell(C')}}{[n]_q!!} \sum_{w_1 \in S_{n-i-1}} q^{\ell(w_1)} = \sum_{C'}q^{\ell(C')} \frac{[n-i-1]_q!}{[n]_q!!} 
$$
where $C'$ ranges over assignments of exact values to $[i-1]$ and signs to the other values on the right half of the permutation. Moreover, if $C''$ ranges over assignments over $[j+2,n]$, with no stipulations on the signs of other values in the right half,
$$
\mathbb{P}(w(j)=l, w(j+1)=l+1) = \sum_{C''} q^{\ell(C'')}\frac{[j-1]_q!!}{[n]_q!!}
$$
since the arrangement of values among the indices in $[\pm (j-1)]$ has no effect on the number of inversions involving indices in $[j+2,n]$. We then have
$$
\mathbb{P} \left( \substack{w(i)=k \\ w(i+1)=k+1} \right) \mathbb{P} \left( \substack{w(j)=l \\ w(j+1)=l+1} \right) = \sum_{C',C''} q^{\ell(C')+\ell(C'')}\frac{[j-1]_q!![n-i-1]_q!}{([n]_q!!)^2}.
$$
Now, consider the case that $C'$ and $C''$ can be combined to form an assignment $C$ to $[\pm n] \setminus \pm [i+2,j-1]$, i.e. that the values assigned by $C'$, $C''$ are nonintersecting subsets of $[\pm n] \setminus \pm \{k,k+1,l,l+1\}$ and $C''$ follows the sign assignments by $C'$. Suppose that, for any $w$ satisfying this assignment, $|w(i')|<w(j)$, i.e. that the numbers assigned by $C'$ are all less than those assigned by $C''$. This requires that $C''$ only assigns positive values to positive indices.

We have,
\begin{align*}
    \ell(C')+\ell(C'') &= |\{ (i',j') \in [\pm(i+1)]\times [i+1]: -j'\le i'<j', w_{C'}(i')>w_{C'}(j') \}| \\
    & \quad + |\{ (i',j') \in [\pm(i+1)]\times [i+2,n]: w_{C'}(i')>w_{C'}(j') \}| \\
    & \quad + |\{ (i',j') \in [-n,-i-2]\times [i+2,n]: -i'\le j', w_{C'}(i')>w_{C'}(j') \}| \\
    & \quad + |\{ (i',j') \in \pm [j,n] \times [j,n]: -j'\le i'<j', w_{C''}(i')>w_{C''}(j') \}| \\
    & \quad + |\{ (i',j') \in [j-1]\times [j,n]: w_{C''}(i')>w_{C''}(j') \}| \\
    &= |\{ (i',j') \in [\pm(i+1)]\times [i+1]: -j'\le i'<j', w_{C}(i')>w_{C}(j') \}| \\
    & \quad + |\{ (i',j') \in [\pm(i+1)]\times [i+2,n]: w_{C}(i')>w_{C}(j') \}| \\
    & \quad + |\{ (i',j') \in [-n,-i-2]\times [i+2,n]:-i'\le j', w_{C}(i')>w_{C}(j') \}| \\
    & \quad + |\{ (i',j') \in [j,n] \times [j,n]: -j'\le i'<j', w_{C}(i')>w_{C}(j') \}| \\
    & \quad + |\{ (i',j') \in [i+2,j-1]\times [j,n]: w_{C}(i')>w_{C}(j') \}| \\
    &= |\{ (i',j') \in [\pm n] \times [n]: -j'\le i'<j', w_{C}(i')<w_{C}(j') \}| \\
    &= \ell(C)
\end{align*}
since $w_C$ is well ordered on $[i+2,j-1]$.

We can then express
\begin{align*}
    \mathbb{P} \left( \substack{w(i)=k \\ w(i+1)=k+1} \right) \mathbb{P} \left( \substack{w(j)=l \\ w(j+1)=l+1} \right) &= \sum_{C}q^{\ell(C)} \frac{[j-1]_q!![n-i-1]_q!}{([n]_q!!)^2} \\
    & \quad + \sum_{C}q^{\ell'(C)} \frac{[j-1]_q!![n-i-1]_q!}{([n]_q!!)^2} \\
    &\quad + \sum_{C',C''} q^{\ell(C')+\ell(C'')}\frac{[j-1]_q!![n-i-1]_q!}{([n]_q!!)^2}
\end{align*}
where the last sum is over $C'$ and $C''$ that intersect (i.e. $C''$ assigns one of $k,k+1$, $C'$ assigned one of $l,l+1$, or $C',C''$ assigned the same value). The second sum is over $C$ where we do not have $|w(i')|<w(j')$ for all $0<i'<i+2<j-1<j'$. We can then write
\begin{align*}
    \mathbb{P}\left( \substack{w(i)=k, w(i+1)=k+1 \\ w(j)=l, w(j+1)=l+1} \right) - &\mathbb{P} \left( \substack{w(i)=k \\ w(i+1)=k+1} \right) \mathbb{P} \left( \substack{w(j)=l \\ w(j+1)=l+1} \right) \\
    &\le \sum_C q^{\ell(C)} \frac{[j-i-2]_q![n]_q!! - [n-i-1]_q![j-1]_q!!}{([n]_q!!)^2} \\
    &\quad + \sum_{C} \frac{[j-i-2]_q![n]_q!!q^{\ell(C)} - [n-i-1]_q![j-1]_q!!q^{\ell'(C)}}{([n]_q!!)^2}.
\end{align*}

The first sum can be written
$$
\sum_C q^{\ell(C)} \frac{[j-i-2]_q!}{[n]_q!!} \left( 1 - \frac{[n-i-1]_q![j-1]_q!!}{[j-i-2]_q![n]_q!!} \right).
$$
Now, note that
\begin{align*}
    1 - \frac{[n-i-1]_q![j-1]_q!!}{[j-i-2]_q![n]_q!!} &= 1 - \frac{(1-q^{j-i-1})\dots (1-q^{n-i-1})}{(1-q^{2j})\dots (1-q^{2n})} \\
    &\le 1 - (1-q^{j-i-1})\dots (1-q^{n-i-1}) \\
    &\le q^{j-i-1} + \dots + q^{n-i-1} \\
    &\le q^{j-i-1} \frac{1-q^n}{1-q}
\end{align*}
and
$$
\sum_C q^{\ell(C)} \frac{[j-i-2]_q!}{[n]_q!!} \le \mathbb{P}\left( \substack{w(i)=k, w(i+1)=k+1 \\ w(j)=l, w(j+1)=l+1} \right)
$$
so the sum is bounded above by
$$
q^{j-i-1} \frac{1-q^n}{1-q} \mathbb{P}\left( \substack{w(i)=k, w(i+1)=k+1 \\ w(j)=l, w(j+1)=l+1} \right).
$$
Summing over $k,l$, to get the total contribution to the covariance term, gives an upper bound of
$$
q^{|i-j|-1} \frac{1-q^n}{1-q} \cdot \frac{22(1-q)^2}{(1-q^{2n-6})^2} \le q^{|i-j|-1} \frac{22(1-q)}{1-q^{2n-6}}
$$
by Lemma 3.1. Now, summing over all $i,j$ with $|i-j|\ge m$, we get an upper bound of
\begin{equation}
    q^{-1}\frac{22(1-q)}{1-q^{2n-6}} \sum_{|i-j|\ge m} q^{|i-j|} \le q^{-1}\frac{22(1-q)}{1-q^{2n-6}} 2(n-1)q^m \frac{1-q^{n-1}}{1-q} \le 44(n-1)q^{m-1}.
\end{equation}

For the second sum, we can throw away the negative part. The remaining terms give the probability that there exists some $a\le i+1$, $b \ge j$ such that $|w(a)|>w(b)$. Denote this event by $X$. Summing over $k$ and $l$ gives
$$
\sum_{k,l} \mathbb{P}\left( \left[ \substack{w(i)=k, w(i+1)=k+1 \\ w(j)=l, w(j+1)=l+1} \right] \cap X \right) = \mathbb{P} \left( \bigcup_{k,l} \left[ \substack{w(i)=k, w(i+1)=k+1 \\ w(j)=l, w(j+1)=l+1} \right] \cap X \right) \le \mathbb{P}[X].
$$
However, we also have that
\begin{align}
    \mathbb{P}[X] &\le \sum_{\substack{a\le i+1 \\ b\ge j}} \sum_{x>y,0} \mathbb{P}\left( w(a)=x, w(b)=y \right) + \mathbb{P}(w(a)=-x, w(b)=y) \nonumber \\
    &\le 2\sum_{\substack{a\le i+1 \\ b\ge j}} \sum_{x>y,0} \mathbb{P}\left( w(a)=x, w(b)=y \right) \nonumber \\
    &\le 2\sum_{\substack{a\le i+1 \\ b\ge j}} \sum_{x>y>0} \frac{q^{\max(|a-x|+|b-y|-1, 0)} (1-q)^2}{(1-q^{2n})(1-q^{2n-2})} \nonumber \\
    & \quad + 2\sum_{\substack{a\le i+1 \\ b\ge j}} \sum_{x>0>y} \frac{q^{\max(|a-x|+|b-y|-2, 0)} (1-q)^2}{(1-q^{2n})(1-q^{2n-2})} \label{prob_X_bound}
\end{align}
 The above sum can be rewritten by summing over $y' = y+1$ in the second sum:
$$
2\sum_{\substack{a\le i+1 \\ b\ge j}} \sum_{x=1}^n \sum_{y=-n+1}^{x-1} \frac{q^{\max(|a-x|+|b-y|-1, 0)} (1-q)^2}{(1-q^{2n})(1-q^{2n-2})}.
$$
We then break the inner double sum up into the following regimes: 
\begin{enumerate}
    \item $y<x\le a<b$
    \item $b<y<x$
    \item $b \ge y$ and $a < x$.
\end{enumerate}
We omit the factor of
$$
\frac{2(1-q)^2}{(1-q^{2n})(1-q^{2n-2})}.
$$
For regime $1$, we have the sum
\begin{align*}
    \sum_{x=1}^{a} \sum_{y=-n+1}^{x-1} q^{a-x+b-y-1} &\le q^{a+b-1}\sum_{x=1}^{a} q^{-x}\sum_{y=-n+1}^{a-1} q^{-y} \\
    &= q^{a+b-1} \left(q^{-a} \frac{1-q^{a}}{1-q}\right) \left( q^{1-a} \frac{1-q^{n+a-1}}{1-q} \right) \\
    &= \frac{q^{b-a} (1-q^{a})(1-q^{n+a-1})}{(1-q)^2}
\end{align*}
For regime $2$, we have the sum
\begin{align*}
    \sum_{x>y>b} q^{x-a+y-b-1} &\le q^{1-a-b} \sum_{x=b+1}^n q^x \sum_{y=b+1}^n q^y \\
    &= q^{-1-a-b} \left( q^{b+1}\frac{1-q^{n-b}}{1-q} \right)\left( q^{b+1}\frac{1-q^{n-b}}{1-q} \right) \\
    &= \frac{q^{b-a+1}(1-q^{n-b})^2}{(1-q)^2}
\end{align*}
For regime $3$, we have the sum
\begin{align*}
    \sum_{\substack{b \ge y, a < x, x>y}} q^{x-a+b-y-1} &= q^{b-a-1} \sum_{\substack{y \le b, x > a \\ x-y = k}} q^k \\
    &\le q^{b-a-1} \sum_{k = 1}^{2n-1} (b-a+k)q^k \\
    &\le q^{b-a} \sum_{k=0}^{2n-2} (b-a+1+k)q^k \\
    &= (b-a)q^{b-a} \frac{1-q^{2n-1}}{1-q} + \frac{q^{b-a}}{(1-q)^2}.
\end{align*}
Putting this all together, we get an upper bound of 
$$
(b-a)q^{b-a} \frac{1-q^{2n-1}}{1-q} + 3\frac{q^{b-a}}{(1-q)^2}
$$
so that 
$$
\mathbb{P}[X] \le \frac{2(1-q)^2}{(1-q^{2n})(1-q^{2n-2})} \sum_{\substack{a\le i+1 \\ b\ge j}} \left[ (b-a)q^{b-a} \frac{1-q^{2n-1}}{1-q} + 3\frac{q^{b-a}}{(1-q)^2} \right].
$$
Now, note that, by using \Cref{lem: assisting ineq},
\begin{align*}
    \sum_{\substack{a\le i+1 \\ b\ge j}} (b-a)q^{b-a} &= \sum_{a=1}^{i+1} \sum_{b=j}^n (b-a)q^{b-a} \\
    &= \sum_{a=1}^{i+1} \left[ \frac{(j-a)q^{j-a}}{1-q} + \frac{q^{j-a+1}}{(1-q)^2} - \frac{(n-a+1)q^{n-a+1}}{(1-q)} - \frac{q^{n-a+2}}{(1-q)^2}\right] \\
    &\le \sum_{a=1}^{i+1} \left[ \frac{(j-a)q^{j-a}}{1-q} + \frac{q^{j-a+1}}{(1-q)^2} \right] \\
    &= \frac{1}{1-q} \left( \frac{(j-i-1)q^{j-i-1}}{1-q} + \frac{q^{j-i}}{(1-q)^2} - \frac{jq^j}{1-q} - \frac{q^{j+1}}{(1-q)^2} \right) + \frac{q^{j-i} - q^{j+1}}{(1-q)^3} \\
    &\le \frac{(j-i-1)q^{j-i-1}}{(1-q)^2} - \frac{(j-i-1)q^{j}}{(1-q)^2} + \frac{2q^{j-i}-2q^{j+1}}{(1-q)^3} \\
    &= \frac{(j-i-1)q^{j-i-1}(1-q^{i+1})}{(1-q)^2} + \frac{2q^{j-i}(1-q^{i+1})}{(1-q)^3}
\end{align*}
We also have that
\begin{align*}
    \sum_{a=1}^{i+1} \sum_{b=j}^n q^{b-a} &= \sum_{a=1}^{i+1} \frac{q^{j-a}- q^{n-a+1}}{1-q} \\
    &= \frac{1}{1-q} \left( \frac{q^{j-i-1}-q^{j}}{1-q} - \frac{q^{n-i}-q^{n+1}}{1-q} \right) \\
    &\le \frac{q^{j-i-1}(1-q^{i+1})(1-q^{n-j+1})}{(1-q)^2}.
\end{align*}
It follows that
\begin{align*}
    \mathbb{P}[X] &\le \frac{2(1-q)^2}{(1-q^{2n})(1-q^{2n-2})} \Biggr[ \frac{(j-i-1)q^{j-i-1}(1-q^{i+1})(1-q^{2n-1})}{(1-q)^3} \\
    & \quad + \frac{2q^{j-i}(1-q^{i+1})(1-q^{2n-1})}{(1-q)^3} + \frac{3q^{j-i-1}(1-q^{i+1})(1-q^{n-j+1})}{(1-q)^4}\Biggr] \\
    &= \frac{2(j-i-1)q^{j-i-1}}{1-q} + \frac{4q^{j-i}}{1-q} + \frac{6q^{j-i-1}}{(1-q)^2} \\
    &= q^{j-i-1} \left( \frac{2(j-i-1)}{1-q} + \frac{4q}{1-q} + \frac{6}{(1-q)^2}\right).
\end{align*}
Now, we note that 
$$
\sum_{|i-j|\ge m} (|j-i|-1) q^{|i-j|-1} \le (n-1)\sum_{c=m-1}^\infty cq^c = \frac{(n-1)(m-1)q^{m-1}}{1-q} + \frac{(n-1)q^m}{(1-q)^2}
$$
and
$$
\sum_{|i-j|\ge m} q^{|i-j|-1} \le (n-1) \sum_{c=m-1} q^c = \frac{(n-1)q^{m-1}}{1-q}.
$$
Hence,
\begin{align*}
    \sum_{|i-j|\ge m} \mathbb{P}[X] &\le \frac{2(n-1)}{1-q} \left( \frac{(m-1)q^{m-1}}{1-q} + \frac{q^m}{(1-q)^2} \right) + \left(\frac{4q}{1-q} + \frac{6}{(1-q)^2}\right) \frac{(n-1)q^{m-1}}{1-q} \\
    &= \frac{(n-1)q^{m-1}(2m-2+4q)}{(1-q)^2} + \frac{(n-1)q^{m-1}(2q+6)}{(1-q)^3} \\
    &\le \frac{(n-1)q^{m-1}(2m+2)}{(1-q)^2} + \frac{8(n-1)q^{m-1}}{(1-q)^3}.
\end{align*}
We can then combine this with the result in (1) to find that the total covariance sum is bounded above by 
$$
44(n-1)q^{m-1} + \frac{(n-1)q^{m-1}(2m+2)}{(1-q)^2} + \frac{8(n-1)q^{m-1}}{(1-q)^3}.
$$
\end{proof}

\begin{lemma} \label{Lemma: Type 5 Bound, B_n, q small}
Fix some $2 \le m \le n-1$ and $q \le 1-(n-1)^{-1/2}$ and let $n \ge 10$. Then
$$
\sum_{i,j \ge 0} \mathrm{Cov}(\mathrm{des}(w)-\mathrm{des}(w_{i}^{-*}), \mathrm{des}(w)-\mathrm{des}(w_{j}^{-*}))
$$
$$
\le 1 + (n-1)\left(5 + 55m(1-q)^2 + 44q^{m-1} + \frac{q^{m-1}(2m+2)}{(1-q)^2} + \frac{8q^{m-1}}{(1-q)^3} \right)
$$
\end{lemma}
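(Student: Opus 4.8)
The plan is to assemble this bound from the two preceding estimates together with a direct treatment of the generator $s_0$, so that almost all of the analytic work is already done. First I would record the reduction that for $i\ge 1$ one has $\mathrm{des}(w)-\mathrm{des}(w_i^{-*})=-I_{A_i}$, where $A_i$ is the event $\{w(i+1)-w(i)=1\}$; consequently $\mathrm{Cov}(\mathrm{des}(w)-\mathrm{des}(w_i^{-*}),\mathrm{des}(w)-\mathrm{des}(w_j^{-*}))=\mathrm{Cov}(I_{A_i},I_{A_j})$, and in general $\mathrm{Cov}(I_{A_i},I_{A_j})\le\mathbb{P}(A_i\cap A_j)$. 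This is what lets the near-diagonal contribution be controlled purely by probabilities.

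Next I would split the sum $\sum_{0\le i,j\le n-1}$ into three groups: the positive indices $i,j\ge 1$ with $|i-j|<m$; the positive indices with $|i-j|\ge m$; and the terms with $i=0$ or $j=0$. For the near-diagonal group I bound each covariance by $\mathbb{P}(A_i\cap A_j)$ and invoke \Cref{Lemma: B_n near diagonal} to obtain $55m(n-1)(1-q)^2+3(n-1)$. For the far group, where replacing the covariance by a probability would be far too lossy, I instead apply the near-independence estimate \Cref{Lemma: B_n off-diagonal} directly, giving $44(n-1)q^{m-1}+\frac{(n-1)q^{m-1}(2m+2)}{(1-q)^2}+\frac{8(n-1)q^{m-1}}{(1-q)^3}$. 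This is precisely the trade-off encoded by the splitting parameter $m$: near the diagonal there is no usable cancellation and we pay $\mathbb{P}(A_i\cap A_j)$, while far from it we exploit the decay in $|i-j|$ coming from the probability bound $\frac{22(1-q)^2}{(1-q^{2n-6})^2}$ established at the start of this subsection.

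It then remains to control the $s_0$ contributions, which the two component lemmas do not cover because they are stated for positive generators only. Here I use, exactly as in the $q\approx 1$ argument of \Cref{Lemma: Type 5 Bound, B_n, q large}, that $\mathrm{des}(w)-\mathrm{des}(w_0^{-*})=-1$ precisely on the event $\{w(1)=1,\ w(-1)=-1\}$. Bounding the mixed terms crudely gives $2\sum_{1\le i\le n-1}\mathbb{P}(A_i\cap\{w(1)=1,\,w(-1)=-1\})\le 2(n-1)$, and the single diagonal term is at most $\mathbb{P}(w(1)=1,\,w(-1)=-1)\le 1$. Adding all four pieces and combining $3(n-1)+2(n-1)=5(n-1)$ reproduces the stated bound; the hypotheses $n\ge 4$ of \Cref{Lemma: B_n near diagonal} and $n\ge 6$ of \Cref{Lemma: B_n off-diagonal} are subsumed by $n\ge 10$.

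The main obstacle here is organizational rather than analytic: I must ensure that the three index groups partition $\{0,\dots,n-1\}^2$ without overlap, and in particular isolate the $s_0$ terms so that the positive-index lemmas apply cleanly. The genuinely delicate estimates—the probability bound and the covariance-decay estimate of \Cref{Lemma: B_n off-diagonal}—are already in hand, so the remaining step is only to confirm that the constants aggregate exactly as claimed.
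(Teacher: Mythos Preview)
Your proposal is correct and follows essentially the same approach as the paper: split the positive indices into near-diagonal and far pieces handled by \Cref{Lemma: B_n near diagonal} and \Cref{Lemma: B_n off-diagonal} respectively, then treat the $2(n-1)+1$ terms involving $s_0$ separately to pick up the remaining $2(n-1)+1$. The only cosmetic difference is that the paper bounds each $s_0$ covariance by $1$ via \Cref{Lemma: Uniform Bound Inverse} directly, whereas you recycle the explicit event description from the $q\approx 1$ argument; both routes give the same $2(n-1)+1$ and the constants aggregate identically.
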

\begin{proof}
This follows from combining Lemmas \ref{Lemma: B_n off-diagonal} and \ref{Lemma: B_n near diagonal} and the $2(n-1) + 1$ covariance terms (each bounded by $1$ by Lemma \ref{Lemma: Uniform Bound Inverse}) from when $i$ or $j$ is $0$. 
\end{proof}

\subsubsection{Unconditional Bounds}

\begin{lemma}[Type 5 Bound] \label{Lemma: Type 5 Bound, B_n}
    The type $5$ terms are bounded by $173(n-1) + 1$.
\end{lemma}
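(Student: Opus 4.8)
The plan is to glue together the two conditional estimates already proved, \Cref{Lemma: Type 5 Bound, B_n, q large} and \Cref{Lemma: Type 5 Bound, B_n, q small}, which between them cover all of $(0,\infty)$: setting $q^\ast=1-(n-1)^{-1/2}$, every $q$ lies in $\{q\ge q^\ast\}\cup\{q\le q^\ast\}$, and in particular every $q>1$ falls in the first set (for $q>1$ one may also first reduce to $q\le 1$ via the reversal symmetry of \Cref{lem: q_invert}). When $q\ge q^\ast$, \Cref{Lemma: Type 5 Bound, B_n, q large} already gives the bound $43(n-1)\le 173(n-1)+1$, so nothing is left to do in that regime.

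The whole task is therefore the regime $q\le q^\ast$, where \Cref{Lemma: Type 5 Bound, B_n, q small} bounds the sum by $1+(n-1)f(m)$ with
$$f(m)=5+55m(1-q)^2+44q^{m-1}+\frac{q^{m-1}(2m+2)}{(1-q)^2}+\frac{8q^{m-1}}{(1-q)^3}$$
and $m$ any integer in $[2,n-1]$. It suffices to produce, for every such $q$, a legal $m$ with $f(m)\le 173$, since then $1+(n-1)f(m)\le 173(n-1)+1$. Writing $\epsilon=1-q\in[(n-1)^{-1/2},1)$, the design principle is to push $m$ large enough that the three terms carrying the factor $q^{m-1}$ — dominated by $8q^{m-1}/\epsilon^{3}$ — become $O(1)$, while keeping the increasing term $55m\epsilon^{2}$ under control. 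I would therefore take $m-1$ to be the least integer exceeding the balance value $4\,\epsilon^{-1}\log(1/\epsilon)$, capped at $n-1$ and never taken below $2$; for $q$ near $0$ this reduces to $m=2$.

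The step I expect to be the main obstacle is verifying $f(m)\le 173$ uniformly, which naturally splits into three overlapping ranges and rests on two elementary suprema. When the balance value lies in $[2,n-1]$, using $\log(1/q)\ge\epsilon$ turns the choice of $m$ into the bound $q^{m-1}\le\epsilon^{4}$, which renders the $q^{m-1}$-terms harmless ($8q^{m-1}/\epsilon^{3}\le 8\epsilon\le 8$, and similarly for the other two), while $55m\epsilon^{2}$ is of order $\epsilon\log(1/\epsilon)$ and hence bounded via $\sup_{0<\epsilon<1}\epsilon\log(1/\epsilon)=1/e$. The delicate range is near the boundary $\epsilon\to(n-1)^{-1/2}$, where the balance value exceeds $n-1$ so that $m$ must be capped at $n-1$: here $q^{m-1}\le\epsilon^{4}$ can fail, and instead one estimates $8q^{m-1}/\epsilon^{3}=8(1-\epsilon)^{m-1}\epsilon^{-3}$ directly, bounding it through a quantity of the form $\sup_{t>0}t^{3}e^{-t}$ (the worst case occurring at the extreme left endpoint); simultaneously the cap forces $55m\epsilon^{2}=55(n-1)\epsilon^{2}\le 220\,\epsilon\log(1/\epsilon)$, again controlled by the same supremum. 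For $q$ near $0$ one simply checks $f(2)$ by hand. Assembling the three ranges — and recalling that $n\ge 10$ is inherited as the hypothesis of \Cref{Lemma: Type 5 Bound, B_n, q small}, which is precisely what guarantees that an admissible $m$ exists — gives $f(m)\le 173$ throughout, and combining with the $q\ge q^\ast$ case yields the claimed bound $173(n-1)+1$.
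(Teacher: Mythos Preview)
Your proposal is correct and takes essentially the same approach as the paper: split at $q^\ast=1-(n-1)^{-1/2}$, and in the small-$q$ regime choose $m$ so that $q^{m-1}$ is of order $(1-q)^4$, making each decaying term $O(1)$ while $m(1-q)^2$ stays bounded. The paper's version is slightly cleaner in two respects: it takes $m=4\log(1-q)/\log q+1$, which gives $q^{m-1}=(1-q)^4$ \emph{exactly} (and is asymptotically your balance value since $-\log q\sim 1-q$), and rather than your capping analysis it simply disposes of all $n\le 100$ by the trivial termwise bound, after which the chosen $m$ automatically lies in $[2,n-1]$ and one reads off $f(m)\le 59+57\cdot 2=173$ from $m(1-q)^2\le 2$.
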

\begin{proof}
In the case that $q \ge 1 - (n-1)^{-1/2}$, the result is true by Lemma \ref{Lemma: Type 5 Bound, B_n, q large}. Thus, assume $q \le 1 - (n-1)^{-1/2}$. We then want to bound the quantity given in Lemma \ref{Lemma: Type 5 Bound, B_n, q small}. Take
$$
m = 4\frac{\log(1-q)}{\log q} + 1.
$$
We have that
$$
m \le 4\frac{\log((n-1)^{-1/2})}{\log(1-(n-1)^{-1/2})} + 1 \le 3(n-1)^{1/2}\log((n-1)^{1/2}) + 1 \le n-1
$$
for $n \ge 100$. If $n \le 100$, the result holds by trivially bounding each term in the sum, so may assume $n\ge 50$ and $2\le m \le n-1$. Also, 
$$
q^m = q(1-q)^4
$$
so we use the bound in Lemma \ref{Lemma: Type 5 Bound, B_n, q small} to obtain that the sum is bounded above by
$$
1 + (n-1)(5 + 57m(1-q)^2 + 44 + 2(1-q)^2 + 8(1-q)) \le 1 + (n-1)(59 + 57m(1-q)^2) \le 1 + 173(n-1)
$$
since $m(1-q)^2 \le 2$ for all $0 < q < 1$.
\end{proof}

\begin{lemma}[Type 6 Bound] \label{Lemma: Type 6 Bound, B_n}
The type 6 terms are bounded by $1 + 173(n-1)$.
\end{lemma}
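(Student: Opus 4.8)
The plan is to collapse the entire Type 6 sum into a single covariance and then dominate it by the Type 5 sum already bounded in Lemma \ref{Lemma: Type 5 Bound, B_n} via Cauchy--Schwarz. First I would rewrite both factors as indicators. For $j \ge 1$ the computation opening Section \ref{sec: hard cov} gives $\des(w)-\des(w_j^{-*}) = -I_{A_j}$, where $A_j$ is the event $w(j+1)-w(j)=1$. For the second factor, the definition $w_i^{-*}=((w^{-1})_i^{*})^{-1}$, with $w$ replaced by $w^{-1}$, yields $(w^{-1})_i^{-*}=(w_i^{*})^{-1}$, so
$$
\des(w^{-1})-\des((w_i^{*})^{-1})=\des(w^{-1})-\des((w^{-1})_i^{-*})=-I_{B_i},
$$
where $B_i$ is the event $w^{-1}(i+1)-w^{-1}(i)=1$, upon applying the same single-factor identity to $w^{-1}$ (valid for $i\ge 1$). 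Hence each interior term (those with $i,j\ge 1$) equals $\Cov(I_{A_j},I_{B_i})$.

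Summing the interior terms then gives a single covariance. Writing $M=\sum_{j\ge 1}I_{A_j}$ and $N=\sum_{i\ge 1}I_{B_i}$, the interior Type 6 contribution is exactly $\Cov(M,N)$, while the interior Type 5 contribution is $\Var(M)=\sum_{i,j\ge 1}\Cov(I_{A_i},I_{A_j})$. I would apply the Cauchy--Schwarz inequality $|\Cov(M,N)|\le \sqrt{\Var(M)\Var(N)}$. Since the Mallows measure is invariant under $w\mapsto w^{-1}$ (as $\ell(w)=\ell(w^{-1})$), the variable $N$ is the same functional of $w^{-1}$ that $M$ is of $w$, so $N$ is identically distributed to $M$ and $\Var(N)=\Var(M)$. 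Therefore $|\Cov(M,N)|\le \Var(M)$, and $\Var(M)$ is precisely the interior quantity controlled by Lemmas \ref{Lemma: B_n near diagonal} and \ref{Lemma: B_n off-diagonal} and optimized in Lemma \ref{Lemma: Type 5 Bound, B_n}.

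It remains to treat the boundary terms, those with $i=0$ or $j=0$, to which the single-factor identity does not apply. There are $2(n-1)+1$ such terms, and by Lemma \ref{Lemma: Uniform Bound Inverse} each factor has absolute value at most $1$, so each boundary covariance is at most $1$ and their total is at most $2(n-1)+1$. These are exactly the same $2(n-1)+1$ trivially bounded terms that appear in the Type 5 proof. Consequently the full Type 6 sum is bounded by $\Var(M)+\bigl(2(n-1)+1\bigr)$, which is at most the interior part of the Type 5 bound plus its identical boundary part, i.e. the full Type 5 bound $1+173(n-1)$.

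The main obstacle is conceptual rather than computational: recognizing that the interior Type 6 sum is the single covariance $\Cov(M,N)$ of the two consecutive-value counts of $w$ and $w^{-1}$, after which Cauchy--Schwarz and inverse symmetry transfer all the difficulty onto the already-established Type 5 estimate, and in particular onto the off-diagonal cancellation of Lemma \ref{Lemma: B_n off-diagonal}. Without this observation one would instead bound the mixed probabilities $\Prob(A_j\cap B_i)$ term by term using conditional independence (Lemma \ref{inverse_indep}) and Lemma \ref{lem: cond ind bound}; but for small $q$ each such probability is close to $1$, so only a genuine covariance argument, not a union bound, can yield the linear-in-$n$ bound, making the direct route considerably more delicate.
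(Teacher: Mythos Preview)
Your argument is correct and gives the same bound, but it reaches the goal by a genuinely different route than the paper. The paper's proof expands $\Cov(I_{A_j},I_{B_i})$ as a double sum over values $k,l$, then observes that summing over $i$ (a value index for $B_i$) and $k$ (a position index) is the same as summing over positions and values in the Type 5 expansion; after this reindexing the interior Type 6 sum is shown to be \emph{exactly equal} to the interior Type 5 sum $\Var(M)$, and the boundary terms are handled identically. Your approach instead packages the interior sum as the single covariance $\Cov(M,N)$ and bounds it by $\Var(M)$ via Cauchy--Schwarz together with the distributional symmetry $w\stackrel{d}{=}w^{-1}$; this yields only the inequality $\Cov(M,N)\le\Var(M)$ rather than equality, but that is all the lemma requires. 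Your route is shorter and avoids the explicit reindexing computation, at the cost of not revealing the exact identity between the two interior sums; the paper's route makes that identity transparent, which is conceptually nice and also explains why the bound should be sharp.
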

\begin{proof}
Let $A_i$ denote the event that $w(i+1) - w(i) = 1$ and $B_i$ denote the event that $w^{-1}(i+1) - w^{-1}(i)$. We have that
\begin{align*}
    \sum_{i,j \ge 1} \Cov(\des(w)-&\des(w_{j}^{-*}),\des(w^{-1})-\des((w_i^*)^{-1})) \\
    &= \sum_{i,j \ge 1} \Cov(I_{A_j}, I_{B_i}) \\
    &= \sum_{i,j \ge 1} \mathbb{P}\left( \substack{w(j+1) - w(j) = 1 \\ w^{-1}(i+1) - w^{-1}(i) = 1} \right) - \Prob(w(j+1) - w(j) = 1)\Prob(w^{-1}(i+1) - w^{-1}(i) = 1) \\
    &= \sum_{i,j \ge 1} \sum_{k,l} \Prob \left( \substack{w(j) = l, w(j+1) = l+1 \\ w(k) = i, w(k+1) = i+1} \right) - \Prob\left( \substack{w(j) = l \\ w(j+1) = l+1} \right)\Prob\left( \substack{w(k) = i \\ w(k+1) = i+1} \right)
\end{align*}
where $k$ and $l$ are taken over all values in $[-n, -2] \cup [1, n-1]$. Now, note that taking $w(k) = i$ when $k$ is negative is equivalent to having $w(|k|) = -i$. Thus, we can restrict $k$ to positive values and let $i$ range over negative values in the above sum to obtain
$$
\sum_{k,j \ge 1} \sum_{i,l} \Prob \left( \substack{w(j) = l, w(j+1) = l+1 \\ w(k) = i, w(k+1) = i+1} \right) - \Prob\left( \substack{w(j) = l \\ w(j+1) = l+1} \right)\Prob\left( \substack{w(k) = i \\ w(k+1) = i+1} \right)
$$
which is equal to $\sum_{i,j \ge 1} \Cov(\des(w)-\des(w_{j}^{-*}),\des(w)-\des(w_i^{-*}))$. Thus, we can obtain analogous results for Lemmas \ref{Lemma: Type 5 Bound, B_n, q large} and \ref{Lemma: Type 5 Bound, B_n, q small} since the terms involving $i$ or $j$ being $0$ are all trivially bounded.
\end{proof}

\subsection{$D_n$ Case}

This section uses similar techniques as the last section for the calculations of bounds for the Types 5 and 6 sums. We wish to bound
$$
\sum_{i,j} \mathrm{Cov}(\mathrm{des}(w)-\mathrm{des}(w_{i}^{-*}), \mathrm{des}(w)-\mathrm{des}(w_{j}^{-*})).
$$
For $i>0$, let $A_i$ denote the event that $w^{-1}(i+1)-w^{-1}(i)=1$ or $(w^{-1}(i+1), w^{-1}(i))= (2, -1)$ or $(1, -2)$. We then have that $\mathrm{des}(w)-\mathrm{des}(w_{i}^{-*}) = -\mathbbm{1}_{A_i}$. Thus, for $i,j>0$,
$$
\mathrm{Cov}(\mathrm{des}(w)-\mathrm{des}(w_{i}^{-*}), \mathrm{des}(w)-\mathrm{des}(w_{j}^{-*})) = \mathbb{P}(A_i \cap A_j) - \mathbb{P}(A_i)\mathbb{P}(A_j).
$$
We can write
\begin{align}
\begin{split} \label{intersection_term}
    \mathbb{P}(A_i \cap A_j) &= \mathbb{P}\left( \substack{w(i+1)-w(i) = 1 \\ w(j+1)-w(j) = 1} \right) + \mathbb{P}\left( \substack{w(i+1)-w(i) = 1 \\ (w(j+1), w(j)) = (2, -1)} \right) + \mathbb{P} \left( \substack{(w(i+1), w(i)) = (2, -1) \\ w(j+1)-w(j) = 1} \right) \\
    & \quad + \mathbb{P}\left( \substack{w(i+1)-w(i) = 1 \\ (w(j+1), w(j)) = (1, -2)} \right) + \mathbb{P} \left( \substack{(w(i+1), w(i)) = (1, -2) \\ w(j+1)-w(j) = 1} \right)
\end{split}
\end{align}
and
\begin{align*}
    \mathbb{P}(A_i)\mathbb{P}(A_j) &= \left[ \mathbb{P}(w(i+1)-w(i) = 1) + \mathbb{P}\left( \substack{w(i+1) = 2 \\ w(i) = -1} \right) + \mathbb{P}\left( \substack{w(i+1) = 1 \\ w(i) = -2} \right) \right] \\ 
    & \quad \cdot \left[ \mathbb{P}(w(j+1)-w(j) = 1) + \mathbb{P}\left( \substack{w(j+1) = 2 \\ w(j) = -1} \right) + \mathbb{P}\left( \substack{w(j+1) = 1 \\ w(j) = -2} \right) \right].
\end{align*}

\subsubsection{$q \approx 1$}

\begin{lemma} \label{q_approx_1_bound}
Let $|i-j|>1$, for $i,j \in [n]$, and $n$ big. Then,
$$
\mathbb{P}\left( \substack{w(i+1)-w(i) = 1 \\ w(j+1)-w(j) = 1} \right) \le 32 \frac{(1-q)^2}{(1-q^{2n-4})(1-q^{2n-6})} \le 32 \frac{(1-q)^2}{(1-q^{2n-6})^2}.
$$
\end{lemma}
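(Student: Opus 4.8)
The plan is to mirror the corresponding $B_n$ estimate proved above (the one yielding $22(1-q)^2/(1-q^{2n-6})^2$), substituting the $D_n$ probability bound \Cref{set_indices} for its $B_n$ analogue. First I would expand over the possible consecutive values placed at the two adjacent pairs of indices,
\[
\mathbb{P}\left( \substack{w(i+1)-w(i) = 1 \\ w(j+1)-w(j) = 1} \right) = \sum_{k,l \in [\pm n]\setminus\{-1,n\}} \mathbb{P}\left( \substack{w(i)=k,\,w(i+1)=k+1 \\ w(j)=l,\,w(j+1)=l+1} \right),
\]
the restriction on $k,l$ ensuring $k+1,l+1 \in [\pm n]$. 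Since $|i-j|>1$, the four constrained indices $i,i+1,j,j+1$ are distinct, so $|C|=4$ and \Cref{set_indices} bounds each summand by $q^{\ell(w')}$ times the normalization factor
\[
\frac{[n-4]_q[2n-10]_q!!}{[n]_q[2n-2]_q!!} = \frac{1-q^{n-4}}{1-q^n}\cdot\frac{(1-q)^4}{(1-q^{2n-2})(1-q^{2n-4})(1-q^{2n-6})(1-q^{2n-8})},
\]
where $w'$ is the signed permutation specified by the four constraints and $D$-well-ordered elsewhere. For definiteness I carry out the geometric estimates for $q<1$, where $\tfrac{1-q^{n-4}}{1-q^n}\le 1$, so this prefactor is absorbed; the case $q>1$ is analogous, summing the series from the other end. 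Note already that the four surviving denominator factors are $2n-2,2n-4,2n-6,2n-8$, shifted down by $2$ relative to the $B_n$ factors $2n,2n-2,2n-4,2n-6$, which is precisely what produces the denominator $(1-q^{2n-4})(1-q^{2n-6})$ here rather than $(1-q^{2n-6})^2$.

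The heart of the argument is computing $\ell(w')$ as a function of the signs and magnitudes of $k$ and $l$, using the $D$-inversion convention (the strict condition $-j<i$ together with the even-parity requirement built into the definition of $w'$ in \Cref{set_indices}). I expect that placing $w(i)=k,\,w(i+1)=k+1$ with $k>0$ at positive indices contributes no negative-part inversions and so induces roughly $2|i-k|$ inversions, while $k<0$ induces $2|i-k|$ shifted by a fixed constant, with a further small correction for the at most one inversion double-counted between the $\pm i$ and $\pm j$ blocks. I would then split $\sum_{k,l}q^{\ell(w')}$ into the four regimes given by the signs of $k$ and $l$ (both positive, both negative, and the two mixed cases), exactly as in the $B_n$ computation, and evaluate each as a product of truncated geometric series. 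Each regime contributes a multiple of $(1-q^{2n-c})^2/(1-q^2)^2$ for an appropriate $c$, and the mixed and all-negative regimes carry extra powers $q^{2i-1}$, $q^{2j-1}$, $q^{2i+2j-2}$ that are bounded by $1$ since $i,j\ge 1$.

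Finally I would add the regime constants and combine with the normalization factor. Using $(1-q)^4/(1-q^2)^2=(1-q)^2/(1+q)^2\le(1-q)^2$, the squared truncation factors from the geometric sums cancel (after bounding each $(1-q^{2n-2})\ge(1-q^{2n-4})$ in the direction that enlarges the estimate) the $(1-q^{2n-2})$ and $(1-q^{2n-8})$ factors in the denominator, leaving exactly $(1-q^{2n-4})(1-q^{2n-6})$ and the constant $32$, which is the stated bound; the final inequality is then immediate from $(1-q^{2n-4})\ge(1-q^{2n-6})$. The main obstacle is the inversion bookkeeping in the middle step: the strict $D$-inversion convention and the parity-dependent definition of $w'$ alter several of the correction constants relative to $B_n$, and it is these shifts that must be tracked carefully to land on the constant $32$ and the precise denominator $(1-q^{2n-4})(1-q^{2n-6})$ rather than the $B_n$ values.
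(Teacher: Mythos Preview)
Your plan is essentially the paper's own proof: expand over $k,l$, apply \Cref{set_indices} with $|C|=4$, split into four sign regimes, bound each by a multiple of $(1-q^{2n-2})^2/(1-q^2)^2$, sum to the constant $32$, and multiply by the normalization factor.

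There is one small slip in your final algebraic cleanup. You absorb the prefactor $(1-q^{n-4})/(1-q^n)\le 1$ at the outset and then propose to cancel the surviving numerator factor $(1-q^{2n-2})^2$ against the denominator factors $(1-q^{2n-2})$ and $(1-q^{2n-8})$. One cancellation is exact, but the other requires $(1-q^{2n-2})/(1-q^{2n-8})\le 1$, and for $q<1$ this ratio is $\ge 1$, so the bound goes the wrong way. The paper does \emph{not} throw away the prefactor: it rewrites the normalization denominator using $(1-q^{2n-8})=(1-q^{n-4})(1+q^{n-4})$, so the $(1-q^{n-4})$ cancels against the prefactor numerator and leaves $(1+q^{n-4})(1-q^n)$ in the denominator; then it factors the leftover numerator term $(1-q^{2n-2})=(1-q^{n-1})(1+q^{n-1})$ and bounds $(1-q^{n-1})/(1-q^n)\le 1$ and $(1+q^{n-1})/(1+q^{n-4})\le 1$ separately. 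This is the only place your sketch diverges from the paper, and it is precisely the step that lands you on $(1-q^{2n-4})(1-q^{2n-6})$ rather than something weaker; everything else in your outline matches.
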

\begin{proof}
We have,
\begin{align*}
    \mathbb{P}\left( \substack{w(i+1)-w(i) = 1 \\ w(j+1)-w(j) = 1} \right) &= \sum_{k,l \in [\pm n] \setminus \{-1, n\}} \mathbb{P}\left( \substack{w(i)=k, w(i+1)=k+1 \\ w(j)=l, w(j+1)=l+1} \right) \\
    &\le \sum_{k,l \in [n]} \frac{q^{\max(2|i-k|+2|j-l|-4, 0)}(1-q)^4}{(1-q^{2n-2})(1-q^{2n-4})(1-q^{2n-6})(1+q^{n-4})(1-q^n)} \\
    &\quad + \sum_{k,l \in [-n]} \frac{q^{\max(2|i-k|+2|j-l|-14, 0)}(1-q)^4}{(1-q^{2n-2})(1-q^{2n-4})(1-q^{2n-6})(1+q^{n-4})(1-q^n)} \\
    &\quad + \sum_{k \in [-n], l \in [n]} \frac{q^{\max(2|i-k|+2|j-l|-9, 0)}(1-q)^4}{(1-q^{2n-2})(1-q^{2n-4})(1-q^{2n-6})(1+q^{n-4})(1-q^n)} \\
    &\quad + \sum_{k \in [n], l \in [-n]} \frac{q^{\max(2|i-k|+2|j-l|-9, 0)}(1-q)^4}{(1-q^{2n-2})(1-q^{2n-4})(1-q^{2n-6})(1+q^{n-4})(1-q^n)}
\end{align*}
by Lemma \ref{set_indices}. This is from observing that $2|i-k|$ inversions are induced from setting $w(i) = k, w(i+1) = k+1$ for positive $k$, while $2|i-k|-5$ are induced for negative $k$ (this is from the first $i-1$ numbers on the right and the first $2-k$ numbers on the left). There is then an extra $4$ for double counting. We now have
$$
\sum_{k,l \in [n]} q^{\max(2|i-k|+2|j-l|-4, 0)}.
$$
This was computed in the hyperoctahedral case, with an upper bound of 
$$
4\frac{(1-q^{2n-2})^2}{(1-q^2)^2} + 4\frac{1-q^{2n-2}}{1-q^2} + 5 \le 13\frac{(1-q^{2n-2})^2}{(1-q^2)^2}.
$$
We now wish to evaluate 
$$
\sum_{k,l \in [-n]} q^{\max(2|i-k|+2|j-l|-10, 0)}.
$$
In this case we actually have $2|i-k|+2|j-l|-14 = 2i+2j-6-2(k+2)-2(l+2)$, where $k+2,l+2$ range from $0$ to $2-n$, so that the sum equals (since $2i+2j-6$ is nonnegative)
$$
q^{2i+2j-6} (1+q^2+ \cdots + q^{2(n-2)})^2 = q^{2i+2j-6} \frac{(1-q^{2n-2})^2}{(1-q)^2}.
$$

We now wish to evaluate
$$
\sum_{k \in [-n], l \in [n]} q^{\max(2|i-k|+2|j-l|-9, 0)}.
$$
We have $2|i-k|+2|j-l|-9 = 2i-5 + 2|j-l| - 2(k+2)$. We can sum over the following regimes:
\begin{itemize}
    \item $j>l+1$ or $j<l-1$: In either case, we have a sum of the form 
    \begin{align*}
        \sum_{k \in [-n]} q^{2i-1}q^{-2(k+2)}(1+q^2 + \cdots + q^{2(n-3)}) &\le q^{2i-1}(1+q^2 + \cdots + q^{2(n-2)})(1+q^2 + \cdots + q^{2(n-3)}) \\
        &= \frac{q^{2i-1}(1-q^{2n-2})(1-q^{2n-4})}{(1-q^2)^2}.
    \end{align*}
    \item $j=l+1$ or $l-1$, $k<-2$: This yields the sum $2q^{2i-1}(1+q^2+\cdots + q^{2(n-3)}) = \frac{2q^{2i-1}(1-q^{2n-4})}{1-q^2}$ (note the factor of $2$).
    \item $j=l, k<-3$: This yields the sum $q^{2i-1}(1+q^2+\cdots + q^{2(n-4)}) = \frac{q^{2i-1}(1-q^{2n-6})}{1-q^2}$.
    \item $(j,k) = (l,-2), (l,-3), (l-1, -2), (l+1,-2)$: This yields a maximum term of $1$ for each of the $4$ pairs.
\end{itemize}
The sum totals to
$$
2\frac{q^{2i-1}(1-q^{2n-2})(1-q^{2n-4})}{(1-q^2)^2} + \frac{2q^{2i-1}(1-q^{2n-4})}{1-q^2} + \frac{q^{2i-1}(1-q^{2n-6})}{1-q^2} + 4
$$
$$
\le 9\frac{q^{2i-1}(1-q^{2n-2})(1-q^{2n-4})}{(1-q^2)^2}.
$$
By similar reasoning, we have
$$
\sum_{k \in [n], l \in [-n]} q^{\max(2|i-k|+2|j-l|-9, 0)} \le 9\frac{q^{2j-1}(1-q^{2n-2})(1-q^{2n-4})}{(1-q^2)^2}.
$$
The sum of these four different sums is bounded above by
$$
13\frac{(1-q^{2n-2})^2}{(1-q^2)^2} + q^{2i+2j-6} \frac{(1-q^{2n-2})^2}{(1-q)^2} $$$$+ 9\frac{q^{2i-1}(1-q^{2n-2})(1-q^{2n-4})}{(1-q^2)^2} + 9\frac{q^{2j-1}(1-q^{2n-2})(1-q^{2n-4})}{(1-q^2)^2}
$$
$$
\le 32\frac{(1-q^{2n-2})^2}{(1-q)^2} 
$$
It follows that
\begin{align*}
    \mathbb{P}\left( \substack{w(i+1)-w(i) = 1 \\ w(j+1)-w(j) = 1} \right) &\le 32\frac{(1-q^{2n-2})^2}{(1-q)^2} \cdot \frac{(1-q)^4}{(1-q^{2n-2})(1-q^{2n-4})(1-q^{2n-6})(1+q^{n-4})(1-q^n)} \\
    &\le 32 \frac{(1-q)^2}{(1-q^{2n-4})(1-q^{2n-6})} \cdot \frac{(1+q^{n-1})(1-q^{n-1})}{(1+q^{n-4})(1-q^n)} \\
    &\le 32 \frac{(1-q)^2}{(1-q^{2n-4})(1-q^{2n-6})}.
\end{align*}
\end{proof}

\begin{lemma}
Fix $q \ge 1-(n-1)^{-1/2}$ and let $n \ge 10$. Then
$$
\sum_{i,j \ge 1} \mathrm{Cov}(\mathrm{des}(w)-\mathrm{des}(w_{i}^{-*}), \mathrm{des}(w)-\mathrm{des}(w_{j}^{-*})) \le 57(n-1).
$$
\end{lemma}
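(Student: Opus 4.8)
The plan is to adapt the proof of \Cref{Lemma: Type 5 Bound, B_n, q large} to the $D_n$ setting, using the event decomposition already set up above. Since each covariance term equals $\Prob(A_i \cap A_j) - \Prob(A_i)\Prob(A_j)$ for $i,j \ge 1$, I would first discard the nonpositive subtracted products to reduce to bounding $\sum_{i,j \ge 1}\Prob(A_i \cap A_j)$, and then split the index set into the off-diagonal pairs $|i-j| > 1$, the near-diagonal pairs $|i-j| \le 1$, and the boundary terms where $i = 0$ or $j = 0$. The key input I would establish first is the numerical estimate
$$
\frac{1-q}{1-q^{2n-6}} \le \frac{5}{4}(n-1)^{-1/2},
$$
valid for $q \ge 1-(n-1)^{-1/2}$ because the left side is decreasing in $q$ on $(0,1)$ and $1-(1-(n-1)^{-1/2})^{2n-6} \ge \tfrac45$ once $n \ge 10$; this is exactly what converts the $(1-q)^2/(1-q^{2n-6})^2$ factor produced by \Cref{q_approx_1_bound} into a factor of order $(n-1)^{-1}$.

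For the dominant contribution I would bound the main term of \eqref{intersection_term} over all off-diagonal pairs directly via \Cref{q_approx_1_bound}:
$$
\sum_{\substack{i,j \ge 1\\ |i-j| > 1}} \Prob\!\left(\substack{w(i+1)-w(i)=1\\ w(j+1)-w(j)=1}\right) \le (n-1)^2 \cdot 32\,\frac{(1-q)^2}{(1-q^{2n-6})^2} \le 32\left(\tfrac54\right)^2 (n-1) = 50(n-1).
$$
This is the term that forces the leading constant, and it explains why the $D_n$ bound ($57$) exceeds its $B_n$ counterpart ($43$): the probability estimate of \Cref{q_approx_1_bound} carries the constant $32$ rather than $22$.

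The remaining work, and the main obstacle, is controlling the four \emph{special} terms of \eqref{intersection_term} involving the values $(2,-1)$ and $(1,-2)$, which are the $s_0$-flavored contributions peculiar to $D_n$. Unlike the single fixed event $\{w(1)=1,\,w(-1)=-1\}$ in the $B_n$ argument, here a term such as $\Prob(w(i+1)-w(i)=1,\ w(j)=-1,\ w(j+1)=2)$ pins specific values at a free position $j$, so a crude per-pair bound of $1$ overcounts and would push the constant past $57$. The clean way around this is to sum over the pinned index first: the events $\{w(j)=-1,\ w(j+1)=2\}$ are pairwise disjoint in $j$, so for each fixed $i$,
$$
\sum_{j\ge 1}\Prob\!\left(\substack{w(i+1)-w(i)=1\\ w(j)=-1,\ w(j+1)=2}\right) \le \Prob(w(i+1)-w(i)=1),
$$
and summing over $i$ leaves $\sum_i \Prob(w(i+1)-w(i)=1)$, the expected number of length-one ascents, which is of order $(n-1)^{1/2}$ in this range of $q$ (each summand is $O((n-1)^{-1/2})$ by the single-pair specialization of \Cref{q_approx_1_bound} via \Cref{set_indices}). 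The same estimate handles the other three special terms, so together they are $o(n)$ and are absorbed once $n \ge 10$. Finally, each near-diagonal covariance term ($|i-j|\le 1$) and each boundary term ($i=0$ or $j=0$) is bounded by $1$ using the $D_n$ case of \Cref{Lemma: Uniform Bound Inverse}, contributing at most $(3(n-1)-2) + (2n-1) = 5(n-1)-1$. Adding the pieces gives $50(n-1) + 5(n-1) + o(n) \le 57(n-1)$, as required.
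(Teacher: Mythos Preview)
Your approach is essentially the paper's, but two points need correction.

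First, the lemma is stated for $i,j \ge 1$ only; the boundary terms with $i=0$ or $j=0$ are handled in the \emph{next} lemma (the $\sum_{i,j\ge 0}$ version). So your ``$+(2n-1)$'' contribution should not appear at all. This does not break the bound, but it is a misreading.

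Second, your treatment of the four special terms in \eqref{intersection_term} is more elaborate than necessary and not fully justified. You correctly observe that the events $\{w(j)=-1,\ w(j+1)=2\}$ are disjoint in $j$, so for fixed $i$ the inner sum is at most $\Prob(w(i+1)-w(i)=1)$. But instead of then invoking an $O((n-1)^{-1/2})$ bound on this probability (which you cite only as a ``single-pair specialization'' of \Cref{q_approx_1_bound} without working out the constant), you should simply bound it by $1$: this gives $\sum_{i\ge 1}\sum_{j\ge 1}\Prob(\cdots) \le n-1$ for each of the four special terms, hence $4(n-1)$ in total. The paper does exactly this. Your $o(n)$ statement is asymptotic and does not by itself give a usable inequality for every $n\ge 10$; with the cruder bound no slack is needed.

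With these two fixes the arithmetic is clean: the main term of \eqref{intersection_term} contributes $50(n-1)$ over $|i-j|>1$ and at most $3(n-1)$ over $|i-j|\le 1$, giving $53(n-1)$; the four special terms add $4(n-1)$; and dropping $-\Prob(A_i)\Prob(A_j)$ then yields exactly $57(n-1)$.
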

\begin{proof}
We have, since $\frac{1-q}{1-q^{2n-6}}$ is a decreasing function of $q$ in $(0,1)$, 
$$
\frac{1-q}{1-q^{2n-6}} \le \frac{(n-1)^{-1/2}}{1-(1-(n-1)^{-1/2})^{2n-6}} \le \frac{5}{4}(n-1)^{-1/2}
$$
where we note $1-(1-(n-1)^{-1/2})^{2n-6} \ge \frac{4}{5}$. We then observe from Lemma \ref{q_approx_1_bound} that
$$
\mathbb{P}\left( \substack{w(i+1)-w(i) = 1 \\ w(j+1)-w(j) = 1} \right) \le 32 \frac{(1-q)^2}{(1-q^{2n-4})(1-q^{2n-6})} \le 32 \frac{(1-q)^2}{(1-q^{2n-6})^2} \le 50(n-1)^{-1}.
$$
Then, we can sum over this to get
$$
\sum_{|i-j| > 1} \mathbb{P}\left( \substack{w(i+1)-w(i) = 1 \\ w(j+1)-w(j) = 1} \right) \le 50(n-1)
$$
since there are less than $(n-1)^2$ terms. For $|i-j| \le 1$, the probabilities are bounded above by $1$, so 
$$
\sum_{i,j \ge 1}\mathbb{P}\left( \substack{w(i+1)-w(i) = 1 \\ w(j+1)-w(j) = 1} \right) \le \sum_{|i-j| > 1} \mathbb{P}\left( \substack{w(i+1)-w(i) = 1 \\ w(j+1)-w(j) = 1} \right) + \sum_{|i-j| \le 1, i,j \ge 1} 1 \le 53(n-1). 
$$
Now, looking at the terms in (\ref{intersection_term}), we sum over the second term to obtain
$$
\sum_{i,j \ge 1} \mathbb{P}\left( \substack{w(i+1)-w(i) = 1 \\ (w(j+1), w(j)) = (2, -1)} \right) = \sum_{i \ge 1} \mathbb{P}\left( \bigcup_{j \ge 1} \left\{ \substack{w(i+1)-w(i) = 1 \\ (w(j+1), w(j)) = (2, -1)} \right\} \right) \le \sum_{i \ge 1} 1 = n-1
$$
since the events $\{(w(j+1), w(j)) = (2, -1)\}$ are pairwise disjoint. We can sum over the other terms in (\ref{intersection_term}) in a similar manner to obtain
$$
\sum_{i,j \ge 1} \mathbb{P}(A_i \cap A_j) \le 53(n-1) + (n-1) + (n-1) + (n-1) + (n-1) = 57(n-1).
$$
Since 
$$
\mathrm{Cov}(\mathrm{des}(w)-\mathrm{des}(w_{i}^{-*}), \mathrm{des}(w)-\mathrm{des}(w_{j}^{-*})) \le \mathbb{P}(A_i \cap A_j)
$$
we have the result.
\end{proof}

\begin{lemma} \label{Lemma: Type 5 Bound, D_n, q large}
Fix $q \ge 1-(n-1)^{-1/2}$ and let $n \ge 10$. Then
$$
\sum_{i,j} \mathrm{Cov}(\mathrm{des}(w)-\mathrm{des}(w_{i}^{-*}), \mathrm{des}(w)-\mathrm{des}(w_{j}^{-*})) \le 59(n-1)+1.
$$
\end{lemma}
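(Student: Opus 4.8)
The plan is to bootstrap from the preceding lemma, which already controls the sum over the interior indices $i,j \ge 1$, and then dispose of the boundary contribution coming from the generator $s_0$ by a crude term-by-term estimate. Concretely, I would split the full sum
\[
\sum_{0 \le i,j \le n-1} \mathrm{Cov}\bigl(\mathrm{des}(w)-\mathrm{des}(w_i^{-*}),\ \mathrm{des}(w)-\mathrm{des}(w_j^{-*})\bigr)
\]
into the part with $i,j \ge 1$ and the part in which $i=0$ or $j=0$. The first part is bounded by $57(n-1)$ by the preceding lemma, which applies precisely in the regime $q \ge 1-(n-1)^{-1/2}$ with $n \ge 10$ that is assumed here.

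For the boundary part I would not compute any probabilities; instead I would bound each covariance trivially. By \Cref{Lemma: Uniform Bound Inverse} we have $|\mathrm{des}(w)-\mathrm{des}(w_i^{-*})| \le 1$ for every generator index $i$; since that lemma is stated for an arbitrary Coxeter group and an arbitrary generator, it applies in particular at $i=0$ (where in $D_n$ one has $T_0 = \{s_0,s_2\}$). Thus both arguments of every covariance are bounded by $1$ in absolute value. Because any two random variables $X,Y$ with $|X|,|Y|\le 1$ satisfy $|\Cov(X,Y)| \le \sqrt{\Var(X)\Var(Y)} \le 1$ by Cauchy--Schwarz, each covariance appearing in the boundary part is at most $1$ in absolute value. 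The number of pairs $(i,j)$ with $i=0$ or $j=0$ is exactly $n^2 - (n-1)^2 = 2n-1 = 2(n-1)+1$, so the boundary part contributes at most $2(n-1)+1$.

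Adding the two estimates yields $57(n-1) + 2(n-1) + 1 = 59(n-1)+1$, as claimed. I do not anticipate any genuine obstacle: the substantive work -- the near-diagonal and off-diagonal probability estimates associated with $s_i$ for $i\ge 1$ -- is already carried out in the preceding lemma via \Cref{q_approx_1_bound}, and all that remains is to account correctly for the $s_0$-boundary terms. The only point warranting care is confirming that the uniform bound of \Cref{Lemma: Uniform Bound Inverse} genuinely covers the index $i=0$ in $D_n$, which it does, so that the crude bound of $1$ per boundary covariance is legitimate and the counting $2n-1 = 2(n-1)+1$ is exact.
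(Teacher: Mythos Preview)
Your proposal is correct and follows essentially the same approach as the paper: split off the $i,j\ge 1$ block (bounded by $57(n-1)$ via the preceding lemma), then bound each of the $2(n-1)+1$ boundary covariances by $1$. The only difference is that the paper re-establishes, by a direct combinatorial argument specific to $s_0$ in $D_n$, that $\mathrm{des}(w)-\mathrm{des}(w_0^{-*})$ is $\{0,-1\}$-valued, whereas you invoke \Cref{Lemma: Uniform Bound Inverse} directly (which already covers $i=0$ for any Coxeter group) together with Cauchy--Schwarz; your route is slightly cleaner and equally valid.
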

\begin{proof}
We have,
\begin{align*}
    \sum_{i,j} \mathrm{Cov}(\mathrm{des}(w)-\mathrm{des}(w_{i}^{-*}), &\mathrm{des}(w)-\mathrm{des}(w_{j}^{-*})) \\
    &\le \sum_{i,j \ge 1} \mathrm{Cov}(\mathrm{des}(w)-\mathrm{des}(w_{i}^{-*}), \mathrm{des}(w)-\mathrm{des}(w_{j}^{-*})) \\
    & \quad + \sum_{j \ge 1} \mathrm{Cov}(\mathrm{des}(w)-\mathrm{des}(w_{0}^{-*}), \mathrm{des}(w)-\mathrm{des}(w_{j}^{-*})) \\
    & \quad + \sum_{i \ge 1} \mathrm{Cov}(\mathrm{des}(w)-\mathrm{des}(w_{i}^{-*}), \mathrm{des}(w)-\mathrm{des}(w_{0}^{-*})) \\
    & \quad + \mathrm{Cov}(\mathrm{des}(w)-\mathrm{des}(w_{0}^{-*}), \mathrm{des}(w)-\mathrm{des}(w_{0}^{-*})).
\end{align*}
Now, note that $\mathrm{des}(w)-\mathrm{des}(w_{i}^{-*})$ is an indicator random variable of the event $A_i$ for $i \ge 1$. We claim that a similar thing is true for $\mathrm{des}(w)-\mathrm{des}(w_{0}^{-*})$. First, note that obtaining $w_{0}^{-*}$ from $w$ involves switching the values $2, -1$ and the values $1, -2$ in the permutation if $-1$ comes before $2$. This changes whether $w$ has a descent at $s_i$ for $i \ge 1$ only if $\{w(i+1), w(i), w(-i), w(-i-1)\} = \{-2, -1, 1, 2\}$, since otherwise the order of $w(i+1), w(i)$ is unchanged ($\mathrm{des}_i(w)$ only depends on this order). Similarly, $\mathrm{des}_0(w)$ depends only on the order of the comparison between $w(-2), w(1)$. Thus, we have that going from $w$ to $w_0^{-*}$ can only add two descents if they are at $s_0$ and $s_1$ and $\{w(2),w(1), w(-1), w(-2)\} = \{2,1,-1,-2\}$. Observe that there are only two ways that the values at indices $-1,-2,1,2$ can be switched by multiplying on the left by $s_0$ (since $2$ and $-1$ cannot be opposite):
\begin{align*}
    -b, -a, a, b \to & a, b, -b, -a \\
    & -a, -b, b, a
\end{align*}
In any case, it is not hard to observe that the order of $w(1), w(2)$ remains the same or the order of $w(-1), w(2)$ remains the same. Hence, $\mathrm{des}(w)-\mathrm{des}(w_{0}^{-*})$ is at most $1$, and is $0$ otherwise. It follows that any of the covariance terms above are covariances of indicator random variables, so they are bounded above by $1$. We then have, using the previous lemma,
$$
\sum_{i,j} \mathrm{Cov}(\mathrm{des}(w)-\mathrm{des}(w_{i}^{-*}), \mathrm{des}(w)-\mathrm{des}(w_{j}^{-*})) \le 57(n-1) + (n-1)+(n-1)+1 = 59(n-1)+1.
$$
\end{proof}

\subsubsection{$q << 1$}

We first control the diagonal:
\begin{lemma} \label{Lemma: D_n near diagonal}
Fix some $0\le m \le n-1$ and $q \le 1-(n-1)^{-1/2}$ with $n \ge 4$. Then 
$$
\sum_{\substack{i,j \in [n-1] \\ |i-j|<m}}\mathbb{P}\left( \substack{w(i+1)-w(i) = 1 \\ w(j+1)-w(j) = 1} \right) \le 100m(n-1)(1-q)^2 + 3(n-1)
$$
\end{lemma}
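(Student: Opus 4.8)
The plan is to mirror the proof of the hyperoctahedral analogue, \Cref{Lemma: B_n near diagonal}, replacing its joint-probability input with the $D_n$ estimate already established in \Cref{q_approx_1_bound}. The strategy is to split the double sum according to whether the two marked positions are nearly adjacent or well-separated, dispose of the few adjacent pairs by the trivial bound $\mathbb{P}(\cdots)\le 1$, and apply the sharp estimate of \Cref{q_approx_1_bound} to each separated pair. The only new feature relative to the $B_n$ case is that the per-term constant is $32$ rather than $22$, which is exactly what inflates the final constant from $55$ to $100$.

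Concretely, I would first record the elementary fact controlling the normalizer: since $q\le 1-(n-1)^{-1/2}$ and $t\mapsto 1/(1-t^{2n-6})$ is increasing, the same computation as in \Cref{Lemma: B_n near diagonal} (using $1-(1-(n-1)^{-1/2})^{2n-6}\ge \tfrac45$ for $n\ge4$) gives
$$
\frac{1}{1-q^{2n-6}}\le\frac54,\qquad\text{hence}\qquad \frac{1}{(1-q^{2n-6})^2}\le\frac{25}{16}.
$$
Then I would decompose the sum as
$$
\sum_{\substack{i,j\in[n-1]\\ |i-j|<m}}\mathbb{P}\left(\substack{w(i+1)-w(i)=1\\ w(j+1)-w(j)=1}\right)
=\sum_{\substack{i,j\in[n-1]\\ 1<|i-j|<m}}\mathbb{P}(\cdots)
+\sum_{\substack{i,j\in[n-1]\\ |i-j|\le1}}\mathbb{P}(\cdots).
$$
In the second sum there are at most $3(n-1)$ pairs (the cases $j\in\{i-1,i,i+1\}$), so bounding each probability by $1$ contributes at most $3(n-1)$. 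In the first sum every term has $|i-j|>1$, so \Cref{q_approx_1_bound} applies and yields
$$
\mathbb{P}\left(\substack{w(i+1)-w(i)=1\\ w(j+1)-w(j)=1}\right)\le 32\frac{(1-q)^2}{(1-q^{2n-6})^2}\le 32\cdot\frac{25}{16}(1-q)^2=50(1-q)^2.
$$
Since there are at most $2m(n-1)$ pairs with $1<|i-j|<m$, this part is at most $2m(n-1)\cdot 50(1-q)^2=100m(n-1)(1-q)^2$, and adding the two contributions gives the claimed bound.

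I expect no genuine obstacle here: the single nontrivial input is the separated-pair estimate \Cref{q_approx_1_bound}, and everything else is elementary counting. The one place that rewards care is the constant-tracking, since it is precisely the product of the count $2m(n-1)$, the per-term constant $32$, and the factor $\tfrac{25}{16}$ that must collapse to exactly $100$; I would also double-check that the regime $q\le 1-(n-1)^{-1/2}$ with $n\ge4$ is indeed the range in which the $\tfrac54$ estimate is valid, so that the hypotheses of the lemma are not understated.
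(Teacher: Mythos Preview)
Your proposal is correct and is essentially identical to the paper's own proof: the same split into $|i-j|\le 1$ versus $1<|i-j|<m$, the same normalizer bound $1/(1-q^{2n-6})\le 5/4$, the same invocation of \Cref{q_approx_1_bound} with constant $32$, and the same pair-count $2m(n-1)$ yielding $100m(n-1)(1-q)^2+3(n-1)$.
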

\begin{proof}
We have that $\frac{1}{1-q^{2n-6}}$ is an increasing function of $q$, so
$$
\frac{1}{1-q^{2n-6}} \le \frac{1}{1-(1-(n-1)^{-1/2})^{2n-6}} \le \frac{5}{4}.
$$
We then have that
\begin{align*}
    \sum_{\substack{i,j \in [n-1] \\ |i-j|<m}}\mathbb{P}\left( \substack{w(i+1)-w(i) = 1 \\ w(j+1)-w(j) = 1} \right) &= \sum_{\substack{i,j \in [n-1] \\ 1<|i-j|<m}}\mathbb{P}\left( \substack{w(i+1)-w(i) = 1 \\ w(j+1)-w(j) = 1} \right) + \sum_{\substack{i,j \in [n-1] \\ |i-j|<2}}\mathbb{P}\left( \substack{w(i+1)-w(i) = 1 \\ w(j+1)-w(j) = 1} \right) \\
    &\le 2m(n-1) \frac{32(1-q)^2}{(1-q^{2n-6})^2} + 3(n-1) \\
    &\le 100m(n-1)(1-q)^2 + 3(n-1).
\end{align*}
\end{proof}

\begin{lemma} \label{Lemma: D_n off-diagonal}
Fix some $2\le m \le n-1$ and $q \le 1-(n-1)^{-1/2}$ with $n \ge 6$. Then,
$$
\sum_{|i-j|\ge m} \mathbb{P} \left( \substack{w(i+1)-w(i) = 1 \\ w(j+1)-w(j) = 1} \right) - \mathbb{P}(w(i+1)-w(i) = 1) \mathbb{P} (w(j+1)-w(j) = 1)
$$
$$
\le 64(n-1)q^{m-1} + \frac{(n-1)q^{m-2}(2m+2)}{(1-q)^2} + \frac{8(n-1)q^{m-2}}{(1-q)^3}.
$$
\end{lemma}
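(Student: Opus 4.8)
The plan is to mirror the proof of the hyperoctahedral off-diagonal estimate, Lemma \ref{Lemma: B_n off-diagonal}, replacing the $B_n$ combinatorial inputs by their $D_n$ counterparts throughout. Assuming without loss of generality that $i<j$, I would first write the summand as
$$
\sum_{k,l} \mathbb{P}\left(\substack{w(i)=k,\,w(i+1)=k+1\\ w(j)=l,\,w(j+1)=l+1}\right) - \mathbb{P}\left(\substack{w(i)=k\\ w(i+1)=k+1}\right)\mathbb{P}\left(\substack{w(j)=l\\ w(j+1)=l+1}\right),
$$
and factor each probability through the parabolic decomposition exactly as in the $B_n$ argument. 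The block of indices $i+2,\dots,j-1$ is disjoint from the support $\{\pm1,\pm2\}$ of $s_0$, so permuting values there is a genuine type-$A$ rearrangement and still contributes a factor $[j-i-2]_q!$; the only changes are that the global normalization is now $Z_{D_n}=[n]_q[2n-2]_q!!$ and that the outer blocks carry $D$-type rather than $B$-type sub-factors, so that $\ell(w)=\ell(C)+\ell(w_0)$ splits as before with the $D$-inversion count in place of the $B$-inversion count.

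Next I would split the covariance into the same two pieces as in the $B_n$ case: a \emph{matched} sum over configurations $C$ in which the left block receives values all smaller than the right block, and an \emph{error} sum collecting every configuration that violates this. For the matched sum, the factorial-ratio correction $1-\tfrac{(\text{marginal factors})}{[j-i-2]_q!\,Z_{D_n}}$ is again bounded by $q^{j-i-1}\tfrac{1-q^n}{1-q}$ times the joint probability; summing over $k,l$ via the $D_n$ near-$1$ estimate Lemma \ref{q_approx_1_bound} (which supplies the constant $32$ rather than $22$) and then over $|i-j|\ge m$ as a geometric series yields the leading term $64(n-1)q^{m-1}$. For the error sum I would bound it by $\mathbb{P}[X]$, where $X$ is the event that some index $a\le i+1$ and some $b\ge j$ satisfy $|w(a)|>w(b)$, and then estimate $\mathbb{P}[X]$ by Lemma \ref{set_indices} applied with $|C|=2$, splitting the double sum into the three regimes $y<x\le a<b$, $b<y<x$, and ($b\ge y$ with $a<x$), and summing the resulting geometric and arithmetico-geometric series with Lemma \ref{lem: assisting ineq}. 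This produces the two remaining terms $\tfrac{(n-1)q^{m-2}(2m+2)}{(1-q)^2}$ and $\tfrac{8(n-1)q^{m-2}}{(1-q)^3}$.

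The shift from $q^{m-1}$ in the $B_n$ bound to $q^{m-2}$ here is exactly the place that requires care: in $D_n$ a value placed at a far index induces fewer inversions than in $B_n$ (the subtraction inside the $\max(\cdot,0)$ exponents in Lemma \ref{set_indices} is larger), so the single-pair probability bounds entering $\mathbb{P}[X]$ pick up an extra factor of $q^{-1}$, which propagates through the summation. The main obstacle I anticipate is bookkeeping-level rather than conceptual: one must verify that the factorization of the joint and marginal probabilities survives the even-signed-permutation parity constraint, since Lemma \ref{set_indices} itself splits into an even case and an odd case, and one must track the exact $D$-inversion counts at the far indices carefully enough to certify the constant $64$ and the exponent $m-2$. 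Once these exponents are pinned down, the two sums combine as in the $B_n$ proof to give the claimed bound.
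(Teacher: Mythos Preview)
Your proposal is correct and follows essentially the same approach as the paper's proof: the same parabolic factorization over configurations $C$, the same split into a matched sum (yielding $64(n-1)q^{m-1}$ via Lemma~\ref{q_approx_1_bound}) and an error sum bounded by $\mathbb{P}[X]$, and the same recognition that the $D_n$ two-point probability bound from Lemma~\ref{set_indices} carries an extra $q^{-1}$ relative to the $B_n$ case, so that the hyperoctahedral computation of $\sum_{|i-j|\ge m}\mathbb{P}[X]$ can be recycled verbatim up to that factor. Your flagged concern about the parity constraint is exactly the one place the paper has to argue carefully, introducing the factor $F(C'')$ that depends on the parity of negative assignments and noting that for the ``good'' configurations (those with $|w(i')|<w(j')$ for all $i'\le i+1$, $j'\ge j$) one has $F(C'')=[2j-4]_q!![j-1]_q$ since no negatives are assigned; this is precisely the bookkeeping you anticipated.
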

\begin{proof}
We have
$$
\mathbb{P} \left( \substack{w(i+1)-w(i) = 1 \\ w(j+1)-w(j) = 1} \right) - \mathbb{P}(w(i+1)-w(i) = 1) \mathbb{P} (w(j+1)-w(j) = 1)
$$
$$
= \sum_{k,l \in [\pm n] \setminus \{-1, n\} } \mathbb{P}\left( \substack{w(i)=k, w(i+1)=k+1 \\ w(j)=l, w(j+1)=l+1} \right) - \mathbb{P}(w(i)=k, w(i+1)=k+1)\mathbb{P}(w(j)=l, w(j+1)=l+1).
$$
We denote
$$
Q_{ij}(k,l) = \mathbb{P}\left( \substack{w(i)=k, w(i+1)=k+1 \\ w(j)=l, w(j+1)=l+1} \right) - \mathbb{P}(w(i)=k, w(i+1)=k+1)\mathbb{P}(w(j)=l, w(j+1)=l+1).
$$
Assuming $i< j$,
$$
\mathbb{P}\left( \substack{w(i)=k, w(i+1)=k+1 \\ w(j)=l, w(j+1)=l+1} \right) = \sum_{C} \frac{q^{\ell(C)}}{[2n-2]_q!![n]} \sum_{w_0 \in S_{j-i-2}} q^{\ell(w_0)} = \sum_{C} q^{\ell(C)} \frac{[j-i-2]_q!}{[2n-2]_q!![n]}.
$$
where $C$ ranges over all assignments to indices in $[n]$ that are less than $i$ or greater than $j+1$ (and obviously the negatives of these indices are assigned the appropriate values) and each $C$ specifies the sign of each value that is assigned in this block. In other words, each $C$ specifies the set of values on the positive side of the permutation, such that an even number of these values are negative. Let $\ell(C)$ denote the number of inversions induced by the assignment $C$. That is, $\ell(C)=\ell(w_C)$ for the signed permutation $w_C$ that satisfies $C$ and is completely well-ordered on the block from $i+2$ to $j-1$. The above claim follows from noting that $\ell(w)=\ell(C)+\ell(w_0)$ where $w_0$ corresponds to the way $w$ is ordered on $[i+2,j-1]$. This is because $w_C$ clearly minimizes the total number of $D$-inversions among permutations that follow the assignment $C$, and any additional inversions arise solely from the block from $i+2$ to $j-1$. Moreover, it is clear that adding one inversion to $w_0$ by a transposition in this block also adds exactly one inversion to the whole signed permutation. We also have that
$$
\mathbb{P}(w(i)=k, w(i+1)=k+1) = \sum_{C'} \frac{q^{\ell(C')}}{[2n-2]_q!![n]} \sum_{w_1 \in S_{n-i-1}} q^{\ell(w_1)} = \sum_{C'}q^{\ell(C')} \frac{[n-i-1]_q!}{[2n-2]_q!![n]} 
$$
where $C'$ ranges over assignments of exact values to $[i-1]$ and signs to the other values on the right half of the permutation. Moreover, if $C''$ ranges over assignments over $[j+2,n]$, with no stipulations on the signs of other values in the right half,
$$
\mathbb{P}(w(j)=l, w(j+1)=l+1) = \sum_{C''} \frac{q^{\ell(C'')}}{[2n-2]_q!![n]} \sum_{w''} q^{\ell(w'')} = \sum_{C''} q^{\ell(C'')} \frac{F(C'')}{[2n-2]_q!![n]}
$$
where $w''$ ranges over $D_{j-1}$ if $C''$ assigns an even number of negative integers to $[j+2, n]$ and $w''$ ranges over $B_{j-1} \setminus D_{j-1}$ if $C''$ assigns an odd number of negative integers $[j+2, n]$. ($w''$ corresponds to how $w$ is ordered on the first $j-1$ terms.) It is then easy to see that 
$$
F(C'') = \begin{cases} [2j-4]_q!![j-1] & \text{ if } C'' \text{ assigns an even number of negatives to } [j-1] \\ [2j-2]_q!! - [2j-4]_q!![j-1] & \text{ if } C'' \text{ assigns an odd number of negatives to } [j-1] \end{cases}
$$
We then have
$$
\mathbb{P} \left( \substack{w(i)=k \\ w(i+1)=k+1} \right) \mathbb{P} \left( \substack{w(j)=l \\ w(j+1)=l+1} \right) = \sum_{C',C''} q^{\ell(C')+\ell(C'')}\frac{F(C'')[n-i-1]_q!}{([2n-2]_q!![n])^2}.
$$
Now, consider the case that $C'$ and $C''$ can be combined to form an assignment $C$ to $[\pm n] \setminus \pm [i+2,j-1]$, i.e. that the values assigned by $C'$, $C''$ are nonintersecting subsets of $[\pm n] \setminus \pm \{k,k+1,l,l+1\}$ and $C''$ follows the sign assignments by $C'$. Suppose that, for any $w$ satisfying this assignment, $|w(i')|<w(j')$ for any $i' \in [1,i+1], j' \in [j, n]$, i.e. that the numbers assigned by $C'$ are all less than those assigned by $C''$. This requires that $C''$ only assigns positive values to positive indices. We claim that $\ell(C') + \ell(C'') = \ell(C)$. Consider an arbitrary inversion $(i',j')$, $i' < j'$, in $w_C$. There is then a bijection between such $(i',j')$ with $j-1 \le j' \le n$ and inversions in $C''$, where we simply take the inversion between the same two values. Similarly, there is a bijection between $(i', j')$ such that $i' \in [-i-1, i+1]$ or $(i',j') \in [1-j, -i-2] \times [i+2, j-1]$ and inversions in $C''$, where we again take the inversion between the same two values. This covers all inversions in $w_C$, since there are no inversions in the block $[i+2, j-1]$ itself.

Now, note that, for $C'$ and $C''$ discussed above, we have $F(C'') = [2j-4]_q!![j-1]$ since $C''$ assigns zero negative values to positive indices. We then can then sum over just these such $C', C''$ to obtain 
$$
\mathbb{P} \left( \substack{w(i)=k \\ w(i+1)=k+1} \right) \mathbb{P} \left( \substack{w(j)=l \\ w(j+1)=l+1} \right) \ge \sum_{C} q^{\ell(C)} \frac{[2j-4]_q!![j-1][n-i-1]_q!}{([2n-2]_q!![n])^2}
$$
where we are summing over $C$ such that $|w(i')|<w(j')$ for all $i' \in [1,i+1], j' \in [j, n]$. Thus,
\begin{align*}
    \mathbb{P}\left( \substack{w(i)=k, w(i+1)=k+1 \\ w(j)=l, w(j+1)=l+1} \right) &- \mathbb{P} \left( \substack{w(i)=k \\ w(i+1)=k+1} \right) \mathbb{P} \left( \substack{w(j)=l \\ w(j+1)=l+1} \right) \\
    &\le \sum_{C \in \Gamma} q^{\ell(C)} \frac{[j-i-2]_q!}{[2n-2]_q!![n]} \left(1 - \frac{[2j-4]_q!![j-1][n-i-1]_q!}{[2n-2]_q!![n][j-i-2]_q!} \right) \\
    & \quad + \sum_{C \not\in \Gamma} q^{\ell(C)} \frac{[j-i-2]_q!}{[2n-2]_q!![n]}.
\end{align*}
where $\Gamma$ is the set of $C$ such that $|w(i')|<w(j')$ for all $i' \in [1,i+1], j' \in [j, n]$. For the first sum, we compute:
\begin{align*}
    1 - \frac{[j-1]}{[n]} \cdot \frac{[2j-4]_q!![n-i-1]_q!}{[2n-2]_q!![j-i-2]_q!} &= 1 - \frac{1-q^{j-1}}{1-q^n} \cdot \frac{(1-q^{j-i-1}) \dots (1-q^{n-i-1})}{(1-q^{2j-2}) \dots (1-q^{2n-2})} \\
    &\le 1 - \cdot \frac{(1-q^{j-i-1}) \dots (1-q^{n-i-1})}{(1-q^{2j-2}) \dots (1-q^{2n-2})} \\
    &\le 1 - (1-q^{j-i-1})\dots (1-q^{n-i-1}) \\
    &\le q^{j-i-1} + \dots + q^{n-i-1} \\
    &= q^{j-i-1} \frac{1-q^{n-j+1}}{1-q} \\
    &\le q^{j-i-1} \frac{1-q^n}{1-q}.
\end{align*}
Hence, the first sum is bounded above by
$$
q^{j-i-1} \frac{1-q^n}{1-q}\sum_{C \in \Gamma} q^{\ell(C)} \frac{[j-i-2]_q!}{[2n-2]_q!![n]} \le q^{j-i-1} \frac{1-q^n}{1-q} \mathbb{P}\left( \substack{w(i)=k, w(i+1)=k+1 \\ w(j)=l, w(j+1)=l+1} \right)
$$
Summing over $k,l$, to get the total contribution to the covariance term, gives an upper bound of
$$
q^{|i-j|-1} \frac{1-q^n}{1-q} \cdot \frac{32(1-q)^2}{(1-q^{2n-6})^2} \le q^{|i-j|-1} \frac{32(1-q)}{1-q^{2n-6}}
$$
by Lemma 3.1. Now, summing over all $i,j$ with $|i-j|\ge m$, we get an upper bound of
\begin{equation} \label{first_sum}
    q^{-1}\frac{32(1-q)}{1-q^{2n-6}} \sum_{|i-j|\ge m} q^{|i-j|} \le q^{-1}\frac{32(1-q)}{1-q^{2n-6}} 2(n-1)q^m \frac{1-q^{n-1}}{1-q} \le 64(n-1)q^{m-1}.
\end{equation}
Now, we wish to bound the second sum. It is equal to the probability that there exists some $a\le i+1$, $b \ge j$ such that $|w(a)|>w(b)$ (and that $w(i)=k$, etc.). Denote this event by $X$. We have, summing the second sum over $k,l$,
$$
\sum_{k,l} \mathbb{P}\left( \left[ \substack{w(i)=k, w(i+1)=k+1 \\ w(j)=l, w(j+1)=l+1} \right] \cap X \right) = \mathbb{P} \left( \bigcup_{k,l} \left[ \substack{w(i)=k, w(i+1)=k+1 \\ w(j)=l, w(j+1)=l+1} \right] \cap X \right) \le \mathbb{P}[X].
$$
However, we also have that
\begin{align*}
    \mathbb{P}[X] &\le \sum_{\substack{a\le i+1 \\ b\ge j}} \sum_{x>y,0} \mathbb{P}\left( w(a)=x, w(b)=y \right) + \mathbb{P}(w(a)=-x, w(b)=y) \\
    &\le 2\sum_{\substack{a\le i+1 \\ b\ge j}} \sum_{x>y,0} \mathbb{P}\left( w(a)=x, w(b)=y \right) \\
    &\le 2\sum_{\substack{a\le i+1 \\ b\ge j}} \sum_{x>y>0} \frac{q^{\max(|a-x|+|b-y|-1, 0)} (1-q^{n-2})(1-q)^2}{(1-q^n)(1-q^{2n-2})(1-q^{2n-4})} \\
    & \quad + 2\sum_{\substack{a\le i+1 \\ b\ge j}} \sum_{x>0>y} \frac{q^{\max(|a-x|+|b-y|-3, 0)} (1-q^{n-2})(1-q)^2}{(1-q^n)(1-q^{2n-2})(1-q^{2n-4})}
\end{align*}
since switching $x$ and $-x$ at positions $a$ and $-a$ can only increase the total number of inversions. The exponents in the last line follows from noting that we induce $|a-x|$ inversions by setting $w(a) = x$ and $|b-y|$ inversions by setting $w(b)=y$ if $y$ is positive, or $|b-y|-2$ inversions if $y$ is negative. The above sum can be rewritten 
\begin{align*}
    \mathbb{P}[X] &\le 2q^{-1}\sum_{\substack{a\le i+1 \\ b\ge j}} \sum_{x>y>0} \frac{q^{\max(|a-x|+|b-y|-1, 0)} (1-q^{n-2})(1-q)^2}{(1-q^n)(1-q^{2n-2})(1-q^{2n-4})} \\
    & \quad + 2q^{-1}\sum_{\substack{a\le i+1 \\ b\ge j}} \sum_{x>0>y} \frac{q^{\max(|a-x|+|b-y|-2, 0)} (1-q^{n-2})(1-q)^2}{(1-q^n)(1-q^{2n-2})(1-q^{2n-4})} \\
    &\le 2q^{-1}\sum_{\substack{a\le i+1 \\ b\ge j}} \sum_{x>y>0} \frac{q^{\max(|a-x|+|b-y|-1, 0)} (1-q)^2}{(1-q^{2n})(1-q^{2n-2})} + 2q^{-1}\sum_{\substack{a\le i+1 \\ b\ge j}} \sum_{x>0>y} \frac{q^{\max(|a-x|+|b-y|-2, 0)} (1-q)^2}{(1-q^{2n})(1-q^{2n-2})}
\end{align*}
This is just $q^{-1}$ times the sum \eqref{prob_X_bound} we bounded in the hyperoctahedral case. Thus, we have 
$$
\sum_{|i-j|\ge m} \mathbb{P}[X] \le q^{-1}\frac{(n-1)q^{m-1}(2m+2)}{(1-q)^2} + q^{-1}\frac{8(n-1)q^{m-1}}{(1-q)^3} = \frac{(n-1)q^{m-2}(2m+2)}{(1-q)^2} + \frac{8(n-1)q^{m-2}}{(1-q)^3}
$$
We can then combine this with the result in (\ref{first_sum}) to find that the total sum in the lemma statement is bounded above by 
$$
64(n-1)q^{m-1} + \frac{(n-1)q^{m-2}(2m+2)}{(1-q)^2} + \frac{8(n-1)q^{m-2}}{(1-q)^3}
$$
\end{proof}

\begin{lemma} 
Fix some $2 \le m \le n-1$ and $q \le 1-(n-1)^{-1/2}$ and let $n \ge 10$. Then
$$
\sum_{i,j \ge 1} \mathrm{Cov}(\mathrm{des}(w)-\mathrm{des}(w_{i}^{-*}), \mathrm{des}(w)-\mathrm{des}(w_{j}^{-*}))
$$
$$
\le (n-1)\left(7 + 100m(1-q)^2 + 64q^{m-1} + \frac{q^{m-2}(2m+2)}{(1-q)^2} + \frac{8q^{m-2}}{(1-q)^3}\right).
$$
\end{lemma}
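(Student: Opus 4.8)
The plan is to assemble this bound from the two preceding estimates, \Cref{Lemma: D_n near diagonal} and \Cref{Lemma: D_n off-diagonal}, in direct analogy with the hyperoctahedral \Cref{Lemma: Type 5 Bound, B_n, q small}, with the extra bookkeeping required by the special terms that distinguish $D_n$ from $B_n$. Since $\mathrm{des}(w)-\mathrm{des}(w_i^{-*}) = -\mathbbm{1}_{A_i}$ for $i \ge 1$, every summand equals $\mathrm{Cov}(\mathbbm{1}_{A_i}, \mathbbm{1}_{A_j}) = \mathbb{P}(A_i\cap A_j) - \mathbb{P}(A_i)\mathbb{P}(A_j)$. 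I would first use \eqref{intersection_term} to write $\mathbb{P}(A_i \cap A_j)$ as the ``main'' probability $\mathbb{P}(w(i+1)-w(i)=1,\, w(j+1)-w(j)=1)$ plus the four ``special'' probabilities involving a value pattern $(2,-1)$ or $(1,-2)$, and to expand $\mathbb{P}(A_i)\mathbb{P}(A_j)$ as $\mathbb{P}(w(i+1)-w(i)=1)\mathbb{P}(w(j+1)-w(j)=1)$ plus cross products each containing at least one special factor. All of these cross products are nonnegative, so discarding them bounds the covariance above by the difference of the two main probabilities plus the four special probabilities.

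Then I would treat the two pieces separately. For the special probabilities I would sum each over all $i,j \ge 1$ rather than bother with the diagonal split: for fixed $i$, the events $\{w(j+1)=2,\, w(j)=-1\}$ (and likewise $\{w(j+1)=1,\, w(j)=-2\}$) are pairwise disjoint in $j$, so summing the joint probability over $j$ gives at most $1$, and a further sum over the $n-1$ admissible values of $i$ gives at most $n-1$ per special term, hence at most $4(n-1)$ in total. For the difference of main probabilities I would split the index set into $|i-j| < m$ and $|i-j| \ge m$. On the near-diagonal range I drop the nonnegative product term and apply \Cref{Lemma: D_n near diagonal} to bound $\sum_{|i-j|<m} \mathbb{P}(w(i+1)-w(i)=1,\, w(j+1)-w(j)=1)$ by $100m(n-1)(1-q)^2 + 3(n-1)$; on the off-diagonal range I apply \Cref{Lemma: D_n off-diagonal} verbatim, since its left-hand side is exactly the summed difference of main probabilities, giving $64(n-1)q^{m-1} + \tfrac{(n-1)q^{m-2}(2m+2)}{(1-q)^2} + \tfrac{8(n-1)q^{m-2}}{(1-q)^3}$.

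Adding these contributions produces the constant $3(n-1)$ from the near-diagonal bound, the $4(n-1)$ from the special terms, and the three $q$-dependent terms from the off-diagonal bound; collecting the constants as $3+4=7$ gives exactly the claimed right-hand side. Since none of the estimates invoked $s_0$, the restriction to $i,j \ge 1$ causes no trouble here, and the $s_0$ contributions are folded in separately in the unconditional Type 5 bound. The only real obstacle is organizational rather than analytic: one must check that the subtracted product terms are genuinely nonnegative (so that discarding them is legitimate for an upper bound) and that the disjointness argument for the special terms applies uniformly over the whole index range, after which the result is just a matter of collecting the already-established estimates.
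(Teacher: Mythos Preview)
Your proposal is correct and follows essentially the same route as the paper: expand $\mathbb{P}(A_i\cap A_j)$ via \eqref{intersection_term}, drop the nonnegative cross products from $\mathbb{P}(A_i)\mathbb{P}(A_j)$, bound each of the four special sums by $n-1$ using disjointness in $j$ for fixed $i$, and handle the main difference by splitting at $|i-j|=m$ and invoking \Cref{Lemma: D_n near diagonal} and \Cref{Lemma: D_n off-diagonal}. The constants $3+4=7$ assemble exactly as in the paper.
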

\begin{proof}
From (\ref{intersection_term}), it is not hard to see that
\begin{align*}
    \sum_{i,j \ge 1} \mathrm{Cov}(&\mathrm{des}(w)-\mathrm{des}(w_{i}^{-*}), \mathrm{des}(w)-\mathrm{des}(w_{j}^{-*})) \\
    &= \sum_{i,j \ge 1} \mathbb{P}(A_i \cap A_j) - \mathbb{P}(A_i)\mathbb{P}(A_j) \\
    &\le \sum_{i,j \ge 1} \mathbb{P} \left( \substack{w(i+1)-w(i) = 1 \\ w(j+1)-w(j) = 1} \right) - \mathbb{P}(w(i+1)-w(i) = 1) \mathbb{P} (w(j+1)-w(j) = 1) \\
    & \quad + \sum_{i,j \ge 1} \mathbb{P}\left( \substack{w(i+1)-w(i) = 1 \\ (w(j+1), w(j)) = (2, -1)} \right) + \sum_{i,j \ge 1} \mathbb{P} \left( \substack{(w(i+1), w(i)) = (2, -1) \\ w(j+1)-w(j) = 1} \right) \\
    & \quad + \sum_{i,j \ge 1} \mathbb{P}\left( \substack{w(i+1)-w(i) = 1 \\ (w(j+1), w(j)) = (1, -2)} \right) + \sum_{i,j \ge 1} \mathbb{P} \left( \substack{(w(i+1), w(i)) = (1, -2) \\ w(j+1)-w(j) = 1} \right)
\end{align*}
where we have simply dropped sum terms from the sum over $\mathbb{P}(A_i)\mathbb{P}(A_j)$. Now, from the previous two lemmas,
\begin{align*}
    \sum_{i,j \ge 1} \mathbb{P} \left( \substack{w(i+1)-w(i) = 1 \\ w(j+1)-w(j) = 1} \right) &- \mathbb{P}(w(i+1)-w(i) = 1) \mathbb{P} (w(j+1)-w(j) = 1) \\
    &\le \sum_{|i-j| \le m} \mathbb{P} \left( \substack{w(i+1)-w(i) = 1 \\ w(j+1)-w(j) = 1} \right) - \mathbb{P}(w(i+1)-w(i) = 1) \mathbb{P} (w(j+1)-w(j) = 1) \\
    & \quad + \sum_{|i-j| > m} \mathbb{P} \left( \substack{w(i+1)-w(i) = 1 \\ w(j+1)-w(j) = 1} \right) - \mathbb{P}(w(i+1)-w(i) = 1) \mathbb{P} (w(j+1)-w(j) = 1) \\
    &\le 100m(n-1)(1-q)^2 + 3(n-1) \\
    & \quad + 64(n-1)q^{m-1} + \frac{(n-1)q^{m-2}(2m+2)}{(1-q)^2} + \frac{8(n-1)q^{m-2}}{(1-q)^3} \\
    &= (n-1)\left(3 + 100m(1-q)^2 + 64q^{m-1} + \frac{q^{m-2}(2m+2)}{(1-q)^2} + \frac{8q^{m-2}}{(1-q)^3}\right)
\end{align*} 
In each of the other four sums, fixing $i$ results in a sum of probabilities of pairwise disjoint events, so they are each bounded above by $n-1$ since there are $n-1$ choices for $i$. Hence, there is an extra contribution of $4(n-1)$.
\end{proof}

\begin{lemma} \label{Lemma: Type 5 Bound, D_n, q small}
Fix some $2 \le m \le n-1$ and $q \le 1-(n-1)^{-1/2}$ and let $n \ge 10$. Then
$$
\sum_{i,j \ge 0} \mathrm{Cov}(\mathrm{des}(w)-\mathrm{des}(w_{i}^{-*}), \mathrm{des}(w)-\mathrm{des}(w_{j}^{-*}))
$$
$$
\le 1+(n-1)\left(9 + 100m(1-q)^2 + 64q^{m-1} + \frac{q^{m-2}(2m+2)}{(1-q)^2} + \frac{8q^{m-2}}{(1-q)^3}\right).
$$
\end{lemma}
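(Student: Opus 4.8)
The plan is to mirror the argument used to pass from $\sum_{i,j\ge 1}$ to $\sum_{i,j\ge 0}$ in the hyperoctahedral case, reducing the claim to the bound on $\sum_{i,j\ge 1}\Cov(\des(w)-\des(w_i^{-*}),\des(w)-\des(w_j^{-*}))$ already proven in the preceding lemma. First I would decompose the double sum over $i,j\ge 0$ into four pieces according to whether either index equals $0$: the main block $\sum_{i,j\ge 1}$, the two boundary sums $\sum_{j\ge 1}\Cov(\des(w)-\des(w_0^{-*}),\des(w)-\des(w_j^{-*}))$ and $\sum_{i\ge 1}\Cov(\des(w)-\des(w_i^{-*}),\des(w)-\des(w_0^{-*}))$, and the single diagonal term at $(0,0)$. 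The main block is controlled directly by the previous lemma, which supplies exactly the quantity $(n-1)\bigl(7+100m(1-q)^2+64q^{m-1}+\tfrac{q^{m-2}(2m+2)}{(1-q)^2}+\tfrac{8q^{m-2}}{(1-q)^3}\bigr)$ appearing in the target bound.

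It then remains to bound the $2(n-1)+1$ covariance terms in which at least one index equals $0$. The key input is that $\des(w)-\des(w_0^{-*})$ is, up to sign, an indicator random variable. This was established in the large-$q$ Type~5 bound for $D_n$, where one checks that passing from $w$ to $w_0^{-*}$ changes the descent count by at most $1$ (the generator $s_0$ only affects the order of the values sitting at indices $-2,-1,1,2$), so that $\des(w)-\des(w_0^{-*})\in\{-1,0\}$. Together with \Cref{Lemma: Uniform Bound Inverse}, which gives $|\des(w)-\des(w_j^{-*})|\le 1$ for every $j$, each boundary covariance is the covariance of two $\{-1,0\}$-valued random variables and hence is at most $1$ in absolute value. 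Summing over the $n-1$ values of $j$ in the first boundary sum, the $n-1$ values of $i$ in its transpose, and the lone $(0,0)$ term then contributes at most $2(n-1)+1$.

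Combining the two contributions gives the claim: the $2(n-1)$ off-diagonal boundary terms are absorbed by raising the leading constant from $7$ to $9$, while the single $(0,0)$ term accounts for the additive $+1$. I expect no genuine analytic obstacle in this step, since the heavy covariance estimates were already carried out in \Cref{Lemma: D_n near diagonal} and \Cref{Lemma: D_n off-diagonal} and assembled in the preceding lemma. The only point I would take care to justify is precisely the indicator claim for $\des(w)-\des(w_0^{-*})$, which hinges on the special combinatorial role of $s_0$ in $D_n$; this is the same subtlety that had to be handled separately in the large-$q$ regime, and once it is in hand the rest of the argument is a bookkeeping split of the index range.
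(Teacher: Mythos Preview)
Your proposal is correct and follows essentially the same approach as the paper: the paper's proof simply states ``As in Lemma~\ref{Lemma: Type 5 Bound, D_n, q large}, the extra terms total to $2(n-1)+1$,'' which is precisely the decomposition you describe---invoke the preceding lemma for the $i,j\ge 1$ block, then bound each of the $2(n-1)+1$ covariance terms involving $i=0$ or $j=0$ by $1$ using the indicator property of $\des(w)-\des(w_0^{-*})$ established in the large-$q$ argument together with \Cref{Lemma: Uniform Bound Inverse}. Your bookkeeping (the constant rising from $7$ to $9$ plus the additive $+1$) matches exactly.
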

\begin{proof}
As in Lemma \ref{Lemma: Type 5 Bound, D_n, q large}, the extra terms total to $2(n-1) + 1$.
\end{proof}

\subsubsection{Unconditional Bounds}

\begin{lemma} [Type 5 Bound] \label{Lemma: Type 5 Bound, D_n}
The Type 5 sum is bounded by $1 + 287(n-1)$
\end{lemma}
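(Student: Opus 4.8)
The plan is to mirror the proof of the $B_n$ Type 5 bound (\Cref{Lemma: Type 5 Bound, B_n}) and split into the two regimes $q\ge 1-(n-1)^{-1/2}$ and $q\le 1-(n-1)^{-1/2}$. The first regime is immediate: \Cref{Lemma: Type 5 Bound, D_n, q large} already shows the Type 5 sum is at most $59(n-1)+1$, which is trivially below $1+287(n-1)$, so no further work is needed there.

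The substance lies in the regime $q\le 1-(n-1)^{-1/2}$, where I would feed \Cref{Lemma: Type 5 Bound, D_n, q small} a carefully chosen cutoff $m$ (treated as a real cutoff, exactly as in the $B_n$ argument). The bracket there contains a term $100m(1-q)^2$ growing in $m$ and three tail terms $64q^{m-1}$, $(2m+2)q^{m-2}/(1-q)^2$, and $8q^{m-2}/(1-q)^3$ decaying in $m$; the binding one is the last, and to force $8q^{m-2}/(1-q)^3\le 8(1-q)$ one wants $q^{m-2}=(1-q)^4$. I would therefore take
$$m=4\frac{\log(1-q)}{\log q}+2,$$
the $D_n$ analogue of the choice $m=4\log(1-q)/\log q+1$ used for $B_n$; the shift from $+1$ to $+2$ exactly compensates for the extra factor of $q^{-1}$ that appears as $q^{m-2}$ rather than $q^{m-1}$ in the $D_n$ tail estimates. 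With this choice $q^{m-2}=(1-q)^4$ and $q^{m-1}=q(1-q)^4$, so the three tail terms become $64q(1-q)^4\le 64$, then $(2m+2)(1-q)^2=2m(1-q)^2+2(1-q)^2$, and finally $8(1-q)\le 8$. Substituting and collecting,
$$9+100m(1-q)^2+64+\bigl(2m(1-q)^2+2(1-q)^2\bigr)+8\le 83+102\,m(1-q)^2,$$
so the bracket is at most $83+102\,m(1-q)^2$, and the claimed constant $287=83+204$ falls out of the uniform estimate $m(1-q)^2\le 2$.

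The main obstacle is precisely this last inequality $m(1-q)^2\le 2$. Unwinding it, one must show
$$2\frac{\log(1-q)}{\log q}(1-q)^2\le q(2-q)\qquad\text{for all }q\in(0,1),$$
an elementary but genuinely delicate calculus estimate: unlike the $B_n$ version, whose supremum sits well below $2$, here the supremum equals $2$ and is approached as $q\to 0$, so there is no slack to waste. I expect this to be the one step requiring real care, established in the same spirit as the corresponding $B_n$ bound (e.g.\ via the standard comparisons $|\log(1-x)|\ge x$ and $|\log(1-x)|\le x/(1-x)$, refined enough to capture the $q\to 0$ behaviour).

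The remaining bookkeeping is to meet the hypothesis $2\le m\le n-1$ of \Cref{Lemma: Type 5 Bound, D_n, q small}. Since $\log(1-q)/\log q>0$ we always have $m\ge 2$, and since $m$ is increasing in $q$ it suffices to bound it at the endpoint $q=1-(n-1)^{-1/2}$; there $|\log(1-x)|\ge x$ gives $m\le 2(n-1)^{1/2}\log(n-1)+2$, which is at most $n-1$ once $n\ge 287$. For the finitely many smaller $n$ I would fall back on the trivial bound: each summand is a covariance of two $\{0,1\}$-valued random variables (each factor $\des(w)-\des(w_i^{-*})$ is, up to sign, an indicator, as recorded in the proof of \Cref{Lemma: Type 5 Bound, D_n, q large}, and bounded by $1$ via \Cref{Lemma: Uniform Bound Inverse}), hence at most $1$ in absolute value, so the sum over all $n^2$ pairs is at most $n^2\le 287(n-1)+1$ whenever $n\le 286$. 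Together with the large-$n$ argument this covers every case.
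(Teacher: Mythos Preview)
Your proposal is correct and follows essentially the same approach as the paper: the same regime split, the same choice $m=4\log(1-q)/\log q+2$, the same substitution $q^{m-2}=(1-q)^4$, the same use of $m(1-q)^2\le 2$ to reach the constant $287$, and the same fallback to the trivial termwise bound for small $n$. The only cosmetic differences are your slightly larger small-$n$ cutoff ($n\le 286$ versus the paper's $n\le 100$) and your explicit flagging of the inequality $m(1-q)^2\le 2$, which the paper simply asserts.
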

\begin{proof}
In the case that $q \ge 1 - (n-1)^{-1/2}$, the result is true by Lemma \ref{Lemma: Type 5 Bound, D_n, q large}. Thus, assume $q \le 1 - (n-1)^{-1/2}$. We then want to bound the quantity given in Lemma \ref{Lemma: Type 5 Bound, D_n, q small}. Take
$$
m = 4\frac{\log(1-q)}{\log q} + 2.
$$
We then have that
$$
m \le 4\frac{\log((n-1)^{-1/2})}{\log(1 -(n-1)^{-1/2})} +2 \le 4(n-1)^{1/2}\log((n-1)^{1/2}) \le n-1
$$
when $n \ge 101$. The result is clearly true when $n \le 100$, since we can trivially bound each term in the sum by $1$, so assume that $2 \le m \le n$. Notice that $q^m = q^2(1-q)^4$. We then obtain that the sum is bounded by
$$
1 + (n-1)(9 + 102m(1-q)^2 + 64q(1-q)^4 + 10(1-q)) \le 1 + 287(n-1)
$$
since $m(1-q)^2 \le 2$ for $0 < q < 1$.
\end{proof}

\begin{lemma} \label{Lemma: Type 6 Bound, D_n}
The Type 6 sum is also bounded by $1 + 287(n-1)$.
\end{lemma}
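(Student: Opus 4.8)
The plan is to mirror the proof of the hyperoctahedral Type 6 bound (Lemma \ref{Lemma: Type 6 Bound, B_n}) and reduce the Type 6 sum to the Type 5 sum, which is already controlled by Lemma \ref{Lemma: Type 5 Bound, D_n}. First I would fix $i,j \ge 1$ and rewrite each summand as a covariance of indicator variables. As in the Type 5 analysis, $\des(w)-\des(w_j^{-*}) = -I_{A_j}$ where $A_j$ is the $D_n$ event appearing in \eqref{intersection_term}. For the second factor I would set $u = w^{-1}$ and use the identity $(w_i^*)^{-1} = u_i^{-*}$ together with the fact that $u$ is again Mallows distributed with the same parameter; this shows $\des(w^{-1})-\des((w_i^*)^{-1}) = -I_{B_i}$, where $B_i$ is the analog of $A_i$ but read off the values of $w$ rather than of $w^{-1}$. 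Hence the Type 6 sum over $i,j \ge 1$ equals $\sum_{i,j \ge 1}\Cov(I_{A_j},I_{B_i})$.

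Next I would expand $\mathbb{P}(A_j \cap B_i)$ and $\mathbb{P}(A_j)\mathbb{P}(B_i)$ into sums over positions and values exactly as in \eqref{intersection_term}, writing $B_i$ as a union over positions $k$ of the events $\{w(k)=i,\,w(k+1)=i+1\}$ (together with the two $s_0$-type configurations). The decisive step, precisely as in the $B_n$ case, is the change of variables: the event $w(k)=i$ at a negative index $k$ coincides with $w(|k|)=-i$, so I can restrict $k$ to positive values at the cost of letting the value $i$ range over the negative integers. After this substitution the cross sum $\sum_{i,j\ge 1}\Cov(I_{A_j},I_{B_i})$ becomes term-for-term the expression $\sum_{i,j}\Cov(\des(w)-\des(w_j^{-*}),\des(w)-\des(w_i^{-*}))$ defining the Type 5 sum, so the two agree on the range $i,j \ge 1$.

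Finally I would handle the boundary terms where $i=0$ or $j=0$. As verified in the proof of Lemma \ref{Lemma: Type 5 Bound, D_n, q large}, both $\des(w)-\des(w_0^{-*})$ and its inverse analog are differences of indicators bounded by $1$, so these terms contribute at most $2(n-1)+1$, exactly the boundary budget already absorbed into the Type 5 accounting. Invoking Lemma \ref{Lemma: Type 5 Bound, D_n} then yields the bound $1+287(n-1)$. I expect the main obstacle to be checking that the $D_n$-specific $s_0$ events — the $(2,-1)$ and $(1,-2)$ configurations occurring in $A_i$ and $B_i$ — transform correctly under the negative-index substitution and do not produce nonzero contributions beyond those already counted in the Type 5 estimate; this is the step where the argument genuinely departs from the cleaner $B_n$ computation and will require the most care.
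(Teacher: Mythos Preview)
Your plan is essentially the paper's own argument: reduce the $i,j\ge 1$ part of the Type~6 sum to the Type~5 sum via the same reindexing/substitution used in Lemma~\ref{Lemma: Type 6 Bound, B_n}, treat the extra $D_n$-specific $s_0$ configurations $(2,-1)$ and $(1,-2)$ separately by the ``probabilities of disjoint events sum to at most~$1$'' trick, and absorb the $i=0$ or $j=0$ boundary terms using the indicator bound from Lemma~\ref{Lemma: Type 5 Bound, D_n, q large}. The one place where your write-up overstates things is the claim that the full $\sum_{i,j\ge 1}\Cov(I_{A_j},I_{B_i})$ becomes the Type~5 sum ``term-for-term'': only the main $\{w(j+1)-w(j)=1\}\cap\{w^{-1}(i+1)-w^{-1}(i)=1\}$ piece reindexes cleanly, and the $s_0$ cross-terms must be bounded directly rather than matched --- but you already flag exactly this as the point requiring care, and the paper handles it just as you anticipate.
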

\begin{proof}
As in Lemma \ref{Lemma: Type 6 Bound, B_n}, we can observe that
$$
\sum_{i,j \ge 1} \mathbb{P}\left( \substack{w(j+1) - w(j) = 1 \\ w^{-1}(i+1) - w^{-1}(i) = 1} \right) - \Prob(w(j+1) - w(j) = 1)\Prob(w^{-1}(i+1) - w^{-1}(i) = 1) $$$$= \sum_{i,j \ge 1} \sum_{k,l} \Prob \left( \substack{w(j) = l, w(j+1) = l+1 \\ w(k) = i, w(k+1) = i+1} \right) - \Prob\left( \substack{w(j) = l \\ w(j+1) = l+1} \right)\Prob\left( \substack{w(k) = i \\ w(k+1) = i+1} \right)
$$
$$
\sum_{k,j \ge 1} \sum_{i,l} \Prob \left( \substack{w(j) = l, w(j+1) = l+1 \\ w(k) = i, w(k+1) = i+1} \right) - \Prob\left( \substack{w(j) = l \\ w(j+1) = l+1} \right)\Prob\left( \substack{w(k) = i \\ w(k+1) = i+1} \right)
$$
so that this portion of the sum is the same as in the Type 5 case. The rest of the covariance sum can be similarly bounded by finding probabilities that sum to at most $1$ and trivially bounding terms where $i$ or $j$ are $0$ trivially by $1$.
\end{proof}

\section{Proofs of Theorems \ref{thm: main theorem 1, B_n}, \ref{thm: main theorem 2, D_n}, and \ref{thm: Wasserstein 1-distance}} \label{sec: limit thms}

\subsection{$B_n$ Case}

We wish to prove Theorem \ref{thm: main theorem 1, B_n} and the first part of Theorem \ref{thm: Wasserstein 1-distance}. Using \cref{thm: size-bias steins,,thm: size-bias steins 1-distance}, it suffices to bound the quantities
\begin{equation} \label{full_var_quant}
    \frac{\mu}{\sigma^2}\sqrt{\Var\E(\des(w)+\des(w^{-1})-\des(w^*)-\des((w^*)^{-1})|w)}
\end{equation}
and
\begin{equation} \label{full_exp_quant}
    \frac{\mu}{\sigma^3}\E(\des(w)+\des(w^{-1})-\des(w^*)-\des((w^*)^{-1}))^2.
\end{equation}
From Lemma \ref{Moments: Type B}, we have that
$$
\mu = \frac{2qn}{1+q}
$$
and
\begin{equation} \label{variance_bound_B}
\sigma^2 \ge \frac{2nq(1-q+q^2)}{(1+q)^2(1+q+q^2)} \ge \frac{nq}{6}.
\end{equation}
It follows that 
\begin{equation} \label{ratio_bound_B}
\frac{\mu}{\sigma^2} \le \frac{(1+q)(1+q+q^2)}{1-q+q^2} \le 6
\end{equation}
and
$$
\frac{\mu}{\sigma^3} \le 6\sqrt{6}q^{-\frac{1}{2}}n^{-\frac{1}{2}}.
$$
By Lemmas \ref{Lemma: Uniform Bound Difference} and \ref{Lemma: Uniform Bound Inverse}, $|\des(w)+\des(w^{-1})-\des(w^*)-\des((w^*)^{-1}| \le 4$. Thus, \eqref{full_exp_quant} is bounded by $96\sqrt{6}q^{-\frac{1}{2}}n^{-\frac{1}{2}} \le 236q^{-\frac{1}{2}}n^{-\frac{1}{2}}$.

To obtain a bound on \eqref{full_var_quant} we can combine the results in Lemmas \ref{Lemma: Type 1 Bound}, \ref{Lemma: Type 2 Bound}, \ref{Lemma: Type 3 Bound}, \ref{Lemma: Type 4 Bound}, \ref{Lemma: Type 5 Bound, B_n}, \ref{Lemma: Type 6 Bound, B_n}. This results in an upper bound on the sum of covariance terms given by
$$
2 \cdot 63(n-1) + 4 \cdot 63(n-1) + 4 (306(n-1) + 9) + 2(594(n-1) + 9) + 4(173(n-1) + 1)
$$
$$
= 3482n-3424 < 3600n.
$$
Thus, due to the factor of $(2n)^{-2}$, the variance is bounded by $900n^{-1}$. Hence, \eqref{full_var_quant} is bounded above by $360n^{-1/2}$. \cref{thm: size-bias steins,,thm: size-bias steins 1-distance} then give the desired result.

\subsection{$D_n$ Case}

We again wish to bound the quantities
\begin{equation} \label{full_var_term_D}
    \frac{\mu}{\sigma^2}\sqrt{\Var\E(\des(w)+\des(w^{-1})-\des(w^*)-\des((w^*)^{-1})|w)}
\end{equation}
and
\begin{equation} \label{full_exp_term_D}
    \frac{\mu}{\sigma^3}\E(\des(w)+\des(w^{-1})-\des(w^*)-\des((w^*)^{-1}))^2.
\end{equation}
Observe that \ref{Moments: Type D} gives
\[
\mu = \frac{2qn}{1+q}
\]
again and
\begin{equation} \label{variance_bound_D}
\sigma^2 \ge \frac{2nq(1-q+q^2)}{(1+q)^2(1+q+q^2)} - \frac{10q^2}{(1+q)^2} \ge \frac{(n-15)q}{6} \ge \frac{nq}{12}
\end{equation}
when $W = D_n$ and $n \ge 30$. For $n \ge 30$, we also have
\begin{equation} \label{ratio_bound_D}
\frac{\mu}{\sigma^2} \le \frac{n(1+q)(1+q+q^2)}{n(1-q+q^2) - 5q(1+q+q^2)} \le \frac{6n}{n-15} \le 12
\end{equation}
and
$$
\frac{\mu}{\sigma^3} \le 24\sqrt{3}n^{-1/2}q^{-1/2}.
$$
By Lemmas \ref{Lemma: Uniform Bound Difference} and \ref{Lemma: Uniform Bound Inverse}, $|\des(w)+\des(w^{-1})-\des(w^*)-\des((w^*)^{-1}| \le 4$. Thus, \eqref{full_exp_term_D} is bounded by $384\sqrt{3}q^{-1/2}n^{-1/2} \le 666q^{-1/2}n^{-1/2}$.

The term in \eqref{full_var_term_D} can be bounded using Lemmas \ref{Lemma: Type 1 Bound}, \ref{Lemma: Type 2 Bound}, \ref{Lemma: Type 3 Bound}, \ref{Lemma: Type 4 Bound}, \ref{Lemma: Type 5 Bound, D_n}, \ref{Lemma: Type 6 Bound, D_n}. We get an upper bound on the sums of covariance terms given by
$$
2 \cdot 63(n-1) + 4 \cdot 63(n-1) + 4 (306(n-1) + 9) + 2(594(n-1) + 9) + 4(1 + 287(n-1))
$$
$$
\le 3938n-3880 < 4000n.
$$
The variance is then bounded by $1000n^{-1}$ so that \eqref{full_var_term_D} is bounded by $768n^{-1/2}$.

\section{Wasserstein $2$-Distance Bounds} \label{sec: 2-distance}

In this section, we prove \Cref{thm: Wasserstein 2-distance} by using the Wasserstein $1$-distance bounds in \Cref{thm: Wasserstein 1-distance}. We recall the definition of the Wasserstein $2$-distance.
\begin{definition}
Let $A$ and $B$ be real-valued random variables. Then the \emph{Wasserstein $2$-distance} between $A$ and $B$ is given by
\[
d_2(A,B) = \inf_{(A',B') \in \Gamma(A,B)} \left(\E[(A'-B')^2]\right)^{\frac{1}{2}}
\]
where $\Gamma(A,B)$ denotes the set of couplings of $(A,B)$.
\end{definition}
The Wasserstein $2$-distance bound given by \Cref{thm: Wasserstein 2-distance} is necessary in the proof of our main result \Cref{CLT: General Coxeter Groups} since we need it to show convergence in distribution (which is not gauranteed by the Wasserstein $1$-distance bound given in \Cref{thm: Wasserstein 1-distance}).

In the language of \cite{CGJ18}, the size-bias coupling of $w$ (where $w$ is an element of any Coxeter group) is $(4, 1)$-bounded for the upper and lower tails. This follows from observing that
$$
\des(w^*) + \des((w^*)^{-1}) \le \des(w) + \des(w^{-1}) + 4
$$
by Lemmas \ref{Lemma: Uniform Bound Difference} and \ref{Lemma: Uniform Bound Inverse}. \cite[Theorem 3.3]{CGJ18} (or, alternatively, \cite[Corollary 1.1]{AB15}) then implies the following.

\begin{proposition} \label{prop: tail_bounds}
Let $w$ be Mallows distributed in either $B_n$ or $D_n$ and let $\mu = \frac{2nq}{1+q}$ be the mean of $\des(w) + \des(w^{-1})$. Then
$$
\Prob(\des(w) + \des(w^{-1}) - \mu \ge x) \le \exp \left( -\frac{x^2}{8(x/3 + \mu)} \right)
$$
for $x \ge 0$ and
$$
\Prob(\des(w) + \des(w^{-1}) - \mu \le -x) \le \exp \left( - \frac{x^2}{8\mu}\right)
$$
for $0 \le x < \mu$.
\end{proposition}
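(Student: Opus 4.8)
The plan is to read this statement as a direct application of the concentration machinery for bounded size-bias couplings to the coupling already built in \Cref{prop: size-bias coupling}. Set $X = \des(w)+\des(w^{-1})$ and let $X^* = \des(w^*)+\des((w^*)^{-1})$ be its size-bias partner, so that $(X,X^*)$ is a size-bias coupling of a nonnegative random variable with mean $\mu = \frac{2nq}{1+q}$. The entire content of the proposition is that this coupling is bounded, i.e.\ that $X^* \le X + 4$ almost surely (what the paper calls $(4,1)$-boundedness, the ``$1$'' recording that the bound holds with probability one); every tail estimate is then read off from an off-the-shelf theorem.

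The first step is to verify this one-sided bound. By construction $w^*$ equals $w_i^{+*}$ or $w_i^{-*}$ for a random index $i$, so $w^*$ differs from $w$ by a single generator multiplication. I would split into the two cases. If the right variant $w^*=w_i^*$ is chosen, then $\des(w^*)-\des(w) \le |\des(w)-\des(w_i^*)| \le 3$ by \Cref{Lemma: Uniform Bound Difference}, while $(w^*)^{-1} = s_i w^{-1}$ is a left multiplication of $w^{-1}$, so $\des((w^*)^{-1})-\des(w^{-1}) \le 1$ by \Cref{Lemma: Uniform Bound Inverse}. If instead the left variant $w^*=w_i^{-*}$ is chosen, the two roles swap and the same two lemmas give contributions bounded by $1$ and $3$ respectively. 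In either case the two contributions sum to at most $4$, which is exactly the inequality $\des(w^*)+\des((w^*)^{-1}) \le \des(w)+\des(w^{-1})+4$ recorded above.

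With the bound $c=4$ in hand, the second step is to invoke \cite[Theorem 3.3]{CGJ18} (equivalently \cite[Corollary 1.1]{AB15}). The point worth emphasizing is that the single one-sided bound $X^*\le X+4$ already controls both tails: writing $m(\theta)=\E e^{\theta X}$, the size-bias identity $m'(\theta)=\mu\,\E e^{\theta X^*}$ combined with $X^*\le X+4$ yields $m'(\theta)\le \mu e^{4\theta}m(\theta)$ for $\theta>0$ and $m'(\theta)\ge \mu e^{4\theta}m(\theta)$ for $\theta<0$; a Chernoff optimization of each then produces a Bernstein-type upper tail and a sub-Gaussian lower tail. Substituting $c=4$ (so $2c=8$) and $\mu=\frac{2nq}{1+q}$ into the general statement gives the claimed $\exp\!\big(-x^2/(8(x/3+\mu))\big)$, whose denominator is the standard correction $2c(\mu+x/3)$, and $\exp\!\big(-x^2/(8\mu)\big)=\exp\!\big(-x^2/(2c\mu)\big)$ on $0\le x<\mu$. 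Since \Cref{prop: size-bias coupling} and the two difference lemmas hold for both $B_n$ and $D_n$, the argument is identical in the two cases, the only group-specific input being the value of $\mu$.

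There is little genuine difficulty here; the whole statement is a translation of the difference estimates into the hypotheses of a cited concentration theorem. The closest thing to an obstacle is making sure the coupling meets the precise notion of boundedness demanded by \cite{CGJ18}/\cite{AB15} (a pointwise, almost-sure one-sided bound on the coupled pair, not merely a distributional bound) and then matching the constants $8$ and $x/3$ exactly; both follow once the two difference lemmas are combined as above.
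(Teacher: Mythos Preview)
Your proposal is correct and follows exactly the paper's approach: verify the one-sided bound $X^*\le X+4$ using \Cref{Lemma: Uniform Bound Difference} and \Cref{Lemma: Uniform Bound Inverse}, then invoke \cite[Theorem 3.3]{CGJ18} (or \cite[Corollary 1.1]{AB15}) with $c=4$. You have in fact supplied more detail than the paper does (the explicit case split between the $+*$ and $-*$ variants and the MGF sketch), but the logic is identical.
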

\begin{remark}
In fact, we have that both inequalities are true for all $x \ge 0$, since if $x > \mu$, then 
$$
\Prob(\des(w) + \des(w^{-1}) - \mu \le -x) = \Prob(\des(w) + \des(w^{-1}) \le \mu-x) \le \Prob(\des(w) + \des(w^{-1}) < 0) = 0.
$$
(When $x = \mu$, the inequality must also hold since the left hand side is monotonically nonincreasing in $x$ and the right hand side is continuous.)
\end{remark}
\begin{remark}
The proposition also holds for when $w$ is sampled from $A_n$, since it is weaker than Proposition 4.4 in \cite{He_2022}.
\end{remark}

Now, we have that, for some coupling $(X, N)$ of $(\des(w) + \des(w^{-1}) - \mu)/\sigma$ and the standard normal, 
\begin{equation} \label{good_coupling}
    d_W\left(\frac{\des(w) + \des(w^{-1}) - \mu}{\sigma}, N\right) \ge \E \left| X - N \right| - 0.00001q^{-1/2}
\end{equation}
by the definition of the Wasserstein $1$-distance. Thus, for any $y > \frac{\mu}{\sigma}$,
\begin{align*}
    \E \left| X - N \right|^2 &\le \E \left( \left| X - N \right|^2 \big| -y \le X,N \le y \right)\Prob(-y \le X,N \le y) \\
    &\quad + \E \left( \left| X - N \right|^2 \big| \max(|X|, |N|) > y \right) \Prob(\max(|X|, |N|) > y)
\end{align*}
Moreover, noting that $|X-N|^2 = X^2 + N^2 - 2XN \le 2X^2 + 2N^2$ yields
\begin{align*}
    \E \left| X - N \right|^2 &\le \E \left( 2y\left| X - N \right| \big| -y \le X,N \le y \right)\Prob(-y \le X,N \le y) \\
    &\quad + 2\E \left( X^2 \big| |X| > y \right) \Prob(|X| > y) + 2\E(X^2 \big| |N| > y \ge |X|)\Prob(|N| > y \ge |X|)\\
    &\quad + 2\E \left( N^2 \big| |N| > y \right) \Prob(|N| > y) + 2\E(N^2 \big| |X| > y \ge |N|)\Prob(|X| > y \ge |N|) \\
    &\le 2y \E |X-N| + 2\E \left( X^2 \big| |X| > y \right) \Prob(|X| > y) + 2\E \left( N^2 \big| |N| > y \right) \Prob(|N| > y) \\
    &\quad + 2y^2\left[\Prob(|N| > y \ge |X|) + \Prob(|X| > y \ge |N|)\right].
\end{align*}
We now bound the above terms. 

\begin{lemma} \label{Lemma: x tail, x^2}
For any $y > 0$,
$$
\E \left( X^2 \big| |X| > y \right) \Prob(|X| > y) $$$$\le \frac{32\mu}{3\sigma^2} \exp\left( -\frac{3\sigma^2 y^2}{32\mu} \right) + \frac{32\mu}{3\sigma^2} \exp\left( -\frac{3\mu}{32} \right) + \frac{2048}{9\sigma^2}\exp\left( -\frac{3\mu}{32} \right) + \frac{8\mu}{\sigma^2} \exp\left( -\frac{\sigma^2 y^2}{8\mu} \right)
$$
\end{lemma}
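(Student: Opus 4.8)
The plan is to rewrite the quantity as an expectation, noting $\E(X^2\mid |X|>y)\Prob(|X|>y)=\E(X^2\mathbbm{1}_{|X|>y})$, and to set $D=\des(w)+\des(w^{-1})$ so that $X=(D-\mu)/\sigma$ and $\{|X|>y\}=\{|D-\mu|>\sigma y\}$. Thus $\E(X^2\mathbbm{1}_{|X|>y})=\sigma^{-2}\E\!\big((D-\mu)^2\mathbbm{1}_{|D-\mu|>\sigma y}\big)$, and I would split this into the upper-tail part $\{D-\mu>\sigma y\}$ and the lower-tail part $\{D-\mu<-\sigma y\}$. For a real random variable $Z$ and $s>0$, the layer-cake identity $\E(Z^2\mathbbm{1}_{Z>s})=s^2\Prob(Z>s)+\int_s^\infty 2t\,\Prob(Z>t)\,dt$ (and its mirror for $\{Z<-s\}$) reduces the whole computation to integrating the tail estimates of \Cref{prop: tail_bounds} against $2t$, with $Z=D-\mu$ and $s=\sigma y$.

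For the upper tail the difficulty is the non-Gaussian denominator in $\Prob(D-\mu\ge t)\le\exp(-t^2/(8(t/3+\mu)))$, which I would resolve by splitting the range of integration at $t=\mu$. For $t\le\mu$ one has $8(t/3+\mu)\le 32\mu/3$, giving the sub-Gaussian bound $\Prob(D-\mu\ge t)\le\exp(-3t^2/(32\mu))$, and the elementary integral $\int_{\sigma y}^{\mu}2t\exp(-3t^2/(32\mu))\,dt\le\frac{32\mu}{3}\exp(-3\sigma^2y^2/(32\mu))$ yields the first stated term after dividing by $\sigma^2$. For $t>\mu$ one has $8(t/3+\mu)\le 32t/3$, giving the exponential bound $\Prob(D-\mu\ge t)\le\exp(-3t/32)$; integrating $\int_\mu^\infty 2t\exp(-3t/32)\,dt$ by parts produces $\big(\tfrac{64\mu}{3}+\tfrac{2048}{9}\big)\exp(-3\mu/32)$, and combining with the $-\tfrac{32\mu}{3}\exp(-3\mu/32)$ left over from the upper endpoint of the sub-Gaussian integral gives precisely the two $\exp(-3\mu/32)$ terms.

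The lower tail is cleaner: by \Cref{prop: tail_bounds} together with the remark extending that estimate to all $x\ge 0$, we have $\Prob(D-\mu\le -t)\le\exp(-t^2/(8\mu))$, and $\int_{\sigma y}^\infty 2t\exp(-t^2/(8\mu))\,dt=8\mu\exp(-\sigma^2y^2/(8\mu))$, which after dividing by $\sigma^2$ is exactly the last term. So the four displayed terms arise, respectively, from the sub-Gaussian upper integral, the exponential upper integral, and the lower integral.

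The part requiring the most care is the boundary term $s^2\Prob(Z>s)$ (and its lower-tail analogue) generated by the layer-cake identity, together with the degenerate case $s=\sigma y>\mu$ in which the upper tail lies entirely in the exponential regime. I would control these by elementary monotonicity, using $\sup_{s}s^2\exp(-3s^2/(32\mu))=\tfrac{32\mu}{3e}$ and the fact that $\mu$ is large in the regime where the lemma is applied (indeed $\mu\ge nk$ under the standing hypotheses, after reducing to $q\le 1$ by \Cref{cor: q_invert}), so that $\tfrac{32\mu}{3e}\le\tfrac{32\mu}{3}\exp(-3\mu/32)$ and the boundary contributions are absorbed into the three upper-tail terms already present. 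Verifying that these absorptions go through cleanly, rather than evaluating the elementary integrals themselves, is the main obstacle.
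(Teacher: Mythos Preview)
Your strategy coincides with the paper's: rewrite the conditional second moment via a layer-cake formula, apply the two tail estimates from \Cref{prop: tail_bounds}, and split the upper-tail integral at the point where the deviation equals $\mu$ (so the denominator $8(t/3+\mu)$ is dominated by one term or the other). Your four integral computations are correct and reproduce exactly the four displayed terms. The paper's proof does the same thing in the variable $z=X^2$ rather than $t=|D-\mu|$, which amounts to the substitution $z=t^2/\sigma^2$.

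The discrepancy is the boundary term $s^2\Prob(|Z|>s)$. The paper in fact writes
\[
\int_0^\infty \Prob(\{X^2>z\}\cap B)\,dz=\int_{y^2}^\infty \Prob(X^2>z)\,dz,
\]
which silently drops the contribution $y^2\Prob(B)$ from the range $0\le z\le y^2$; so the paper simply omits this term rather than absorbing it. You are right to notice it, but your proposed absorption is backwards: the inequality $\tfrac{32\mu}{3e}\le\tfrac{32\mu}{3}\exp(-3\mu/32)$ is equivalent to $\mu\le 32/3$, the \emph{opposite} of ``$\mu$ large''. More to the point, after dividing by $\sigma^2$ the boundary term is $y^2\exp(-3\sigma^2y^2/(32\mu))$ (in the sub-Gaussian regime $\sigma y\le\mu$), and matching it against the first RHS term would require $y^2\le 32\mu/(3\sigma^2)$; since $\mu/\sigma^2\le 12$ throughout, this fails for the values $y=8\log(nq)$ actually used.

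The clean fix is not to absorb it here at all: the quantity $y^2\Prob(|X|>y)$ is precisely of the shape bounded in \Cref{Lemma: trivial_tails}, and those $y^2\exp(\cdot)$ terms already appear in the final assembly of the $d_2$ estimate. Either carry an additional $y^2\exp(-3\sigma^2y^2/(32\mu))$ (etc.) term in the statement of this lemma, or merge the boundary contribution into the later step; the downstream numerics in \Cref{sec: 2-distance} are unaffected.
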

\begin{proof}
First, let $B$ denote the event that $|X| \ge y$. We have, using \Cref{prop: tail_bounds},
\begin{align}
    \E(X^2 | B)\Prob (B) &= \int_{0}^\infty \Prob(\{X^2 > z\} \cap B) dz \nonumber\\
    &= \int_{y^2}^\infty \Prob(X^2 > z) dz \nonumber \\
    &\le \int_{y^2}^\infty \exp\left( -\frac{\sigma^2 z}{8(\sigma\sqrt{z}/3 + \mu)} \right)dz + \int_{y^2}^\infty \exp\left( -\frac{\sigma^2 z}{8\mu} \right) dz \label{two_integrals}
\end{align}
since $X^2 > z$ is equivalent to $|\des(w) + \des(w^{-1}) - \mu| > \sigma \sqrt{z}$. The first integral is bounded above by
$$
\int_{y^2}^{\mu^2/\sigma^2} \exp\left( -\frac{\sigma^2 z}{8(\mu/3 + \mu)} \right)dz + \int_{\mu^2/\sigma^2}^\infty \exp\left( -\frac{\sigma^2 z}{8(\sigma\sqrt{z}/3 + \sigma\sqrt{z})} \right)dz
$$
$$
= \int_{y^2}^{\mu^2/\sigma^2} \exp\left( -\frac{3\sigma^2 z}{32\mu} \right)dz + \int_{\mu^2/\sigma^2}^\infty \exp\left( -\frac{3\sigma \sqrt{z}}{32} \right)dz
$$
The above sum of integrals can be computed as
\[
\frac{32\mu}{3\sigma^2} \exp\left( -\frac{3\sigma^2 y^2}{32\mu} \right) - \frac{32\mu}{3\sigma^2} \exp\left( -\frac{3\mu}{32} \right) + \frac{2048}{9\sigma^2} \left( \frac{3\mu}{32}\exp\left(-\frac{3\mu}{32}\right) + \exp\left( -\frac{3\mu}{32} \right) \right)
\]
The second term in (\ref{two_integrals}) is easily computed as
$$
\frac{8\mu}{\sigma^2} \exp\left( -\frac{\sigma^2 y^2}{8\mu} \right)
$$
yielding the desired bound.
\end{proof}

\begin{lemma} \label{Lemma: n tail, n^2}
For any $y>0$,
$$
\E(N^2 | |N| > y)\Prob(|N|>y) \le\sqrt{\frac{2}{\pi}}ye^{-\frac{y^2}{2}} + \frac{2}{y}\sqrt{\frac{2}{\pi}}e^{-\frac{y^2}{2}}
$$
\end{lemma}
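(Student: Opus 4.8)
The plan is to recognize the left-hand side as a truncated second moment of the standard normal and to evaluate it directly by elementary calculus. First I would rewrite $\E(N^2 \mid |N| > y)\Prob(|N| > y) = \E\bigl(N^2 \mathbbm{1}_{|N|>y}\bigr)$, and then use the symmetry of the standard normal density $\phi(x) = \frac{1}{\sqrt{2\pi}}e^{-x^2/2}$ to reduce this to $2\int_y^\infty x^2 \phi(x)\,dx$.

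The key computational step is an integration by parts exploiting $\phi'(x) = -x\phi(x)$. From this identity one gets $\frac{d}{dx}\bigl(-x\phi(x)\bigr) = x^2\phi(x) - \phi(x)$, so that $\int_y^\infty x^2\phi(x)\,dx = y\phi(y) + \int_y^\infty \phi(x)\,dx$, where the boundary term at infinity vanishes because $x\phi(x)\to 0$ as $x\to\infty$. The remaining integral is exactly the Gaussian upper tail $\Prob(N>y)$, which I would bound using the standard Mills-ratio estimate $\Prob(N>y)\le \phi(y)/y$, valid for all $y>0$.

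Combining these gives $\E\bigl(N^2\mathbbm{1}_{|N|>y}\bigr) \le 2y\phi(y) + \frac{2\phi(y)}{y}$. Substituting $\phi(y) = \frac{1}{\sqrt{2\pi}}e^{-y^2/2}$ and simplifying the constant via $\frac{2}{\sqrt{2\pi}} = \sqrt{\tfrac{2}{\pi}}$ yields the bound $\sqrt{\tfrac{2}{\pi}}\,y\,e^{-y^2/2} + \frac{1}{y}\sqrt{\tfrac{2}{\pi}}\,e^{-y^2/2}$. This is in fact strictly smaller than the claimed right-hand side, whose second term carries a factor $\frac{2}{y}$ rather than $\frac{1}{y}$, so the stated inequality follows \emph{a fortiori}.

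There is essentially no serious obstacle here: the only points requiring care are the justification of the integration by parts (and the vanishing boundary term) and recalling the tail estimate $\Prob(N>y)\le \phi(y)/y$, both of which are completely standard. The remainder is bookkeeping of the constants, and the slack in the second term means the constant need not even be sharp.
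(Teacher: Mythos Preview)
Your proposal is correct and follows essentially the same approach as the paper: use symmetry to rewrite the quantity as $2\int_y^\infty t^2\phi(t)\,dt$, then integrate by parts and bound the residual Gaussian tail. The paper merely says ``evaluating the above integral with integration by parts yields the result,'' so your write-up is actually more detailed, and your observation that the Mills-ratio bound gives a coefficient $\tfrac{1}{y}$ rather than $\tfrac{2}{y}$ in the second term is a genuine (if inconsequential) sharpening of the stated inequality.
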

\begin{proof}
We have,
$$
\E(N^2 | |N| > y)\Prob(|N|>y) = 2\E(N^2 | N>y)\Prob(N>y) = 2\int_y^\infty t^2\frac{1}{\sqrt{2\pi}}e^{-\frac{t^2}{2}} dt
$$
since $N$ is symmetrically distributed. Evaluating the above integral with integration by parts yields the result.
\end{proof}

\begin{lemma} \label{Lemma: trivial_tails}
For $y \ge 0$, 
$$
2y^2\left[\Prob(|N| > y \ge |X|) + \Prob(|X| > y \ge |N|)\right]
$$
$$
\le 2y^2 \left[ \exp\left( -\frac{3y^2\sigma^2}{32\mu} \right) + \exp\left( -\frac{3y\sigma}{32} \right) + \exp\left( -\frac{y^2\sigma^2}{8\mu} \right) \right] + 2\sqrt{\frac{2}{\pi}}y e^{-\frac{y^2}{2}}
$$
\end{lemma}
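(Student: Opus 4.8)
The plan is to discard the conditioning in both joint probabilities and reduce everything to one-dimensional tail bounds: one Gaussian estimate for $N$ and an application of \Cref{prop: tail_bounds} for $X$. The first step is the trivial observation that $\{|N| > y \ge |X|\} \subseteq \{|N| > y\}$ and $\{|X| > y \ge |N|\} \subseteq \{|X| > y\}$, so the left-hand side is at most $2y^2\left(\Prob(|N| > y) + \Prob(|X| > y)\right)$. It then suffices to bound each marginal tail separately.

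For the Gaussian term I would invoke the standard Mills-ratio bound $\Prob(N > y) \le \frac{1}{y\sqrt{2\pi}}e^{-y^2/2}$ together with the symmetry $\Prob(|N| > y) = 2\Prob(N > y)$. This gives $2y^2\Prob(|N| > y) = 4y^2\Prob(N > y) \le \frac{4y}{\sqrt{2\pi}}e^{-y^2/2} = 2\sqrt{2/\pi}\,y e^{-y^2/2}$, using $\frac{4}{\sqrt{2\pi}} = 2\sqrt{2/\pi}$, which is precisely the last term of the claimed bound. For the $X$ term, I would recall that $X = (\des(w)+\des(w^{-1})-\mu)/\sigma$, so $\{X > y\}$ is the event $\{\des(w)+\des(w^{-1})-\mu > \sigma y\}$ and $\{X < -y\}$ is $\{\des(w)+\des(w^{-1})-\mu < -\sigma y\}$. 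Applying \Cref{prop: tail_bounds} with $x = \sigma y$ handles the lower tail immediately, giving $\Prob(X < -y) \le \exp(-\sigma^2 y^2/(8\mu))$ (using the remark that extends this bound to all $x \ge 0$), which supplies the third exponential term.

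The only mildly delicate point is the upper tail, where the exponent $\sigma^2 y^2/(8(\sigma y/3 + \mu))$ must be bounded below; here I would split into the two regimes exactly as in the proof of \Cref{Lemma: x tail, x^2}. When $\sigma y \le \mu$ the denominator is at most $8\cdot \tfrac{4\mu}{3}$, yielding $\exp(-3\sigma^2 y^2/(32\mu))$, and when $\sigma y > \mu$ it is at most $8\cdot\tfrac{4\sigma y}{3}$, yielding $\exp(-3\sigma y/32)$; bounding by the sum of these two expressions gives $\Prob(X > y) \le \exp(-3\sigma^2 y^2/(32\mu)) + \exp(-3\sigma y/32)$ valid in both cases. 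Combining the three exponential contributions to $\Prob(|X| > y)$ with the Gaussian term reproduces the claimed inequality. I do not expect any genuine obstacle: the whole argument is bounding events by their supersets and quoting two already-established tail estimates, with the regime split for the upper tail being the single technical flourish and one that merely reuses a computation carried out earlier.
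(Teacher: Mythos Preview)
Your proposal is correct and follows essentially the same route as the paper: drop the joint events to the marginal tails $\Prob(|X|>y)$ and $\Prob(|N|>y)$, apply \Cref{prop: tail_bounds} with the same regime split $\sigma y \lessgtr \mu$ for the upper $X$-tail, and use a Gaussian tail estimate for $N$. The only cosmetic difference is that the paper bounds the Gaussian tail via $e^{-t^2/2}\le e^{-yt/2}$ rather than the sharper Mills ratio you invoke; since the paper also (by what looks like a typo) writes $\Prob(N>y)$ in place of $\Prob(|N|>y)$, the two factors of $2$ cancel and both arguments land on the same $2\sqrt{2/\pi}\,y e^{-y^2/2}$.
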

\begin{proof}
Proposition \ref{prop: tail_bounds} implies
$$
\Prob(|X| > y \ge |N|) \le \Prob(|X| > y) \le \exp\left( -\frac{y^2\sigma^2}{8(y\sigma/3+\mu)} \right) + \exp\left( -\frac{y^2\sigma^2}{8\mu} \right).
$$
If $\mu \ge y\sigma$, then the $y\sigma$ in the denominator of the first term above may be replaced by $\mu$. The opposite is true when $y\sigma \ge \mu$. It follows that the above is upper bounded by
$$
\exp\left( -\frac{3y^2\sigma^2}{32\mu} \right) + \exp\left( -\frac{3y\sigma}{32} \right) + \exp\left( -\frac{y^2\sigma^2}{8\mu} \right).
$$
In addition, as in the proof of Lemma \ref{Lemma: x tail, x^2},
$$
\Prob(|X| > y \ge |N|) \le \Prob(N > y) = \int_y^\infty \frac{1}{\sqrt{2\pi}}e^{-\frac{t^2}{2}} dt \le \frac{1}{\sqrt{2\pi}} \int_y^\infty e^{-\frac{yt}{2}} dt = \frac{1}{y} \sqrt{\frac{2}{\pi}} e^{-\frac{y^2}{2}}.
$$
Combining these bounds yields the result.
\end{proof}

Assume that $q \le 1$. Note that $\mu \ge qn$. We can combine this fact with the results of lemmas \ref{Lemma: x tail, x^2}, \ref{Lemma: n tail, n^2}, and \ref{Lemma: trivial_tails} and the bounds given in (\ref{ratio_bound_B}), (\ref{ratio_bound_D}), (\ref{variance_bound_B}), and (\ref{variance_bound_D}) to find that, in the $B_n$ case,
\begin{align*}
    \E((X-N)^2) &\le 2y\E |X-N| + (2y^2 + 128)\exp\left( -\frac{y^2}{64} \right) + 2y^2\exp \left( -\frac{\sqrt{6}n^{1/2}q^{1/2}y}{64} \right) \\
    &\quad + \left(2y^2 + 48\right)\exp \left( -\frac{y^2}{48} \right) + \left(128 + \frac{8128}{3nq}\right)\exp\left( -\frac{3nq}{32} \right) + \left(4y + \frac{4}{y} \right)\exp\left({-\frac{y^2}{2}}\right).
\end{align*}
Furthermore, the above is an upper bound for $d_2\left( \frac{\des(w) + \des(w^{-1}) - \mu}{\sigma}, N\right)$ by the definition of the Wasserstein $2$-distance (since $(X,N)$ is a coupling). Then, assume that $nq \ge 32/3$ so that $n^{1/2}q^{1/2} \ge \frac{4\sqrt{6}}{3}$. Set $y = 8\log(nq) > 8$ and let $n \ge 4$. We then obtain that, using \eqref{good_coupling} and \Cref{thm: Wasserstein 1-distance},
\begin{align*}
    \E((X-N)^2) &\le 16\log(nq) \left(180+236q^{-1/2} + 0.00001q^{-1/2}\right)n^{-\frac{1}{2}} + 256(\log(nq))^2 \exp\left( -(\log(nq))^2 \right) \\
    &\quad + 128(\log (nq))^2 \exp\left( -\log (nq) \right) + 112(\log(nq))^2\exp\left( -\frac{128(\log(nq))^2}{9} \right) + 384 \frac{32}{3nq}\\
    &\quad + (32\log(nq) + 1)\exp(-32 (\log (nq))^2) \\
    &\le 10000q^{-1/2}n^{-1/2}\log(nq).
\end{align*}
where we have used the fact that, for any $x \ge e$, $\log x \le \sqrt{x}$ (and we let $x = nq$).

This proves Theorem $\ref{thm: Wasserstein 2-distance}$ in the $B_n$ case when $q \le 1$. Instances when $q \ge 1$ follow from Corollary \ref{cor: q_invert}. The $D_n$ case is proved similarly (and we know that Corollary \ref{Cor: D_n simple_var_bound} and (\ref{ratio_bound_D}) still hold since $n \ge nk \ge 30$). To see the $A_n = S_{n+1}$ case, note that, by \cite[Proposition 3.11]{He_2022}, when $q \le 1$,
$$
\sigma^2 \ge \frac{2nq(1-q+q^2)}{(1+q)^2(1+q+q^2)} \ge \frac{nq}{6}
$$
and 
$$
\frac{\mu}{\sigma^2} \le \frac{(1+q)(1+q+q^2}{1-q+q^2} \le 6
$$
so the proof is identical as in the $B_n$ case. (\cite[Proposition 2.7]{He_2022} allows us to extend the result to when $q \ge 1$.)

\section{Central Limit Theorem for General Coxeter Groups} \label{sec: grand result}

In this section, we finally prove our main result, \Cref{CLT: General Coxeter Groups}. F\'eray proves a similar result in \cite{ferayCLT} that involves $w_n$ being uniformly distributed in $W_n$. Equivalently, he proves the case in which the set of permissible parameters is $\{1\}$. Our proof of Theorem \ref{CLT: General Coxeter Groups} closely mirrors his methods.

The following are \cite[Lemmas 3 \& 4]{ferayCLT} and also \cite[Lemmas 1 \& 3]{Mal72}.

\begin{lemma} \label{Lemma: 2-distance convergence}
Let $X_n$ and $X$ be square integrable random variables. Then $d_2(X_n, X) \to 0$ if and only if $X_n \to X$ in distribution and $\E(X_n^2) \to \E(X^2)$.
\end{lemma}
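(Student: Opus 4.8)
The plan is to prove the two implications separately, using the classical fact that on $\mathbb{R}$ the Wasserstein-$2$ distance is realized by the \emph{comonotone (quantile) coupling}. Write $F_n, F$ for the cumulative distribution functions of $X_n, X$ and $F_n^{-1}, F^{-1}$ for their generalized inverses (quantile functions) on $(0,1)$. Then $d_2(X_n,X)^2 = \int_0^1 |F_n^{-1}(u) - F^{-1}(u)|^2 \, du$, since for a convex cost the monotone rearrangement is the optimal coupling. At the outset I would record the two standard structural facts I intend to use: first, $d_2$ is a genuine metric on square-integrable laws (the triangle inequality coming from the gluing lemma), and second, $d_W(A,B) \le d_2(A,B)$ by the Cauchy--Schwarz inequality $\E|A'-B'| \le (\E(A'-B')^2)^{1/2}$ applied to each coupling and then taking infima.

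For the forward direction ($d_2(X_n,X) \to 0$ implies convergence in law and of second moments), I would argue as follows. Convergence of laws is immediate: $d_W(X_n,X) \le d_2(X_n,X) \to 0$, and Wasserstein-$1$ convergence forces $X_n \to X$ in distribution. For the second moments, observe that $(\E A^2)^{1/2} = d_2(A, \delta_0)$, where $\delta_0$ is the point mass at $0$ (the only coupling of $A$ with $\delta_0$ is $(A,0)$). The triangle inequality for $d_2$ then gives $|(\E X_n^2)^{1/2} - (\E X^2)^{1/2}| \le d_2(X_n,X) \to 0$, whence $\E X_n^2 \to \E X^2$.

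For the converse, I would pass to quantile functions. Convergence in distribution is equivalent to $F_n(x) \to F(x)$ at every continuity point of $F$, which is in turn equivalent to $F_n^{-1}(u) \to F^{-1}(u)$ at every continuity point of $F^{-1}$; since $F^{-1}$ is monotone it is continuous Lebesgue-a.e., so $F_n^{-1} \to F^{-1}$ almost everywhere on $(0,1)$. Meanwhile $\int_0^1 |F_n^{-1}(u)|^2\,du = \E X_n^2 \to \E X^2 = \int_0^1 |F^{-1}(u)|^2\,du$, i.e. the $L^2(0,1)$ norms converge. The conclusion $d_2(X_n,X)^2 = \int_0^1 |F_n^{-1} - F^{-1}|^2 \to 0$ then follows from the standard principle that almost-everywhere convergence together with convergence of $L^2$ norms implies $L^2$ convergence: applying Fatou's lemma to the nonnegative functions $2|F_n^{-1}|^2 + 2|F^{-1}|^2 - |F_n^{-1} - F^{-1}|^2 = |F_n^{-1} + F^{-1}|^2$, which converge a.e. to $4|F^{-1}|^2$, gives $\limsup_n \int_0^1 |F_n^{-1} - F^{-1}|^2 \le 0$ once the $L^2$-norm convergence is used to cancel the first two terms.

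The main obstacle is precisely this last step of the converse: upgrading pointwise a.e. convergence of the quantile functions to $L^2$ convergence. This is where the hypothesis $\E X_n^2 \to \E X^2$ is indispensable, since it supplies the uniform integrability that prevents escape of mass to infinity, and the Fatou/Scheff\'e argument above is the crux. Everything else --- the quantile representation of $d_2$, the identity $\|A\|_2 = d_2(A,\delta_0)$, and the domination $d_W \le d_2$ --- is routine once those tools are assembled.
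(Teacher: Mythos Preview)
The paper does not actually give its own proof of this lemma: it is stated there as a quotation of \cite[Lemmas 3 \& 4]{ferayCLT} and \cite[Lemmas 1 \& 3]{Mal72}, with no argument supplied. So there is nothing in the paper to compare against except the bare citation.

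Your proof is correct and is essentially the standard one. The forward direction via $d_W\le d_2$ and the identity $d_2(A,\delta_0)=\|A\|_{L^2}$ is clean; the converse via the quantile representation $d_2(X_n,X)^2=\int_0^1|F_n^{-1}-F^{-1}|^2$, almost-everywhere convergence of the quantile functions, and the Scheff\'e/Fatou upgrade (applied to $|F_n^{-1}+F^{-1}|^2\ge 0$) is exactly the argument one finds in the cited sources. One small point worth making explicit is that in the last step you use that $\int|F_n^{-1}|^2\to\int|F^{-1}|^2$ \emph{as a limit} (not merely a $\liminf$), so that $\liminf\big(2\|F_n^{-1}\|_2^2+2\|F^{-1}\|_2^2-\|F_n^{-1}-F^{-1}\|_2^2\big)=4\|F^{-1}\|_2^2-\limsup\|F_n^{-1}-F^{-1}\|_2^2$; you already have this from the hypothesis $\E X_n^2\to\E X^2$, so the step goes through.
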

\begin{lemma} \label{2-distance_split_sum}
Let $k>0$ be an integer and $Z$ be a standard normal random variable. If $X_1, \dots, X_k$ are independent random variables and $a_j$ for $1 \le j \le k$ are real such that $\sum_{j = 1}^k a_j^2 = 1$, then
$$
d_2\left( \sum_{j=1}^k a_jX_j, Z \right) \le \sum_{j=1}^k a_j^2 d_2(X_j, Z).
$$
\end{lemma}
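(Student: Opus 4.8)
The plan is to exploit the Gaussianity of $Z$ together with independence, reducing the whole statement to a single two--summand inequality. Throughout I use that the $X_j$ are the normalized statistics appearing in the application, so each $X_j$ is centered with $\E(X_j^2)=1$; this hypothesis is essential (see the last paragraph). First I would fix i.i.d.\ standard normals $Z_1,\dots,Z_k$, independent of the $X_j$; since $\sum_j a_j^2=1$, the sum $\sum_j a_j Z_j$ is again standard normal, so it represents $Z$ and I may compare $\sum_j a_j X_j$ with $\sum_j a_j Z_j$. Coupling each pair $(X_j,Z_j)$ optimally and independently gives, because the means agree and the cross terms vanish, the bound $d_2(\sum_j a_j X_j, Z)\le (\sum_j a_j^2\, d_2(X_j,Z)^2)^{1/2}$. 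But this quadratic mean is \emph{larger} than the claimed $\sum_j a_j^2\, d_2(X_j,Z)$, so a single product coupling does not suffice and a finer, smoothing--type argument is needed.

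Next I would reduce to two summands by induction on $k$. Writing $c=\sqrt{1-a_1^2}=(\sum_{j\ge 2}a_j^2)^{1/2}$ and $Y=c^{-1}\sum_{j\ge 2}a_j X_j$, the variable $Y$ is again centered with unit variance and independent of $X_1$, and the inductive hypothesis gives $d_2(Y,Z)\le c^{-2}\sum_{j\ge 2}a_j^2\, d_2(X_j,Z)$. Since $\sum_{j=1}^k a_j X_j=a_1 X_1+c\,Y$ with $a_1^2+c^2=1$, the two--summand inequality stated below, applied with weights $a_1^2,c^2$, yields
\[
d_2\Big(\sum_{j=1}^k a_j X_j,\,Z\Big)\le a_1^2\, d_2(X_1,Z)+c^2\, d_2(Y,Z)\le a_1^2\, d_2(X_1,Z)+\sum_{j\ge 2}a_j^2\, d_2(X_j,Z),
\]
the factor $c^2$ exactly cancelling the $c^{-2}$ from the hypothesis and closing the induction.

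The crux is therefore the two--summand case: for independent, centered, unit--variance $X,Y$ and $p^2+q^2=1$,
\[
d_2(pX+qY,\,Z)\le p^2\, d_2(X,Z)+q^2\, d_2(Y,Z).
\]
I would prove this by viewing it through the one--parameter family $\Phi(t)=d_2(\sqrt{t}\,X+\sqrt{1-t}\,Y,\,Z)$ on $[0,1]$, whose endpoint values are $\Phi(1)=d_2(X,Z)$ and $\Phi(0)=d_2(Y,Z)$: the desired inequality at $t=p^2$ is precisely the assertion that $\Phi$ lies below the chord joining its endpoints, which follows once $\Phi$ is shown to be convex. Establishing this convexity is the main obstacle, and is exactly where the Gaussian smoothing enters (note that the naive triangle--inequality split of $\Phi$ only smooths one of the two summands and recovers the weaker weight $q$ rather than $q^2$). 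I would follow Mallows \cite{Mal72}, using the monotone quantile optimal coupling for $d_2$ on $\mathbb{R}$ together with the heat--semigroup structure of convolution by $\sqrt{1-t}\,Y$ to differentiate $\Phi$ twice in $t$ and verify $\Phi''\ge 0$.

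Finally, I would flag that centering is indispensable, which also explains why the bound cannot hold for arbitrary $X_j$: if $X,Y$ share a common nonzero mean $\mu$, then one computes $\Phi(t)=|\mu|(\sqrt{t}+\sqrt{1-t})$, which is \emph{concave}, so the chord bound and hence the lemma fail. This confirms both that the convexity of $\Phi$ is the genuine content and that the normalization of the $X_j$ to mean zero and variance one is exactly what makes the argument go through.
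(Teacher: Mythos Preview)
The paper does not prove this lemma: it quotes it as \cite[Lemmas~3~\&~4]{ferayCLT} and \cite[Lemmas~1~\&~3]{Mal72} and uses it as a black box, so there is no in-paper argument to compare yours against. Your sketch goes further than the paper does, and your diagnosis that the stated hypotheses are incomplete---one needs the $X_j$ centered with unit variance, as your closing counterexample shows---is correct (this is satisfied for the normalized $\widetilde{t_{n,j}}$ in the only place the lemma is applied). Your induction down to $k=2$ is clean and correct. The one soft spot is the two-summand step: invoking the ``heat--semigroup structure of convolution by $\sqrt{1-t}\,Y$'' to obtain $\Phi''\ge 0$ only makes literal sense when $Y$ is Gaussian, whereas after the inductive reduction $Y=c^{-1}\sum_{j\ge 2}a_jX_j$ is a general centered unit-variance variable; at that point you are, like the paper, effectively deferring to \cite{Mal72}. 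It is also worth noting that for the paper's sole use of the lemma (where every $d_2(\widetilde{t_{n,j}},Z)\le\epsilon$ and one wants $d_2(\widetilde{t_{n,+}},Z)\le\epsilon$), the easier product-coupling bound $d_2\bigl(\sum_j a_jX_j,Z\bigr)^2\le\sum_j a_j^2\,d_2(X_j,Z)^2$ from your first paragraph already suffices.
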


We now begin the proof of \Cref{CLT: General Coxeter Groups}. Note that if $\frac{t(w_n) - \E(t(w_n))}{\sqrt{\Var(t(w_n))}}$ tends to a standard normal in distribution then we must have that $\Var(t(w_n))$ approaches infinity, since $t(w_n)$ is integer valued. \cite[Proposition 6.15]{KS20} has more details. Thus, it remains to show the "if" direction. Assume $\Var(t(w_n))$ approaches infinity. Let 
$$w_n = \prod\limits_{j = 1}^{k_n} w_{n,j}
$$
where $w_{n,j} \in W_n^j$. We then have that the $w_{n,j}$ are independent and Mallows distributed in $W_n^j$. Let the parameters of these distributions be $q_{n,j}$ and let the rank of $W_n^j$ be $r_{n,j}$. Moreover, let $t_n = t(w_n)$ and $t_{n,j} = t(w_{n,j})$. By \cite[Lemma 2.4]{BR22},
\begin{equation} \label{general_descent_decomp}
t_n = \sum_{j = 1}^{k_n} t_{n,j}.
\end{equation}
Let $s_{n,j}^2 = \Var(t(w_{n,j}))$ and $s_n^2 = \Var(t(w_n)) = \sum_{j = 1}^{k_n} s_{n,j}^2$. We also define the following normalized random variables:
$$
\widetilde{t_{n,j}} = \frac{t_{n,j} - \E(t_{n,j})}{\sqrt{\Var(t_{n,j})}}, \quad \widetilde{t_n} = \frac{t_{n} - \E(t_{n})}{\sqrt{\Var(t_{n})}}.
$$
(\ref{general_descent_decomp}) can then be rewritten as
$$
\widetilde{t_n} = \sum_{j=1}^{k_n} \frac{s_{n,j}}{s_n}\widetilde{t_{n,j}}.
$$

Recall that all irreducible finite Coxeter groups are one the following: $A_p$ for $p \ge 1$, $B_p$ for $p \ge 2$, $D_p$ for $p \ge 4$, $I_2(m)$ for $m \ge 3$, or one of the exceptional types, $H_3, H_4, E_6, E_7, E_8$. We define $a_p^q$ to be $t(w)$ where $w$ is a random element of $A_p$ that is Mallows distributed with parameter $q$, and similarly for $b_p^q$ and $d_p^q$. We also define the normalized random variables
$$
\widetilde{a_p^q} = \frac{a_p^q - \E(a_p^q)}{\sqrt{\Var(a_p^q)}}, \quad \widetilde{b_p^q} = \frac{b_p^q - \E(b_p^q)}{\sqrt{\Var(b_p^q)}}, \quad \widetilde{d_p^q} = \frac{d_p^q - \E(d_p^q)}{\sqrt{\Var(d_p^q)}}.
$$
By Lemma \ref{Lemma: 2-distance convergence}, theorem $\ref{thm: Wasserstein 2-distance}$ ensures that $\widetilde{a_p^q}$ approaches $Z$ in distribution as long as $p\min(q, 1/q) \to \infty$.

Fix $\epsilon > 0$. By Theorem \ref{thm: Wasserstein 2-distance}, there exists some uniform constant bound $C > 100$ (that depends on $\epsilon$) such that
\begin{equation} \label{large_groups_close}
d_2(\widetilde{a_p^q}, N), \ d_2(\widetilde{b_p^q}, N), \ d_2(\widetilde{d_p^q}, N) \le \epsilon
\end{equation}
for all integers $p$ and positive reals $q$ such that $p\min(q, 1/q) \ge C$. Call any $W_{n}^j$ \textit{large} if it is one of $a_p^q, b_p^q, d_p^q$ for which $p\min(q, 1/q) \ge C$. Otherwise, we call $W_n^j$ \textit{small}. We may then reorder the indices $j$ such that there exists some $l_n$ where $W_{n,j}$ is large if and only if $j \le l_n$. We now write
$$
s_{n,+}^2 = \sum_{j = 1}^{l_n} s_{n,j}^2, \quad s_{n, -}^2 = \sum_{j = l_n+1}^{k_n} s_{n,j}^2
$$
and
$$
\widetilde{t_{n,+}} = \sum_{j = 1}^{l_n} \frac{s_{n,j}}{s_{n,+}} \widetilde{t_{n,j}}, \quad \widetilde{t_{n,-}} = \sum_{j = l_n+1}^{k_n} \frac{s_{n,j}}{s_{n,-}} \widetilde{t_{n,j}}
$$
so that
$$
\widetilde{t_n} = \frac{s_{n,+}}{s_n} \widetilde{t_{n,+}} + \frac{s_{n,-}}{s_n} \widetilde{t_{n,-}}.
$$
We now prove convergence to the standard normal using estimates on the characteristic function. Fix some $\zeta \in \mathbb{R}$.

\subsection{Large Component Bound}

From (\ref{large_groups_close}), we have that $d_2(\widetilde{t_{n,j}}, Z) \le \epsilon$ for $j \le l_n$. By Lemma \ref{2-distance_split_sum}, we then have,
$$
d_2(\widetilde{t_{n,+}}, Z) = d_2\left( \sum_{j = 1}^{l_n} \frac{s_{n,j}}{s_{n,+}}\widetilde{t_{n,j}}, Z \right) \le \sum_{j = 1}^{l_n} \frac{s_{n,j}^2}{s_{n,+}^2} d_w(\widetilde{t_{n,j}}, Z) \le \epsilon \sum_{j = 1}^{l_n}\frac{s_{n,j}^2}{s_{n,+}^2} = \epsilon.
$$
Recall that $x \mapsto \exp(-x^2/2)$ is the characteristic function of $Z$. Since $u \mapsto \exp(iu)$ as a function on $\mathbb{R}$ is $1$-Lipschitz, we have that,
\begin{equation} \label{large component bound}
\begin{split}
    \left| \E\left[ \exp\left(i\zeta\frac{s_{n,+}}{s_n} \widetilde{t_{n,+}}\right) \right] - \exp\left( -\frac{\zeta^2s_{n,+}^2}{2s_n^2} \right)\right| &\le \E\left[\left| \exp\left(i\zeta\frac{s_{n,+}}{s_n} \widetilde{t_{n,+}}\right) - \exp\left(i\zeta\frac{s_{n,+}}{s_n} Z\right) \right|\right] \\
    &\le \frac{s_{n,+}}{s_n}|\zeta| \E\left| \widetilde{t_{n,+}}-Z \right| \le |\zeta| \| \widetilde{t_{n,+}}-Z\|_2 \le |\zeta|\epsilon
\end{split}
\end{equation}
where $\|\cdot\|_2$ denotes the $L_2$ norm. The second to last inequality follows from the Cauchy-Schwarz inequality. The last inequality follows from choosing a coupling of $(\widetilde{t_{n,+}}, Z)$ where the $L_2$ norm above is equal to $d_2(\widetilde{t_{n,+}}, Z)$. 

\subsection{Small Component Bound}

We first show that there exists some $K$ (only dependent on $\epsilon$) such that $s_{n,j} \le K$ and 
\begin{equation*}
\E\left(|t_{n,j} - \E(t_{n,j})|^3 \right) \le Ks_{n,j}^2
\end{equation*}
for all $j > l_n$. For all such $j$, $W_n^j$ is small. First suppose it is $I_2(m)$ for some $m$, an exceptional type, or is one of $a_p^q, b_p^q, d_p^q$ for $p \le 30$. Then, since $I_2(m)$ always has rank $2$ and the largest rank among the exceptional types is $8$, the rank of $W_n^j$ is at most $30$. It follows that $t_{n,j} \le 60$ so $s_{n,j} \le 60$ and 
$$
\E\left(|t_{n,j} - \E(t_{n,j})|^3 \right) \le 60\E\left(|t_{n,j} - \E(t_{n,j})|^2 \right) = 60s_{n,j}^2.
$$
Now, suppose that $W_n^j$ is one of $a_p^q, b_p^q, d_p^q$ for $p \ge 30$. We then have that $p\min(q, 1/q) \le C$ so that, by Corollaries \ref{Cor: A_n simple_var_bound}, \ref{Cor: B_n simple_var_bound}, and \ref{Cor: D_n simple_var_bound}, $s_{n,j}^2 \le 8C$. Assume initially that $q \le 1$. Lemma \ref{Lemma: descent_cubed_bound} then implies that, using Jensen's inequality,
\begin{align*}
    \E\left(|t_{n,j} - \E(t_{n,j})|^3 \right) &\le 9\E\left( \des(w)^3 + \des(w^{-1})^3 + \E(t_{n,j})^3 \right) \\
    &= 18\E \left( \des(w)^3 \right) + 9\E(t_{n,j})^3 \\
    &\le 18(p^3q^3 + 24p^2q^2 + 16pq) + 9p^3q^3 \\
    &\le (27C^2 + 432C + 288)pq.
\end{align*}
Corollaries \ref{Cor: A_n simple_var_bound}, \ref{Cor: B_n simple_var_bound}, and \ref{Cor: D_n simple_var_bound} imply that $pq \le 12\sigma_{n,j}^2$ so we have that
$$
\E\left(|t_{n,j} - \E(t_{n,j})|^3 \right) \le 12(27C^2 + 432C + 288)\sigma_{n,j}^2.
$$
Hence, we can take $K = \max(60, \sqrt{8C}, 12(27C^2 + 432C + 288))$ and the result is proven.

Now, since the $s_{n,j}$ for $j > l_n$ are uniformly bounded with no dependence on $n$, and $\lim_{n \to \infty} s_n = +\infty$ by assumption, for $n$ sufficiently large we have $\frac{\zeta^2}{s_n^2}s_{n,j}^2 \le 1$ for all $l_n < j \le k_n$. We may also assume that $|\zeta| \le s_n$. Using \cite[(27.11) \& (27.15)]{Bil95} then yields
\begin{equation*}
\begin{split}
    \left| \E\left[ \exp\left(i\frac{\zeta}{s_n}(t_{n,j} - \E(t_{n,j}))\right) \right] - \exp\left( -\frac{\zeta^2s_{n,j}^2}{2s_n^2} \right) \right| &\le \frac{|\zeta|^3}{s_n^3}\E\left( |t_{n,j} - \E(t_{n,j})|^3 \right) + \frac{|\zeta|^4}{s_n^4}s_{n,j}^4 \\
    &\le \frac{|\zeta|^3}{s_n^3}Ks_{n,j}^2 + \frac{|\zeta|^3}{s_n^3}K^2s_{n,j}^2 \le 2K^2|\zeta|^3\frac{s_{n,j}^2}{s_n^3}.
\end{split}
\end{equation*}
We now use the following from \cite{Bil95}:
\begin{proposition}[{\cite[(27.5)]{Bil95}}] \label{prop: products_to_sum}
For any collections of complex numbers $(a_i)_{1 \le i \le t}$, $(b_i)_{1\le i \le t}$ of absolute value at most $1$,
$$
\left| \prod_{i=1}^t a_i - \prod_{i=1}^t b_i \right| \le \sum_{i=1}^t |a_i-b_i|.
$$
\end{proposition}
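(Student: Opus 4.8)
The plan is to prove this by a standard telescoping argument, reducing the difference of products to a sum of differences in single factors. Concretely, I would write $\prod_{i=1}^t a_i - \prod_{i=1}^t b_i$ as a telescoping sum in which the $b_i$ are replaced by $a_i$ one index at a time. Define the hybrid products $P_k = \left(\prod_{i=1}^{k} a_i\right)\left(\prod_{i=k+1}^{t} b_i\right)$ for $0 \le k \le t$, so that $P_0 = \prod_{i=1}^t b_i$ and $P_t = \prod_{i=1}^t a_i$. Then $\prod_{i=1}^t a_i - \prod_{i=1}^t b_i = \sum_{k=1}^t (P_k - P_{k-1})$, and each summand factors cleanly as $P_k - P_{k-1} = \left(\prod_{i=1}^{k-1} a_i\right)(a_k - b_k)\left(\prod_{i=k+1}^{t} b_i\right)$.

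The key observation is that, because every $|a_i| \le 1$ and $|b_i| \le 1$, the two flanking products have modulus at most $1$, so $|P_k - P_{k-1}| \le |a_k - b_k|$. Summing over $k$ and applying the triangle inequality gives $\left| \prod_{i=1}^t a_i - \prod_{i=1}^t b_i \right| \le \sum_{k=1}^t |a_k - b_k|$, which is exactly the claim.

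Equivalently, one could argue by induction on $t$. The base case $t = 1$ is an equality, and for the inductive step I would add and subtract $a_t \prod_{i=1}^{t-1} b_i$ to write the difference as $a_t\left(\prod_{i=1}^{t-1} a_i - \prod_{i=1}^{t-1} b_i\right) + (a_t - b_t)\prod_{i=1}^{t-1} b_i$; the bounds $|a_t| \le 1$ and $|b_i| \le 1$ together with the induction hypothesis then close the estimate. Both routes amount to the same bookkeeping, so I would present only the telescoping version for brevity.

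There is no real obstacle here: the result is elementary. The only point requiring any care is the bookkeeping, namely keeping track of which factors in each hybrid product come from the $a$-sequence and which from the $b$-sequence, and invoking the unit-modulus bounds on exactly the right flanking factors. The hypotheses $|a_i| \le 1$ and $|b_i| \le 1$ are essential and are used precisely to discard those flanking products; without them the flanking moduli could blow up and the inequality would fail.
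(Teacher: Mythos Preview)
Your telescoping argument is correct and is exactly the standard proof of this inequality. The paper does not actually supply its own proof of this proposition; it simply cites \cite[(27.5)]{Bil95} and uses the result as a black box, so there is nothing further to compare.
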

We have, since the $t_{n,j}$ are independent,
\begin{align*}
    \left| \E\left[ \exp\left(i\zeta\frac{s_{n,-}}{s_n}\widetilde{t_{n,-}}\right) \right] - \exp\left( -\frac{\zeta^2s_{n,-}^2}{2s_n^2} \right) \right| &= \left| \prod_{j=l_n+1}^{k_n} \E\left[ \exp\left(i\frac{\zeta}{s_n}(t_{n,j} - \E(t_{n,j}))\right) \right] - \prod_{j=l_n+1}^{k_n} \exp\left( -\frac{\zeta^2s_{n,j}^2}{2s_n^2} \right) \right| \\
    &\le \sum_{j=l_n+1}^{k_n} 2K^2|\zeta|^3\frac{s_{n,j}^2}{s_n^3} = 2K^2|\zeta|^3\frac{s_{n,-}^2}{s_n^3} \le \frac{2K^2|\zeta|^3}{s_n}.
\end{align*}
The last expression above tends to $0$ as $n \to \infty$, so for $n$ sufficiently large, 
\begin{equation} \label{small component bound}
\left| \E\left[ \exp\left(i\zeta\frac{s_{n,-}}{s_n}\widetilde{t_{n,-}}\right) \right] - \exp\left( -\frac{\zeta^2s_{n,-}^2}{2s_n^2} \right) \right| \le \epsilon.
\end{equation}

\subsection{Proof of Convergence}

Note that, by independence,
$$
\E\left[ \exp(i\zeta\widetilde{t_n}) \right] = \E\left[ \exp\left(i\zeta\frac{s_{n,+}}{s_n}\widetilde{t_{n,+}}\right) \right] \E\left[ \exp\left(i\zeta\frac{s_{n,-}}{s_n}\widetilde{t_{n,-}}\right) \right]
$$
and
$$
\exp\left( -\frac{\zeta^2}{2} \right) = \exp\left( -\frac{\zeta^2 s_{n,-}^2}{2s_n^2} \right) \exp\left( -\frac{\zeta^2 s_{n,+}^2}{2s_n^2} \right).
$$
Thus, by Proposition \ref{prop: products_to_sum} (the $t=2$ case, specifically) and equations (\ref{large component bound}) and (\ref{small component bound}),
$$
\left| \E\left[ \exp(i\zeta\widetilde{t_n}) \right] - \exp\left( -\frac{\zeta^2}{2} \right) \right| \le (1 + |\gamma|)\epsilon
$$
for sufficiently large $n$ (where the threshold value depends solely on $\epsilon$ and $\zeta$). Thus, the characteristic function of $\widetilde{t_n}$ converges pointwise to $\zeta \mapsto \exp\left( -\frac{\zeta^2}{2} \right)$, which is the characteristic function of $N$. Hence, by L\'evy's continuity theorem, $\widetilde{t_n} = \frac{t(w_n) - \E(t(w_n))}{\sqrt{\Var(t(w_n))}}$ converges in distribution to $N$.

\appendix

\section{Moment Computations}

\begin{lemma} \label{Lem: Coxeter Expectation}
Let $w \in W$ be Mallows distributed with parameter $q \le 1$ where $W$ is any finite Coxeter group and let the number of generators of $W$ be $n$. Then,
\[
\mathbb{E}(\mathrm{des}(w) + \mathrm{des}(w^{-1})) = \frac{2qn}{1+q}
\]
\end{lemma}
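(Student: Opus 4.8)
The plan is to apply linearity of expectation to the decomposition
$$
\des(w)+\des(w^{-1})=\sum_{i=0}^{n-1}\des_i(w)+\sum_{i=0}^{n-1}\des_i(w^{-1}),
$$
so that it suffices to compute the single marginal $\mathbb{E}(\des_i(w))=\mathbb{P}(\des_i(w)=1)$ for each generator $s_i$, together with the corresponding quantity for $w^{-1}$. I expect each of these marginals to be $q/(1+q)$, and then the result is immediate by summing $2n$ identical terms.

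First I would establish that $\mathbb{P}(\des_i(w)=1)=q/(1+q)$ for every $i$. This is precisely the computation already carried out inside the proof of Proposition~\ref{prop: size-bias coupling}: the map $w\mapsto ws_i$ is a bijection from $\{w:\des_i(w)=0\}$ onto $\{w:\des_i(w)=1\}$, and whenever $\des_i(w)=0$ we have $\ell(ws_i)=\ell(w)+1$, so that $\mu_q^W(ws_i)=q\,\mu_q^W(w)$. Summing this weight identity over the non-descent set gives $\mathbb{P}(\des_i(w)=1)=q\,\mathbb{P}(\des_i(w)=0)$, and combining with $\mathbb{P}(\des_i(w)=0)+\mathbb{P}(\des_i(w)=1)=1$ yields $\mathbb{P}(\des_i(w)=1)=q/(1+q)$, independent of $i$.

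Next I would handle the inverse terms by observing that inversion is length-preserving in any Coxeter group, i.e.\ $\ell(w)=\ell(w^{-1})$ (reversing a reduced word for $w$ gives a reduced word for $w^{-1}$). Hence $\mu_q^W$ is invariant under $w\mapsto w^{-1}$, so $w^{-1}$ is again Mallows distributed with parameter $q$, and each $\des_i(w^{-1})$ has the same marginal law as $\des_i(w)$, giving $\mathbb{E}(\des_i(w^{-1}))=q/(1+q)$ as well. Summing all $2n$ expectations then produces
$$
\mathbb{E}(\des(w)+\des(w^{-1}))=2n\cdot\frac{q}{1+q}=\frac{2qn}{1+q},
$$
as claimed.

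There is no serious obstacle here beyond the per-generator probability, and that computation is already the heart of the size-bias construction; the only point requiring any care is the invariance of $\mu_q^W$ under inversion, which rests on the length-preservation fact above. I note also that none of the steps uses the hypothesis $q\le 1$, so the identity in fact holds for all $q\in(0,\infty)$; the restriction $q\le 1$ is presumably kept only for uniformity with the neighbouring moment estimates in the appendix.
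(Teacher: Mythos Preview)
Your proof is correct and follows essentially the same approach as the paper: both use the bijection $w\mapsto ws_i$ to show $\mathbb{E}(\des_i(w))=q/(1+q)$ and then apply linearity of expectation. Your version is somewhat more careful in that you explicitly justify the inverse terms via the length-preserving property of $w\mapsto w^{-1}$, whereas the paper leaves this step implicit.
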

\begin{proof}
The result follows from noting that $\mathbb{E}(\mathrm{des}_i(w)) = \frac{q}{1+q}$ since there is an obvious bijection $w \mapsto ws_i$ that maps signed permutations without a descent at $s_i$ to those without a descent at $s_i$. It is then clear that this bijection adds $\pm 1$ to the length of $w$ (depending on if there is a descent or not).
\end{proof}

The following is a corollary of \cite[Proposition 3.11]{He_2022}.

\begin{lemma} \label{Moments: Type A}
Let $w \in A_n$ be Mallows distributed with parameter $q \le 1$ and $n \ge 2$. Then,
$$
\frac{2nq(1-q+q^2)}{(1+q)^2(1+q+q^2)} \le \mathrm{Var}(\mathrm{des}(w) + \mathrm{des}(w^{-1})) \le \frac{2nq(1-q)(1+q)^2}{1-q^n} + \frac{2(n+2)q(1-q+q^2)}{(1+q)^2(1+q+q^2)}.
$$
\end{lemma}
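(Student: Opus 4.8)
The plan is to obtain this as a direct translation of He's variance estimate to the Coxeter setting. The essential point is the identification $A_n = S_{n+1}$: under the standard isomorphism, the $n$ Coxeter generators $s_1, \dots, s_n$ of $A_n$ correspond to the $n$ adjacent transpositions $(i\ i{+}1)$ of $S_{n+1}$, and a right descent of $w$ at $s_i$ in the Coxeter sense (i.e.\ $\ell(ws_i) < \ell(w)$) coincides with an ordinary descent of $w$, regarded as a permutation of $[n+1]$, at position $i$. Hence $\des(w)$ equals the usual descent count of a permutation of $n+1$ letters, ranging over the $n$ interior positions, and the Coxeter length $\ell(w)$ equals the number of inversions; moreover the Mallows measure $\mu_q^{A_n}$ is exactly the Mallows measure on $S_{n+1}$ with parameter $q$. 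Consequently $\des(w) + \des(w^{-1})$ is precisely the two-sided descent statistic of a Mallows-distributed permutation of $[n+1]$.

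With this dictionary in place, I would invoke He's Proposition 3.11, which provides lower and upper bounds for the variance of the two-sided descent statistic of a Mallows permutation of $[N]$, expressed in terms of the permutation size $N$ (equivalently, the number $N-1$ of descent positions). I would then simply set $N = n+1$. Under this substitution $N - 1 = n$, $N + 1 = n + 2$, and $1 - q^{N-1} = 1 - q^n$, which transforms He's bounds term-by-term into the displayed lower bound $\frac{2nq(1-q+q^2)}{(1+q)^2(1+q+q^2)}$ and upper bound $\frac{2nq(1-q)(1+q)^2}{1-q^n} + \frac{2(n+2)q(1-q+q^2)}{(1+q)^2(1+q+q^2)}$. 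If He's statement is phrased instead for the single-sided variance $\Var(\des(w))$ together with a cross term, I would additionally use that $\ell(w) = \ell(w^{-1})$ makes $\mu_q^{A_n}$ invariant under $w \mapsto w^{-1}$, so that $\des(w)$ and $\des(w^{-1})$ are identically distributed and $\Var(\des(w)+\des(w^{-1})) = 2\Var(\des(w)) + 2\Cov(\des(w),\des(w^{-1}))$, and then recombine He's estimates for the two pieces.

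The work here is essentially bookkeeping rather than new probabilistic content: the only genuine care is to match He's indexing convention (size $N$ versus number of generators $n = N-1$) to ours, to confirm that the constants and lower-order corrections in Proposition 3.11 are exactly those appearing in the stated closed forms after the substitution, and to check that the hypotheses $q \le 1$ and $n \ge 2$ lie in the range over which He's estimate is proved. I expect the main (and only) subtlety to be the off-by-one reindexing, since the shift from the main-term factor $n$ to the factor $n+2$ in the second summand of the upper bound comes precisely from the boundary behavior captured by $N+1 = n+2$; getting this alignment right is what makes the corollary exact rather than merely asymptotic.
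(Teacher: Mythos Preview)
Your proposal is correct and matches the paper's approach: the paper likewise identifies $A_n=S_{n+1}$, invokes \cite[Proposition~3.11]{He_2022} (together with the exact formula $\Var(\des(w)) = \frac{(n+1)q(1-q+q^2)}{(1+q)^2(1+q+q^2)} - \frac{q(1-3q+q^2)}{(1+q)^2(1+q+q^2)}$ from \cite{BDF10}), and combines via $\Var(\des(w)+\des(w^{-1})) = 2\Var(\des(w)) + 2\Cov(\des(w),\des(w^{-1}))$. The only point to flag is that the displayed bounds are not a pure term-by-term substitution of He's estimates but require two small additional inequalities---bounding the correction term $-\frac{2q(1-3q+q^2)}{(1+q)^2(1+q+q^2)}$ for the lower bound, and using $-(1-3q+q^2)\le 1-q+q^2$ to pass from the exact $\Var(\des(w))$ to $\frac{(n+2)q(1-q+q^2)}{(1+q)^2(1+q+q^2)}$ for the upper bound---exactly the kind of ``lower-order correction'' you anticipated needing to check.
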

\begin{proof}
For the lower bound on the variance, we have that, from \cite[Proposition 3.11]{He_2022},
$$
\mathrm{Var}(\mathrm{des}(w) + \mathrm{des}(w^{-1})) \ge \frac{2(n+1)q(1-q+q^2)}{(1+q)^2(1+q+q^2)} - \frac{2q(1-3q+q^2)}{(1+q)^2(1+q+q^2)} \ge \frac{2nq(1-q+q^2)}{(1+q)^2(1+q+q^2)}.
$$
To show the upper bound, \cite[Proposition 3.11]{He_2022} states that
$$
\Cov(\des(w), \des(w^{-1})) \le \frac{nq(1-q)(1+q)^2}{1-q^n}.
$$
In the proof (and in \cite[Proposition 5.2]{BDF10}), it is also stated that
$$
\Var(\des(w)) = \frac{(n+1)q(1-q+q^2)}{(1+q)^2(1+q+q^2)} - \frac{q(1-3q+q^2)}{(1+q)^2(1+q+q^2)}.
$$
We have,
$$
-(1-3q+q^2) = q - (1-q)^2 \le q + (1-q)^2 = 1 -q + q^2
$$
so that
$$
\Var(\des(w)) \le \frac{(n+2)q(1-q+q^2)}{(1+q)^2(1+q+q^2)}.
$$
Then, noting that
\begin{align*}
    \Var(\des(w) + \des(w^{-1})) &= \Var(\des(w)) + \Var(\des(w^{-1})) + 2\Cov(\des(w), \des(w^{-1})) \\
    &= 2\Var(\des(w)) + 2\Cov(\des(w), \des(w^{-1}))
\end{align*}
yields the result.
\end{proof}

\begin{corollary} \label{Cor: A_n simple_var_bound}
Let $w \in A_n$ be Mallows distributed with parameter $q \in (0, \infty)$ and $n \ge 2$. Then,
$$
\frac{n\min(q, 1/q)}{6} \le \mathrm{Var}(\mathrm{des}(w) + \mathrm{des}(w^{-1})) \le 8n\min(q, 1/q).
$$
\end{corollary}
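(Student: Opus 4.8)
The plan is to read off both inequalities from Lemma \ref{Moments: Type A}, which gives two-sided bounds on the variance but only under the hypothesis $q \le 1$. The first step is therefore to reduce to that case. The variance of $\des(w)+\des(w^{-1})$ is invariant under the reflection $q \mapsto 1/q$: this is \cite[Proposition 2.7]{He_2022}, the $A_n$ analogue of our \Cref{cor: q_invert}, under which $\des(w)+\des(w^{-1})$ is replaced by $2n-\des(w)-\des(w^{-1})$, an affine transformation that leaves the variance unchanged. Hence I may assume without loss of generality that $q \le 1$, so that $\min(q,1/q)=q$, and it suffices to show that the lower bound in Lemma \ref{Moments: Type A} is at least $nq/6$ and that the upper bound is at most $8nq$.

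For the lower bound I would reduce the claim $\frac{2nq(1-q+q^2)}{(1+q)^2(1+q+q^2)} \ge \frac{nq}{6}$ to the polynomial inequality $12(1-q+q^2) \ge (1+q)^2(1+q+q^2)$ for $q \in (0,1]$. Expanding the right-hand side as $1+3q+4q^2+3q^3+q^4$ and rearranging shows this is equivalent to
$$
(1-q)(q^3+4q^2-4q+11) \ge 0,
$$
and the cubic factor is positive on $[0,1]$ (it is bounded below by its value at its unique critical point there), so the inequality holds, with equality exactly at $q=1$.

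For the upper bound I would estimate the two summands of the bound in Lemma \ref{Moments: Type A} separately. For the first summand, $q \le 1$ and $n \ge 2$ give $1-q^n \ge 1-q^2=(1-q)(1+q)$, so
$$
\frac{2nq(1-q)(1+q)^2}{1-q^n} \le 2nq(1+q) \le 4nq,
$$
with the $q=1$ case following by continuity. For the second summand, since $1-q+q^2 \le 1 \le (1+q)^2(1+q+q^2)$ on $(0,1]$ and $n+2 \le 2n$ for $n \ge 2$,
$$
\frac{2(n+2)q(1-q+q^2)}{(1+q)^2(1+q+q^2)} \le 2(n+2)q \le 4nq.
$$
Adding the two estimates yields $8nq = 8n\min(q,1/q)$, completing the argument.

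The only genuinely delicate point is the opening reduction to $q \le 1$: the bounds supplied by Lemma \ref{Moments: Type A} are visibly asymmetric in $q$, so the symmetric factor $\min(q,1/q)$ in the statement rests entirely on the reflection invariance of the variance, which is why I invoke \cite[Proposition 2.7]{He_2022} at the outset. Everything after that is elementary algebra, the main bookkeeping being the verification that the cubic $q^3+4q^2-4q+11$ does not vanish on $[0,1]$.
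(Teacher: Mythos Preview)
Your proof is correct and follows essentially the same approach as the paper: reduce to $q\le 1$ by the reflection symmetry, then read off both bounds from Lemma~\ref{Moments: Type A}. The only cosmetic difference is in the lower-bound algebra: the paper avoids your polynomial expansion by observing directly that $\frac{1}{(1+q)^2}\ge \frac14$ and $\frac{1-q+q^2}{1+q+q^2}=\frac{(1-q)^2+q}{(1-q)^2+3q}\ge \frac13$, which multiply to give the factor $\frac16$ immediately; your factorization $(1-q)(q^3+4q^2-4q+11)\ge 0$ is equally valid but a bit more work. Your citation of \cite[Proposition~2.7]{He_2022} for the $A_n$ reflection symmetry is actually more precise than the paper's own reference to Corollary~\ref{cor: q_invert}, which as stated covers only $B_n$ and $D_n$.
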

\begin{proof}
Without loss of generality assume $q \le 1$ (since the other case follows by Corollary \ref{cor: q_invert}). The lower bound follows from noting that $\frac{1}{(1+q)^2} \ge \frac{1}{4}$ and 
$$
\frac{1-q+q^2}{1+q+q^2} = \frac{(1-q)^2 + q}{(1-q)^2 + 3q} \ge \frac{1}{3}.
$$
For the upper bound, we have,
$$
\frac{2nq(1-q)(1+q)^2}{1-q^n} + \frac{2(n+2)q(1-q+q^2)}{(1+q)^2(1+q+q^2)} \le 2nq(1+q) + 2(n+2)q \le 8nq.
$$
\end{proof}

\begin{lemma} \label{Moments: Type B}
Let $w \in B_n$ be Mallows distributed with parameter $q \le 1$ and $n \ge 2$. Then,
$$
\frac{2nq(1-q+q^2)}{(1+q)^2(1+q+q^2)} \le \mathrm{Var}(\mathrm{des}(w) + \mathrm{des}(w^{-1})) \le \frac{2nq(2+q+2q^2)}{(1+q)^2(1+q+q^2)}.
$$
\end{lemma}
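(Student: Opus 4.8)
The plan is to follow the template of \Cref{Moments: Type A}. Because $\ell(w)=\ell(w^{-1})$, the Mallows measure is invariant under $w\mapsto w^{-1}$, so $\des(w)$ and $\des(w^{-1})$ are identically distributed and
$$
\Var(\des(w)+\des(w^{-1}))=2\Var(\des(w))+2\Cov(\des(w),\des(w^{-1})).
$$
It therefore suffices to compute $\Var(\des(w))$ exactly and to bound $\Cov(\des(w),\des(w^{-1}))$ from both sides.

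First I would compute $\Var(\des(w))$. Writing $\des(w)=\sum_{i=0}^{n-1}\des_i(w)$, each $\des_i(w)=\mathbbm{1}[w_{\{s_i\}}\neq e]$ is Bernoulli with mean $q/(1+q)$, so $\Var(\des_i(w))=q/(1+q)^2$. By \Cref{Lemma: general_parabolic_indep}, $\des_i(w)$ and $\des_j(w)$ are independent whenever $s_i$ and $s_j$ commute, which in the path-shaped Coxeter graph of $B_n$ means $|i-j|\ge 2$; hence only the $n-1$ nearest-neighbor covariances survive. By \Cref{First_indep_result} the restriction $w_{\{s_i,s_{i+1}\}}$ is Mallows distributed in the rank-two parabolic $W_{\{s_i,s_{i+1}\}}$, which is $A_2\cong S_3$ for $i\ge 1$ and the order-$8$ dihedral group $B_2$ for $i=0$. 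A direct enumeration of these two groups gives $\Cov(\des_i(w),\des_{i+1}(w))=-q^2/[(1+q)^2(1+q+q^2)]$ in the $S_3$ case and $-q^2/[(1+q)^2(1+q^2)]$ in the $B_2$ case, so that
$$
\Var(\des(w))=\frac{nq(1-q+q^2)}{(1+q)^2(1+q+q^2)}+\frac{2q^2(1-q+q^2)}{(1+q)^2(1+q^2)(1+q+q^2)}.
$$

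Next I would treat $\Cov(\des(w),\des(w^{-1}))=\sum_{i,j}\Cov(\des_i(w),\des_j(w^{-1}))$. For $i,j\ge 1$ the first factor depends only on the relative order of the values at positions $i,i+1$ while the second depends only on the positions of the values $j,j+1$, so by the conditional independence of \Cref{inverse_indep} a term vanishes unless one of the values $j,j+1$ occupies, up to sign, one of the positions $i,i+1$. Bounding each surviving term by the probability of such a collision via \Cref{lem: set_indices_B} and summing over $i,j$ should yield an upper bound of the form $\tfrac{nq}{1+q+q^2}$ up to an explicit $O(1)$ correction, while the lower bound of the lemma reduces to the nonnegativity $\Cov(\des(w),\des(w^{-1}))\ge 0$. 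Substituting the variance formula and these covariance bounds into the identity above and simplifying then produces the two stated bounds; in particular the upper bound requires the covariance estimate to beat $\tfrac{nq}{1+q+q^2}$ by exactly the nonnegative $O(1)$ correction appearing in $\Var(\des(w))$.

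The main obstacle is the covariance term. Establishing $\Cov(\des(w),\des(w^{-1}))\ge 0$ requires a positive-association argument (I would adapt He's treatment of $S_n$ in \cite{He_2022}), and, since the constants $1-q+q^2$ and $2+q+2q^2$ are sharp, the covariance upper bound must be carried out with precise bookkeeping of the collision contributions from \Cref{lem: set_indices_B} rather than a crude order-of-magnitude estimate. By contrast, the rank-two covariance computations feeding the variance formula are routine finite enumerations.
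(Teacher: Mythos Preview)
Your overall decomposition and the computation of $\Var(\des(w))$ are exactly what the paper does, and your exact formula
\[
\Var(\des(w))=\frac{nq(1-q+q^2)}{(1+q)^2(1+q+q^2)}+\frac{2q^2(1-q+q^2)}{(1+q)^2(1+q^2)(1+q+q^2)}
\]
is correct.  The divergence from the paper is in how you propose to handle the cross term $\Cov(\des(w),\des(w^{-1}))$, and here your tools are too blunt.

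For the nonnegativity, the paper does \emph{not} invoke a positive-association principle.  Conditioning on the complement of the event $E=\{\{w(i),w(i+1)\}=\{j,j+1\}\text{ or }\{-j,-j-1\}\}$ and using \Cref{First_indep_result} on each piece of $E$ yields, for $i,j\ge 1$, the exact identity
\[
\Cov(\des_i(w),\des_j(w^{-1}))=\frac{q}{(1+q)^2}\Prob(E_1)-\frac{q^2}{(1+q)^2}\Prob(E_2),
\]
where $E_1=\{\{w(i),w(i+1)\}=\{j,j+1\}\}$ and $E_2=\{\{w(i),w(i+1)\}=\{-j,-j-1\}\}$.  The bijection $E_1\to E_2$ that negates $w(i),w(i+1)$ strictly increases length, so $\Prob(E_2)<\Prob(E_1)$ and the covariance is nonnegative.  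The boundary cases $i=0$ or $j=0$ are handled separately (they are zero by a parity observation in \Cref{inverse_indep}, except $i=j=0$ which is again manifestly nonnegative).  This is elementary and self-contained; no adaptation of He's argument is needed.

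For the upper bound, \Cref{lem: set_indices_B} is the wrong instrument: applying it term by term loses the prefactor $q/(1+q)^2$ and leaves you with a constant that does not close to the stated $2+q+2q^2$.  The paper instead keeps the exact identity above, drops the negative $E_2$ piece, and uses that for fixed $i$ the events $E_1$ are \emph{pairwise disjoint} as $j$ varies, so $\sum_j\Prob(E_1)\le 1$ and hence $\sum_{i,j\ge 1}\Cov(\des_i(w),\des_j(w^{-1}))\le (n-1)q/(1+q)^2$.  Together with the $i=j=0$ contribution this gives $\Cov(\des(w),\des(w^{-1}))\le nq/(1+q)^2$; combined with the weakening $\Var(\des(w))\le nq(1+q^2)/[(1+q)^2(1+q+q^2)]$ (valid for $n\ge 2$) one lands exactly on $2nq(2+q+2q^2)/[(1+q)^2(1+q+q^2)]$.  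Note this covariance bound $nq/(1+q)^2$ is strictly sharper than your target $nq/(1+q+q^2)$; no precise bookkeeping against an $O(1)$ correction is needed.
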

\begin{proof}
Note that
\begin{align}
    \mathrm{Var}(\mathrm{des}(w) + \mathrm{des}(w^{-1})) &= \mathrm{Var}\left(\sum_{i=0}^{n-1} \mathrm{des}_i(w) + \sum_{i=0}^{n-1} \mathrm{des}_i(w^{-1})\right) \nonumber \\ \label{variance_formula}
    &= 2\sum_{0 \le i,j \le n-1} \mathrm{Cov}(\mathrm{des}_i(w), \mathrm{des}_j(w)) + 2\sum_{0 \le i,j \le n-1} \mathrm{Cov}(\mathrm{des}_i(w), \mathrm{des}_j(w^{-1})). 
\end{align}
For $|i-j|>1$, $s_i$ and $s_j$ commute, so $\mathrm{des}_i(w)$ and $\mathrm{des}_j(w)$ are independent events by Lemma \ref{Lemma: general_parabolic_indep} since $\mathrm{des}_i(w)$ is a function of $w_{\{s_i\}}$. It follows that the covariance of these terms are $0$.

Now, we compute $\mathrm{Cov}(\mathrm{des}_i(w), \mathrm{des}_{i+1}(w))$. If $i \ge 1$, then we have that $(B_n)_{\{s_i, s_{i+1}\}} \cong S_3$. Then, $\mathrm{des}_i(w) = 1$ if and only if $w_{\{s_i, s_{i+1}\}} = 213, 312, 321$ and $\mathrm{des}_{i+1}(w) = 1$ if and only if $w_{\{s_i, s_{i+1}\}} = 321, 231, 132$. Now, the expectation of each of $\des_i(w)$ and $\des_{i+1}(w)$ is $\frac{q}{1+q}$. Hence,
\begin{align*}
    \mathrm{Cov}(\mathrm{des}_i(w), \mathrm{des}_{i+1}(w)) &= \mathbb{E}(\mathrm{des}_i(w) \mathrm{des}_{i+1}(w)) - \mathbb{E}(\mathrm{des}_i(w)) \mathbb{E}(\mathrm{des}_{i+1}(w)) \\
    &= \mathbb{P}(w_{\{s_i, s_{i+1}\}} = 321) - \left( \frac{q}{1+q} \right)^2 \\
    &= \frac{q^3}{(1+q)(1+q+q^2)} - \left( \frac{q}{1+q} \right)^2 \\
    &= -\frac{q^2}{(1+q)^2(1+q+q^2)}.
\end{align*}
We have used the fact that the normalizing constant for the Mallows distribution over $S_3$ is 
\[
Z_{S_3}(q) = [1]_q[2]_q[3]_q = \frac{1-q^2}{1-q} \cdot \frac{1-q^3}{1-q} = (1+q)(1+q+q^2).
\]
To compute $\mathrm{Cov}(\mathrm{des}_0(w), \mathrm{des}_{1}(w))$, note that $\mathrm{des}_0(w) = 1$ if and only if $w_{\{s_0, s_1\}} = (\pm 2, 1, -1, \mp 2)$ or $ (\pm 1, 2, -2, \mp 1)$ (noticing that $(B_n)_{\{s_0, s_1\}}$ is isomorphic to $B_2$). It follows that
\begin{align*}
    \mathrm{Cov}(\mathrm{des}_0(w), \mathrm{des}_{1}(w)) &= \mathbb{E}(\mathrm{des}_0(w) \mathrm{des}_{1}(w)) - \mathbb{E}(\mathrm{des}_0(w)) \mathbb{E}(\mathrm{des}_{1}(w)) \\
    &= \mathbb{P}(w_{\{s_0, s_1\}} = \{2, 1, -1, -2\}) - \left( \frac{q}{1+q} \right)^2 \\
    &= \frac{q^4}{(1+q)(1+q+q^2+q^3)} - \frac{q^2}{(1+q)^2} \\
    &= - \frac{q^2}{(1+q)^2(1+q^2)}
\end{align*}
where we have used the normalization constant for $B_2$
\[
Z_{B_2}(q) = [2]_q[4]_q = (1+q)(1+q+q^2+q^3).
\]
For all $i$, we have
$$
\mathrm{Cov}(\mathrm{des}_i(w), \mathrm{des}_{i}(w)) = \mathbb{E}(\mathrm{des}_i(w)^2) - \mathbb{E}(\mathrm{des}_i(w))^2 = \frac{q}{1+q} - \left( \frac{q}{1+q} \right)^2 = \frac{q}{(1+q)^2}.
$$
We can now compute:
\begin{align}
    \sum_{0 \le i,j \le n-1} \mathrm{Cov}(\mathrm{des}_i(w), \mathrm{des}_j(w)) &= n\frac{q}{(1+q)^2} - 2(n-2)\frac{q^2}{(1+q)^2(1+q+q^2)} - 2\frac{q^2}{(1+q)^2(1+q^2)} \label{B_n variance}\\
    & \ge \frac{nq(1-q+q^2)}{(1+q)^2(1+q+q^2)}. \nonumber
\end{align}
Now, note that for any $i,j$ we have that $\mathrm{des}_i(w)$ and $\mathrm{des}_j(w^{-1})$ are independent conditioned on $\{w(k): k \in \overline{\{s_i\}}\} \cap \overline{\{s_j\}}$ and $\{-w(k): k \in \overline{\{s_i\}}\} \cap \overline{\{s_j\}}$ each containing at most $1$ element. This is by Lemma \ref{inverse_indep}. Otherwise, these sets are the same (as each contains $2$ elements), so for $i,j \ge 1$ this yields the event
$$
E = \{\{w(i), w(i+1)\} = \{j, j+1\}, \{-j, -j-1\}\}.
$$
Conditioning on $E_1 = \{\{w(i), w(i+1)\} = \{j, j+1\}\}$ yields that $\mathrm{des}_i(w) = \mathrm{des}_j(w^{-1})$. Moreover, it is clear that right multiplication by $s_i$ and left multiplication by $s_j$ preserve this event. Thus, by Lemma \ref{First_indep_result}, it follows that these descent random variables are independent from the event, and conditioning on it we still have that each has expectation $\frac{q}{1+q}$. The same is true for the event $E_2 = \{w(i), w(i+1)\} = \{-j, -j-1\}$ but we instead have that $\mathrm{des}_i(w) \neq \mathrm{des}_j(w^{-1})$. Hence,
\begin{align*}
    \mathrm{Cov}(\mathrm{des}_i(w), \mathrm{des}_{j}(w^{-1})) &= \E(\des_i(w)\des_j(w^{-1})) - \frac{q^2}{(1+q)^2} \\
    &= \E(\des_i(w)\des_j(w^{-1}) | E_1)\Prob(E_1) + \E(\des_i(w)\des_j(w^{-1}) | E_2)\Prob(E_2) \\
    &\quad - \frac{q^2}{(1+q)^2}\Prob(E) \\
    &= \frac{q}{1+q}\Prob(E_1) - \frac{q^2}{(1+q)^2}\Prob(E) = \frac{q}{(1+q)^2}\Prob(E_1) - \frac{q^2}{(1+q)^2}\Prob(E_2).
\end{align*}
There is an obvious bijection from $E_1$ to $E_2$ that switches $w(i), w(-i)$ and $w(i+1), w(-i-1)$ and strictly increases the length of $w$. Thus, $\Prob(E_2) < \Prob(E_1)$ so the above is positive. Now, if $i= 0, j \neq 0$, then $\{w(k): k \in \overline{\{s_i\}}\} = \{-w(k): k \in \overline{\{s_i\}}\} = \{w(-1), w(1)\}$ and $\{w(-1), w(1)\} \cap \overline{\{s_j\}}$ contains at most $1$ element, since one set contains two additive inverses while the other does not. Hence, the covariance term is $0$ in this case by Lemma \ref{inverse_indep}. A similar argument can be made to show that the covariance term is $0$ when $i \neq 0, j = 0$. If $i = j = 0$, $\{w(k): k \in \overline{\{s_i\}}\} \cap \overline{\{s_j\}}$ contains two elements if and only if 
$$
\{w(1), w(-1)\} = \{1, -1 \} \iff w(1) = \pm 1.
$$
Furthermore, Lemma \ref{First_indep_result} again implies that each descent statistic is independent of this event and so
$$
\mathrm{Cov}(\mathrm{des}_0(w), \mathrm{des}_{0}(w^{-1})) = \left( \frac{q}{1+q} - \frac{q^2}{(1+q)^2} \right) \mathbb{P}(w(1) = \pm 1).
$$
In all cases, $\mathrm{Cov}(\mathrm{des}_i(w), \mathrm{des}_{j}(w^{-1})) \ge 0$. Thus, by (\ref{variance_formula}), we have
$$
\mathrm{Var}(\mathrm{des}(w) + \mathrm{des}(w^{-1})) \ge \frac{2nq(1-q+q^2)}{(1+q)^2(1+q+q^2)}.
$$
For the upper bound, note that \eqref{B_n variance} implies that
\begin{align*}
    \sum_{0 \le i,j \le n-1} \mathrm{Cov}(\mathrm{des}_i(w), \mathrm{des}_j(w)) &\le n\frac{q}{(1+q)^2} - (n-2)\frac{q^2}{(1+q)^2(1+q+q^2)} - 2\frac{q^2}{(1+q)^2(1+q^2)} \\
    &\le \frac{nq(1+q^2)}{(1+q)^2(1+q+q^2)}.
\end{align*}
We also have that, from the above work,
\begin{align*}
    \sum_{0 \le i,j \le n-1} \mathrm{Cov}(\mathrm{des}_i(w), \mathrm{des}_j(w^{-1})) &\le \sum_{0 \le i,j \le n-1} \frac{q}{(1+q)^2} \mathbb{P}(\{w(i), w(i+1)\} = \{j, j+1\}) \\
    &\quad + \frac{q}{(1+q)^2}\mathbb{P}(w(1) = \pm 1) \\
    &\le (n-1)\frac{q}{(1+q)^2} + \frac{q}{(1+q)^2} = \frac{nq}{(1+q)^2}.
\end{align*}
Summing these bounds gives the result.
\end{proof}

\begin{corollary} \label{Cor: B_n simple_var_bound}
Let $w \in B_n$ be Mallows distributed with parameter $q \in (0, \infty)$ and $n \ge 2$. Then,
$$
\frac{n\min(q, 1/q)}{6} \le \Var(\des(w) + \des(w^{-1})) \le 4n\min(q,1/q).
$$
\end{corollary}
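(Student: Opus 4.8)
The plan is to deduce both inequalities directly from the sharp two-sided bound in \Cref{Moments: Type B}, after first reducing to the case $q \le 1$. Indeed, by \Cref{cor: q_invert} the distribution of $\des(w)+\des(w^{-1})$ for parameter $q$ coincides with that of $2n-\des(w')-\des((w')^{-1})$ for parameter $1/q$, and variance is invariant under the affine map $X \mapsto 2n-X$; hence $\Var(\des(w)+\des(w^{-1}))$ is unchanged when $q$ is replaced by $1/q$. So I may assume $q \le 1$, in which case $\min(q,1/q)=q$, and it remains to show
$$
\frac{nq}{6} \;\le\; \frac{2nq(1-q+q^2)}{(1+q)^2(1+q+q^2)} \quad\text{and}\quad \frac{2nq(2+q+2q^2)}{(1+q)^2(1+q+q^2)} \;\le\; 4nq.
$$

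For the lower bound I would argue exactly as in the proof of \Cref{Cor: A_n simple_var_bound}: cancel the common factor $nq$, write the remaining quantity as a product $2\cdot\frac{1}{(1+q)^2}\cdot\frac{1-q+q^2}{1+q+q^2}$, and bound each factor separately. Since $q \le 1$ gives $(1+q)^2 \le 4$, the first factor is at least $\tfrac14$; and rewriting $\frac{1-q+q^2}{1+q+q^2}=\frac{(1-q)^2+q}{(1-q)^2+3q}$ shows the second factor is at least $\tfrac13$. Multiplying gives $2\cdot\tfrac14\cdot\tfrac13=\tfrac16$, as desired.

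For the upper bound I would again cancel $nq$ and clear denominators, reducing the claim to the polynomial inequality $2+q+2q^2 \le 2(1+q)^2(1+q+q^2)$. Expanding the right-hand side yields $2+6q+8q^2+6q^3+2q^4$, so the inequality is equivalent to $0 \le 5q+6q^2+6q^3+2q^4$, which holds for all $q \ge 0$. (It is worth noting that the ratio tends to $4$ as $q \to 0$, so the constant $4$ is genuinely sharp in this regime.)

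There is no real obstacle here: \Cref{Moments: Type B} already carries all of the probabilistic content, and this corollary is purely a matter of packaging those rational-function bounds into the clean form $\tfrac{1}{6}n\min(q,1/q)$ and $4n\min(q,1/q)$. The only thing requiring any care is the reduction to $q\le 1$, where I must explicitly invoke the variance-invariance furnished by \Cref{cor: q_invert} rather than re-deriving the $q \ge 1$ estimates by hand.
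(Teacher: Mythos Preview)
Your proposal is correct and follows essentially the same approach as the paper: reduce to $q\le 1$ via \Cref{cor: q_invert}, then read off both bounds from \Cref{Moments: Type B}. The paper's proof says only that the corollary is ``a simple consequence'' of that lemma, whereas you have spelled out the elementary inequalities (and correctly so), including the same factorization used in \Cref{Cor: A_n simple_var_bound} for the lower bound.
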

\begin{proof}
When $q \le 1$, this is a simple consequence of Lemma \ref{Moments: Type B}. The case of $q \ge 1$ follows from Corollary \ref{cor: q_invert}.
\end{proof}

\begin{lemma} \label{Moments: Type D}
Let $w \in D_n$ be Mallows distributed with parameter $q \le 1$ and $n \ge 2$. Then,
$$
\frac{2nq(1-q+q^2)}{(1+q)^2(1+q+q^2)} - \frac{10q^2}{(1+q)^2} \le \mathrm{Var}(\mathrm{des}(w) + \mathrm{des}(w^{-1})) \le \frac{4nq(2+2q+3q^2+q^3)}{(1+q)^2(1+q+q^2)}.
$$
\end{lemma}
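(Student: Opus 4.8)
The plan is to follow the template of \Cref{Moments: Type B} closely, exploiting the decomposition
$$
\mathrm{Var}(\des(w)+\des(w^{-1})) = 2\sum_{0\le i,j\le n-1}\Cov(\des_i(w),\des_j(w)) + 2\sum_{0\le i,j\le n-1}\Cov(\des_i(w),\des_j(w^{-1}))
$$
and handling the two sums separately. As in the $B_n$ case, $\des_i(w)$ is a function of $w_{\{s_i\}}$, so by \Cref{Lemma: general_parabolic_indep} the covariance $\Cov(\des_i(w),\des_j(w))$ vanishes whenever $s_i$ and $s_j$ commute; the surviving terms come only from the diagonal and from pairs adjacent in the Coxeter graph of $D_n$.

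First I would compute the single-sided sum exactly. The key structural difference from $B_n$ is the shape of the $D_n$ graph: $s_0$ is attached to $s_2$ (not $s_1$), the pair $\{s_0,s_1\}$ commutes, and \emph{every} adjacent pair — including $\{s_0,s_2\}$ — generates a parabolic isomorphic to $S_3$ rather than $B_2$. Thus each of the $n-1$ edges contributes the same value $-q^2/((1+q)^2(1+q+q^2))$ computed for the $A_2$ case in \Cref{Moments: Type B}, and each diagonal term contributes $q/(1+q)^2$. Summing over ordered pairs gives
$$
\sum_{i,j}\Cov(\des_i(w),\des_j(w)) = \frac{nq}{(1+q)^2} - \frac{2(n-1)q^2}{(1+q)^2(1+q+q^2)} = \frac{nq(1-q+q^2)}{(1+q)^2(1+q+q^2)} + \frac{2q^2}{(1+q)^2(1+q+q^2)},
$$
which immediately yields the main term of the lower bound and, since $2q^2\le 2nq^2$, the clean estimate $\le nq/(1+q)^2$ needed for the upper bound.

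For the cross sum I would reuse the argument from \Cref{Moments: Type B}: \Cref{inverse_indep} shows $\des_i(w)$ and $\des_j(w^{-1})$ are conditionally independent unless the value-sets $\{w(k):k\in\overline{\{s_i\}}\}$ and $\overline{\{s_j\}}$ overlap in more than a singleton, and for the ``generic'' pairs with $i,j\ge 1$ away from the center the same length-increasing bijection between the two overlap events shows each covariance is nonnegative and bounded above by $\frac{q}{(1+q)^2}\Prob(\{w(i),w(i+1)\}=\{j,j+1\})$, which sums to $O(nq/(1+q)^2)$. The lower bound then follows by dropping all these nonnegative terms, and the upper bound by summing the probability estimates.

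The main obstacle — and the reason for the explicit correction $-10q^2/(1+q)^2$ in the lower bound and the larger constant in the upper bound — is the generator $s_0$, whose descent $\des_0(w)$ depends on the flip of the four central values $w(\pm 1),w(\pm 2)$ rather than a single adjacent transposition. This breaks the clean factorization used above for the $O(1)$ many pairs involving $s_0$ (or involving $s_1,s_2$, whose associated indices meet $\{\pm 1,\pm 2\}$), so those covariances are not manifestly nonnegative. I would bound each such exceptional term crudely: since $\des_i(w)$ and $\des_j(w^{-1})$ are indicators with common mean $p=q/(1+q)$, every covariance satisfies $-q^2/(1+q)^2 = -p^2 \le \Cov \le p(1-p) = q/(1+q)^2$, and I would verify that at most five ordered pairs require this crude lower bound, yielding the stated $-10q^2/(1+q)^2$ after the overall factor of $2$. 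The delicate part is the bookkeeping: confirming that only this bounded, $n$-independent set of central pairs fails the nonnegativity argument, so that the discrepancy from the $B_n$ formula is an additive constant rather than a term growing with $n$.
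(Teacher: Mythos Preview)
Your decomposition and your treatment of the single-sided sum are essentially the paper's, and in fact your edge count ($n-1$ edges, all giving an $S_3$ parabolic) is correct. The gap is in your handling of the cross sum for the lower bound.

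You assert that only $O(1)$ ordered pairs involving $s_0$ fail the nonnegativity argument, and you propose to bound each of those crudely by $-q^2/(1+q)^2$. But this is not right: for \emph{every} $j\ge 1$ the pair $(0,j)$ is problematic, because $\overline{\{s_0\}}=\{\pm 1,\pm 2\}$ has four elements and the conditional-independence reduction from \Cref{inverse_indep} leaves you with an event $E_j=\{\{j,j+1\}\subseteq\{w(\pm 1),w(\pm 2)\}\}$ on which $\des_0(w)$ and $\des_j(w^{-1})$ are coupled in a way that does not split into the clean ``equal/unequal'' dichotomy you used for $i,j\ge 1$. There are $2(n-1)+1$ such ordered pairs, so your crude per-term bound would produce a correction of order $nq^2/(1+q)^2$, destroying the lower bound.

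The idea you are missing is to bound the whole row $\sum_{j\ge 1}\Cov(\des_0(w),\des_j(w^{-1}))$ at once rather than term by term. One shows, as in the paper, that
\[
\Cov(\des_0(w),\des_j(w^{-1})) \ge -\frac{q^2}{(1+q)^2}\,\Prob(E_j),
\]
and then observes that the events $E_j$ are pairwise disjoint (each specifies which pair $\{j,j+1\}$ sits in the four central positions), so $\sum_j \Prob(E_j)\le 1$ and the entire row contributes at least $-2q^2/(1+q)^2$. The same bound applies to the column $\sum_{i\ge 1}\Cov(\des_i(w),\des_0(w^{-1}))$, and the single term $(0,0)$ is bounded by $-q^2/(1+q)^2$; summing gives $-5q^2/(1+q)^2$, hence the $-10q^2/(1+q)^2$ after the overall factor of $2$. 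Your upper-bound sketch is fine, since there an $O(n)$ count of exceptional pairs each bounded by $q/(1+q)$ is harmless.
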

\begin{proof}
We have, as before,
\begin{align}
    \mathrm{Var}(\mathrm{des}(w) + \mathrm{des}(w^{-1})) &= \mathrm{Var}\left(\sum_{i=0}^{n-1} \mathrm{des}_i(w) + \sum_{i=0}^{n-1} \mathrm{des}_i(w^{-1})\right) \nonumber \\
    &= 2\sum_{0 \le i,j \le n-1} \mathrm{Cov}(\mathrm{des}_i(w), \mathrm{des}_j(w)) + 2\sum_{0 \le i,j \le n-1} \mathrm{Cov}(\mathrm{des}_i(w), \mathrm{des}_j(w^{-1})). 
\end{align}
As in Lemma \ref{Moments: Type B}, we also have that $\des_i(w)$ and $\des_j(w)$ are independent if $s_i$ and $s_j$ commute and $i \neq j$. The computation for $\mathrm{Cov}(\des_i(w), \des_{i+1}(w))$ for $i \ge 1$ is identical as in Lemma \ref{Moments: Type B}. The same is true for $\mathrm{Cov}(\des_i(w), \des_{i}(w))$ for all $i$. Explicitly,
$$
\mathrm{Cov}(\des_i(w), \des_{i+1}(w)) = -\frac{q^2}{(1+q)^2(1+q+q^2)} \ \forall i \ge 1,
$$
$$
\mathrm{Cov}(\des_i(w), \des_{i}(w)) = \frac{q}{(1+q)^2} \ \forall i.
$$
Now, we have that $s_2$ is the only generator that $s_0$ does not commute with. And, $(s_0s_2)^3 = 1$, so $W_{\{s_0, s_2\}} \cong S_3$. Hence,
$$
\Cov(\des_0(w), \des_2(w)) = -\frac{q^2}{(1+q)^2(1+q+q^2)}
$$
and
\begin{align}
    \sum_{0 \le i,j \le n-1} \mathrm{Cov}(\mathrm{des}_i(w), \mathrm{des}_j(w)) &= n\frac{q}{(1+q)^2} - 2n\frac{q^2}{(1+q)^2(1+q+q^2)} \nonumber\\
    &= \frac{nq(1-q+q^2)}{(1+q)^2(1+q+q^2)}. \label{D_n var}
\end{align}
Now, we can use the same logic as in the proof of Lemma \ref{Moments: Type B} to show that, for $i,j \ge 1$,
\begin{equation} \label{D_n var most_cross_terms}
\mathrm{Cov}(\mathrm{des}_i(w), \mathrm{des}_{j}(w^{-1})) = \frac{q}{1+q}\mathbb{P}(\{w(i), w(i+1)\} = \{j, j+1\}) - \frac{q^2}{(1+q)^2}\mathbb{P}(\{w(i), w(i+1)\} = \{-j, -j-1\}) 
\end{equation}
We now consider the case $i=0, j \neq 0$. Then, by Lemma \ref{inverse_indep}, $\des_0(w)$ and $\des_j(w^{-1})$ are independent conditioning on the event that $\{w(1), w(2), w(-1), w(-2)\} \cap \{j , j+1\}$ contains at most $1$ element. Now, if the event $\{j , j+1\} \subseteq \{w(1), w(2), w(-1), w(-2)\}$ is true, we have that $\{w(1), w(2)\} = \{\pm j, \pm (j+1) \}$. Call this event $E_j$. Then, letting $p = \Prob(E_j)$,
\begin{align*}
    \Cov(\des_0(w), \des_j(w^{-1})) &= \mathbb{E}(\des_0(w)\des_j(w^{-1})) - \mathbb{E}(\des_0(w))\mathbb{E}(\des_j(w^{-1})) \\
    &= \mathbb{E}(\des_0(w)\des_j(w^{-1}) | E_j)p + \mathbb{E}(\des_0(w)|E_j^c)\E(\des_j(w^{-1}) | E_j^c)(1-p) - \frac{q^2}{(1+q)^2}\\
    &= \mathbb{E}(\des_0(w)\des_j(w^{-1}) | E_j)p + \frac{q^2}{(1+q)^2}(1-p) - \frac{q^2}{(1+q)^2} \\
    &\ge -\frac{2pq^2}{(1+q)^2}.
\end{align*}
We have used the fact that $\des_0(w)$ and $E_j$ are independent since there exists a bijection $w \to ws_0$ that preserves $E_j$ (and thus $E_j^c$). That is, in $E_j$, permutations come in pairs, differing in length by $1$, such that the shorter one does not have a descent at $s_0$ while the longer one does. A similar result is true for $\des_j(w^{-1})$. Now, the events $E_j$ are pairwise disjoint. Thus,
$$
\sum_{j = 1}^{n-1} \Cov(\des_0(w), \des_j(w^{-1})) \ge -\frac{2q^2}{(1+q)^2}.
$$
By similar reasoning (or simply subtituting $w^{-1}$ for $w$ above), $\sum\limits_{i\ge 1} \Cov(\des_i(w), \des_0(w^{-1})) \ge -\frac{2q^2}{(1+q)^2}$ as well. Since $\des_0$ is Bernoulli, we also have that $\Cov(\des_0(w), \des_0(w^{-1})) \ge -\E(\des_0(w))\E(\des_0(w^{-1})) = -\frac{q^2}{(1+q)^2}$. Hence,
$$
\sum_{0 \le i,j \le n-1} \mathrm{Cov}(\mathrm{des}_i(w), \mathrm{des}_j(w^{-1})) \ge -\frac{5q^2}{(1+q)^2}
$$
and we have the lower bound. To see the upper bound, note that
$$
\Cov(\des_i(w), \des_j(w^{-1})) \le \mathbb{E}(\des_0(w)\des_j(w^{-1})) \le \mathbb{E}(\des_0(w)) = \frac{q}{1+q}
$$
for any $i,j$ (possibly $0$). Thus, by (\ref{D_n var most_cross_terms}),
\begin{align*}
    \sum_{0 \le i,j \le n-1} \mathrm{Cov}(\mathrm{des}_i(w), \mathrm{des}_j(w^{-1})) &\le (2n-1)\frac{q}{1+q} + \sum_{1 \le i,j \le n-1} \frac{q}{(1+q)^2} \mathbb{P}(\{w(i), w(i+1)\} = \{j, j+1\}) \\
    &\le (n-1)\frac{q}{(1+q)^2} + (2n-1)\frac{q}{1+q} = \frac{nq(3+2q)}{(1+q)^2}.
\end{align*}
Combining this with \ref{D_n var} yields the desired upper bound.
\end{proof}

\begin{corollary} \label{Cor: D_n simple_var_bound}
Let $w \in D_n$ be Mallows distributed with parameter $q \in (0, \infty)$ and $n \ge 30$. Then,
$$
\frac{n\min(q, 1/q)}{12} \le \Var(\des(w), \des(w^{-1})) \le 8n\min(q, 1/q).
$$
\end{corollary}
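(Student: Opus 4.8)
The plan is to follow exactly the pattern of Corollaries \ref{Cor: A_n simple_var_bound} and \ref{Cor: B_n simple_var_bound}: reduce to the regime $q \le 1$ and then extract the claimed clean bounds from the sharper rational-function estimates of Lemma \ref{Moments: Type D}. First I would invoke Corollary \ref{cor: q_invert} to dispose of the case $q \ge 1$: since $\des(w)+\des(w^{-1})$ for parameter $q$ is distributed as $2n-\des(w')-\des((w')^{-1})$ for $w' \sim \mu_{1/q}^{D_n}$, and variance is invariant under the affine map $x \mapsto 2n-x$, the variance is unchanged upon replacing $q$ by $1/q$. Hence it suffices to treat $q \le 1$, where $\min(q,1/q)=q$.

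For the upper bound I would start from the upper estimate in Lemma \ref{Moments: Type D} and reduce the claim $\frac{4nq(2+2q+3q^2+q^3)}{(1+q)^2(1+q+q^2)} \le 8nq$ to the elementary inequality $\frac{2+2q+3q^2+q^3}{(1+q)^2(1+q+q^2)} \le 2$. Expanding the denominator as $1+3q+4q^2+3q^3+q^4$, this is equivalent to $0 \le 4q+5q^2+5q^3+2q^4$, which holds for all $q \ge 0$; so the upper bound is immediate and in fact holds for every $n$.

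The lower bound is the only place where the hypothesis $n \ge 30$ enters, and it is the main (though mild) obstacle, precisely because the lower estimate in Lemma \ref{Moments: Type D} carries the $n$-independent negative correction $-\frac{10q^2}{(1+q)^2}$ coming from the $s_0$-interactions special to $D_n$ (in contrast to the $A_n$ case, which has no such term). As in the $A_n$ computation I would first bound the leading term using $\frac{1}{(1+q)^2} \ge \frac14$ and $\frac{1-q+q^2}{1+q+q^2} \ge \frac13$ to get $\frac{2nq(1-q+q^2)}{(1+q)^2(1+q+q^2)} \ge \frac{nq}{6}$. Then, using that $\frac{q}{(1+q)^2} \le \frac14$ for $q \le 1$, I would bound the correction by $\frac{10q^2}{(1+q)^2} = 10q\cdot\frac{q}{(1+q)^2} \le \frac{10q}{4} = \frac{30q}{12}$, which is at most $\frac{nq}{12}$ exactly when $n \ge 30$. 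Combining these gives $\Var(\des(w)+\des(w^{-1})) \ge \frac{nq}{6}-\frac{nq}{12} = \frac{nq}{12}$, as claimed, and the case $q \ge 1$ follows from the reduction in the first step. Thus the substance of the argument is a short sequence of rational-function estimates, with the sole genuine point being that $n \ge 30$ is precisely the threshold at which the leading linear term dominates the fixed $D_n$-specific correction.
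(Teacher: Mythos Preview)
Your proposal is correct and follows essentially the same approach as the paper: reduce to $q\le 1$ via Corollary~\ref{cor: q_invert}, bound the leading term of Lemma~\ref{Moments: Type D} by $\frac{nq}{6}$ using $\frac{1-q+q^2}{1+q+q^2}\ge\frac13$ and $\frac{1}{(1+q)^2}\ge\frac14$, and absorb the $D_n$-specific correction $\frac{10q^2}{(1+q)^2}$ into $\frac{nq}{12}$ exactly when $n\ge 30$. The paper's only cosmetic difference is that it factors out $\frac{q}{(1+q)^2}$ before bounding, writing $\frac{q}{(1+q)^2}\bigl(\frac{2n(1-q+q^2)}{1+q+q^2}-10q\bigr)\ge \frac{q}{(1+q)^2}\bigl(\frac{2n}{3}-\frac{nq}{3}\bigr)\ge\frac{nq}{12}$.
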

\begin{proof}
First, suppose that $q \le 1$. The upper bound is not hard to see from Lemma \ref{Moments: Type D} since $2 + 2q + 3q^2 + q^2 \le 2(1+q)(1+q+q^2)$. For the lower bound, note that
$$
\frac{2nq(1-q+q^2)}{(1+q)^2(1+q+q^2)} - \frac{10q^2}{(1+q)^2} = \frac{q}{(1+q)^2}\left( \frac{2n(1-q+q^2)}{1+q+q^2} - 10q \right) \ge \frac{q}{(1+q)^2}\left( \frac{2n}{3} - \frac{nq}{3} \right) \ge \frac{nq}{12}.
$$
The case when $q \ge 1$ follows from Corollary \ref{cor: q_invert}.
\end{proof}

\begin{lemma} \label{Lemma: descent_cubed_bound}
Let $w \in W$ be Mallows distributed for $W = A_n, B_n$ or $D_n$ with parameter $q$, a positive real. We have the following bound on the expectation of the cube of the one-sided descent:
$$
\E(\des(w)^3) \le \frac{n^3q^3}{(1+q)^3} + \frac{24n^2q^2}{(1+q)^2} + \frac{16nq}{1+q}.
$$
\end{lemma}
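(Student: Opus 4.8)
The plan is to expand the cube as a triple sum over descent indicators and bound each summand according to how its three indices interact in the Coxeter graph. Writing $\des(w) = \sum_i \des_i(w)$, where each $\des_i(w)$ is a Bernoulli variable of mean $p := q/(1+q)$ (the bijection $w \mapsto ws_i$ gives $\Prob(\des_i(w)=1) = q/(1+q)$ for all $q>0$), we have
\[
\E(\des(w)^3) = \sum_{i,j,k} \E\bigl(\des_i(w)\des_j(w)\des_k(w)\bigr),
\]
the sum ranging over the $n$ generators. Since a product of indicators is again an indicator, $\E(\des_i\des_j\des_k) = \Prob(\des_i = \des_j = \des_k = 1)$, and the whole argument reduces to classifying triples $(i,j,k)$ and counting.

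First I would record two facts used throughout. If $s_a,s_b$ commute and $a \ne b$, then $\des_a(w)$ and $\des_b(w)$ are independent: each $\des_a(w)$ is a function of $w_{\{s_a\}}$, so Lemma \ref{Lemma: general_parabolic_indep} applies with $S=\{s_a\}$, $S'=\{s_b\}$, and iterating yields mutual independence of $\des_a,\des_b,\des_c$ whenever $s_a,s_b,s_c$ pairwise commute. Consequently: (i) if $i,j,k$ are distinct and pairwise commuting then $\E(\des_i\des_j\des_k) = p^3$; (ii) if some two \emph{distinct} indices among $i,j,k$ commute, then, since the joint event is contained in $\{\des_a=\des_b=1\}$, we get $\E(\des_i\des_j\des_k) \le p^2$; and (iii) in every case $\E(\des_i\des_j\des_k) \le p$.

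The key structural input is that for $W=A_n,B_n,D_n$ the Coxeter graph is a tree (a path, or the branched graph of $D_n$) of maximum degree $\le 3$ with \emph{no triangle}, so its only cliques are single vertices and edges. Call a triple \emph{good} if its indices are distinct and pairwise commuting, and a \emph{clique triple} if the set of distinct indices it uses is a clique. Any non-good triple whose distinct indices are not all pairwise adjacent contains two distinct commuting indices, so it falls under (ii); the remaining non-good triples are precisely the clique triples, handled by (iii). I would then count crudely: at most $n^3$ good triples; at most $9n^2$ non-good triples, by a union bound over the three slot-pairs (the number of ordered pairs $(a,b)$ with $a=b$ or $s_a \sim s_b$ is $n+2(n-1)=3n-2$, and one slot is free); and $n + 6(n-1) \le 7n$ clique triples (the all-equal triples, plus for each of the $n-1$ edges the $6$ ordered triples using both endpoints). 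Combining with (i)--(iii),
\[
\E(\des(w)^3) \le n^3 p^3 + 9n^2 p^2 + 7n p \le \frac{n^3 q^3}{(1+q)^3} + \frac{24 n^2 q^2}{(1+q)^2} + \frac{16 nq}{1+q},
\]
using $9 \le 24$ and $7 \le 16$.

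The main obstacle is purely bookkeeping: correctly establishing the $\le p^2$ dichotomy in the presence of repeated indices, and confirming the clique structure of each graph --- in particular that the branch vertex $s_2$ of $D_n$ produces no triangle, so that $s_0,s_1$ commute and the maximum clique remains an edge. Because the target constants are generous, the crude union-bound counts above leave ample slack and no delicate estimate is required.
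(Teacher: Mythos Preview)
Your proof is correct and follows essentially the same three-case decomposition as the paper: triples with all three indices distinct and pairwise commuting, triples containing some pair of distinct commuting indices but not all, and ``clique'' triples where no such pair exists. Your counting is slightly tighter (yielding $9n^2$ and $7n$ in place of $24n^2$ and $16n$) via the union bound and edge count of the tree, and your justification of mutual independence by iterating Lemma~\ref{Lemma: general_parabolic_indep} on $S=\{s_i\}$, $S'=\{s_j,s_k\}$ is a clean alternative to the paper's conditioning argument, but the overall route is the same.
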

\begin{proof}
We have,
$$
\E(\des(w)^3) = \E\left( \sum_{0 \le i,j,k \le n-1} \des_i(w)\des_j(w)\des_k(w) \right) = \sum_{0 \le i,j,k \le n-1} \E(\des_i(w)\des_j(w)\des_k(w)).
$$
We now do casework on which pairs of $s_i, s_j, s_k$ commute, i.e. the subgraph of the Coxeter graph of $W$ that $s_i, s_j, s_k$ form.
\begin{itemize}
    \item All pairs of the $s_i, s_j, s_k$ commute, i.e. the subgraph has no edges and the $s_i, s_j, s_k$ are distinct: We claim that the random variables $\des_i(w), \des_j(w), \des_k(w)$ are independent. To see this, condition on the values of $\des_i(w), \des_j(w)$. Then, these values are invariant after multiplication on the right by $s_k$ by Lemma \ref{Lemma: Descents_invariant_commute}. It follows that $w \mapsto ws_k$ is an involution on the subset of $W$ that results from the conditioning. Hence, this subset can be partitioned into pairs of the form $(w, ws_k)$ so that $\des_k(w)$ is still Bernoulli with expectation $\frac{q}{1+q}$. The analogous results are true for $\des_i(w), \des_j(w)$, so the three random variables are independent. The amount contributed to the sum is therefore
    $$
    \E(\des_i(w)\des_j(w)\des_k(w)) = \E(\des_i(w))\E(\des_j(w))\E(\des_k(W)) = \frac{q^3}{(1+q)^3}.
    $$
    There are clearly at most $n^3$ such triples $(i,j,k)$ that satisfy this case.

    \item Two of the generators are distinct and commute but a different pair of the generators does not satisfy this: Without loss of generality suppose that $s_i, s_j$ are distinct and commute. Then, the contribution to the sum above is at most
    $$
    \E(\des_i(w)\des_j(w)\des_k(w)) \le \E(\des_i(w)\des_j(w)) = \E(\des_i(w))\E(\des_j(w)) = \frac{q^2}{(1+q)^2}.
    $$
    We now bound the number of such triples. First, there are $3$ choices for the pair of indices that correspond to commuting generators, and then at most $n^2$ possibilities for the pair of generators. There are then $8$ choices for the last generator since it is either a neighbor of or equal to one of the first two. This results an an upper bound of $24n^2$ triples.

    \item No pair of generators in $s_i, s_j, s_k$ satisfy that they are distinct and commute: The contribution to the sum is trivially bounded above by
    $$
    \E(\des_i(w)\des_j(w)\des_k(w)) \le \E(\des_i(w)) = \frac{q}{1+q}.
    $$
    We have each of $s_j$ and $s_k$ are either equal to or a neighbor of $s_i$, so there are at most $4$ possibilities for each. Hence, there are at most $16n$ triples that satisfy this condition.
\end{itemize}
From the above work, we have that
$$
\E(\des(w)^3) \le n^3 \frac{q^3}{(1+q)^3} + 24n^2 \frac{q^2}{(1+q)^2} + 16n \frac{q}{1+q}.
$$
\end{proof}

\printbibliography

\end{document}